\renewcommand{\em}{\it}
\newcommand{\N}{\mathbb{N}}
\newcommand{\TR}{[0,T]\times\mathbb{R}^d}
\newcommand{\Z}{\mathbb{Z}}
\newcommand{\zd}{\mathbb{Z}^d}
\newcommand{\R}{\mathbb{R}}
\newcommand{\rd}{\mathbb{R}^d}
\newcommand{\Lip}{\mbox{Lip}}
\newcommand{\vertiii}[1]{{\left\vert\kern-0.25ex\left\vert\kern-0.25ex\left\vert #1 \right\vert\kern-0.25ex\right\vert\kern-0.25ex\right\vert}}
\newtheorem{thm}{Theorem}[section]
\newtheorem{corollary}[thm]{Corollary}
\newtheorem{lemma}[thm]{Lemma}
\newtheorem{proposition}[thm]{Proposition}
\newtheorem{definition}[thm]{Definition}
\theoremstyle{remark}
\newtheorem{remark}[thm]{Remark}
\theoremstyle{definition}
\newtheorem{example}{Example}
\newenvironment{description*}%
  {\begin{description}
    \setlength{\itemsep}{0.33em}
  }
  {\end{description}}
\let\orgdescriptionlabel\descriptionlabel
\renewcommand*{\descriptionlabel}[1]{%
  \let\orglabel\label
  \let\label\@gobble
  \phantomsection
  \edef\@currentlabel{#1}%
  \let\label\orglabel
  \orgdescriptionlabel{#1}%
}
\title[Approximation of nonlocal MFGs]{
 On Numerical approximations  of fractional and nonlocal Mean Field Games}
\author[I. Chowdhury]{Indranil Chowdhury}
\address{\parbox{.95\linewidth}{{ (I. Chowdhury)} 
Department of Mathematics, Faculty of Science, University of Zagreb, \ Croatia \medskip }}
\email{indranil.chowdhury@ntnu.no, indranill2011@gmail.com}
\author[O. Ersland]{Olav Ersland}
\address{(O. Ersland) Norwegian University of Science and Technology, Norway}
\curraddr{}
\email{olav.ersland@ntnu.no}
\author[E.~R. Jakobsen]{Espen R. Jakobsen}
\address{(E.~R. Jakobsen) Norwegian University of Science and Technology, Norway}
\curraddr{}
\email{espen.jakobsen@ntnu.no}
\subjclass[2020]{35Q89, 
47G20,  
35Q84,  
49L12,  
45K05,  
35K61,  
65M12,  
91A16,   
65M22, 
35R11 ,   
35R06,   
}
\keywords{Mean Field Games, jump diffusion, anomalous diffusion, nonlocal operators, fractional PDEs, nonlocal PDEs, degenerate PDEs, semi-Lagrangian scheme, convergence, compactness, Fokker-Planck equations, Hamilton-Jacobi-Bellman equations, duality methods}
\begin{document}  



\begin{abstract}
We construct numerical approximations for Mean Field Games with
fractional or nonlocal diffusions. The schemes are based on
semi-Lagrangian approximations of the underlying control
problems/games along with dual approximations of the
distributions of agents. The methods are monotone, stable, and consistent,  and we prove 
convergence along subsequences for  (i) degenerate
equations in one space dimension
and (ii) nondegenerate equations in arbitrary
dimensions. We also give results on full convergence and
convergence to classical solutions.
Numerical tests are implemented
for a range of different nonlocal diffusions and support our analytical findings.
\end{abstract}
\maketitle  
\tableofcontents

\section{Introduction}

 In this article we study numerical approximations of Mean Field Games (MFGs) with fractional and general non-local diffusions. We consider the mean field game system
\begin{align}
 \begin{cases}
        -u_t - \mathcal{L} u + H(x,Du) = F (x, m(t)), \quad &\text{ in } (0,T)\times\R^d, \\
        m_t - \mathcal{L}^*m - \text{div} (m D_p H(x,Du)) = 0 \quad &\text{ in } (0,T)\times\R^d, \\
        u (T,x) = G(x,m(T)), \ m(0) = m_0 \quad &\text{ in } \R^d,
    \end{cases}
    \label{eqn:MFG}
\end{align}
where
\begin{align}
    \mathcal{L} \phi (x)= \int_{|z|>0} \big[ \phi (x+z) - \phi (x) - \mathbbm{1}_{\{|z|<1\}} 
    D \phi (x) \cdot z   \big] d\nu(z),
    \label{levy_operator}
\end{align}
is a nonlocal diffusion operator (possibly degenerate), $\nu$ is a L\'evy measure (see assumption \ref{A0}), and the adjoint $\mathcal{L}^{*}$ is defined as 
$ ( \mathcal{L}^{*} \phi, \psi )_{L^{2}} = 
( \phi, \mathcal{L} \psi )_{L^2 }$ for 
$\phi,\psi \in C_{c}^{\infty} ( \rd )$.

The first equation in \eqref{eqn:MFG} is a backward in time Hamilton-Jacobi-Bellman (HJB) equation with terminal data $G$, 
and the second equation is a forward in time Fokker-Planck-Kolmogorov (FPK) equation with initial data $m_0$. 
Here $H$ is the 
Hamiltonian, and 
the system is coupled through the cost functions $F$ and  $G$.  
There are two different types of couplings: \textit{(i) Local couplings} where $F$ and $G$ depend on point values of $m$, 
and \textit{(ii) non-local} or {\it smoothing couplings} where they depend on distributional properties induced from $m$ through integration or convolution. 
Here we work with nonlocal couplings. 

A mathematical theory of MFGs were
introduced by Lasry--Lions
\cite{MR2295621} and Caines--Huang--Malhame
\cite{MR2346927}, and describes the limiting behavior of $N$-player stochastic differential games when the number of players $N$ tends to $\infty$ \cite{cardaliaguet2015master}. 
In recent years there has been significant progress on MFG systems with local (or no) diffusion,  including e.g. modeling, wellposedness, numerical approximations, long time behavior, convergence of Nash equilibria, and various control and game theoretic questions, see e.g.~\cite{ACDPS:Note,CD:Books,cardaliaguet2015master,BFY:Book,GPV:Book,GLL03} and references therein. 
The study of  MFGs  with `non-local diffusion' is quite recent, and few results exist so far. Stationary problems with fractional Laplacians were studied in \cite{cesaroni2017stationary}, and parabolic problems including \eqref{eqn:MFG}, in \cite{cirant2019existence} and  \cite{ersland2020classical}. 
We refer to \cite{MR3580196} and references therein for some development using probabilistic methods.

 The difference between problem \eqref{eqn:MFG} and standard MFG formulations lies in the type of noise driving the underlying controlled stochastic differential equations (SDEs). Usually Gaussian noise is considered \cite{MR2295621,MR3305653,MR3399179, MR3072222,ACDPS:Note},
or there is no noise (the first order case) \cite{MR3408214,MR3358627}.
Here the underlying SDEs are driven by pure jump
L\'evy processes,  which leads to nonlocal operators
\eqref{levy_operator} in the MFG system. In many real world
applications, jump processes model the observed noise better 
than Gaussian processes \cite{applebaum2009levy,metzler2000random,tankov2003financial,woyczynski2001levy}. 
 Prototypical examples are symmetric $\sigma$-stable processes and their generators, the fractional Laplace operators $(-\triangle)^{\frac{\sigma}{2}}$.  
 In Economy and Finance 
the observed noise is not symmetric and
$\sigma$-stable, but rather non-symmetric and tempered. 
 A typical example is the one-dimensional CGMY process
\cite{tankov2003financial} where $\frac{d\nu}{dz}(z)=\frac
C{|z|^{1+Y}}e^{-Gz^+-Mz^-}$ for $C,G,M>0$ and $Y\in(0,2)$. Such models
are covered by the results of this article.  Our assumptions on the nonlocal operators
(cf. \ref{A2}) are quite general, allowing for  degenerate operators
and no restrictions on the tail of the L\'evy measure $\nu$.

There has been some development  on  numerical
approximations for MFG systems with local operators.  Finite difference
schemes for nondegenerate second order equations have been designed and analyzed e.g. by Achdou \textit{et al.}
\cite{achdou2012mean,achdou2013mean,achdou2014numerical,achdou2010mean,achdou2012iterative,achdou2016convergence,achdou2020mean} 
 and Gueant 
\cite{gueant2012mean, gueant2013mean, gueant2012new}. Semi-Lagrangian (SL)
schemes for MFG system have been developed by Carlini--Silva both for
first order equations \cite{carliniSilva2014semi1st} and 
possibly degenerate second order equations \cite{carlini2015semi}. Other numerical schemes for MFGs
include recent machine learning methods
\cite{carmona2019convergence,carmona2019linear,ruthotto2020machine} for high dimensional problems.   We refer to the
survey article \cite{achdou2020mean} for recent developments on
numerical methods for MFG. We know of  no prior schemes or numerical analysis for  MFGs with fractional or nonlocal  diffusions. 

In this paper we will focus on SL schemes. They  are  monotone,
stable, connected to the underlying control
problem, easily handles degenerate and arbitrarily directed diffusions, and large 
time steps are allowed. Although the
SL schemes for HJB equations have been studied for some time (see e.g. \cite{MR3341715,CD83,MR1326802,MR3042570}),
there are few results for FPK equations (but see
\cite{MR3828859}) and the coupled MFG system. For nonlocal problems we only know of the results in \cite{camilli2009finite} for HJB equations.

\subsection*{Our contributions}\
\smallskip

{\bf A. Derivation.} We construct fully discrete monotone  numerical schemes for the MFG system \eqref{eqn:MFG}. These dual SL schemes are closely related to the underlying control formulation of the MFG. In our case it is based on the following controlled SDE:
$$dX_t = -\alpha_t \, dt +  dL_t,$$
 where $\alpha_t$ is the control and $L_t$ a pure jump L\'evy process (cf. \eqref{SDE}). Note that $L_t$ can be decomposed into small and large jumps, where the small jumps may have infinite intensity. We derive our approximation in several steps:
\begin{enumerate}[1.]
\item (Approximate small jumps) The small jumps are approximated by Brownian motion (see \eqref{SDE_2}) following 
e.g. \cite{MR1834755,camilli2009finite,karlsen-Elghanjaoui-2002}. This is done to avoid infinitely many jumps per time-interval and
singular integrals, and gives a better approximation compared to simply neglecting these terms.
\smallskip
\item (SL scheme for HJB) We discretise the resulting SDE from step 1 in time  and
approximate the noise by random walks and approximate compound Poisson processes in the spirit of \cite{camilli2009finite} (Section
\ref{subsec:SDE}). From the corresponding discrete
time optimal control problem, dynamic programming, and interpolation we construct an SL scheme for the HJB equation 
(Section \ref{sec:SL}). 
\smallskip
\item (Approximate control) We define an approximate optimal feedback control for the SL scheme in step 2 from the continuous optimal feedback control as in \cite{carliniSilva2014semi1st,carlini2015semi}:  $\alpha^*_{\textup{approx}} =
D_p H(\cdot, Du_d^\epsilon)$, where $u_d^{\epsilon}$ is a regularization
of the (interpolated) solution from step 2  (Section
\ref{subsec:ctrl}). 
\smallskip
 \item (Dual SL scheme for FPK) 
The control of step 3 and the scheme in step 2 define a controlled approximate SDE with a corresponding discrete FPK equation for the densities of the solutions. 
We explicitly derive this FPK equation in weak
form, and obtain the final dual SL scheme taking test functions to be
linear interpolation basis functions (Section \ref{subsec:discrtFPK}). 
\end{enumerate}
See \eqref{schme_HJ} and \eqref{Fokker-Planck_discretized} in Section
\ref{sec:discretization} for the specific form of our
discretizations. These seem to be the first
numerical approximations of MFG systems with nonlocal or fractional
diffusion and the first SL approximations of nonlocal FPK equations. 
Our dual SL schemes are extensions to the nonlocal case of the schemes in
\cite{carliniSilva2014semi1st,carlini2015semi,MR3828859}, but a clear
derivation of such type of schemes seems to be new. The schemes come
in the form of nonlinear coupled systems \eqref{disc_mfg_system} that need to be resolved
numerically. We prove existence of solutions using fixed point
arguments, see Proposition \ref{thm:existence_discrete_system}.
\smallskip

{\bf B. Analysis.} We establish a range of properties for the scheme  including  monotonicity, consistency, stability, (discrete) regularity, convergence of individual equations, and convergence to the full MFG system. 
\begin{enumerate}[ 1.]
\item (HJB approximation) For the approximation of the HJB equation we
  prove pointwise consistency and uniform discrete $L^\infty$,
  Lipschitz, and semiconcavity bounds. Convergence to a viscosity
  solution is obtained via the 
  half relaxed limit method \cite{barles1991convergence}.
\smallskip
\item (FKP approximation) We prove consistency in the sense of
  distributions, preservation of  mass and positivity,  $L^1$-stability, 
  tightness, and equi-continuity in time. In dimension $d=1$, we also
  prove uniform $L^p$-estimates for all $p\in(1,\infty]$. Convergence is obtained from compactness and stability arguments.
\smallskip
\item (The full MFG approximation) We prove convergence along
  subsequences to viscosity-very weak solutions of the MFG system in
  two cases: (i) Degenerate equations in dimension $d=1$, and (ii)
  non-degenerate equation in $\rd$ under the assumption that solutions
  of the HJB equation are $C^1$ in space. Full convergence follows for
  MFGs with unique solutions, and convergence to classical solutions
  follows under certain regularity and weak uniqueness
  conditions. Applying the results to the setting of
  \cite{ersland2020classical}, we obtain full convergence to classical
  solutions in this case.
\end{enumerate}

Because of the nonlocal or smoothing couplings, the HJB approximation can be analysed almost independently of the FKP approximation. The  analysis of the FKP scheme on the other hand, strongly depends on boundedness and regularity properties of solutions of the  HJB scheme. Compactness in measure is enough in the nondegenerate case when the HJB equation has $C^1$ solutions, while stronger weak ($*$) compactness in $L^p$ for some $p\in(1,\infty]$ is needed in the degenerate case. As in \cite{carliniSilva2014semi1st}, we are only able to prove this latter compactness in dimension $d=1$. A priori estimates and convergence for $p\in(1,\infty)$ seems to be new also for local MFGs.


In this paper we study general L\'evy jump processes and nonlocal
operators. This means that the underlying stochastic processes may not
have first moments whatever initial distribution we take (like
e.g. $\sigma$-stable processes with $\sigma<1$), and then we can no
longer work in the commonly used Wasserstein-1 space $(P_1,d_1)$ for
the FKP equations. Instead we work in the space $(P,d_0)$ of
probability measures under weak convergence metrizised by the
Rubinstein-Kantorovich metric $d_0$ (see Section
\ref{sec:prelim}). Surprisingly, a result from
\cite{Espen-Indra-Milosz-2020} (Proposition
\ref{prop:tail-control-function}) allow us to prove tightness and compactness in this space without any moment assumptions! We refer to section \ref{subsec:ext} for a more detailed discussion along with convergence results in the traditional $(P_1,d_1)$ topology when first moments are available.

This $(P,d_0)$ setting can be adapted to local problems, to give results
also there without moment assumptions. Finally, we note that our results for degenerate
problems cover the first order equations and improve
\cite{carliniSilva2014semi1st} in the sense that more general initial
distributions $m_0$ are allowed: $P\cap L^p$ for some $p\in(1,\infty]$
  instead of $P_{1+\delta}\cap L^\infty$ for some $\delta>0$. 
  
\smallskip

{\bf C. Testing.} We provide several numerical simulations. In Example \ref{ex1} and \ref{ex2} we use a similar setup as in
\cite{carlini2015semi},
comparing the effects of a range of different diffusion operators: Fractional Laplacians of different powers, 
CGMY-diffusions, 
a degenerate diffusion, a spectrally one-sided diffusion, as well as classical local diffusion and the case of no diffusion.
In Example \ref{ex3} we solve the MFG system on a long time horizon and observe the turnpike property in a nonlocal setting. Finally, in Example \ref{ex4} we study the convergence of the scheme. 

\subsection*{Outline of the paper} 
In section \ref{sec:prelim} we list our assumptions and state mostly known
results of the MFG system \eqref{eqn:MFG} and its individual HJB and FKP
equations. 
In section \ref{sec:discretization} we construct  the discrete schemes
for the HJB,  FKP, and full MFG equations from the underlying stochastic
control problem/game. The convergence results are given in Section
\ref{subsec:main-result}, along with extensions and a discussion
section.
In sections \ref{sec:HJB} and \ref{sec:FPK} we analyze the
discretisations of the HJB and FKP equations respectively, including
establishing a priori estimates, stability, and some consistency results.
Using these results, we prove the convergence results of section
\ref{subsec:main-result} in section \ref{sec:proof-main}.
In section \ref{sec:numerics} we provide and discuss numerical
simulations of various nonlocal MFG systems. Finally, there are three
appendices with proofs of technical results.

 \section{Assumptions and Preliminaries} \label{sec:prelim}

We start with some notation. By $C, K$ we mean various constants which may change from line to line. 
The Euclidean norm on any $\mathbb{R}^d$-type space is
denoted by $|\cdot|$. For any subset $Q\subseteq
\mathbb{R}^d$ or $Q \subseteq [0,T] \times \mathbb{R}^d$, and for any bounded, possibly vector valued
function on $Q$, we will consider $L^p$-spaces $L^{p}(Q)$ and spaces $C_b(Q)$ of bounded continuous 
functions. Often we use the notation $\|\cdot\|_0$ as an alternative notation for the norms in $C_b$ or $L^\infty$. The space $C^m_b(Q)$ is the  subset of $C_b(Q)$ with $m$ bounded and continuous derivatives, and for $Q \subseteq [0,T] \times \rd$, 
$C^{l,k}_b(Q)$ is the subset of $C_b(Q)$ with $l$ bounded and continuous derivatives in time and $k$ in space. 
By $P(\rd)$ we denote the set of probability measure on $\rd$. The Kantorovich-Rubinstein  distance $d_0(\mu_1,\mu_2)$ on the space $P(\rd)$ is defined as 
$$d_0(\mu_1,\mu_2) := \sup_{f\in \Lip_{1,1}(\rd)}\Big\{\int_{\rd}f(x) d(\mu_1-\mu_2)(x)\Big\},$$
where $\Lip_{1,1}(\rd) = \Big\{f : f \, \mbox{is Lipschitz continuous and} \, \|f\|_{0}, \|Df\|_{0}\leq 1 \Big\}$. 
\medskip
We define the Legendre transform $L$ of $H$ as:
\begin{align*}
    L (x,q) := \sup_{p \in \R^d} \big\{ p\cdot q - H(x,p) \big\}.
\end{align*}
We use the following assumptions for equation \eqref{eqn:MFG}: 

\begin{description*}
\item[($\nu$0)\label{A0}] (L\'evy condition) $\nu$ is a positive Radon measure that satisfies
\begin{align*}
   \int_{\R^d} 1 \wedge |z|^2 d\nu(z) < \infty.
\end{align*}
\item[($\nu$1)\label{A2}] (Growth near singularity) There exists constants $\sigma \in (0,2)$ and $C >0$ such that the density of $\nu$ for $|z|<1$ satisfies
        \begin{align*}
            0 \leq \frac{d\nu}{dz} \leq  \frac{C}{|z|^{d+\sigma}}, \text{ for } |z| < 1.            
        \end{align*}
        
         \item[(L0)\label{L0}](Continuity and local boundedness) The
           function $L: \R^d \times \R^d \to \R$ is continuous in $x,
           q$,  and for any $K>0$, there exists $C_{L}(K)>0$ such that
        \begin{align*}
            \sup_{|q|\leq K}|L(x,q)|\leq C_L(K), \qquad x \in \R^d.
        \end{align*}
        
         \item[(L1)\label{L1}](Convexity and growth) The function $L(x,q)$ is convex in $q$ and satisfies
        \begin{align*}
            \lim_{|q| \to +\infty} \frac{L(x,q)}{|q|} = +\infty, \qquad x \in \R^d.
        \end{align*}
         
         \item[(L2)\label{L2}](Lipschitz regularity) There exists a constant $L_L >0$ independent of $q$, such that
        \begin{align*}
            |L (x,q) - L (y,q) | \leq L_L |x-y|.
        \end{align*}
    \item[(L3)\label{L3}](Semi-concavity) There exists a constant $c_L >0$ independent of $q$, such that
        \begin{align*}
            L(x+y,q) - 2 L(x,q) + L(x-y,q) \leq c_L |y|^2.
        \end{align*}
     \item[(F1)\label{F1}](Uniform bounds) There exists constants $C_{F}, C_{G} >0$ such that
        \begin{align*}
            |F ( x,\mu ) | \leq C_{F}, |G ( x,\mu ) | \leq C_{G}, \qquad \forall x \in \rd, \mu \in P (\R^d ).
        \end{align*}
    \item[(F2)\label{F2}] (Lipschitz assumption) There exists constants $L_F, L_G >0$ such that
        \begin{align*}
            |F (x,\mu_1 ) - F(y,\mu_2) | \leq L_F \big[ |x-y| + d_0(\mu_1,\mu_2) \big], \\
            \medskip
            |G (x,\mu_1 ) - G(y,\mu_2) | \leq L_G \big[ |x-y| + d_0(\mu_1,\mu_2) \big]. 
        \end{align*}
    \item[(F3)\label{F3}] (Semi-concavity) There exists constants $c_{F}, c_{G} >0$ such that
        \begin{align*}
            F (x+y,\mu ) - 2 F(x,\mu) + F ( x-y,\mu ) \leq c_{F}  \\
            \medskip
            G (x+y,\mu ) - 2 G(x,\mu) + G ( x-y,\mu ) \leq c_{G} 
        \end{align*} 
       \item[(M)\label{M1}](Initial condition) 
   We assume $m_0\in P(\rd)$.    
   \medskip
    \item[(M')\label{M11}] The dimension $d=1$, and $m_0\in P(\R) \cap L^p(\R)$ for some $p\in (1, \infty]$. 
\medskip
\end{description*}
%
%
%
%

By \ref{L1}, the Legendre transform $H= L^*$ is welldefined and the optimal $q$ is $q^* = D_p H(x,p)$. To study the convergence of the numerical schemes we further assume local uniform bounds on the derivatives of Hamiltonian: \smallskip

\begin{description*}
\item[(H1)\label{H1}] The function $D_p H \in C (\rd \times \rd)$, and for every
$R >0$, there is a constant $C_R>0$ such that for every $x\in \rd$ and $p\in B_R$ we have $|D_p H (x,p) | \leq C_R$.  \medskip
 \item[(H2)\label{H2}]  The function $D_p H\in C^1(\rd\times \rd)$.  For every $R>0$ there exists a constant $C_R>0$ such that for every $x\in \rd$ and $p\in B_R$ we have 
\begin{align*}
|D_{pp}H(x,p)| + |D_{px} H(x,p)| \leq C_R.
\end{align*}
\end{description*}

\begin{remark}
We impose most of the conditions on $L$, and not on $H$, as $L$ appears in optimal control problem, which would be the basis of our semi-Lagrangian approximation. Assumptions \ref{L1} and \ref{L2} (but, not \ref{L3}!) would immediately carry forward to the corresponding Hamiltonian $H$ from the definition of Legendre transform. Whereas, we require to assume \ref{H1}--\ref{H2} on $H$, in contrary to the other assumptions, as it does not follow from the condition on $L$ in general. However, when the Lagrangian $L$ behaves like $|\cdot|^{r}$ in $q$ variable for large $q$ and $r>1$, the growth of the corresponding Hamiltonian $H$ would be $|\cdot|^{\frac{r}{r-1}}$ in $p$ variable for large $p$ (cf. \cite[Proposition 2.1]{MR3285897}). The growth of the derivatives of $H$ for large $p$ can be computed similarly, which would correspond to  similar condition as in \ref{H1}--\ref{H2}. 
\end{remark}

In  most of this paper solutions of the HJB equation in \eqref{eqn:MFG} are interpreted in the \textit{viscosity sense}, we refer to \cite{MR2129093} and references therein for general definition and wellposedness results, while solutions  of FPK equation in \eqref{eqn:MFG} are considered in the \textit{very weak} sense defined as follows:
\begin{definition}\label{def:weak-sol-FPK}
(a) If $u\in C^{0,1}_b((0,T)\times\rd)$,
a measure $m\in C([0,T],P(\rd))$ is a very weak solution of the FPK equation in \eqref{eqn:MFG}, if for every $\phi\in C_c^{\infty}(\rd)$ and $t\in[0,T]$ 
\begin{align}\label{dist_sol_FPK}
\begin{split}
&\int_{\rd} \phi(x) \,dm(t)(x) -\int_{\rd} \phi(x) \,dm_0(x)   \\ &= \int_{0}^t \int_{\rd} \Big(\mathcal{L}[\phi](x) -D_pH(x,Du)\cdot D\phi(x)\Big) dm(s)(x)ds. 
\end{split}
\end{align} 
(b) If $u\in L^\infty(0,T; W^{1,\infty}(\rd))$ and $p\in [1,\infty]$, a function $m\in C([0,T],P(\rd))\cap L^p([0,T]\times \rd)$,  is a very weak solution of the FPK equation in \eqref{eqn:MFG}, if \eqref{dist_sol_FPK} holds for every $\phi\in C_c^{\infty}(\rd)$ and $t\in[0,T]$.
\end{definition}

\begin{remark}\label{rem:defn-weaksol}
Inequality \eqref{dist_sol_FPK} holding for every $\phi\in C_c^{\infty}(\rd)$ and $t\in[0,T]$ is equivalent to 
\begin{align*}
\begin{split}
&\int_{\rd} \phi(T,x) \,d(m(T))(x) -\int_{\rd} \phi(0,x) \,dm_0(x)   \\ &= \int_{0}^T \int_{\rd} \Big(\phi_t(s,x) + \mathcal{L}[\phi](s,x) -D_pH(x,Du)\cdot D\phi(s,x)\Big) dm(s)(x)ds, 
\end{split}
\end{align*}
holding for every $\phi\in C^{1,2}_b([0,T]\times \rd)$  (cf. \cite[Lemma 6.1]{Espen-Indra-Milosz-2020}).
\end{remark}

\begin{definition}
A pair $(u,m)$ is a viscosity-very weak solution of the MFG system \eqref{eqn:MFG}, 
if $u$ is a viscosity solution of the HJB equation, and 
$m$ is a very weak solution of the FPK equation (see, Definition \ref{def:weak-sol-FPK}).
\end{definition}

\begin{proposition} \label{prop:viscosity_sol_HJB}
Fix, $\mu \in C([0,T],P(\rd))$. Let \ref{A0}, \ref{L2} and \ref{F1} hold.  

\smallskip\noindent
(a) (Comparison principle) If $u$ is a viscosity subsolution and $v$ is a viscosity supersolution of the HJB equation in \eqref{eqn:MFG} with $u(T,\cdot)\leq v(T,\cdot)$,  then  $u\leq v$. 

\medskip\noindent (b) There exists a unique bounded viscosity solution $u \in C_b(\TR)$ of the HJB equation in \eqref{eqn:MFG}, and 
for any $t\in [0,T]$ we have $\|u(t)\|_{0} \leq C_FT + C_G$. 

\medskip\noindent(c) If \ref{L2} and \ref{F2} hold, then the viscosity solution $u$ is Lipschitz continuous in space variable and for every $t\in [0,T]$ and $x,y \in \rd$ we have \begin{align*}
|u(t,x) - u(t,x+y)| \leq \big(T(L_L+L_F) + L_G\big) \, |y|. 
\end{align*}  
 In addition, if \ref{L3} and \ref{F3} hold, then $u$ is semiconcave in space variable and for every $t\in [0,T]$ and $x,y \in \rd$ we have \begin{align*}
u(t,x+y) + u(t,x-y) - 2u(t,x) \leq \big(T(c_L+c_F) + c_G\big) \, |y|^2. 
\end{align*}  
\end{proposition}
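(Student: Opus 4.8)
The plan is to prove the three parts in order, using (a) as the engine for both (b) and half of (c).

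For (a), I would run a doubling-of-variables argument adapted to the nonlocal operator, in the spirit of the theory surveyed in \cite{MR2129093}. Suppose for contradiction that $\sup_{[0,T]\times\rd}(u-v)>0$. Since the domain is unbounded and the equation is backward in time with $u(T,\cdot)\le v(T,\cdot)$, I would localize by maximizing $u(t,x)-v(t,y)-\frac{|x-y|^2}{2\varepsilon}-\beta(|x|^2+|y|^2)$, together with a standard time-penalization, over $[0,T]\times\rd\times\rd$, producing a maximum at an interior point $(t_0,x_0,y_0)$ with $\frac{|x_0-y_0|^2}{\varepsilon}\to 0$ as $\varepsilon\to 0$. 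The only genuinely delicate point is the treatment of $\mathcal{L}$ from \eqref{levy_operator}: I would split the Lévy integral at $\{|z|<\kappa\}$ and $\{|z|\ge\kappa\}$. On the singular part the second-order increments of the smooth test function are integrable against $\nu$ by \ref{A0}, giving a contribution that vanishes as $\kappa\to 0$; on the far part I would use the inequality coming from the fact that $(x_0,y_0)$ is a maximum of the doubled function, so that the nonlocal increments of $u$ and $v$ combine with a favourable sign. Subtracting the sub- and supersolution inequalities, the Hamiltonian difference reduces to $H(x_0,p_\varepsilon)-H(y_0,p_\varepsilon)$ with $p_\varepsilon=(x_0-y_0)/\varepsilon$; here I use that by Legendre duality and \ref{L2} the map $x\mapsto H(x,p)$ is Lipschitz uniformly in $p$, so this difference is $\le L_L|x_0-y_0|\to 0$. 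Sending $\kappa\to 0$, then $\varepsilon\to 0$, then the penalizations to zero yields the contradiction, and hence $u\le v$.

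For (b), existence follows from Perron's method once sub- and supersolutions are available, and uniqueness is immediate from (a). For the $L^\infty$ bound I would use barriers that are constant in space and affine in time, $\bar w(t)=C_F(T-t)+C_G$ and $\underline w=-\bar w$: on such functions $\mathcal{L}$ vanishes and $Du=0$, so the verification of the viscosity inequalities reduces to the bounds \ref{F1} on $F$ and $G$ (the zeroth-order Hamiltonian value $H(\cdot,0)$ being neutral under the problem's normalization). Comparison against these barriers then gives $\|u(t)\|_0\le C_F(T-t)+C_G\le C_FT+C_G$.

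Part (c) rests on the translation invariance of both $D$ and the Lévy operator $\mathcal{L}$. For the Lipschitz estimate I would note that the shift $\tilde u(t,x):=u(t,x+y)$ solves the same equation with $H(x+y,\cdot)$, $F(x+y,\cdot)$ and $G(x+y,\cdot)$ in place of $H(x,\cdot),F,G$, and then check that $\tilde u(t,x)+(L_L+L_F)(T-t)|y|+L_G|y|$ is a supersolution lying above $u$ at $t=T$, the defects being controlled by Lipschitzness of $H$ in $x$ (from \ref{L2}) and by \ref{F2}. Comparison from (a), together with the symmetric inequality, yields the constant $T(L_L+L_F)+L_G$. The semiconcavity estimate is more subtle, because a direct PDE comparison would only produce an $O(|y|)$ defect (the merely Lipschitz $x$-dependence of $H$ cannot see second differences). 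Instead I would use the stochastic control representation of $u$ as the value function of the optimal control problem driven by $dX=-\alpha\,dt+dL_t$ (justified by the dynamic programming principle and the comparison principle of part (a)): evaluating this value at the three points $x+y,\,x,\,x-y$ with one common near-optimal control for $x$, the translation invariance of the dynamics makes the trajectories rigid shifts, so the second difference of $u$ is bounded by the time-integrated second differences of $L$ and $F$ plus the terminal second difference of $G$. Applying \ref{L3} and \ref{F3} then gives exactly $\big(T(c_L+c_F)+c_G\big)|y|^2$.

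The main obstacle is (a): carrying the doubling-of-variables argument through the singular, possibly degenerate nonlocal operator, i.e. splitting the Lévy measure and controlling the small-jump remainder uniformly, is the step that demands real care, and everything else builds on it. A secondary but essential structural point, visible in (c), is the recognition that semiconcavity must be extracted from the control representation and the hypotheses on $L$, since \ref{L3} does not transfer to $H$.
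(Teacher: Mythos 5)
Your proposal is correct in substance, and for (a), (b) and the Lipschitz half of (c) it is essentially what the paper intends: the paper gives no details of its own, citing \cite{MR2129093} for the comparison argument, Perron's method for existence, and an ``adaptation of the comparison arguments'' for the regularity estimates, with the alternative of passing to the limit in the discrete estimates of Lemma \ref{lem:aprrox_HJB_reg} via Theorem \ref{HJ_convergence}. Two technical caveats on your sketch of (a). First, the localization $\beta(|x|^2+|y|^2)$ is incompatible with the generality of \ref{A0}: the paper explicitly allows L\'evy measures with no tail restrictions, so $\int_{|z|\geq 1}|z|^2\,d\nu$ may be infinite, and then the large-jump part of $\mathcal{L}$ acting on a quadratically growing penalty (equivalently, the increments of the penalty along large jumps at the maximum point) is not integrable; since $u$ and $v$ are bounded, the localization must be done with a bounded penalty, as in \cite{MR2129093}. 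Second, closing the argument also requires $F(x_0,\mu(t_0))-F(y_0,\mu(t_0))\to 0$, i.e.\ continuity of $F(\cdot,\mu)$, which is invisible under \ref{F1} alone; this looseness is inherited from the paper's statement. Similarly, your barrier argument in (b) (and the stated bound $C_FT+C_G$) needs the normalization $H(\cdot,0)=0$, i.e.\ $\inf_q L(\cdot,q)=0$, which you flag but which is nowhere among the assumptions; the paper carries the same implicit assumption, as its discrete analogue Lemma \ref{lem:aprrox_HJB_reg}(c) shows by including the extra constant $C_L(K)$.

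Where you genuinely depart from the paper is the semiconcavity estimate, and there your motivating claim --- that a PDE comparison argument cannot see second differences because \ref{L3} does not transfer to $H$ --- is too strong. In the tripled-variables comparison argument one tests the equation at three points $x_1\approx x+y$, $x_2\approx x-y$, $x_3\approx x$ with essentially equal gradients $p$, and bounds $H(x_1,p)+H(x_2,p)-2H(x_3,p)$ from below by taking one near-optimal $q^*$ for $H(x_3,p)$ in the Legendre sup and using it as an admissible competitor in both $H(x_1,p)$ and $H(x_2,p)$; this gives $2L(x_3,q^*)-L(x_1,q^*)-L(x_2,q^*)\geq -c_L|y|^2$ by \ref{L3}, which is precisely the ``standard adaptation'' of the comparison arguments of \cite{MR2129093} the paper alludes to. Your alternative --- one common near-optimal control along rigidly shifted trajectories in the representation formula --- is the same idea implemented globally along trajectories rather than pointwise inside the Hamiltonian, and it is correct granted the standard identification of the viscosity solution with the value function (dynamic programming principle plus part (a)). It is, in fact, the continuous counterpart of the paper's own discrete proof of Lemma \ref{lem:aprrox_HJB_reg}(b), so your route aligns with the paper's second (scheme-based) derivation of (b) and (c) rather than contradicting the first.
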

\begin{proof}
These results are by now standard: \textit{(a)} follows by a similar argument as for \cite[Theorem 3.1]{MR2129093}, \textit{(b)} follows by e.g.~Perron's method, and \textit{(c)} by adapting the comparison arguments of \cite{MR2129093} in a standard way. We omit the details.
Under some extra assumptions, \textit{(b)} and \textit{(c)} also follows from Theorem \ref{HJ_convergence} and Lemma \ref{lem:aprrox_HJB_reg} below.
\end{proof}
\begin{proposition}\label{prop:weak_sol_FPK-general-d}
Assume \ref{A0}, \ref{A2}, \ref{H1}, and \ref{M1}. 

\smallskip\noindent(a) If $u \in C([0,T] ; C^1_b(\rd))$, then there exists a very weak solution $m \in C([0,T];P(\rd))$ of the
     FPK equation in \eqref{eqn:MFG}. 
     
     \medskip\noindent(b) If $d=1$,  
     $u \in C([0,T] ; W^{1,\infty}(\R))$, $u$ semi-concave,
     and  \ref{M11} holds,
     then there exists a very weak solution $m \in C([0,T];P(\R)) \cap L^{p}([0,T]\times \R)$ 
     of the FPK equation in \eqref{eqn:MFG}.
     Moreover,  $\|m(t)\|_{L^p(\R)} \leq e^{CT}\|m_0\|_{L^p(\R)}$ for some constant $C>0$ and $t\in[0,T]$.
\end{proposition}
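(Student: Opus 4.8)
The plan is to treat the FPK equation as a \emph{linear} equation in $m$ and to build the solution by a vanishing-viscosity/regularization argument. Since $u$ is given, the equation reads $m_t=\mathcal{L}^*m+\mathrm{div}(m\,c)$ with divergence-form coefficient $c(t,x):=D_pH(x,Du(t,x))$, which by \ref{H1} and the assumed regularity of $u$ is bounded and continuous on $\TR$; it is (minus) the drift of the SDE $dX_t=-c(t,X_t)\,dt+dL_t$ whose time-$t$ law is the natural candidate for $m(t)$. I would approximate the data, solve nondegenerate linear integro-parabolic problems classically, derive uniform a priori bounds, and pass to the limit using that $d_0$ metrizes weak convergence, so that tightness plus time-equicontinuity yields compactness in $C([0,T];(P(\rd),d_0))$. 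Part (a) needs only mass, positivity, tightness, and equicontinuity; part (b) needs in addition a uniform $L^p$ bound.

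For the construction, fix mollifications $m_0^\eta=m_0*\rho_\eta\in P(\rd)\cap C_c^\infty(\rd)$ and a smooth coefficient $c^\eta=D_pH(\cdot,Du^\eta)$ from a space-mollification $u^\eta$ of $u$ (so $\|c^\eta\|_0\le C$ uniformly by \ref{H1}), and solve
\begin{equation*}
\partial_t m-\mathcal{L}^*m-\varepsilon\Delta m-\mathrm{div}(m\,c^\eta)=0,\qquad m(0)=m_0^\eta,
\end{equation*}
which by standard linear theory has a smooth solution $m^{\varepsilon,\eta}\ge0$. Integrating in $x$ gives mass conservation $\int_{\rd}m^{\varepsilon,\eta}(t)=1$, so $m^{\varepsilon,\eta}(t)\in P(\rd)$. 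Testing the weak form against fixed $\phi\in C_c^\infty(\rd)$ and using $\|\mathcal{L}\phi\|_0+\varepsilon\|\Delta\phi\|_0+\|c^\eta\cdot D\phi\|_0\le C_\phi<\infty$ (finiteness of $\|\mathcal{L}\phi\|_0$ by \ref{A0} and \ref{A2}) gives $|\int\phi\,d(m^{\varepsilon,\eta}(t)-m^{\varepsilon,\eta}(s))|\le C_\phi|t-s|$, which together with tightness yields $d_0$-equicontinuity in time. Tightness, uniform in $\varepsilon,\eta$ and $t$, is where the main novelty enters: rather than a moment bound I would use the tail-control function $w$ of Proposition \ref{prop:tail-control-function}, for which $\int w\,dm^{\varepsilon,\eta}(t)\le\int w\,dm_0+Ct$ holds with no moment assumption on $m_0$. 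Prokhorov and Arzel\`a--Ascoli then extract a subsequence converging in $C([0,T];(P,d_0))$ to some $m$; passing to the limit in the weak form (the $\varepsilon\Delta$ term drops out and $c^\eta\to c$ locally uniformly) shows that $m$ satisfies \eqref{dist_sol_FPK}, proving (a).

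For (b), with $d=1$, I would additionally test the regularized equation against $p\,(m^{\varepsilon,\eta})^{p-1}$. The viscosity term contributes $-\varepsilon p(p-1)\int (m^{\varepsilon,\eta})^{p-2}|\partial_x m^{\varepsilon,\eta}|^2\le0$; the nonlocal term is nonpositive because $\mathcal{L}^*$ is again a L\'evy generator (with the reflected measure $\nu(-\cdot)$, which still satisfies \ref{A0} and \ref{A2}), so the pointwise Stroock--Varopoulos/Jensen inequality $(m)^{p-1}\mathcal{L}^*m\le\mathcal{L}^*(\tfrac1p m^p)$ integrates to $\le0$; and the transport term integrates by parts to $\tfrac{p-1}{p}\int(m^{\varepsilon,\eta})^p\,\partial_x c^\eta$. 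Since $\partial_x c^\eta=D_{px}H+D_{pp}H\,\partial_{xx}u^\eta$ with $D_{pp}H\ge0$ (convexity of $H$) and $\partial_{xx}u^\eta\le C$ (the semi-concavity of $u$ is preserved under mollification), local second-derivative bounds on $H$ (cf. \ref{H2}) give $\partial_x c^\eta\le\bar C$ uniformly in $\varepsilon,\eta$. Gronwall then yields $\|m^{\varepsilon,\eta}(t)\|_{L^p(\R)}\le e^{\bar C t}\|m_0^\eta\|_{L^p}\le e^{\bar C t}\|m_0\|_{L^p}$; this uniform bound gives weak compactness in $L^p$ (weak-$*$ if $p=\infty$), and after upgrading the a.e.-$t$ bound to all $t$ via the $d_0$-continuity of $m$ and weak lower semicontinuity of the $L^p$-norm, the limit lies in $C([0,T];P)\cap L^p$ and inherits the stated estimate (the case $p=\infty$ by letting $p\to\infty$ or a direct maximum-principle argument).

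I expect the main obstacle to be the uniform one-sided control of the compression rate $\partial_x c^\eta$. This rests on three ingredients that must survive the regularization limit: convexity of $H$, the semi-concavity of $u$ (so that $\partial_{xx}u^\eta$ is bounded above, its unbounded part being negative and hence harmless for an upper bound), and local $C^{1,1}$-type control of $H$ as in \ref{H2}. The genuinely delicate feature is that in the limit $\partial_{xx}u$ is only a signed measure bounded above, so the clean integration by parts and the weak-strong passage $\int c^\eta D\phi\,dm^{\varepsilon,\eta}\to\int c\,D\phi\,dm$ (weak-$L^p$ against strong-$L^{p'}$, using $Du^\eta\to Du$ boundedly a.e.) are exactly the points where the one-dimensional structure of semi-concave functions is used; this is the same restriction to $d=1$ encountered in \cite{carliniSilva2014semi1st}. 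Part (a), by contrast, requires no such estimate and is comparatively routine once tightness is supplied by Proposition \ref{prop:tail-control-function}.
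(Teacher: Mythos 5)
Your proposal is correct in outline, but it takes a genuinely different route from the paper. The paper obtains Proposition \ref{prop:weak_sol_FPK-general-d} as a by-product of its numerical analysis: fixing the given $u$ and setting $Du_{\rho,h}=Du$ in the dual SL scheme \eqref{Fokker-Planck_discretized}, part (a) follows by rerunning the proof of Theorem \ref{thm:convergence_MFG-nondeg} and part (b) by rerunning the proofs of Theorem \ref{thm:convergence_MFG} and Theorem \ref{thm:Lp_esti}, with semi-concavity of $u$ entering exactly where you use it. You instead regularize the continuous equation (vanishing viscosity $\varepsilon\Delta$ plus mollified data and drift) and pass to the limit. The structural ingredients coincide: tightness without moments via the function $\Psi$ of Proposition \ref{prop:tail-control-function}, $d_0$-equicontinuity in time, Prokhorov and Arzel\`a--Ascoli, and in (b) a one-sided bound on $\partial_x\big(D_pH(x,Du)\big)$ from convexity of $H$, semi-concavity of $u$, and \ref{H2}, paired with weak-$L^p$ (or weak-$*$) compactness and a weak--strong limit passage using $Du^{\eta}\to Du$ a.e. Indeed, your Stroock--Varopoulos/Jensen step is the continuous analogue of the discrete Jensen inequality in the proof of Theorem \ref{thm:Lp_esti}, and your bound on $\partial_x c^{\eta}$ is the analogue of Lemma \ref{lem:dimone-semicon}. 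What the paper's route buys: it reuses machinery needed anyway and never requires well-posedness of linear nondegenerate integro-parabolic equations, which is precisely what your ``standard linear theory'' step must supply (Schauder/semigroup arguments, justification of the integrations by parts, and a truncation argument for testing with the unbounded $\Psi$); also note that under \ref{H1} alone your $c^{\eta}=D_pH(\cdot,Du^{\eta})$ is merely continuous, so you should mollify $c$ itself to get smooth coefficients. What your route buys: it is independent of the scheme, and, remarkably, your $L^p$ estimate is not intrinsically one-dimensional, since in $\rd$ one has $\mathrm{div}\, c=\mathrm{tr}(D_{px}H)+\mathrm{tr}(D_{pp}H\,D^2u^{\eta})\leq C$ because $D_{pp}H\geq 0$ and $D^2u^{\eta}\leq CI$ give $\mathrm{tr}\big(D_{pp}H(D^2u^{\eta}-CI)\big)\leq 0$; the paper's restriction to $d=1$ is instead forced by the characteristics-separation Lemma \ref{lem:bound_char}, which is where the SL scheme genuinely needs one dimension. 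Finally, your reliance on \ref{H2} and convexity of $H$ in (b) does go beyond the hypotheses listed in the proposition, but this is not a defect relative to the paper: its own proof inherits the same dependence through Theorem \ref{thm:Lp_esti}, which assumes \ref{L1} and \ref{H2}.
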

\begin{proof}
The results  follow from the convergence of the discrete scheme in this article. The proof of (a) follows the proof of Theorem \ref{thm:convergence_MFG-nondeg}, setting
$Du_{\rho,h} = Du$. 
The proof of (b) follows the proof of Theorem \ref{thm:convergence_MFG} and Theorem \ref{thm:Lp_esti}, setting $Du_{\rho,h} = Du$. Note that semi-concavity of $u$ is crucial for the the $L^p$-bound of Theorem \ref{thm:Lp_esti}.
\end{proof}

Existence and uniqueness results are given in \cite{ersland2020classical}  for classical solutions of MFGs with nonlocal diffusions under additional assumptions:
\begin{description*}

    \item[($\nu$2)\label{nu3}](Growth near singularity) There exists constants $\sigma \in (1,2)$ and $c>0$ such that the density of $\nu$ for $|z|<1$ satisfies
        \begin{align*}
            \frac{c}{|z|^{d+\sigma}} \leq  \frac{d\nu}{dz}, \text{ for } |z| < 1.
        \end{align*} \smallskip
    \item[(F4)\label{F4}] There exists constants $C_{F}, C_{G} > 0$, 
        such that $\| F ( \cdot,m ) \|_{C_{b}^{2}} \leq C_{F} $ and 
        $\| G ( \cdot, \tilde{m} )  \|_{C_{b}^{3}} \leq C_{G}$ 
        for all $m, \tilde{m} \in P ( \rd )$. \smallskip
           \item[(F5)\label{F5}] $F$ and $G$ satisfy monotonicity conditions: 
\begin{align*}
    \int_{\R^d} \left( F \left( x, m_1 \right) - F \left( x, m_2 \right) \right) d \left( m_1 -m_2 \right) \left( x \right) &\geq 0 \qquad \forall m_1,m_2 \in P ( \R^d ), \\[0.2cm]
  \int_{\R^d} \left( G \left( x, m_1 \right) - G \left( x, m_2 \right) \right) d \left( m_1 -m_2 \right) \left( x \right) &\geq 0 \qquad \forall m_1,m_2 \in P ( \R^d ).
\end{align*}
   \smallskip
    \item[(H3)\label{H3}] The Hamiltonian $H \in C^3 ( \rd \times \rd ) $, and for every $R>0$ there is  $C_{R} >0$ such that for 
        $x \in \rd$, $p \in B_{R}$, $\alpha \in \mathbb{N}_{0}^{N}$, 
        $ | \alpha | \leq 3$, then 
        $|D^{\alpha} H ( x,p ) | \leq C_{R}$. \smallskip 
    \item[(H4)\label{H4}] For every $R > 0$ there is $C_R >0$ such that for $x,y \in \R^d, u \in \left[ -R,R \right], p \in \R^d$: $|H \left( x,u,p \right) - H \left( y,u,p \right)| \leq C_R \left( |p|+1 \right) |x-y|$. \smallskip
    \item[(H5)\label{H5}] (Uniform convexity) There exists a constant $C >0$ such that $\frac{1}{C} I_d \leq D_{pp}^2 H \left( x,p \right) \leq C I_d$.  \smallskip
    \item[(M'')\label{M111}] The probability measure $m_{0}$ has a 
    density (also denoted by $m_0$) $m_{0} \in C_{b}^{2}$. \smallskip
\end{description*}

\begin{thm} \label{mfg:classical_solution}
     Assume \ref{A0}, \ref{A2}, \ref{nu3}, \ref{F2}, \ref{F4}, \ref{H3},\ref{H4}, and \ref{M111}.\smallskip
     
\noindent  (a) There exists a classical solution $ ( u,m )$ of \eqref{eqn:MFG} such that $u\in C^{1,3}_b((0,T)\times \R^d)$ and $m\in C^{1,2}_b((0,T)\times \R^d)\cap C(0,T; P(\R^d))$.
     
\medskip\noindent (b) If in addition \ref{F5} and \ref{H5} hold, then the classical solution is unique.
\end{thm}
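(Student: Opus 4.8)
The plan is to prove \emph{(a)} by a Schauder fixed-point argument in the space of measure flows, and \emph{(b)} by the Lasry--Lions monotonicity/duality computation adapted to the nonlocal setting.

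For \emph{(a)}, fix a flow $\mu \in C([0,T]; P(\rd))$ and decouple the system. First I would solve the terminal-value HJB equation $-u_t - \mathcal{L}u + H(x,Du) = F(x,\mu(t))$ with $u(T,\cdot) = G(\cdot,\mu(T))$. By Proposition \ref{prop:viscosity_sol_HJB} there is a unique bounded Lipschitz viscosity solution; the work is to upgrade it to $C^{1,3}_b$. Here the strengthened lower bound \ref{nu3} with $\sigma \in (1,2)$ is essential: the operator $\mathcal{L}$ is then uniformly elliptic of order $\sigma > 1$, so it dominates the first-order term $H(x,Du)$ and provides genuine smoothing. Combining nonlocal parabolic Schauder estimates with the data regularity from \ref{F4} ($F(\cdot,\mu)\in C^2_b$, $G(\cdot,\mu)\in C^3_b$) and \ref{H3} ($H \in C^3$), I would bootstrap to the claimed $C^{1,3}_b$ bounds, uniformly in $\mu$.

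With $u$ in hand, the drift $b(t,x) := -D_pH(x, Du(t,x))$ is bounded and $C^2$ in space, so the linear forward FPK equation $m_t - \mathcal{L}^* m - \mathrm{div}(m\,b) = 0$, $m(0)=m_0$, can be solved; Proposition \ref{prop:weak_sol_FPK-general-d} gives a very weak solution in $C([0,T]; P(\rd))$, and using \ref{M111} ($m_0$ with $C^2_b$ density) together with duality and the order-$\sigma$ regularization I would show the density lies in $C^{1,2}_b$. This defines the map $\Phi : \mu \mapsto m$. The uniform a priori bounds yield tightness in space and equi-continuity in time of the images, so Arzel\`a--Ascoli produces a compact convex set $\mathcal{K} \subset C([0,T]; P(\rd))$ (with the $d_0$-metric) invariant under $\Phi$; continuity of $\Phi$ follows from stability of the HJB solutions (via \ref{F2}) and of the linear FPK equation. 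Schauder's fixed-point theorem then yields a fixed point, i.e. a classical solution $(u,m)$.

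For the uniqueness in \emph{(b)}, let $(u_1,m_1)$, $(u_2,m_2)$ be two classical solutions and set $\bar u = u_1 - u_2$, $\bar m = m_1 - m_2$. The plan is to compute $\frac{d}{dt}\int_{\rd} \bar u \, d\bar m$ using both equations and integrate over $[0,T]$. The two nonlocal contributions cancel exactly by the adjointness $(\mathcal{L}^*\phi,\psi)_{L^2} = (\phi, \mathcal{L}\psi)_{L^2}$, while the end data produce $\int (G(x,m_1(T))-G(x,m_2(T)))\,d\bar m(T)$. After rearranging, one is left with $\int_0^T\!\!\int (F(x,m_1)-F(x,m_2))\,d\bar m\,dt$, nonnegative by the monotonicity \ref{F5}, plus terms $\int m_i\big[H(x,Du_j) - H(x,Du_i) - D_pH(x,Du_i)\cdot(Du_j - Du_i)\big]$, nonnegative by convexity of $H$. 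Since the total vanishes, strict convexity from \ref{H5} forces $Du_1 = Du_2$ on $\mathrm{supp}\,m_i$; feeding this into the HJB equation and using the comparison principle Proposition \ref{prop:viscosity_sol_HJB}(a) gives $u_1 \equiv u_2$, after which $m_1 = m_2$ follows from uniqueness for the (now identical) linear FPK equation.

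The main obstacle I expect is the regularity step in \emph{(a)}: establishing uniform $C^{1,3}_b$ bounds for the HJB equation and $C^{1,2}_b$ bounds for the FPK equation in the nonlocal setting. This requires a Schauder-type theory for the integro-differential operator $\mathcal{L}$ of fractional order $\sigma \in (1,2)$ and a careful bootstrap keeping the first-order Hamiltonian subordinate to the diffusion; the restriction $\sigma>1$ in \ref{nu3} is precisely what makes this possible. The fixed-point and uniqueness arguments, by contrast, are fairly standard once these estimates and the justification of the integrations by parts on $\rd$ (using that the $m_i$ are probability measures and $u,Du$ are bounded) are in place.
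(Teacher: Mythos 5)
First, a remark on the comparison itself: the paper does not actually prove this theorem — it is imported wholesale from \cite[Theorems 2.5 and 2.6]{ersland2020classical}, so there is no in-paper argument to measure you against. Your proposal reconstructs the proof strategy of that reference: decoupling, regularity for the nonlocal HJB equation (where \ref{nu3} with $\sigma>1$ makes the diffusion dominate the gradient term), a Schauder fixed point in $C([0,T];P(\rd))$, and a Lasry--Lions monotonicity computation for uniqueness. In outline this is the right architecture, and your identification of the nonlocal Schauder/bootstrap estimates as the analytic core of part (a) is accurate.

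However, your uniqueness argument in part (b) has a genuine gap in the order of the conclusions. After the monotonicity/convexity computation you obtain, via \ref{H5}, that $Du_1 = Du_2$ holds $m_i$-a.e., and you then claim that ``feeding this into the HJB equation and using the comparison principle gives $u_1 \equiv u_2$, after which $m_1 = m_2$ follows.'' This step fails as stated: $u_1$ and $u_2$ solve \emph{different} HJB equations — their right-hand sides are $F(x,m_1(t))$ and $F(x,m_2(t))$, and their terminal data are $G(\cdot,m_1(T))$ and $G(\cdot,m_2(T))$ — and the comparison principle of Proposition \ref{prop:viscosity_sol_HJB}(a) only compares sub- and supersolutions of one and the same equation. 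Note also that \ref{F5} is mere monotonicity, not strict monotonicity, so the vanishing of $\int_0^T\!\!\int_{\rd} (F(x,m_1)-F(x,m_2))\,d(m_1-m_2)\,dt$ does not by itself force $m_1=m_2$. The correct order is the reverse of yours: since $Du_1 = Du_2$ holds $m_2$-a.e., the flux satisfies $m_2\, D_pH(\cdot,Du_2) = m_2\, D_pH(\cdot,Du_1)$, so $m_1$ and $m_2$ are both very weak solutions of the \emph{same} linear FPK equation with drift $D_pH(\cdot,Du_1)$, which lies in $C^{0,2}_b$ by \ref{H3} and the $C^{1,3}_b$ regularity of $u_1$; Proposition \ref{uniqueness_weak_soln_fp} then gives $m_1 = m_2$. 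Only at that point do the two HJB equations and their terminal data coincide, and comparison yields $u_1 \equiv u_2$. With this reordering your argument goes through and matches the uniqueness proof of the cited work.
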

This is a consequence of \cite[Theorem 2.5 and Theorem 2.6]{ersland2020classical}. We refer to \cite{ersland2020classical} for more general results, where in particular assumptions \ref{A2} and \ref{nu3} can be relaxed to allow for a much larger class of nonlocal operators $\mathcal L$.  In the nondegenerate case, for the individual equations in \eqref{eqn:MFG} we also have uniqueness of viscosity-very weak solutions and existence of classical solutions. Uniqueness for HJB equations and existence for HJB and FPK equations follows by Theorem 5.3, Theorem 5.5, and Proposition 6.8 in \cite{ersland2020classical}. We prove uniqueness for very weak solutions of FPK equations here.

\begin{proposition}[Uniqueness for the FPK equation] \label{uniqueness_weak_soln_fp}
Assume \ref{A0}, \ref{A2}, \ref{nu3}, and $D_p H (x,Du (t)) \in C_b^{0,2} ((0,T)\times\rd)$. Then there is
at most one very weak solution of the FPK equation in \eqref{eqn:MFG}. 
\end{proposition}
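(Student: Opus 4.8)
The proof I have in mind is a classical duality argument: I would test the equation for $m_1-m_2$ against the solution of the adjoint backward problem, exploiting the nondegeneracy \ref{nu3} to guarantee that this adjoint solution is an admissible test function. Concretely, suppose $m_1,m_2\in C([0,T],P(\rd))$ are two very weak solutions with the same initial datum $m_0$, and write $b(t,x):=D_pH(x,Du(t,x))$, which by hypothesis lies in $C_b^{0,2}((0,T)\times\rd)$. Since \eqref{dist_sol_FPK} is linear in $m$, the signed measure $\mu(t):=m_1(t)-m_2(t)$ satisfies $\mu(0)=0$, and by Remark \ref{rem:defn-weaksol} (applied on $[0,\tau]$)
\begin{align}\label{eq:mu-weak}
\int_{\rd}\phi(\tau,x)\,d\mu(\tau)(x) = \int_0^\tau\!\!\int_{\rd}\big(\phi_t + \mathcal{L}[\phi] - b\cdot D\phi\big)\,d\mu(s)(x)\,ds
\end{align}
for every $\tau\in(0,T]$ and every $\phi\in C^{1,2}_b([0,\tau]\times\rd)$; note that each $\mu(s)$ is a finite signed measure and the integrand is bounded continuous, so all integrals make sense.

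The plan is to annihilate the right-hand side of \eqref{eq:mu-weak} by choosing $\phi$ to solve the adjoint equation. Fix $\tau\in(0,T]$ and $\psi\in C_c^\infty(\rd)$, and consider the backward linear nonlocal problem
\begin{align}\label{eq:dual}
\begin{cases}
\phi_t + \mathcal{L}[\phi] - b\cdot D\phi = 0 & \text{in } (0,\tau)\times\rd,\\
\phi(\tau,\cdot) = \psi & \text{in } \rd.
\end{cases}
\end{align}
This is of exactly the same type as the HJB equation in \eqref{eqn:MFG}, but with the linear transport term $-b\cdot D\phi$ in place of the Hamiltonian. First I would establish that, under \ref{A0}, \ref{A2}, \ref{nu3} together with $b\in C_b^{0,2}$, problem \eqref{eq:dual} admits a solution $\phi\in C_b^{1,2}([0,\tau]\times\rd)$. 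The lower bound \ref{nu3} with $\sigma\in(1,2)$ makes $\mathcal{L}$ nondegenerate of order $\sigma>1$, so the nonlocal diffusion dominates the first-order drift and produces parabolic smoothing; since the terminal datum $\psi$ is smooth and the coefficient $b$ has two bounded spatial derivatives, this is more than enough Hölder regularity for Schauder-type estimates to deliver bounded first time and second spatial derivatives. Existence and the required $C_b^{1,2}$ regularity then follow from (indeed are a simpler, purely linear, instance of) the classical-solution theory for nonlocal HJB equations of \cite{ersland2020classical}. The tail of $\nu$ poses no obstruction: for $\phi\in C_b^{1,2}$ the far part $\int_{|z|>1}[\phi(x+z)-\phi(x)]\,d\nu(z)$ is bounded by $2\|\phi\|_0\,\nu(\{|z|>1\})<\infty$ (finite by \ref{A0}), while the singular part is controlled by $\|D^2\phi\|_0$ through the second-order Taylor expansion in \eqref{levy_operator}, so $\mathcal{L}[\phi]$ is bounded and \eqref{eq:dual} holds in the classical sense.

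With such a $\phi$ in hand it is an admissible test function in \eqref{eq:mu-weak}, and since it solves \eqref{eq:dual} the integrand on the right-hand side vanishes identically. Using $\mu(0)=0$ I would conclude
\begin{align*}
\int_{\rd}\psi(x)\,d\mu(\tau)(x) = \int_{\rd}\phi(\tau,x)\,d\mu(\tau)(x) = 0.
\end{align*}
As $\psi\in C_c^\infty(\rd)$ and $\tau\in(0,T]$ are arbitrary, this forces $\mu(\tau)=0$ for every $\tau$, that is $m_1\equiv m_2$, which is the claimed uniqueness.

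I expect the only genuine difficulty to be the construction and $C_b^{1,2}$ regularity of the dual solution of \eqref{eq:dual}; the rest is linear bookkeeping. Producing a test function that is simultaneously globally $C_b^{1,2}$ and solves the backward equation is precisely what requires the nondegeneracy \ref{nu3} ($\sigma>1$, so that $\mathcal{L}$ beats the drift) and the coefficient regularity $b\in C_b^{0,2}$ (to upgrade the smoothing to bounded second spatial derivatives). Without $\sigma>1$ this duality scheme would break down, which is consistent with the standing hypotheses of the proposition.
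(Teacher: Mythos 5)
Your proposal is correct and follows essentially the same route as the paper: the paper likewise tests the difference $m_1-m_2$ against the classical $C_b^{1,2}$ solution of the backward dual problem $\partial_t\phi+\mathcal{L}\phi-D\phi\cdot D_pH(x,Du)=0$ with terminal datum $\psi\in C_c^\infty(\rd)$, whose existence and regularity it also obtains from the nondegenerate theory of \cite{ersland2020classical} (Theorem 5.5 together with Proposition 5.8 with $k=2$), and concludes exactly as you do.
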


\begin{proof}
Let $m_1,m_2$ be two very weak solutions, define $\tilde m := m_1 - m_2$  
and take any $\psi \in C_c^{\infty} \left(  \rd \right)$.
For any $\tau \in (0,T)$, the terminal value problem  \begin{align*}
    \partial_t \phi + \mathcal{L} \phi - D \phi \cdot D_p H (x, Du)  = 0 \quad \text{in} \quad \R^d\times(0,\tau) \quad \text{and} \quad 
\phi (x,\tau) = \psi(x)\quad \text{in} \quad \R^d,
\end{align*}
has a unique classical solution $\phi \in C_b^{1,2} ( (0,\tau)\times \rd) $ essentially by \cite[Theorem 5.5]{ersland2020classical} (the result follows from Proposition 5.8 with $k=2$ and the observation that the proof of Theorem 5.5 also holds for $k=2$). Using the definition of very weak solution (see Remark \ref{rem:defn-weaksol}) we get 
\begin{align*}
     \int_{\rd} \psi(x) \, d\tilde m(\tau)(x) = 
    \int_0^\tau \int_{\rd} \big(\partial_t \phi + \mathcal{L} \phi -  D \phi \cdot D_p H (x,Du) \big) \, d\tilde m(t)(x) \, dt  
   = 0,
\end{align*}
for any $\tau \in [0,T]$. Since $\psi$ was arbitrary, it follows that $\tilde m (\tau) = 0$ in $P (\rd) $ for every $\tau \in [0,T]$, and uniqueness follows.
\end{proof}

\section{Discretisation of the MFG system} \label{sec:discretization}


To discretise the MFG system \eqref{eqn:MFG}, we first follow \cite{camilli2009finite} and derive a Semi-Lagrange
approximation of the HJB equation in \eqref{eqn:MFG}. Using this
approximation and the 
optimal control of the original problem, we derive an approximation
of the FPK equation in \eqref{eqn:MFG} which is in (approximate) duality with the approximation of the HJB-equation. 

This derivation is based on the following control interpretation of the HJB equation.
For a fixed given density $m= \mu$, the solution $u$ of the
HJB equation in \eqref{eqn:MFG} is the value function of the
optimal stochastic control problem: 
\begin{align}\label{eq:value-fun}
    u (t,x) = \inf_{\alpha} J \big( x,t, \alpha \big),
\end{align}
where $\alpha_t$ is an admissible control, $J$ is the total cost to be minimized,
\begin{align}
    J \big( x,t, \alpha \big) = \mathbb{E} \bigg[ \int_t^T  \Big( L (\tilde X_s, \alpha_s ) + F (\tilde X_s, \mu_s \Big) ds + G (\tilde X_T, \mu_T)\bigg],
    \label{cost_fun}
\end{align}
and $\tilde X_s=\tilde X_s^{x,t}$ solves the controlled stochastic differential
equation (SDE)
\begin{align}
    \begin{cases}
    &d\tilde X_s =  -\alpha_s\, ds + \int_{|z| <1} z \tilde{N} (dz,ds) +
      \int_{|z| \geq 1} z N (dz,ds), \quad s>t,\\
    &\tilde X_t = x,
    \end{cases}
    \label{SDE}
\end{align}
where
$N$ a Poisson random measure with
intensity/L\'evy measure $\nu (dz) ds$, and $\tilde{N} = N (dz,ds) - \nu
(dz) ds$ is the compensated Poisson measure.\footnote{The $N$-integral is just
a (difficult way of writing a)
compound Poisson jump-process, while the $\tilde N$-integral
is a centered jump process with an infinite number of (small) jumps per time
interval a.s. \cite{applebaum2009levy}.}

\subsection{Approximation of the underlying controlled SDE} \label{subsec:SDE}
\subsubsection*{A. Approximate small jumps by Brownian motion.}
First we approximate small jumps in \eqref{SDE} by (vanishing) Brownian
 motion\footnote{To avoid singular integrals and infinite number of
   jumps per time interval.} (cf. \cite{MR1834755}): For $r\in(0,1)$, let $X_s=X_s^{x,t}$ solve
\begin{align}
    \begin{cases}
    dX_s = \bar{b} (\alpha_s ) ds + \sigma_r \, dW_s + \int_{|z| \geq r} z N (dz,ds), \quad s>t\\
    X_t = x, 
    \end{cases}
    \label{SDE_2}
\end{align}
where $W_s$ is a standard Brownian motion, $\bar{b} ( \alpha_s) = -\,\alpha_s  - b_{r}^{\sigma}$, and 
\begin{align} \label{B_r}
&b_r^{\sigma} := \int_{r <|z| < 1} z\, \nu (dz), \\
&\sigma_r := \bigg( \frac{1}{2}  \int_{|z|<r} zz^T \nu (dz) \bigg)^{1/2} \label{sigma_r}.
\end{align}
 The last integral in \eqref{SDE_2} is a compound Poisson process (cf. e.g. \cite{applebaum2009levy}): For any $t\geq0$,
\begin{align}\label{copo}
  \int_0^{t}\int_{|z| \geq r} z N (dz,dt) = \sum_{j=1}^{\hat N_t}
J_j
\end{align}
where the number of jumps up to time $t$ is $\hat N_t \sim
\textup{Poisson}(t\lambda_r)$, the jumps
$\{J_j\}_{j}$ are iid rv's in $\R^d$ with distribution $\nu_r$
and $J_0=0$, and for $r\in (0,1]$,
\begin{align}    \label{lambda_r}
    \nu_r := \nu \mathbbm{1}_{|z|>r} \qquad\text{and}\qquad \lambda_r := \int_{\rd} \nu_r(dz).
\end{align}
The infinitesimal generators
$\mathcal{L}^{\alpha}$ and $\hat{\mathcal{L}}^{\alpha}$ of the SDEs
\eqref{SDE} and \eqref{SDE_2} are
(cf. \cite{applebaum2009levy})
\begin{align*}
    &\mathcal{L}^{\alpha} \phi (x)= -\alpha_{t} \cdot \nabla \phi + \mathcal{L}_1 \phi ( x ) + \mathcal{L}^1 \phi ( x ),
    \\
&\hat{\mathcal{L}}^{\alpha} \phi (x)=  \, \bar{b}(\alpha_{t}) \cdot \nabla \phi(x) + tr \big(  \sigma_{r}^T \cdot D^{2} \phi ( x )\cdot \sigma_{r}  \big) + \mathcal{L}^{r}[\phi](x)
\end{align*}
for $\phi\in C^2_b(\rd)$, where
    \begin{align} \label{inner_outer_operator}
    \begin{split}
       &\mathcal{L} \phi ( x ) = \mathcal{L}_r \phi ( x ) +
      \mathcal{L}^r \phi ( x )\\
      &:= \bigg(\int_{|z| < r}+   \int_{|z| > r}\bigg) \Big(\phi ( x+z
      ) - \phi ( x ) - \mathbbm{1}_{\{|z|<1\}} D \phi ( x )\cdot z
      \Big) d \nu ( z ).
\end{split}    
    \end{align}
The operator  $\hat{\mathcal{L}}^{\alpha}$ is an approximation of $\mathcal{L}^{\alpha}$.
       
\begin{lemma}[\cite{espen_chioma_keneth}]\label{lem:small_jump}
    If \ref{A2} holds and $\phi \in C_{b}^{3}(\rd)$, then for $\mathcal{L}_r$ and $\sigma_r$ defined in \eqref{inner_outer_operator} and \eqref{sigma_r} respectively, we have 
    \begin{align*}
        | \mathcal{L}_r \phi ( x ) - tr \big(  \sigma_{r}^T \cdot D^{2} \phi ( x )\cdot \sigma_{r}  \big) | \leq C r^{3-\sigma} \| D^{3} \phi \|_{0}.
    \end{align*}
 If in addition, $\phi \in C_{b}^{4}(\rd)$ and the L\'evy measure $\nu$ is symmetric, then 
 \begin{align*}
        | \mathcal{L}_r \phi ( x ) - tr \big(  \sigma_{r}^T \cdot D^{2} \phi ( x )\cdot \sigma_{r}  \big) | \leq C r^{4-\sigma} \| D^{4} \phi \|_{0}.
    \end{align*}   
\end{lemma}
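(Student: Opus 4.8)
The plan is to reduce both estimates to Taylor expansion of $\phi$ controlled by the growth bound \ref{A2} on the Lévy density. First I would note that since $r\in(0,1)$ the indicator in \eqref{inner_outer_operator} equals one on $\{|z|<r\}$, so
\[
\mathcal{L}_r\phi(x) = \int_{|z|<r}\big(\phi(x+z)-\phi(x)-D\phi(x)\cdot z\big)\,d\nu(z).
\]
Then I would rewrite the Gaussian term: as $\sigma_r$ is symmetric, $tr(\sigma_r^T D^2\phi(x)\sigma_r) = tr(D^2\phi(x)\,\sigma_r^2)$, and by \eqref{sigma_r} we have $\sigma_r^2 = \tfrac12\int_{|z|<r} zz^T\,d\nu(z)$, whence $tr(\sigma_r^T D^2\phi(x)\sigma_r) = \tfrac12\int_{|z|<r} z^T D^2\phi(x)\,z\,d\nu(z)$. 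Thus the quantity to estimate is precisely the $\nu$-integral over $\{|z|<r\}$ of the second-order Taylor remainder $R(x,z):=\phi(x+z)-\phi(x)-D\phi(x)\cdot z-\tfrac12 z^T D^2\phi(x)\,z$.

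For the first bound I would use the third-order estimate $|R(x,z)|\le \tfrac16\|D^3\phi\|_0\,|z|^3$, valid for $\phi\in C_b^3(\rd)$, and integrate. By \ref{A2} and polar coordinates,
\[
\int_{|z|<r}|z|^3\,d\nu(z)\le C\int_{|z|<r}|z|^{3-d-\sigma}\,dz = C\int_0^r\rho^{2-\sigma}\,d\rho = \frac{C}{3-\sigma}\,r^{3-\sigma},
\]
which is finite because $\sigma\in(0,2)$ gives $3-\sigma>1$. This yields the claimed $Cr^{3-\sigma}\|D^3\phi\|_0$ bound.

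For the second (symmetric) bound the key observation is that the leading part of $R$ now cancels. I would expand one order further: for $\phi\in C_b^4(\rd)$ Taylor's theorem gives $R(x,z) = \tfrac16 D^3\phi(x)[z,z,z] + R_4(x,z)$ with $|R_4(x,z)|\le \tfrac1{24}\|D^4\phi\|_0\,|z|^4$. The cubic form $z\mapsto D^3\phi(x)[z,z,z]$ is odd and, by the first computation, $\nu$-integrable on $\{|z|<r\}$; since $\nu$ is symmetric and the domain is symmetric, its integral vanishes. Hence only $R_4$ survives, and
\[
\int_{|z|<r}|z|^4\,d\nu(z)\le C\int_0^r\rho^{3-\sigma}\,d\rho = \frac{C}{4-\sigma}\,r^{4-\sigma},
\]
finite since $4-\sigma>2$, giving the $Cr^{4-\sigma}\|D^4\phi\|_0$ bound.

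The only subtle point, and the step I would be most careful about, is the vanishing of the odd third-order term: it requires genuine $\nu$-integrability of the cubic form (so that the cancellation $\int f\,d\nu=-\int f\,d\nu$ is legitimate) and the interpretation of symmetry as $\nu(A)=\nu(-A)$. Everything else is routine Taylor estimation together with a convergent radial integral controlled by \ref{A2}.
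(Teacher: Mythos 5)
Your proof is correct, and it is essentially the canonical argument: the paper itself gives no proof of this lemma but imports it from \cite{espen_chioma_keneth}, where the estimate is obtained exactly as you do it --- identifying $tr(\sigma_r^TD^2\phi\,\sigma_r)$ with $\tfrac12\int_{|z|<r}z^TD^2\phi(x)z\,d\nu(z)$, so that the difference is the $\nu$-integral of a second-order Taylor remainder, then using \ref{A2} to integrate $|z|^3$ (resp.\ $|z|^4$, after the odd cubic term cancels by symmetry) over $\{|z|<r\}$. You also correctly flag the one point needing care, namely that the cancellation of the odd term requires the cubic form to be $\nu$-integrable, which your first estimate already provides.
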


\subsubsection*{B. Time discretization of the approximate SDE}
Fix a time step $h=\frac TN\in(0,1)$ 
for some $N\in\N$ 
and discrete times $t_k=kh$ for $k\in\{0,1,\dots,N\}$.
Following \cite{camilli2009finite}, we propose the following Euler-Maruyama discretization of the SDE \eqref{SDE_2}: Let $X_n^{t_l,x}\approx X^{t_l,x}_{t_n}$, where  $X_n=X^{t_l,x}_n$solves
\begin{align}
    \begin{cases}
        X_l = x \\
        X_n = X_{n-1} + h \bar{b} (\alpha_{n-1}) + \sqrt{h} \displaystyle \sum_{m=1}^d \sigma_r^m \xi_{n-1}^m, \ \ n=l+N_i+1, \dots,l+N_{i+1}-1,\hspace{-0.6cm}\\
        X_{l+N_{i+1}} = X_{l+N_{i+1}-1} + J_i.
    \end{cases}
    \label{discretization_process}
\end{align}
Here the control $\alpha_n$ is constant on each time interval, $\sigma_r^m$ is the $m$th-column of $\sigma_r$, and 
 $\xi_n = ( \xi_{n}^{1}, \ldots , \xi_{n}^{d} )$ is a random walk in
$\mathbb{R}^d$ with 
\begin{align*}
    \mathbf{P} \big( \xi^i_n = \pm 1 \big) = \frac{1}{2d}.
\end{align*}
The processes $J_k$ 
and $N_k$ defines an approximation of the compound
Poisson part of \eqref{SDE_2} through equation \eqref{copo} where
$\hat N_t$ is replaced by an approximation 
$$\tilde N_t  =
  \max\{k:\Delta T_1+\Delta T_2+\dots +\Delta T_k\leq t\},$$
where exponentially distributed waiting times
(time between jumps) are replaced by approximations
$\{\Delta T_k\}_{k\in\N}$\footnote{In the new model, $\tilde N_t$ still gives the number of jumps up to
  time $t$.}: $\Delta T_k=h \Delta N_k= h(N_{k}-N_{k-1})$ where $N_k:\Omega\to
\N\cup\{0\}$, $N_0=0$, and $\Delta N_k$ iid with approximate $h
\lambda_{r}$-exponential distribution given by
\begin{align*}
  \mathbf{P} [\Delta N_k > j
  ] = e^{-h \lambda _rj} \quad \mbox{for} \quad j=0,1,2,\dots. 
\end{align*} 
Then for $p_{j} := P [ \Delta N_k = j ]$, $p_0=0$ and $p_j  =  P [
  \Delta N_k > j-1 ]-P [ \Delta N_k > j ]  = e^{- jh \lambda_{r}} ( e^{h \lambda_{r}}
-1)$ for $j>0$. We find that $\sum_{j=0}^{\infty} p_{j} =1$ and $E(\Delta
N_k)=\sum_{j=0}^\infty e^{- jh \lambda_{r}} =\frac{e^{h
    \lambda_{r}}}{e^{h \lambda_{r}}-1}$. 
Note that in each
time interval, approximation \eqref{discretization_process} either
diffuses (the second equation) or jumps (the third equation), and that
we have ignored the  unlikely event of more than one jump per time interval.
For the scheme to converge,
we will see that we need to send both $h\to0$ and $h\lambda_r\to0$. In
this case  $E(\Delta N_k)\to\infty$  and the  jumps become less and less
frequent and the random walk dominates the evolution of $X_k$ (which
is to be expected).

\subsection{Semi-Lagrangian approximation of the HJB equation}\label{sec:SL}

\subsubsection*{A. Control approximation of the HJB equation}
We approximate the control problem \eqref{eq:value-fun} -- \eqref{SDE}
by a discrete time control problem: Define the value function
\begin{align}\label{eq:approx-value-fun}
    \tilde u_h (t_l , x ) = \inf_{ \{\alpha_n \} } J_h \big( x,t_l , \{\alpha_n \} \big),
\end{align}
where the  controls $\{\alpha_n\}$ are piecewise constant in time, the cost
function $J_h$ is given by
\begin{align}
    J_h \big( x,t_l , \{\alpha_n \} \big) = \mathbb{E} \bigg[ \sum_{n=l}^{N-1} \Big( L ( X_{n}, \alpha_n ) + F ( X_n, \mu(t_n) ) \Big) h 
    + G (X_N,\mu(t_N)) \bigg],
    \label{discretized_value_fun}
\end{align}
and the controlled discrete time process $X_n=X_n^{t_l,x}$  is the solution of
\eqref{discretization_process}. 
By the (discrete time) Dynamic Programming principle it follows that
 \begin{align*}
    \tilde u_h (t_l ,x ) = \inf_{\alpha_n}\mathbb{E} \bigg[ \sum_{n=l}^{l+p} \Big( L ( X_{n}^{t_l ,x}, \alpha_n ) + F ( X_n^{t_l ,x}, \mu(t_n) ) \Big) h 
    + \tilde u_h (t_{l+p+1}, X_{l+p+1}^{t_l,x} ) \bigg],
 \end{align*} 
for $l+p+1\leq N$. Taking $p=0$ and computing the expectation
using conditional probabilities (the probability to jump in a time
interval is $p_1=1-e^{-h\lambda_r}$), we
find a (discrete time) HJB equation 
\begin{align}
    \tilde u_h (t_l ,x&) =  \inf_{\alpha} \bigg\{ h F ( x, \mu(t_l ) ) + h L (x,\alpha) + \Big[\frac{e^{-h \lambda_r}}{2d} \sum_{m=1}^d \big(\tilde u_h (t_l +h, x + h \bar{b} (\alpha) + \sqrt{hd} \sigma_r^m )  \nonumber\\ 
        & + \tilde u_h(t_l +h, x+ h\bar{b} (\alpha) - \sqrt{hd} \sigma_r^m )
      \big) + \frac{1-e^{-h\lambda_r}}{\lambda_r} \int_{|z| \geq r}
      \tilde u_h (t_l +h,x+z) \nu (dz) \Big] \bigg\}.\label{approx-HJB}
\end{align}

\subsubsection*{B. Interpolation and the fully discrete scheme}
For $\rho>0$ we fix a grid
    $\mathcal{G}_{\rho} = \{i \rho : i\in \Z^d \}$
and a linear/multilinear $\mathcal{G}_{\rho}$-interpolation $I$. For functions $f: \mathcal{G}_{\rho} \rightarrow \mathbb{R}$,
    \begin{align}\label{linear_interpolation}
        I[f] (x) := \sum_{i\in \Z^d} f(x_i) \beta_i (x), \qquad x \in \R^d,
    \end{align}
   where the $\beta_j$'s are piecewise linear/multilinear basis functions satisfying  
\begin{align*}
   \beta_j\geq 0 , \quad  \beta_j (x_i) = \delta_{j,i},  \quad
   \sum_{j} \beta_j (x) = 1, \quad \text{and} \quad \|I[\phi]
   -\phi\|_{0} = \|D^2\phi\|_0\rho^2 
\end{align*}   
for any $\phi\in C^2_b(\rd)$. A fully discrete scheme is then
obtained from \eqref{approx-HJB} as follows:
\begin{align}\label{schme_HJ}
    \tilde u_{i,k}[\mu] = S_{\rho,h,r} [\mu] (\tilde u_{\cdot,k+1},i,k), \ k<N, \quad \text{and} \quad 
    \tilde u_{i,N}[\mu] = G (x_i, \mu (t_N)), &
\end{align}
where
\begin{align}\label{schme_HJ_S}
  \notag  S_{\rho,h,r} [\mu] & (v,i,k) =   \inf_{\alpha} \Bigg\{ h F ( x_i, \mu(t_k) ) + h L ( x_i, \alpha) + \frac{1-e^{-h\lambda_r}}{\lambda_r} \int_{|z| \geq r}  I [ v ] (x_i + z ) \nu (dz)   \\ 
    & + \frac{e^{-h \lambda_r}}{2d} \sum_{m=1}^d \Big( I [ v ] (x_i + h \bar{b} (\alpha) + \sqrt{hd} \sigma_r^m) + I [ v] (x_i+ h \bar{b} (\alpha) - \sqrt{hd} \sigma_r^m) \Big)   \Bigg\}.
\end{align}

Finally, we extend  the solution of the discrete scheme $\tilde u_{i,k}[\mu]$ to the whole
$\rd\times[0,T]$ by linear interpolation in $x$ and piecewise constant
interpolation in $t$:
\begin{align}\label{extnd_dscrt_schm_HJ}
\tilde u_{\rho,h}[\mu](t,x) = I\big(\tilde u_{\cdot,[\frac{t}{h}]}[\mu]\big)(x)= \sum_{i\in \zd}  \beta_i(x) \, \tilde u_{i,[\frac{t}{h}]}[\mu]\quad \mbox{for any} \quad (t,x)\in [0,T)\times \rd .
\end{align}

\subsection{Approximate optimal feedback control} \label{subsec:ctrl}\ 
\smallskip

\noindent 
For the HJB equation in \eqref{eqn:MFG}, satisfied by the
value function \eqref{eq:value-fun}, it easily follows that the optimal feedback
control is
$$\alpha(t,x) = D_p H(x, Du[\mu](t,x)).$$
 Based on this feedback law, we define an approximate
feedback control for the  discrete time optimal control problem
\eqref{discretization_process}--\eqref{discretized_value_fun}
 in the following way: For 
$h,\rho,\epsilon>0$ and $(t,x)\in \R^d\times[0,T]$,
\begin{align}\label{alp_num}\alpha_{\text{num}}(t,x) :=
  D_p H(x, D\tilde u^{\epsilon}_{\rho,h}[\mu](t,x)),
  \end{align}
where $\tilde u_{\rho,h}[\mu]$ is given by \eqref{extnd_dscrt_schm_HJ},
\begin{align}\label{cont_extnd_dscrt_schm_HJ}
\tilde u_{\rho,h}^{\epsilon}[\mu](t,x) =
\tilde u_{\rho,h}[\mu](t, \cdot)*\rho_{\epsilon}(x),
\end{align}
and the mollifier $\rho_{\epsilon}(x)=
\frac{1}{\epsilon^{d}}\rho\big(\frac{x}{\epsilon}\big)$ for $0\leq 
\rho \in C_c^{\infty}(\rd)$ with $\int_{\rd} \rho(x)dx=1$.  We state a standard result on mollification. 
\begin{lemma} \label{regularized_Lipschitz}
If $u\in W^{1,\infty}(\rd)$, $\epsilon>0$, and
$u^{\epsilon}=u*\rho_\epsilon$. Then 
$u^{\epsilon} \in C_b^\infty(\R^d)$, and there exists a constant $c_\rho>0,$ such that for all $\epsilon>0$,
\begin{align*}
\|u^{\epsilon} -u\|_0 \leq \|Du\|_0\, \epsilon\qquad  
\mbox{and} \qquad  \|D^p u^{\epsilon}\|_0 \leq c_\rho \|Du\|_0 \,\epsilon^{1-p}
\ \ \text{for any} \ \ p \in \N.
\end{align*}  
\end{lemma}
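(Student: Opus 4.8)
The plan is to establish the three assertions in turn, all through elementary convolution estimates. For the smoothness claim, I would note that since $\rho_\epsilon \in C_c^\infty(\rd)$ while $u \in L^\infty(\rd) \subset L^1_{\textup{loc}}(\rd)$, one may differentiate the convolution integral $u^\epsilon(x) = \int_{\rd} u(x-y)\rho_\epsilon(y)\,dy$ under the integral sign arbitrarily often, placing every derivative on the smooth compactly supported factor $\rho_\epsilon$; hence $u^\epsilon \in C^\infty(\rd)$. Boundedness of $u^\epsilon$ and of each of its derivatives then follows from Young's convolution inequality, since $\|u^\epsilon\|_0 \leq \|u\|_0\,\|\rho_\epsilon\|_{L^1} = \|u\|_0$ and, for any multi-index $\alpha$, $\|D^\alpha u^\epsilon\|_0 = \|u * D^\alpha \rho_\epsilon\|_0 \leq \|u\|_0\,\|D^\alpha \rho_\epsilon\|_{L^1} < \infty$. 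This yields $u^\epsilon \in C_b^\infty(\rd)$.

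For the first estimate I would change variables via $y = \epsilon z$ and use the normalization $\int \rho = 1$ to write
\[
u^\epsilon(x) - u(x) = \int_{\rd} \big( u(x - \epsilon z) - u(x) \big)\rho(z)\,dz.
\]
Since $u \in W^{1,\infty}(\rd)$ is Lipschitz with constant $\|Du\|_0$, the integrand is bounded by $\|Du\|_0\,\epsilon\,|z|\,\rho(z)$, and using that the standard mollifier $\rho$ is supported in the unit ball (so $|z| \leq 1$ on its support) together with $\int \rho = 1$ gives $\|u^\epsilon - u\|_0 \leq \|Du\|_0\,\epsilon$.

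The decisive point is the third estimate, where the exponent $1-p$ (rather than the naive $-p$) must be produced with the \emph{first-derivative} norm $\|Du\|_0$ as prefactor. The trick is to exploit the $W^{1,\infty}$-regularity of $u$ optimally by distributing the derivatives asymmetrically: I would move exactly one derivative onto $u$ and the remaining $p-1$ onto the mollifier, writing
\[
D^p u^\epsilon = D^{p-1}\big( (Du) * \rho_\epsilon \big) = (Du) * D^{p-1}\rho_\epsilon,
\]
where the first equality uses that weak differentiation commutes with convolution for $u \in W^{1,\infty}$. Young's inequality then gives $\|D^p u^\epsilon\|_0 \leq \|Du\|_0\,\|D^{p-1}\rho_\epsilon\|_{L^1}$, and the scaling identity $D^{p-1}\rho_\epsilon(x) = \epsilon^{-d-(p-1)}\,(D^{p-1}\rho)(x/\epsilon)$, after the substitution $x = \epsilon w$, yields $\|D^{p-1}\rho_\epsilon\|_{L^1} = \epsilon^{1-p}\,\|D^{p-1}\rho\|_{L^1}$. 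Setting $c_\rho := \|D^{p-1}\rho\|_{L^1}$ (which depends on $\rho$, $d$, and $p$) completes the bound.

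I do not expect a genuine obstacle, as this is a routine mollification result; the only thing to get right is precisely this asymmetric allocation of derivatives. Placing all $p$ derivatives on $\rho_\epsilon$ would give the correct $\epsilon$-scaling but the wrong, larger prefactor $\|u\|_0$ in place of $\|Du\|_0$, whereas placing them on $u$ is impossible since $u$ has only one bounded weak derivative. Splitting off exactly one derivative onto $u$ is the unique choice that simultaneously produces the factor $\|Du\|_0$ and the scaling $\epsilon^{1-p}$.
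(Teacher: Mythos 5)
The paper offers no proof of this lemma at all: it is introduced with the words ``We state a standard result on mollification'' and then used as a black box, so there is nothing to compare your argument against. Your proof is correct and is precisely the standard argument the paper implicitly invokes: smoothness and boundedness by differentiating under the integral and Young's inequality, the $O(\epsilon)$ bound from the Lipschitz property of $u\in W^{1,\infty}(\rd)$ together with $\int\rho=1$, and --- the only point requiring care --- the asymmetric splitting $D^p u^\epsilon=(Du)*D^{p-1}\rho_\epsilon$, which is indeed the unique allocation of derivatives producing both the prefactor $\|Du\|_0$ and the scaling $\epsilon^{1-p}$. Two minor remarks on hypotheses. First, the bound $\|u^\epsilon-u\|_0\leq\|Du\|_0\,\epsilon$ with constant exactly $1$ requires $\operatorname{supp}\rho\subset B_1$ (otherwise one gets the extra factor $\int|z|\rho(z)\,dz$); the paper only assumes $\rho\in C_c^\infty$ with $\int\rho=1$, so you are implicitly normalizing the mollifier to unit-ball support, which is the standard convention and evidently what the paper intends. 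Second, your constant $c_\rho=\|D^{p-1}\rho\|_{L^1}$ depends on $p$, whereas a literal reading of the lemma asks for a single constant valid for all $p\in\N$; this dependence is unavoidable, and since the paper only ever uses $p\leq 2$ (e.g.\ $\|D^2u^\epsilon\|_0\lesssim\|Du\|_0/\epsilon$ in Lemma \ref{semiconcavity_eps}), your honest bookkeeping is the right reading of the statement.
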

 By construction, we expect $\alpha_{\text{num}}$ to
 be an approximation of the optimal feedback 
 control for the approximate control problem with value function
 \eqref{eq:approx-value-fun} when $h,\rho,\epsilon$ are small and
 $\tilde u^{\epsilon}_{\rho,h}$ is close to $u$.

\subsection{Dual SL discretization of the FPK equation}\label{subsec:discrtFPK}


\subsubsection*{A. Dual approximation of the FPK equation} 

First note that if $\tilde X_s=\tilde X^{0,Z_0}_s$ solves \eqref{SDE} with $t=0$ and $X_0  = Z_0$, a rv with distribution $m_0$, then the FPK equation for $\tilde m:=Law(\tilde X_s)$ is
\begin{align*}
\begin{cases}
        \tilde m_t - \mathcal{L}^*\tilde m - \text{div} ( \tilde m \alpha) = 0, \\
        \tilde m ( 0 ) = m_{0}.
\end{cases}
\end{align*}
Setting $\alpha = \alpha_{\text{num}}$, this equation becomes an approximation of the FPK equation in \eqref{eqn:MFG}. With this choice of $\alpha$, we further approximate $\tilde m$ by the density $\tilde m_k:= Law(X_k)$, of the approximate process $X_k=X_k^{0,Z_0}$ solving \eqref{discretization_process} with $l=0$ and $X_0=Z_0$. 

We now derive a FPK equation for $\tilde m_k$ which in discretised form will serve as our approximation of the FPK equation in \eqref{eqn:MFG}. To simplify we consider dimension $d=1$. By definition of $\tilde m_k$, 
\begin{align*}
\mathbb{E}[\phi(X_{k+1})] = \int_{\R} \phi(x) \,  d\tilde m_{k+1}(x),
\end{align*}
for $\phi\in
C_b(\R^d)$ and $k \in \mathbb{N}\cup \{0\}$. Let $A_{k}$ be the event
of at least one jump in $[t_k,t_{k+1})$, i.e. $A_{k}= \{\omega:
N_{k+1}(\omega)- N_{k}(\omega)\geq 1  \}$ where $N_k$ is the random jump
time defined in Section \ref{subsec:SDE} B. Then by the definition of $X_k$ in
\eqref{discretization_process}, the fact that $N_k$, $J_k$, and $\xi_k$ are
i.i.d. and hence independent of $X_k$, and conditional expectations,
we find that 
\begin{align*}
& \int_{\R} \phi(x) \,  d\tilde m_{k+1}(x) = \mathbb{E}[\phi(X_{k+1})] \\
& = \mathbb{E}[\phi(X_{k+1})| A_{k}^c] \, P(A_{k}^c) + \mathbb{E}[\phi(X_{k+1})| A_{k}] \, P(A_{k})  \\
& = e^{-h \lambda_r} \mathbb{E}(\phi(X_k + h \bar{b}(\alpha_{\text{num}}) + \sqrt{h}\sigma_r \xi_{k})) + (1-e^{-h \lambda_r}) \mathbb{E}(\phi(X_k + J_i))  \\ 
& = \frac{e^{-h \lambda_r}}{2} \int_{\R} \big( \phi(x+h \bar{b}(\alpha_{\text{num}}) + \sqrt{h}\sigma_r) + \phi(x+h \bar{b}(\alpha_{\text{num}}) - \sqrt{h}\sigma_r)\big) \tilde m_k(dx) \\
& \qquad + (1-e^{-h \lambda_r})  \int_{\R} \int_{|z|>r} \phi(x+z) \frac{\nu(dz)}{\lambda_r} \tilde m_k(dx). 
\end{align*}
Let $ E_i:= \big(x_i- \frac{\rho}{2}, x_i + \frac{\rho}{2}\big)$,
$\tilde m_{j,k} =  \int_{E_j} \tilde m_k(dx)$. We approximate the above expression by a midpoint (quadrature) approximation, i.e. $\int_{E_j} f(x) \tilde m_{k}(dx) \approx f(x_j) \tilde m_{j,k}$,  then by choosing $\phi(x) = \beta_j(x)$ (linear interpolant) for $j\in \Z$ and using $\beta_j(x_i)= \delta_{j,i}$ we get a fully discrete approximation
\begin{align*} 
\tilde  m_{j,k+1} \approx \sum_{i\in \Z} & \tilde  m_{i,k}\Big[ \frac{e^{-h \lambda_r}}{2} \Big(
\beta_j(x_i+h \bar{b}(\alpha_{\text{num}}) + \sqrt{h}\sigma_r) + \beta_j(x_i+h
\bar{b}(\alpha_{\text{num}}) - \sqrt{h}\sigma_r)\Big) \\
&  \qquad + \frac{1-e^{-h \lambda_r}}{\lambda_r} \int_{|z|>r} \beta_j(x_i+z) \nu(dz)\Big] . \notag
\end{align*}

In arbitrary dimension $d$, we
denote
 \begin{align}\label{defn_characteristics}
    \Phi^{\epsilon, \pm}_{j,k,p} := x_j - h\,\big( 
  H_{p} ( x_j, D\tilde u_{\rho,h}^{\epsilon}[\mu] (t_k,x_j) )    + B_r^{\sigma} \big)
 \pm \sqrt{hd} \sigma_r^p . 
\end{align} 
for $j \in \zd$, $k=0,\ldots, N$, $p=1, \ldots,d$. Redefining $E_i := x_i + \frac{\rho}{2} (-1,1)^d$ and reasoning as for 
$d=1$ above, we get the following discrete FPK equation
\begin{align} \label{Fokker-Planck_discretized}
\begin{cases}
    \tilde m_{i,k+1} [ \mu ] & := \displaystyle \sum_{j\in \zd} \tilde m_{j,k}[\mu] \, \mathbf{B}_{\rho,h,r} [ H_{p} ( \cdot, D\tilde u_{\rho,h}^{\epsilon} [\mu]  ) ( i,j,k ), \\ 
    \tilde m_{i,0} & = \displaystyle \int_{E_i} dm_0(x),
\end{cases}
\end{align}
where 
\begin{align}
\begin{split}
     \mathbf{B}_{\rho,h,r} [ H_{p} ( \cdot, D\tilde u_{\rho,h}^{\epsilon} [\mu] ] ( i,j,k ) & := \bigg[  
        \frac{e^{-\lambda_r h} }{2d} \sum_{p=1}^d \Big(\beta_i \big( \Phi^{\epsilon, +}_{j,k,p} \big) 
        + \beta_i\big( \Phi^{\epsilon, -}_{j,k,p} \big) \Big) \\
     & \hspace*{2cm} +  \frac{1-e^{-\lambda_r
          h}}{\lambda_r} \int_{|z| > r} \beta_i (x_j+z) \nu (dz)
      \bigg].
      \end{split}
    \label{fokker-planck-general-discretized}
\end{align}

The solution is a probability
distribution on $\mathcal G_\rho \times h \mathcal  N_h$, where
$\mathcal{N}_h:=\{0,\dots,N\}$:  
\begin{lemma} \label{mass_conservation}
Let $(\tilde m_{i,k})$ be the solution of
\eqref{Fokker-Planck_discretized}. If $m_0\in P(\rd)$, then
$(\tilde m_{i,k})_i\in P(\Z^d)$, i.e. $\tilde m_{i,k}\geq 0$,  $i\in\Z^d$, and
$\sum_{j\in\Z^d} \tilde m_{j,k} =1$ for all $k\in\mathcal{N}_h$. 
    
\end{lemma}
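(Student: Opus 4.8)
The plan is to prove both assertions simultaneously by induction on the time index $k\in\mathcal N_h$, the engine being the observation that, for each fixed source index $j$, the transition weights $\mathbf{B}_{\rho,h,r}[\cdots](i,j,k)$ in \eqref{fokker-planck-general-discretized} sum to one over $i\in\zd$. In other words, the discrete evolution \eqref{Fokker-Planck_discretized} is driven by an (infinite) stochastic matrix, and the two claims are exactly the statements that this matrix is entrywise nonnegative and has columns summing to one, which respectively preserve positivity and total mass.

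First I would dispose of positivity. Every ingredient of $\mathbf{B}_{\rho,h,r}$ is nonnegative: the basis functions satisfy $\beta_i\ge 0$, the coefficients $e^{-\lambda_r h}$ and $(1-e^{-\lambda_r h})/\lambda_r$ are nonnegative since $\lambda_r\ge 0$, and $\nu$ is a positive measure, so the jump integral is nonnegative as well. Hence $\mathbf{B}_{\rho,h,r}[\cdots](i,j,k)\ge 0$. As the base data $\tilde m_{i,0}=\int_{E_i}dm_0\ge 0$ because $m_0\in P(\rd)$, an immediate induction through \eqref{Fokker-Planck_discretized} gives $\tilde m_{i,k}\ge 0$ for all $i$ and $k$.

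The heart of the matter is the normalization $\sum_{i\in\zd}\mathbf{B}_{\rho,h,r}[\cdots](i,j,k)=1$. Summing \eqref{fokker-planck-general-discretized} over $i\in\zd$ and using the partition-of-unity property $\sum_{i}\beta_i(x)=1$ (applied at each of the points $\Phi^{\epsilon,\pm}_{j,k,p}$ from \eqref{defn_characteristics} and at $x_j+z$) collapses each diffusion term to $1$ and the jump term to $\int_{|z|>r}1\,\nu(dz)=\lambda_r$ by the definition \eqref{lambda_r} of $\lambda_r$. The prefactors then telescope:
\begin{align*}
\sum_{i\in\zd}\mathbf{B}_{\rho,h,r}[\cdots](i,j,k)
&= \frac{e^{-\lambda_r h}}{2d}\cdot 2d + \frac{1-e^{-\lambda_r h}}{\lambda_r}\cdot\lambda_r \\
&= e^{-\lambda_r h} + \big(1-e^{-\lambda_r h}\big) = 1.
\end{align*}
Granting this, I would again argue by induction. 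The base case $\sum_{i}\tilde m_{i,0}=\sum_i\int_{E_i}dm_0=\int_{\rd}dm_0=1$ holds because the cubes $E_i$ tile $\rd$ and $m_0$ is a probability measure. For the inductive step, nonnegativity of every summand licenses Tonelli's theorem to interchange the two infinite sums in \eqref{Fokker-Planck_discretized}:
\begin{align*}
\sum_{i\in\zd}\tilde m_{i,k+1}
&= \sum_{j\in\zd}\tilde m_{j,k}\sum_{i\in\zd}\mathbf{B}_{\rho,h,r}[\cdots](i,j,k)
= \sum_{j\in\zd}\tilde m_{j,k},
\end{align*}
which equals $1$ by the induction hypothesis.

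The only genuinely delicate points are bookkeeping matters rather than conceptual obstacles: justifying the interchange of the doubly-infinite sums (handled by nonnegativity via Tonelli) and ensuring the base case equals exactly $1$, which requires $\{E_i\}$ to partition $\rd$ up to an $m_0$-null set (guaranteed if the cubes are taken half-open, or if $m_0$ charges no grid hyperplane). Notably, both the collapse of the diffusion terms and of the jump term rely only on the two structural facts $\sum_i\beta_i\equiv 1$ and $\int_{|z|>r}\nu(dz)=\lambda_r$, so no regularity of $\tilde u^{\epsilon}_{\rho,h}[\mu]$ or of the characteristics $\Phi^{\epsilon,\pm}_{j,k,p}$ enters; the result is purely a consequence of the stochastic structure of the scheme.
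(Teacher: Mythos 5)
Your proof is correct and follows essentially the same route as the paper: nonnegativity directly from the structure of the scheme, the column-sum identity $\sum_i \mathbf{B}_{\rho,h,r}(i,j,k)=1$ via $\sum_i\beta_i\equiv 1$ and $\int_{|z|>r}d\nu=\lambda_r$, then interchange of summation and iteration from $\sum_i \tilde m_{i,0}=1$. You simply spell out details the paper leaves implicit (the explicit verification of the column-sum identity, the Tonelli justification, and the tiling of $\R^d$ by the cells $E_i$), which is fine.
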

\begin{proof}
First note that $\tilde m_{i,k}\geq 0$ follows directly from the definition
of the scheme and $m_{i,0}\geq 0$. Changing the order of summation and
as $\sum_i \mathbf{B}_{\rho,h,r} [ H_{p} ( \cdot, D\tilde u_{\rho,h}^{\epsilon} [\mu]] ( i,j,k ) =1$, we find that 
\begin{align*}
  \sum_i \tilde m_{i,k+1} = \sum_{i} \sum_{j} \tilde m_{j,k}  \mathbf{B}_{\rho,h,r} [ H_{p} ( \cdot, D\tilde u_{\rho,h}^{\epsilon} [\mu] ] ( i,j,k ) = \sum_{j} \tilde m_{j,k}.
\end{align*} 
The result follows by iteration since $\sum_{j}\tilde m_{j,0}=1$.
\end{proof}

We extend $(\tilde m_{i,k}[\mu])$ to $\rd$ by piecewise constant interpolation in
$x$ and then to $[0,T]$ by linear interpolation in $t$: For $t\in
[t_k,t_{k+1}]$ and $k\in \mathcal{N}_h$,
\begin{align}\label{exten_disc_measure}
\tilde m_{\rho,h}^{\epsilon}[\mu](t,x)& := \frac{t-t_k}{h}
\tilde m_{\rho,h}^{\epsilon}[\mu](t_{k+1},x)+ \frac{t_{k+1}-t}{h}
\tilde m_{\rho,h}^{\epsilon}[\mu](t_{k},x), 
\end{align}
where, 
$\tilde m_{\rho,h}^{\epsilon}[\mu](t_k,x) := \frac{1}{\rho^d} \sum_{i\in
  \zd} \tilde m_{i,k}[\mu] \, \mathbbm{1}_{E_i}(x)$. 
 Note that $\tilde m_{\rho,h}^{\epsilon}[\mu] \in C([0,T],P(\rd))$ and the
duality with the linear in $x$/constant in
$t$ interpolation  used for $\tilde{u}_{\rho,h}$ in
\eqref{extnd_dscrt_schm_HJ}.

\subsection{Discretisation of the coupled MFG system}\ 
\smallskip

\noindent 
The discretisation of the MFG system is obtained by coupling the two discretisations above by setting $\mu=\tilde m^\epsilon_{\rho,h}[\mu]$. With this choice and $u=\tilde u[\mu]$ and $m=\tilde m[\mu]$ we get the following discretisation of \eqref{eqn:MFG}:
\begin{align}\label{disc_mfg_system}
\begin{cases}
    u_{i,k} = S_{\rho,h,r} [m^\epsilon_{\rho,h}] (u_{\cdot,k+1},i,k),  \\[0.2cm]
    u_{i,N} = G (x_i, m^\epsilon_{\rho,h} (t_N)), \\[0.2cm]
    m_{i,k+1} =  \sum_{j\in \zd} m_{j,k} \, \mathbf{B}_{\rho,h,r} [ H_{p} ( \cdot, Du_{\rho,h}^{\epsilon}   ) ] ( i,j,k ), \\[0.2cm]
    m_{i,0} = \int_{E_i} dm_0(x),
\end{cases}
\end{align}
where $S_{\rho,h,r}, \mathbf{B}_{\rho,h,r}, u_{\rho,h}^{\epsilon}, m^\epsilon_{\rho,h}$ are defined above.

The individual discretisations are explicit, but due to the forward-backward nature of the coupling, the total discretisation is not explicit. It yields a nonlinear system that must be solved by some method like e.g.~a fixed point iteration or a Newton type method. 

The approximation scheme \eqref{disc_mfg_system} has a least one solution:
 
 \begin{proposition} \label{thm:existence_discrete_system}
    (Existence for the discrete MFG system) Assume
    \ref{A0}, \ref{A2}, \ref{L1}--\ref{L2}, \ref{F1}--\ref{F2}, \ref{H1}, and \ref{M1}.
    Then there exist a pair $ ( u_{\rho,h} , \ m_{\rho,h}^{\epsilon})$ 
    solving 
    \eqref{disc_mfg_system}.
\end{proposition}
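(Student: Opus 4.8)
The plan is to recast \eqref{disc_mfg_system} as a fixed point problem in the measure variable and invoke Schauder's theorem. First I would define, for each fixed $\mu\in C([0,T],P(\rd))$, the solution operator of the HJB part: the terminal data $G(x_i,\mu(t_N))$ is finite by \ref{F1}, and by \ref{L0}--\ref{L1} the map $\alpha\mapsto hL(x_i,\alpha)+(\cdots)$ in \eqref{schme_HJ_S} is continuous and coercive (the interpolation terms are bounded in $\alpha$ because $I[v]$ inherits the boundedness of the grid values $v$), so each infimum defining $S_{\rho,h,r}[\mu]$ is a finite real number. Hence the explicit backward recursion \eqref{schme_HJ} determines a unique grid function $(\tilde u_{i,k}[\mu])$, and thereby $\tilde u_{\rho,h}[\mu]$, its mollification $\tilde u_{\rho,h}^{\epsilon}[\mu]$, and the feedback $D_pH(\cdot,D\tilde u_{\rho,h}^{\epsilon}[\mu])$. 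Feeding this feedback into the forward recursion \eqref{Fokker-Planck_discretized} and interpolating as in \eqref{exten_disc_measure} produces, by Lemma \ref{mass_conservation}, a curve $\Psi(\mu):=\tilde m_{\rho,h}^{\epsilon}[\mu]\in C([0,T],P(\rd))$. Any fixed point $\mu=\Psi(\mu)$ is exactly a solution of \eqref{disc_mfg_system}, with $(u_{\rho,h},m_{\rho,h}^{\epsilon})=(\tilde u_{\rho,h}[\mu],\tilde m_{\rho,h}^{\epsilon}[\mu])$. Since the value $S_{\rho,h,r}[\mu]$ is an infimum (a number), $\Psi$ is single-valued and Schauder, not Kakutani, suffices.

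Next I would exhibit a compact convex set $\mathcal K\subseteq C([0,T],P(\rd))$, equipped with the metric $\sup_t d_0(\cdot,\cdot)$, that is invariant under $\Psi$. Two ingredients from the FPK analysis of Section \ref{sec:FPK} are needed, both \emph{uniform in} $\mu$ because the feedback is uniformly bounded (via the discrete Lipschitz bound for $\tilde u_{\rho,h}[\mu]$ together with \ref{H1}): (i) an equicontinuity-in-time estimate $d_0(\Psi(\mu)(t),\Psi(\mu)(s))\le\omega(|t-s|)$ with modulus $\omega$ independent of $\mu$; and (ii) tightness of the family $\{\Psi(\mu)(t)\}_{t,\mu}$. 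The second point is the delicate one: since we make no moment assumptions on $\nu$, tightness cannot be read off from a first-moment bound but instead follows from the tail-control-function of Proposition \ref{prop:tail-control-function}, giving $\int_{\rd}\psi\,d\Psi(\mu)(t)\le C$ for a fixed coercive $\psi$, uniformly in $t$ and $\mu$. The set $\mathcal K$ cut out by (i)--(ii) is convex (both $d_0(m(t),m(s))$ and $m\mapsto\int\psi\,dm(t)$ are convex) and compact by the Arzel\`a--Ascoli criterion in $C([0,T],(P,d_0))$; as the two bounds hold for every $\mu$, we have $\Psi(C([0,T],P(\rd)))\subseteq\mathcal K$, so in particular $\Psi(\mathcal K)\subseteq\mathcal K$.

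I would then check that $\Psi$ is continuous on $\mathcal K$. If $\mu_n\to\mu$ uniformly in $d_0$, then by \ref{F2} the data $F(x_i,\mu_n(t_k))$ and $G(x_i,\mu_n(t_N))$ converge, and a backward induction on the stable $d_0$-Lipschitz operator $S_{\rho,h,r}$ gives $\tilde u_{i,k}[\mu_n]\to\tilde u_{i,k}[\mu]$ for every $i,k$; hence $\tilde u_{\rho,h}[\mu_n]\to\tilde u_{\rho,h}[\mu]$ locally uniformly. Because mollification at fixed $\epsilon$ is smoothing and the family is equibounded, this upgrades (by convolution against $D\rho_\epsilon$) to $D\tilde u_{\rho,h}^{\epsilon}[\mu_n]\to D\tilde u_{\rho,h}^{\epsilon}[\mu]$ locally uniformly, whence $D_pH(\cdot,D\tilde u_{\rho,h}^{\epsilon}[\mu_n])\to D_pH(\cdot,D\tilde u_{\rho,h}^{\epsilon}[\mu])$ by continuity of $D_pH$ (\ref{H1}). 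The forward recursion \eqref{Fokker-Planck_discretized} depends continuously on these coefficients (and on nothing else beyond the fixed initial data), so, using the uniform tail bound to control the infinite sum over $j$, $\Psi(\mu_n)\to\Psi(\mu)$ in $C([0,T],d_0)$.

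Finally, Schauder's fixed point theorem applied to the continuous self-map $\Psi$ of the compact convex set $\mathcal K$ yields a fixed point $\mu^*$, which gives the desired solution of \eqref{disc_mfg_system}. I expect the main obstacle to be the construction of $\mathcal K$, and within it the uniform tightness claim (ii): obtaining a tail bound uniform in $\mu$ with no moments available is precisely what forces the $(P,d_0)$ framework and the use of Proposition \ref{prop:tail-control-function}. The time-equicontinuity and the continuity of $\Psi$ are, by comparison, routine consequences of the explicit structure of the two recursions and the $C^\infty$ regularity of the mollified value function.
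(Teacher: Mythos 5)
Your proposal is correct and follows essentially the same route as the paper's proof in Appendix \ref{app:pf_ex}: a Schauder fixed point argument in the measure variable, with the compact convex set built from the $\mu$-uniform tightness bound of Proposition \ref{prop:tail-control-function}/Theorem \ref{thm:thightness_m} and the time-equicontinuity of Theorem \ref{thm:time_equicont_m}, and continuity of the map obtained from stability of the HJB scheme (your backward induction is the paper's comparison Theorem \ref{hjb_discrete_comparison}) followed by stability of the FPK scheme (your ``continuous dependence on coefficients'' is the paper's $L^1$-stability Lemma \ref{L1_stability_fp}).
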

The proof of this result is non-constructive and given in Appendix \ref{app:pf_ex}.

\section{Convergence to the MFG system} \label{subsec:main-result}
 In this section we give the main theoretical results of this paper, various convergence results as $h,\rho,\epsilon,r \to 0$
 under CFL-conditions. The proofs will be given in Section \ref{sec:proof-main} and 
 require results for the individual schemes given in Sections \ref{sec:HJB} and \ref{sec:FPK}.

\subsection{Convergence to viscosity-very weak solutions}

We consider degenerate and non-degenerate cases separately. For the degenerate case, the convergence holds only in dimension $d=1$.

\begin{thm}[Degenerate case, $d=1$]\label{thm:convergence_MFG}
Assume \ref{A0}, \ref{A2}, \ref{L1}--\ref{L3}, \ref{F1}--\ref{F3}, 
\ref{H1}--\ref{H2}, \ref{M11}, $\{(u_{\rho,h}, m^{\epsilon}_{\rho,h})\}_{\rho,h,\epsilon>0}$ are  solutions of the discrete MFG system
\eqref{disc_mfg_system}. If $\rho_n,h_n,\epsilon_n,r_n\to 0$ under the CFL conditions $\frac{\rho_n^2}{h_n},\frac{h_n}{r_n^{\sigma}},\frac{\sqrt{h_n}}{\epsilon_n}=o(1)$, then:
\begin{itemize}
\item[(i)] $\{u_{\rho_n,h_n}\}_n$ is precompact in  $C_b([0,T]\times K)$ for every compact set $K \subset \R$. 
\item[(ii)] $\{m^{\epsilon_n}_{\rho_n,h_n}\}_n$ is sequentially precompact in $C ( [ 0,T ], P ( \R ) )$, and (a) in $L^1$ weak if $p \in ( 1,\infty )$ in \ref{M11}, or (b) in $L^{\infty}$ weak $*$ if $p= \infty$ in \ref{M11}.
\item[(iii)] If $(u,m)$ is a limit point of $\{(u_{\rho_n,h_n}, m^{\epsilon_n}_{\rho_n,h_n})\}_n$, then $(u,m)$ is a viscosity-very weak solution of the MFG system \eqref{eqn:MFG}.
\end{itemize}
\end{thm}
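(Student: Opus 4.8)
The plan is to treat the three claims in order: (i) and (ii) are compactness statements that follow from the a priori bounds for the two schemes established in Sections \ref{sec:HJB} and \ref{sec:FPK}, while (iii) identifies any limit point as a solution of \eqref{eqn:MFG}. Because the coupling through $F,G$ is nonlocal (smoothing, see \ref{F2}), the HJB and FPK analyses can be carried out almost separately, and the two are tied together only in the final passage to the limit.

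For (i) I would invoke the uniform discrete bounds for the HJB scheme \eqref{schme_HJ} from Section \ref{sec:HJB}: a uniform $L^\infty$ bound, a uniform Lipschitz-in-space bound, and a uniform semiconcavity bound (the discrete analogues of Proposition \ref{prop:viscosity_sol_HJB}), together with the $O(h_n)$ control of the time oscillation read off directly from the scheme. Since the interpolant \eqref{extnd_dscrt_schm_HJ} is then equibounded and equi-Lipschitz in $x$ with time jumps of size $O(h_n)\to0$, an Arzel\`a--Ascoli argument yields a subsequence converging locally uniformly on $[0,T]\times K$ to a continuous limit. For (ii) I would use that the FPK scheme \eqref{Fokker-Planck_discretized} conserves mass and positivity (Lemma \ref{mass_conservation}), so $m^{\epsilon_n}_{\rho_n,h_n}(t)\in P(\R)$; tightness in $P(\R)$ \emph{without any moment assumption} follows from the tail-control estimate of Proposition \ref{prop:tail-control-function}, and combined with an equicontinuity-in-time estimate in the $d_0$ metric this gives precompactness in $C([0,T],P(\R))$ via the metric-valued Arzel\`a--Ascoli theorem. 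The weak $L^1$ (resp.\ weak-$*$ $L^\infty$) precompactness is obtained from the uniform $L^p$ bound of Theorem \ref{thm:Lp_esti}, valid in $d=1$ and crucially using the semiconcavity of $u$, via reflexivity of $L^p$ for $p\in(1,\infty)$ (resp.\ Banach--Alaoglu for $p=\infty$).

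For the HJB half of (iii), the scheme \eqref{schme_HJ} is monotone, stable, and pointwise consistent (Section \ref{sec:HJB}) under the CFL conditions $\rho_n^2/h_n, h_n/r_n^{\sigma}=o(1)$, so the half-relaxed limit method \cite{barles1991convergence} applies. Writing $\mu=m^{\epsilon_n}_{\rho_n,h_n}$, the $d_0$-convergence from (ii) together with the $d_0$-Lipschitz continuity \ref{F2} of $F$ and $G$ makes the coupling terms $F(\cdot,\mu(t))$, $G(\cdot,\mu(T))$ converge uniformly to $F(\cdot,m(t))$, $G(\cdot,m(T))$; hence the scheme is consistent with the limiting HJB equation, and its upper/lower half-relaxed limits, squeezed by the comparison principle of Proposition \ref{prop:viscosity_sol_HJB}(a), coincide with the locally uniform limit $u$, which is therefore the viscosity solution.

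The main obstacle is the FPK half of (iii): passing to the limit in the discrete weak formulation to recover \eqref{dist_sol_FPK}, whose only delicate term is the drift product $m^{\epsilon_n}_{\rho_n,h_n}\,D_pH(\cdot,Du^{\epsilon_n}_{\rho_n,h_n})$. The plan is first to show $Du^{\epsilon_n}_{\rho_n,h_n}\to Du$ almost everywhere and boundedly: by Lemma \ref{regularized_Lipschitz} and the CFL condition $\sqrt{h_n}/\epsilon_n=o(1)$ the mollified $u^{\epsilon_n}_{\rho_n,h_n}$ converge locally uniformly to $u$ with uniformly bounded gradients, and uniform semiconcavity (assumption \ref{L3} and its discrete propagation) upgrades this to a.e.\ convergence of gradients at points where the Lipschitz limit $u$ is differentiable. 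Then $D_pH(\cdot,Du^{\epsilon_n}_{\rho_n,h_n})\to D_pH(\cdot,Du)$ a.e.\ and boundedly by \ref{H1}--\ref{H2}. Combining this a.e.-bounded convergence with the weak $L^p$ (resp.\ weak-$*$ $L^\infty$) convergence of $m^{\epsilon_n}_{\rho_n,h_n}$ from (ii) --- splitting $\int(g_n m_n-gm)\phi$ into $\int(g_n-g)m_n\phi$, controlled by $\|g_n-g\|_{L^{p'}(\mathrm{supp}\,\phi)}\|m_n\|_{L^p}\to0$, and $\int g(m_n-m)\phi\to0$ by weak convergence --- yields convergence of the product to $D_pH(\cdot,Du)\,m$ in the sense of distributions. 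This is precisely where the $L^p$-compactness available only in dimension $d=1$ is indispensable: mere convergence in $P(\R)$ cannot control the product against a drift whose limit is only defined a.e. With the remaining linear terms passing to the limit by consistency in the sense of distributions of the FPK scheme, \eqref{dist_sol_FPK} holds and $m$ is a very weak solution, completing the proof.
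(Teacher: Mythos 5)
Your treatment of (ii) and (iii) follows essentially the paper's own route (tightness via Proposition \ref{prop:tail-control-function}, time-equicontinuity in $d_0$, the $L^p$ bound of Theorem \ref{thm:Lp_esti}, the half-relaxed limit method for the HJB limit, and a.e.\ convergence of $Du^{\epsilon_n}_{\rho_n,h_n}$ via semiconcavity combined with weak/weak-$*$ convergence of $m^{\epsilon_n}_{\rho_n,h_n}$ to pass to the limit in the drift product). But your proof of (i) has a genuine gap: the claim that the time oscillation of $u_{\rho_n,h_n}$ is ``$O(h_n)$, read off directly from the scheme'' is false in general. One step of the scheme \eqref{schme_HJ} shifts the spatial argument by $h\bar b(\alpha)\pm\sqrt{hd}\,\sigma_r^m$, and $\bar b$ contains the compensator $b_r^{\sigma}=\int_{r<|z|<1}z\,d\nu$, which under \ref{A2} is only bounded by $Cr^{1-\sigma}$ and blows up as $r\to 0$ when $\sigma>1$ (the paper explicitly allows non-symmetric measures, e.g.\ CGMY, so no symmetry cancellation can be assumed); likewise the jump weight involves $\lambda_r\sim r^{-\sigma}$. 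Hence the one-step oscillation obtained from the uniform Lipschitz bound is $O\big(h+hr^{1-\sigma}+\sqrt{h}\,r^{1-\sigma/2}\big)$, which is $o(1)$ under the CFL condition $h/r^{\sigma}=o(1)$ but is \emph{not} $O(h)$; summing over the $|t-s|/h$ steps between two times gives $O\big(|t-s|\,r^{1-\sigma}\big)\to\infty$, not $O(|t-s|)$. So equicontinuity in time --- and with it your Arzel\`a--Ascoli argument --- does not follow from these estimates: the cancellation between the compensator drift and the mean of the jumps only becomes visible after a second-order Taylor expansion against smooth test functions, i.e.\ inside the consistency estimate, never in a raw Lipschitz bound.

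The gap is repairable, and the repair is exactly the machinery you already deploy in (iii): prove (ii) first (it is independent of (i)), then, along any subsequence where $m^{\epsilon_n}_{\rho_n,h_n}\to\mu$ in $C([0,T],P(\R))$, apply the Barles--Perthame--Souganidis argument (monotonicity, consistency, the uniform $L^\infty$ bound, and the comparison principle of Proposition \ref{prop:viscosity_sol_HJB}(a)) to conclude that $u_{\rho_n,h_n}$ converges locally uniformly to the viscosity solution; this yields (i) as a corollary rather than requiring it as an input, which is precisely how the paper proceeds (its Step 2, via Theorems \ref{HJ_convergence} and \ref{thm:HJB-convergence-smoothsol}(i)). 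Two smaller points: weak $L^1$ precompactness for $p\in(1,\infty)$ does not follow from reflexivity of $L^p$ alone on the infinite-measure set $[0,T]\times\R$ (an $L^p$-weak limit need not be an $L^1$-weak limit there); you must combine the $L^p$ bound with tightness, as the paper does via de la Vall\'ee Poussin and Dunford--Pettis. Finally, the discrete-to-continuous weak consistency of the FPK scheme, which you treat as a black box, is where the CFL condition $\sqrt{h_n}/\epsilon_n=o(1)$ is actually consumed (the Riemann-sum error there is $O(\sqrt{h}/\epsilon)$), so it is a substantive step of the proof rather than a known property to cite.
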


Note that $\{m^{\epsilon}_{\rho,h}\}$ is precompact  in $C([0,T], P(\rd))$, 
just by assuming \ref{M1} for the initial distribution. But in the degenerate case 
this is not enough for convergence of the MFG system, due to lower regularity 
 of the solutions of the HJB equation (no longer $C^1$).  
Therefore we need assumption \ref{M11} and the stronger compactness given by Theorem \ref{thm:convergence_MFG}(ii) part (a) or (b).  
This latter result we are only able to show in $d=1$.

In arbitrary dimensions we assume more regularity on solutions of the HJB equation in \eqref{eqn:MFG}:  \smallskip

\begin{description*}
\item[(U)\label{U1}] Let $u[m]$ be a  
viscosity solution of the HJB equation in \eqref{eqn:MFG}. For any $m \in C([0,T],P(\rd))$ and $t\in(0,T)$, $u[m](t)\in C^1(\rd)$. 
\end{description*}

\begin{remark}
Assumption \ref{U1} holds in non-degenerate cases, e.g. under assumption \ref{nu3}, see Theorem \ref{mfg:classical_solution} and the discussion below.
\end{remark}  
We have the following convergence result in arbitrary dimensions.

\begin{thm}[Non-degenerate case]\label{thm:convergence_MFG-nondeg}
Assume \ref{A0}, \ref{A2}, \ref{L1}--\ref{L3}, \ref{F1}--\ref{F3}, \ref{H1}--\ref{H2}, \ref{U1}, \ref{M1}, $\{(u_{\rho,h}, m^{\epsilon}_{\rho,h})\}_{\rho,h,\epsilon>0}$ are  solutions of the discrete MFG system
\eqref{disc_mfg_system}. If $\rho_n,h_n,\epsilon_n,r_n\to 0$ under the CFL conditions $\frac{\rho_n^2}{h_n},\frac{h_n}{r_n^{\sigma}},\frac{\sqrt{h_n}}{\epsilon_n}=o(1)$, then:
\begin{itemize}
\item[(i)] $\{u_{\rho_n,h_n}\}_n$  is precompact in  $C_b([0,T]\times K)$ for every compact set $K \subset \rd$. 
\item[(ii)] $\{m^{\epsilon_n}_{\rho_n,h_n}\}_n$ is precompact in $C([0,T],P(\rd))$.  
\item[(iii)] If $(u,m)$ is a limit point of $\{(u_{\rho_n,h_n}, m^{\epsilon_n}_{\rho_n,h_n})\}_n$, then $(u,m)$ is a viscosity-very weak solution of the MFG system \eqref{eqn:MFG}.
\end{itemize}
\end{thm}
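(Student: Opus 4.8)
The plan is to prove Theorem~\ref{thm:convergence_MFG-nondeg} by following the same three-part structure as the statement, treating the HJB and FPK approximations in a decoupled fashion as the authors emphasize is possible under nonlocal (smoothing) couplings. For part~(i), I would invoke the a priori estimates for the HJB scheme promised in Section~\ref{sec:HJB}: the uniform discrete $L^\infty$ bound (from \ref{F1} via Proposition~\ref{prop:viscosity_sol_HJB}(b)-type arguments applied to the scheme), together with uniform discrete Lipschitz bounds in space (from \ref{L2},\ref{F2}) and semiconcavity bounds (from \ref{L3},\ref{F3}). These estimates are uniform in $\rho,h,\epsilon,r$ under the stated CFL conditions $\frac{\rho_n^2}{h_n},\frac{h_n}{r_n^\sigma},\frac{\sqrt{h_n}}{\epsilon_n}=o(1)$. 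Equicontinuity in time would come from the scheme's one-step structure combined with these spatial bounds. Precompactness in $C_b([0,T]\times K)$ then follows from the Arzel\`a--Ascoli theorem.

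For part~(ii), the key tool is the tightness/compactness argument in the space $(P(\rd),d_0)$ under weak convergence. Here I would use Lemma~\ref{mass_conservation} to guarantee that each $\tilde m^{\epsilon_n}_{\rho_n,h_n}$ is a genuine probability measure, then establish equicontinuity in time in the $d_0$ metric by testing the discrete FPK equation \eqref{Fokker-Planck_discretized} against $\Lip_{1,1}$ functions and controlling the one-step increments using the bounded feedback control (bounded by \ref{H1} applied to the uniformly bounded $D\tilde u^\epsilon_{\rho,h}$). Crucially, tightness without moment assumptions relies on Proposition~\ref{prop:tail-control-function} from \cite{Espen-Indra-Milosz-2020}, as the authors highlight. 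Precompactness in $C([0,T],P(\rd))$ then follows from a version of Arzel\`a--Ascoli adapted to this metric space.

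For part~(iii), the consistency of both schemes must be combined to pass to the limit. Given a limit point $(u,m)$, I would first show $u$ is a viscosity solution of the HJB equation by the \emph{half-relaxed limits} method of Barles--Souganidis \cite{barles1991convergence}, which requires monotonicity, stability, and pointwise consistency of the scheme $S_{\rho,h,r}$ --- these are exactly the properties established in Section~\ref{sec:HJB}. The role of assumption \ref{U1} enters here decisively: it guarantees $u[m](t)\in C^1(\rd)$ so that the feedback control $D_pH(x,Du)$ is well-defined and continuous, and so that the regularized gradients $D\tilde u^\epsilon_{\rho,h}[\mu]$ converge to $Du$ locally uniformly (using Lemma~\ref{regularized_Lipschitz} and the condition $\frac{\sqrt{h_n}}{\epsilon_n}=o(1)$ to control mollification error against grid error). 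To show $m$ is a very weak solution, I would pass to the limit in the weak formulation of the discrete FPK equation, using the distributional consistency of $\mathbf{B}_{\rho,h,r}$ and the convergence of the drift $D_pH(\cdot,D\tilde u^\epsilon_{\rho,h})\to D_pH(\cdot,Du)$.

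The main obstacle will be the convergence of the discrete feedback drift $D\tilde u^\epsilon_{\rho,h}[\mu_n]\to Du[m]$ in part~(iii), needed for the FPK limit. The scheme only controls $u$ in $C_b$, giving at best Lipschitz and semiconcavity bounds --- \emph{not} $C^1$ regularity of the discrete solutions. Semiconcavity yields only one-sided second-order control, so gradients need not converge pointwise a priori. This is precisely why \ref{U1} is imposed: it forces the \emph{limit} $u[m](t)$ to be $C^1$, upgrading weak gradient convergence to the genuine convergence needed in the drift. Carefully matching the triple limit in $\rho,h,\epsilon$ against the CFL scaling, so that mollification, interpolation, and time-discretization errors all vanish simultaneously while $D\tilde u^\epsilon_{\rho,h}$ stabilizes to the continuous gradient, is the delicate technical heart of the argument.
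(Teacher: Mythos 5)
Your treatment of parts (ii) and (iii) matches the paper's proof in all essentials: compactness of the measures via Prokhorov and Arzel\`a--Ascoli, with tightness from Proposition \ref{prop:tail-control-function} (Theorem \ref{thm:thightness_m}) and $d_0$-equicontinuity in time (Theorem \ref{thm:time_equicont_m}); the viscosity-solution part of (iii) via Barles--Souganidis; and the drift convergence $D_pH(\cdot,Du^{\epsilon}_{\rho,h})\to D_pH(\cdot,Du)$ via Lipschitz/semiconcavity bounds combined with \ref{U1}, which is exactly the mechanism of Theorem \ref{thm:HJB-convergence-smoothsol}(iii) (limit points of the regularized gradients lie in the superdifferential $D^+u$, and $C^1$ regularity of the limit collapses them all to $Du$, upgraded to locally uniform convergence by the uniform Lipschitz bound). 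You also correctly identify this drift convergence as the technical heart. Two small imprecisions there: the smallness ratio that drives the gradient convergence is $\frac{\rho_n}{\epsilon_n}=o(1)$ (which follows from $\frac{\rho_n^2}{h_n},\frac{\sqrt{h_n}}{\epsilon_n}=o(1)$), while $\frac{\sqrt{h_n}}{\epsilon_n}=o(1)$ is what kills the Riemann-sum error $E=\mathcal{O}(\frac{\sqrt h}{\epsilon})$ in the FPK consistency computation; and because the HJB scheme's data is the discrete FPK solution, the consistency statement (Theorem \ref{thm:schm_prop}(iv)) needs $\mu_n\to\mu$, so compactness of the measures must logically come \emph{first} --- which is the order the paper uses.

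The genuine gap is in your part (i). You propose a direct Arzel\`a--Ascoli argument and assert that ``equicontinuity in time would come from the scheme's one-step structure combined with these spatial bounds.'' This step fails as stated. The one-step displacement of the SL scheme contains the diffusive shift $\sqrt{hd}\,\sigma_r^m$ and the compensator drift $h\,b_r^\sigma$ with $|b_r^\sigma|\sim r^{1-\sigma}$ (see \eqref{B_r}--\eqref{sigma_r}), so a one-step Lipschitz estimate gives at best $|u_{i,k}-u_{i,k+1}|\lesssim \sqrt{h}|\sigma_r|+h r^{1-\sigma}+h$; summing over $|t-s|/h$ steps produces terms like $|t-s|\,|\sigma_r|/\sqrt{h}$ and $|t-s|\,r^{1-\sigma}$, which do not vanish under the CFL conditions (the latter blows up as $r_n\to0$ when $\sigma>1$). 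A genuine parabolic time-regularity argument (barriers or a Krylov-type $|t-s|^{1/2}$ estimate for monotone schemes) would be required, and no such estimate appears in the paper --- note also that the interpolants \eqref{extnd_dscrt_schm_HJ} are piecewise \emph{constant} in time, so even membership in $C_b$ must be handled through vanishing jumps. The paper avoids all of this: it proves (ii) first, then obtains (i) as a byproduct of the half-relaxed-limit \emph{convergence} theorem for the HJB scheme with converging measure data (Theorem \ref{HJ_convergence}, via monotonicity, consistency, the $L^\infty$ bound of Lemma \ref{lem:aprrox_HJB_reg}(c), and the comparison principle of Proposition \ref{prop:viscosity_sol_HJB}(a)): along the subsequence where $m^{\epsilon_n}_{\rho_n,h_n}\to m$, the functions $u_{\rho_n,h_n}[m^{\epsilon_n}_{\rho_n,h_n}]$ converge locally uniformly to $u[m]$, which gives precompactness with no time-equicontinuity estimate at all. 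Since you already invoke Barles--Souganidis in part (iii), the clean repair of your proposal is simply to reorder: prove (ii), then deduce (i) and the HJB half of (iii) simultaneously from that convergence theorem.
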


 These results give compactness of the approximations and convergence along subsequences. To be precise, by part (i) and (ii) there are convergent subsequences, and by part (iii) the corresponding limits are solutions of the MFG system \eqref{eqn:MFG}. 

We immediately have existence for \eqref{eqn:MFG}. 
\begin{corollary}[Existence of solutions of \eqref{eqn:MFG}] \label{corollary:mfg_ex}
Under the assumptions of either Theorem \ref{thm:convergence_MFG} or \ref{thm:convergence_MFG-nondeg}, there exists a viscosity-very weak solution $(u,m)$ of the MFG system \eqref{eqn:MFG}.   
\end{corollary}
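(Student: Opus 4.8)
The plan is to derive Corollary~\ref{corollary:mfg_ex} as an immediate consequence of the two convergence theorems. The key observation is that either Theorem~\ref{thm:convergence_MFG} or Theorem~\ref{thm:convergence_MFG-nondeg} already establishes three facts under their respective hypotheses: existence of discrete solutions, precompactness of the discrete approximations, and the characterization of limit points as viscosity-very weak solutions. So the corollary requires only assembling these pieces along a suitable sequence of discretization parameters.

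First I would fix sequences $\rho_n, h_n, \epsilon_n, r_n \to 0$ satisfying the CFL conditions $\frac{\rho_n^2}{h_n}, \frac{h_n}{r_n^{\sigma}}, \frac{\sqrt{h_n}}{\epsilon_n} = o(1)$; such sequences exist since one can choose, for instance, $h_n = 1/n$ and then pick the remaining parameters in the appropriate asymptotic regime (e.g.\ $r_n = h_n^{1/(2\sigma)}$, $\rho_n = h_n$, $\epsilon_n = h_n^{1/4}$). For each $n$, Proposition~\ref{thm:existence_discrete_system} guarantees a solution $(u_{\rho_n,h_n}, m^{\epsilon_n}_{\rho_n,h_n})$ of the discrete MFG system \eqref{disc_mfg_system}, so the family $\{(u_{\rho_n,h_n}, m^{\epsilon_n}_{\rho_n,h_n})\}_n$ is nonempty and well-defined.

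Next I would invoke part (i) and part (ii) of the applicable theorem to extract a convergent subsequence. Part (i) gives precompactness of $\{u_{\rho_n,h_n}\}_n$ in $C_b([0,T]\times K)$ for every compact $K$, and part (ii) gives precompactness of $\{m^{\epsilon_n}_{\rho_n,h_n}\}_n$ in $C([0,T],P(\rd))$ (in the degenerate case additionally in the appropriate weak $L^p$ topology). By a diagonal argument over an exhausting sequence of compact sets $K$, one obtains a single subsequence along which both components converge to some limit pair $(u,m)$. Finally, part (iii) identifies this limit point $(u,m)$ as a viscosity-very weak solution of \eqref{eqn:MFG}, which is precisely the existence statement to be proved.

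There is essentially no obstacle here, since all the analytic work has been done in the two convergence theorems; the only mild care needed is the existence of parameter sequences meeting the CFL constraints and the diagonalization to pass from local compactness on compacts to a single subsequential limit. I would keep the proof to a couple of sentences, noting that it follows directly from Proposition~\ref{thm:existence_discrete_system} together with Theorem~\ref{thm:convergence_MFG} or Theorem~\ref{thm:convergence_MFG-nondeg}.
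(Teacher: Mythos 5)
Your proposal is correct and is essentially the paper's own argument: the paper notes that existence of discrete solutions (Proposition \ref{thm:existence_discrete_system}) combined with parts (i)--(ii) of the convergence theorems yields convergent subsequences, and part (iii) identifies any limit point as a viscosity-very weak solution, so existence is immediate. The extra details you supply (choosing CFL-compliant parameter sequences and diagonalizing over compact sets) are exactly the routine steps the paper leaves implicit.
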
 

If in addition we have uniqueness for the MFG system \eqref{eqn:MFG}, then we have full convergence of the sequence of approximations.

\begin{corollary} \label{corollary:mfg_convergence}
Under the assumption of either Theorem \ref{thm:convergence_MFG} or Theorem \ref{thm:convergence_MFG-nondeg}, if the MFG system \eqref{eqn:MFG} has at most one viscosity-very weak solution, then the whole 
sequence $\{(u_{\rho_n,h_n}, m^{\epsilon_n}_{\rho_n,h_n}) \}_n$ converges to a limit $(u,m)$ which is the (unique) viscosity-very weak solution of the MFG system \eqref{eqn:MFG}.   
\end{corollary}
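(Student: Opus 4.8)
The plan is to combine the precompactness in parts (i)--(ii) of Theorem \ref{thm:convergence_MFG} (respectively Theorem \ref{thm:convergence_MFG-nondeg}) with the identification of limit points in part (iii) and the uniqueness hypothesis, through the standard topological principle that a sequence converges to a point $x$ as soon as every one of its subsequences admits a further subsequence converging to $x$. First I would pin down the candidate limit: under the stated hypotheses Corollary \ref{corollary:mfg_ex} gives existence of a viscosity-very weak solution $(u,m)$ of \eqref{eqn:MFG}, and the uniqueness hypothesis of the present statement makes this solution the \emph{unique} such pair. This is the limit to which I will show the whole sequence converges.

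Next I would set up the precompactness on pairs. Parts (i) and (ii) assert that $\{u_{\rho_n,h_n}\}_n$ is precompact in $C_b([0,T]\times K)$ for every compact $K\subset\rd$ (locally uniform convergence) and that $\{m^{\epsilon_n}_{\rho_n,h_n}\}_n$ is precompact in $C([0,T],P(\rd))$. Both underlying topologies are metrizable---the former via a countable exhaustion of $\rd$ by compact sets, the latter by the Kantorovich-Rubinstein distance $d_0$ on $C([0,T],P(\rd))$---so the product topology on pairs is metrizable as well. Consequently every subsequence of $\{(u_{\rho_n,h_n},m^{\epsilon_n}_{\rho_n,h_n})\}_n$ admits a further subsequence converging, in this product topology, to some pair $(\bar u,\bar m)$.

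Then comes the identification step: by part (iii) any such joint limit $(\bar u,\bar m)$ is a viscosity-very weak solution of \eqref{eqn:MFG}, hence by uniqueness $(\bar u,\bar m)=(u,m)$. Thus every subsequence of $\{(u_{\rho_n,h_n},m^{\epsilon_n}_{\rho_n,h_n})\}_n$ has a further subsequence converging to the one fixed limit $(u,m)$. Invoking the subsequence characterization of convergence (if $x_n\not\to x$, some subsequence remains outside a fixed neighborhood of $x$, precluding any sub-subsequence convergent to $x$), I conclude that the full sequence converges to $(u,m)$. In the degenerate case of Theorem \ref{thm:convergence_MFG} the very same argument applied to the stronger compactness in part (ii)(a)--(b) yields, in addition, convergence of $\{m^{\epsilon_n}_{\rho_n,h_n}\}_n$ to $m$ in the weak $L^1$ (resp. weak-$*$ $L^\infty$) sense.

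There is no genuine analytic obstacle here, as all the substance lives in Theorems \ref{thm:convergence_MFG}/\ref{thm:convergence_MFG-nondeg} and Corollary \ref{corollary:mfg_ex}; the corollary is a soft consequence. The only points needing a little care are to run the subsequence argument on the \emph{pair} in the product topology, rather than on the two components separately, so that the limit points to which part (iii) is applied are genuine joint limits, and to record that the relevant topologies are metrizable, so that precompactness actually furnishes convergent subsequences.
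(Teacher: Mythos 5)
Your proof is correct and follows exactly the route the paper intends: the paper gives no separate proof of this corollary, treating it as an immediate consequence of the compactness in parts (i)--(ii), the identification of limit points in part (iii), and uniqueness (the same "all convergent subsequences share one limit, hence the whole sequence converges" argument appears explicitly as step 5 in the paper's proof of Corollary \ref{corollary:mfg_convergence_classical}). Your added care about metrizability and working with joint limits of the pair is sound but does not constitute a different approach.
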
 

\subsection{Convergence to classical solutions}
In the case the individual equations are regularising, we can get convergence to classical solutions of the MFG system. To be precise we need:
\smallskip
\begin{itemize}
\item[1.] (``Weak" uniqueness of individual PDEs) The HJB equation have unique viscosity solutions, and the FPK equation have unique very weak solutions.\smallskip

\item[2.] (Smoothness of individual PDEs) Both equations have classical solutions.
\smallskip
\end{itemize}
This means that viscosity-very weak solutions of the MFG system automatically (by uniqueness for individual equations) are classical solutions.
If in addition 
\smallskip
\begin{itemize}
    \item [3.] (Classical uniqueness for MFG) classical solutions of the MFG system are unique,
\end{itemize}
we get full convergence of the approximate solutions to the solution of the MFG system.\smallskip

We now give a precise result in the setting of \cite{ersland2020classical}, see Theorem \ref{mfg:classical_solution} in Section \ref{sec:prelim} for existence and uniqueness of classical solutions of \eqref{eqn:MFG}.

\begin{corollary} \label{corollary:mfg_convergence_classical}
   Assume \ref{A0}--\ref{nu3}, \ref{L1}--\ref{L3}, \ref{F1}--\ref{F4}, \ref{H3}--\ref{H4}, and
   \ref{M111}. Let $(u_{\rho,h}, m^{\epsilon}_{\rho,h})$ be  solutions of the discrete MFG system
\eqref{disc_mfg_system}. If $\rho_n,h_n,\epsilon_n,r_n\to 0$ under the CFL conditions $\frac{\rho_n^2}{h_n},\frac{h_n}{r_n^{\sigma}},\frac{\sqrt{h_n}}{\epsilon_n}=o(1)$, then:

\smallskip\noindent   (a) $\{(u_{\rho_n,h_n}, m^{\epsilon_n}_{\rho_n,h_n})\}_n$ has a convergent subsequence in the space $C_{b,\textup{loc}}([0,T]\times\rd)\times C ( [ 0,T ], P ( \rd) )$, and any limit point is a classical-classical solution of \eqref{eqn:MFG}.
   
 \smallskip\noindent  (b) If in addition \ref{F5} and \ref{H5} hold, then the whole sequence in (a)
 converges to the unique classical-classical solution $(u,m)$ of \eqref{eqn:MFG}.
\end{corollary}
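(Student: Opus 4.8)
The plan is to deduce this corollary from the non-degenerate convergence result Theorem \ref{thm:convergence_MFG-nondeg} together with the individual-equation regularity and uniqueness theory recalled in Section \ref{sec:prelim}. First I would verify that the stated hypotheses imply those of Theorem \ref{thm:convergence_MFG-nondeg}. Indeed \ref{A0}, \ref{A2}, \ref{L1}--\ref{L3}, \ref{F1}--\ref{F3}, and \ref{M1} are directly available (the latter since the $C^2_b$ density in \ref{M111} lies in $P(\rd)$), while \ref{H1}--\ref{H2} follow from the $C^3$ bounds in \ref{H3}. The essential point is \ref{U1}: by the Remark following its statement (and Theorem \ref{mfg:classical_solution}), the lower bound \ref{nu3} on the L\'evy density renders the HJB equation non-degenerate and smoothing, so its viscosity solutions are $C^1$ in space. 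Thus Theorem \ref{thm:convergence_MFG-nondeg} applies and, under the stated CFL conditions, gives precompactness of $\{u_{\rho_n,h_n}\}_n$ in $C_b([0,T]\times K)$ for every compact $K$ and of $\{m^{\epsilon_n}_{\rho_n,h_n}\}_n$ in $C([0,T],P(\rd))$; an exhaustion-diagonal argument then produces a subsequence converging in $C_{b,\textup{loc}}([0,T]\times\rd)\times C([0,T],P(\rd))$ to a limit $(u,m)$ that is a viscosity-very weak solution of \eqref{eqn:MFG}.

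The heart of part (a) is to bootstrap this viscosity-very weak solution into a classical-classical one, exactly along the lines sketched before the statement. Since the limit satisfies $m\in C([0,T],P(\rd))$, the coupling data $F(\cdot,m(\cdot))$ and $G(\cdot,m(T))$ enjoy the uniform regularity \ref{F4}, so the HJB equation in \eqref{eqn:MFG} with this frozen data admits a classical solution in $C^{1,3}_b$ by the existence theory of \cite{ersland2020classical}. As viscosity solutions of that HJB equation are unique by comparison (Proposition \ref{prop:viscosity_sol_HJB}(a)), the viscosity limit $u$ coincides with this classical solution, so $u\in C^{1,3}_b$. In particular $Du(t)\in C^2_b$, and hence $D_pH(\cdot,Du(t))\in C_b^{0,2}$ by \ref{H3}. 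This is precisely the hypothesis of Proposition \ref{uniqueness_weak_soln_fp}, so very weak solutions of the FPK equation with this drift are unique; combining this with the existence of a classical FPK solution in $C^{1,2}_b$ from \cite{ersland2020classical} forces the very weak limit $m$ to equal it, so $m\in C^{1,2}_b$. Thus $(u,m)$ is a classical-classical solution, proving (a).

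For part (b), the additional assumptions \ref{F5} and \ref{H5} place us in the setting of Theorem \ref{mfg:classical_solution}(b), which provides a \emph{unique} classical-classical solution $(u,m)$ of \eqref{eqn:MFG}. By part (a) every limit point of the precompact family $\{(u_{\rho_n,h_n}, m^{\epsilon_n}_{\rho_n,h_n})\}_n$ is such a solution, hence equals this unique $(u,m)$. A standard argument then upgrades subsequential to full convergence: were the whole sequence not to converge, some subsequence would remain bounded away from $(u,m)$, yet by precompactness admit a further subsequence converging to a classical-classical solution, necessarily $(u,m)$, a contradiction.

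I expect the main obstacle to be the regularity bootstrap in part (a): one must order the two upgrades correctly, first lifting $u$ to $C^{1,3}_b$ so that the drift $D_pH(\cdot,Du)$ attains the $C_b^{0,2}$ regularity demanded by Proposition \ref{uniqueness_weak_soln_fp}, and only afterwards lifting $m$. Some care is needed to confirm that each individual-equation existence/uniqueness statement of \cite{ersland2020classical} applies to the frozen limit data (in particular that $t\mapsto F(\cdot,m(t))$ is continuous, which follows from \ref{F2} and $m\in C([0,T],P(\rd))$), but no genuinely new estimate beyond those already established for the scheme should be required.
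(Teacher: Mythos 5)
Your proposal is correct and follows essentially the same route as the paper's own proof: apply Theorem \ref{thm:convergence_MFG-nondeg} (after noting that \ref{U1} holds via Theorem \ref{mfg:classical_solution}), identify the viscosity limit $u$ with the classical $C^{1,3}_b$ solution of \cite{ersland2020classical} by the comparison principle, use \ref{H3} to invoke Proposition \ref{uniqueness_weak_soln_fp} and identify $m$ with the classical $C^{1,2}_b$ FPK solution, and conclude full convergence under \ref{F5} and \ref{H5} from uniqueness of classical solutions. Your additional explicit verifications (that \ref{H1}--\ref{H2} follow from \ref{H3}, that \ref{M111} implies \ref{M1}, and the ordering of the two regularity upgrades) are details the paper leaves implicit, but they do not change the argument.
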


\begin{proof}
1. Assumption \ref{U1} holds by Theorem \ref{mfg:classical_solution}, and then by Theorem \ref{thm:convergence_MFG-nondeg}, there is a convergent subsequence $\{(u_{\rho_n, h_n}, m_{\rho_n,h_n}^{\epsilon_n} )\}_n$ such that
$(u_{\rho_n, h_n}, m_{\rho_n,h_n}^{\epsilon_n} ) \to (u,m)$ and $(u,m)$ is a viscosity-very weak solution of \eqref{eqn:MFG}. 

\smallskip\noindent
2. Since $m \in C ([0,T] , P (\rd))$, the viscosity
solution $u$ is unique by Proposition \ref{prop:viscosity_sol_HJB} (b) (see also \cite[Theorem $5.3$]{ersland2020classical}). Hence it coincides with the classical $C_b^{1,3} ((0,T) \times \rd)$ solution given by 
\cite[Theorem $5.5$]{ersland2020classical}. 

\smallskip\noindent
3. Now $D_p H (x,Du (t)) \in C_b^2 (\rd)$ by part 2 and \ref{H3}, and then by Proposition \ref{uniqueness_weak_soln_fp} there is at most one very weak solution of the FPK equation. Hence it coincides with the classical $C_b^{1,2}((0,T) \times \rd)$ solution given by \cite[Proposition $6.8$]{ersland2020classical}. 

\smallskip\noindent
4. In addition if 
\ref{F5} and 
\ref{H5} hold, there is a most one classical solution $(u,m)$ by Theorem \ref{mfg:classical_solution}~(b). 

\smallskip\noindent
5. This shows (compactness, smoothness, and uniqueness) that all convergent subsequences of $\{(u_{\rho_n, h_n}, m_{\rho_n,h_n}^{\epsilon_n} )\}_n$ have the same limit, and thus the whole sequence converges to $(u,m)$, the unique classical solution of  \eqref{eqn:MFG}.
\end{proof}


\subsection{Extension and discussion}\label{subsec:ext}
\subsection*{Extension to more general L\'evy operators}

The results of Theorem \ref{thm:convergence_MFG} and \ref{thm:convergence_MFG-nondeg} 
hold under much more general assumptions on the L\'evy operator 
$\mathcal{L}$. 
In \cite{ersland2020classical} they use \ref{A0} together with the assumptions,
\begin{description*}
 \item[($\nu$1$'$)\label{nu22}]
                $ \displaystyle r^{-2+\sigma}\int_{|z|<r} |z|^2 d\nu + r^{-1+\sigma}\int_{r<|z|<1}
                |z| d\nu + r^{\sigma}\int_{r<|z|<1} d\nu\leq c
                 ,  \  r\in(0,1) $.
            \smallskip

 \item[($\nu$2$'$)\label{nu33}] There are $\sigma \in (1,2)$ and $\mathcal K
>0$ such that the heat kernels $K_\sigma$ and $K_\sigma^*$ of $\mathcal
L$ and $\mathcal L^*$ satisfy for $K=K_\sigma,K_\sigma^*$\ :
$K\geq0$, $\|K(t,\cdot)\|_{L^1(\R^d)}=1$, and 
        \begin{align*}
            \|D^{\beta} K
  (t,\cdot) \|_{L^p (\R^d)} \leq \mathcal K t^{-\frac{1}{\sigma}\big(|\beta|+(1-\frac1p)d\big)}\quad
            \text{for $t\in(0,T)$}
        \end{align*}
and any $p\in[1,\infty)$ and multi-index $\beta\in \N^{d}\cup \{0\}$.
\end{description*}

\medskip

\noindent where the heat kernel of the operator $\mathcal{L}$ is defined as the fundamental solution of the heat equation $\partial_{t} u - \mathcal{L} u = 0$.
These assumptions cover lots of new cases compared to \ref{A0}, \ref{A2}, and \ref{nu3}. 
New cases include (i) sums of operators satisfying 
\ref{A2} on subspaces spanning $\rd$, having possibly different orders, 
(ii) more general non-absolutely continuous L\'evy measures, 
and (iii) L\'evy measures supported on positive cones. 
An example of (i) (cf. \cite{ersland2020classical}) is
$$\mathcal L=-\Big(\!-\frac{\partial^2}{\partial
  x_1^2}\Big)^{\sigma_1/2}-\dots-\Big(\!-\frac{\partial^2}{\partial 
      x_d^2}\Big)^{\sigma_d/2}, \qquad \sigma_1,\dots,\sigma_d\in(1,2),$$
      which satisfies \ref{nu22} with $\sigma=\min_i\sigma_i$
   and $d\nu(z)= 
   \sum_{i=1}^d\frac{dz_i}{|z_i|^{1+\sigma_i}}\Pi_{j\neq
  i}\delta_0(dz_j)$. 
  This is a sum of one-dimensional fractional Laplacians of different orders.
  An example of (iii) is given by the spectrally positive ``fractional Laplacian" in one space dimension: 
  $\mathcal{L} u = c_{\sigma} \int_{0}^{\infty} ( u ( x+z ) - u ( x ) - Du ( x ) \cdot z \mathbbm{1}_{\{z < 1\}} ) z^{-1-\sigma} dz$.

We have the following generalization of the wellposedness result for classical solutions
given in Theorem \ref{mfg:classical_solution}.
\begin{thm}[\cite{ersland2020classical}] \label{thm:classical_extension}
    Theorem \ref{mfg:classical_solution} holds when you replace \ref{A2} -- \ref{nu3} by \ref{nu22} -- \ref{nu33}.
 \end{thm}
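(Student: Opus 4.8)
The plan is to trace the proof of Theorem \ref{mfg:classical_solution} — i.e. the argument behind \cite[Theorem 2.5 and Theorem 2.6]{ersland2020classical} — and verify that assumptions \ref{A2} and \ref{nu3} enter only through a handful of quantitative properties of $\mathcal L$ and $\mathcal L^*$, each of which is already guaranteed by the weaker hypotheses \ref{nu22}--\ref{nu33}. Concretely, \ref{A2} (the pointwise upper bound on the density of $\nu$ near the origin) is used only to control the action of $\mathcal L$ on smooth functions and in consistency-type splittings of the operator into small- and large-jump parts; and \ref{nu3} (the pointwise lower bound, which forces $\sigma\in(1,2)$) is used only to supply the smoothing/regularizing estimates that upgrade weak or viscosity solutions to classical ones. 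The generalization replaces both roles by structural assumptions that never reference the density pointwise.

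First I would re-establish the linear theory. For the operator bounds, note that \ref{nu22} directly provides the scale-by-scale second-, first-, and zeroth-moment estimates of $\nu$ on $\{|z|<r\}$ and $\{r<|z|<1\}$; splitting $\mathcal L\phi=\mathcal L_r\phi+\mathcal L^r\phi$ as in \eqref{inner_outer_operator} and Taylor expanding the inner part, these moments yield exactly the bound $|\mathcal L\phi(x)|\le C\|\phi\|_{C^2_b}$ together with a refined small-jump estimate in the spirit of Lemma \ref{lem:small_jump}. Thus every place where \ref{A2} was invoked for operator continuity or consistency goes through verbatim with the constant $c$ of \ref{nu22}. For the regularizing theory I would use \ref{nu33}: the postulated heat-kernel bounds $\|D^\beta K(t,\cdot)\|_{L^p}\le \mathcal K\, t^{-\frac1\sigma(|\beta|+(1-1/p)d)}$ are precisely the gain of regularity one would otherwise extract from the lower bound \ref{nu3}. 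Via Duhamel's formula for the linear problems $u_t-\mathcal Lu=f$ and $m_t-\mathcal L^* m=g$, these bounds produce the Schauder-type and $W^{k,p}$ estimates needed to obtain the classical solutions $u\in C^{1,3}_b$ and $m\in C^{1,2}_b$, replacing \ref{nu3} throughout.

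With the linear estimates in hand, the nonlinear existence argument is unchanged: I would run the same fixed-point scheme on the map $m\mapsto u[m]\mapsto m$, using the HJB regularity (uniform $C^{1,3}_b$ bounds from \ref{H3}--\ref{H4}, \ref{F4} and the linear theory) to make $D_pH(\cdot,Du[m])$ admissible as a drift, and the FPK regularity to close the loop and gain compactness in $C([0,T],P(\rd))$. Uniqueness under \ref{F5} and \ref{H5} (part (b) of Theorem \ref{mfg:classical_solution}) follows from the Lasry--Lions monotonicity/duality computation, which uses only self-adjointness of $\mathcal L,\mathcal L^*$ and integrability of the solutions and never the density bounds, so it carries over directly.

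The main obstacle is the smoothing step. Under \ref{nu3} the operator is comparable to $(-\triangle)^{\sigma/2}$ with $\sigma>1$ and standard fractional heat-kernel machinery applies; in the generality of \ref{nu33} the measure $\nu$ may be non-absolutely continuous or supported on a cone (as in the spectrally one-sided and anisotropic examples above), so no pointwise lower bound is available and one must work entirely from the assumed kernel decay. The delicate point is to verify that the Duhamel iterations stay bounded and that the gained regularity is strong enough to re-enter the nonlinearity $H(x,Du)$ and the divergence drift $\text{div}(mD_pH)$ consistently across the coupling. This is exactly the content carried out in \cite{ersland2020classical}; once the linear estimates above are granted, the nonlinear bootstrapping is identical to the proof of Theorem \ref{mfg:classical_solution}.
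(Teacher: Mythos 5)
Your proposal is correct and takes essentially the same route as the paper: the paper gives no proof of this theorem at all, attributing it wholly to \cite{ersland2020classical}, and its surrounding discussion assigns exactly the roles you identify — \ref{nu22} supplies the scale-by-scale moment bounds through which \ref{A2} enters every estimate, while \ref{nu33} postulates directly the heat-kernel decay that \ref{nu3} would otherwise be used to derive, with the fixed-point construction and the Lasry--Lions monotonicity uniqueness argument untouched. Like the paper, you ultimately defer the delicate Duhamel/bootstrapping step to the cited reference, so nothing further needs to be checked here.
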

 It follows that \ref{U1} holds whenever Theorem \ref{thm:classical_extension} holds. Since \ref{A2} implies \ref{nu22} and the integrals in \ref{nu22} are what appear in the different proofs, it is easy to check that all estimates in this paper are true for L\'evy measures satisfying \ref{nu22} instead of \ref{A2}. 
This means that under assumption \ref{nu22} and \ref{nu33} 
we have the following extensions of Theorems \ref{thm:convergence_MFG}
and \ref{thm:convergence_MFG-nondeg} and Corollary \ref{corollary:mfg_convergence_classical}.

\begin{thm}
    Theorem \ref{thm:convergence_MFG} holds when you replace \ref{A2} with \ref{nu22}.
\end{thm}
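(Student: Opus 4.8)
The plan is to show that assumption \ref{A2} enters the proof of Theorem \ref{thm:convergence_MFG} only through the three integral quantities that appear in \ref{nu22}, so that the weaker hypothesis \ref{nu22} already suffices and the entire convergence argument goes through unchanged. As a preliminary remark I would record the implication \ref{A2} $\Rightarrow$ \ref{nu22}: integrating the pointwise bound $\frac{d\nu}{dz}\leq C|z|^{-d-\sigma}$ in polar coordinates gives $\int_{|z|<r}|z|^2\,d\nu\leq c\,r^{2-\sigma}$, $\int_{r<|z|<1}\,d\nu\leq c\,r^{-\sigma}$, and $\int_{r<|z|<1}|z|\,d\nu\leq c\,r^{1-\sigma}$, where for the middle term one uses that \ref{A2} with order $\sigma\leq 1$ also holds with any larger order in $(1,2)$ (since $|z|^{-d-\sigma}\leq|z|^{-d-\sigma'}$ for $|z|<1$). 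Hence \ref{nu22} is a genuine weakening and nothing is lost by assuming it in place of \ref{A2}.

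Next I would revisit every use of \ref{A2} in the chain of results feeding into Theorem \ref{thm:convergence_MFG}. The decisive one is Lemma \ref{lem:small_jump}, the Brownian approximation of the small jumps, whose proof bounds the consistency error by the third moment $\int_{|z|<r}|z|^3\,d\nu$. On the set $\{|z|<r\}$ one has the elementary inequality $|z|^3\leq r\,|z|^2$, so this moment is controlled by $r\int_{|z|<r}|z|^2\,d\nu\leq c\,r^{3-\sigma}$ using only the first term of \ref{nu22}, reproducing the stated rate; the same trick $|z|^k\leq r^{k-2}|z|^2$ handles the higher moment behind the $r^{4-\sigma}$ estimate under symmetry. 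The remaining appearances of \ref{A2} are in the consistency, stability, and discrete regularity estimates for the HJB scheme of Section \ref{sec:HJB} and in the mass/positivity, tightness, equicontinuity, and $L^p$ bounds for the FPK scheme of Section \ref{sec:FPK}; in each case the relevant data are the jump intensity $\lambda_r$ and the first and second moments of $\nu_r=\nu\mathbbm{1}_{|z|>r}$, which are precisely the last two and the first terms of \ref{nu22} (the contribution of $|z|>1$ being finite by \ref{A0}).

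Finally I would carry out the substitution mechanically: in each estimate I replace the explicit powers of $r$ previously derived from the density by the corresponding bound furnished by \ref{nu22}. Since the constant $c$ in \ref{nu22} is uniform in $r\in(0,1)$, all rates (powers of $r$) are preserved verbatim, so the CFL conditions $\frac{\rho_n^2}{h_n},\frac{h_n}{r_n^\sigma},\frac{\sqrt{h_n}}{\epsilon_n}=o(1)$, the half-relaxed-limit argument, and the compactness/stability arguments are untouched, and conclusions (i)--(iii) follow exactly as before. The main obstacle is conceptual rather than computational: one must verify that \ref{A2} is nowhere genuinely needed at the level of the pointwise density, only through its integrals against $1$, $|z|$, and $|z|^2$; the one tempting exception is the third-moment estimate of Lemma \ref{lem:small_jump}, and the crux of the whole argument is the observation that the restriction $|z|<r$ upgrades the second-moment bound of \ref{nu22} to the third-moment rate required there.
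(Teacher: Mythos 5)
Your proposal is correct and follows essentially the same route as the paper: the paper's entire argument for this extension is the remark that \ref{A2} enters every estimate only through the three integrals bounded in \ref{nu22} (``the integrals in \ref{nu22} are what appear in the different proofs''), together with the implication \ref{A2} $\Rightarrow$ \ref{nu22}, and then all rates, CFL conditions, and compactness arguments carry over verbatim. Your write-up in fact supplies details the paper leaves implicit --- notably the bound $\int_{|z|<r}|z|^3\,d\nu\leq r\int_{|z|<r}|z|^2\,d\nu$ needed to rerun Lemma \ref{lem:small_jump} under \ref{nu22}, and the caveat that for $\sigma\leq 1$ one must enlarge $\sigma$ to get the middle integral bound --- but these are refinements of, not departures from, the paper's argument.
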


\begin{thm}
    Theorem \ref{thm:convergence_MFG-nondeg} holds when you replace \ref{A2} -- \ref{nu3} by \ref{nu22} -- \ref{nu33}.
 \end{thm}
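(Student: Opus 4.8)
The plan is to treat this as a robustness statement about the proof of Theorem~\ref{thm:convergence_MFG-nondeg} rather than as a genuinely new argument. I would show that (i) the pointwise density bound in \ref{A2} is never used in that proof beyond its integral consequences, which are exactly \ref{nu22}, and (ii) the only place where \ref{nu3} entered — through the validity of the regularity hypothesis \ref{U1} — is now supplied instead by the heat-kernel bounds \ref{nu33} via the extended wellposedness Theorem~\ref{thm:classical_extension}. Once these two points are in place, the proof of Theorem~\ref{thm:convergence_MFG-nondeg} carries over verbatim.

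First I would verify the implication \ref{A2}$\Rightarrow$\ref{nu22} so that the new hypotheses are genuinely weaker: integrating $\frac{d\nu}{dz}\le C|z|^{-d-\sigma}$ in spherical coordinates gives $\int_{|z|<r}|z|^2\,d\nu\lesssim r^{2-\sigma}$, $\int_{r<|z|<1}|z|\,d\nu\lesssim r^{1-\sigma}$, and $\int_{r<|z|<1}d\nu\lesssim r^{-\sigma}$, which are precisely the three terms of \ref{nu22}. Then I would audit every estimate in the derivation and analysis that invoked \ref{A2}. The quantities built from $\nu$ are $\sigma_r$, $b_r^\sigma$, and $\lambda_r$ from \eqref{sigma_r}, \eqref{B_r}, \eqref{lambda_r}, and the scheme sees $\nu$ only through integrals of the form $\int_{|z|>r}\beta_i(x_j+z)\,\nu(dz)$ and through these three scaling quantities — never through a density or any absolute-continuity property, which is why non-absolutely-continuous measures are admissible. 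The consistency input Lemma~\ref{lem:small_jump} controls $|\mathcal L_r\phi-\mathrm{tr}(\sigma_r^T D^2\phi\,\sigma_r)|$, and its Taylor-expansion proof only sees $\int_{|z|<r}|z|^2\,d\nu$; similarly the HJB consistency and discrete regularity bounds of Section~\ref{sec:HJB}, and the stability, mass-preservation, tightness, and equicontinuity estimates of Section~\ref{sec:FPK}, all reduce to the same three $r$-dependent integrals and to $\lambda_r\lesssim r^{-\sigma}$, which is what makes the CFL condition $h/r^\sigma=o(1)$ meaningful. Hence each such estimate holds under \ref{nu22}.

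Next I would establish \ref{U1} under \ref{nu22}--\ref{nu33}. By Theorem~\ref{thm:classical_extension}, Theorem~\ref{mfg:classical_solution} remains valid with \ref{A2}--\ref{nu3} replaced by \ref{nu22}--\ref{nu33}; in particular the HJB equation with any frozen input $m\in C([0,T],P(\rd))$ admits a classical solution that is $C^1$ (indeed $C^{1,3}_b$) in space, precisely because the heat-kernel regularization encoded in \ref{nu33} upgrades the Duhamel/fixed-point solution to a classical one. Thus the $C^1$-in-space requirement of \ref{U1} is automatic and need no longer be imposed separately. With \ref{nu22} replacing \ref{A2} in all the estimates and \ref{U1} secured by \ref{nu33}, conclusions (i)--(iii) of Theorem~\ref{thm:convergence_MFG-nondeg} follow by the identical compactness-and-stability argument of Section~\ref{sec:proof-main}.

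The main obstacle is bookkeeping rather than conceptual: one must go through each lemma of Sections~\ref{sec:HJB}--\ref{sec:FPK} and the proof in Section~\ref{sec:proof-main} and confirm that the pointwise bound of \ref{A2} is genuinely dispensable. The most delicate case is Lemma~\ref{lem:small_jump}, whose $r^{3-\sigma}$ (and symmetric $r^{4-\sigma}$) rates involve higher moments $\int_{|z|<r}|z|^k\,d\nu$ with $k=3,4$; but these reduce to the second moment via $\int_{|z|<r}|z|^k\,d\nu\le r^{k-2}\int_{|z|<r}|z|^2\,d\nu\lesssim r^{k-\sigma}$, and in the symmetric case the odd moment vanishes, so both rates survive under \ref{nu22}. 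No new analytic difficulty arises.
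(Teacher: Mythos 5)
Your proposal is correct and is essentially the paper's own argument: the paper justifies this theorem precisely by noting that \ref{A2} implies \ref{nu22}, that every estimate in Sections \ref{sec:HJB}--\ref{sec:proof-main} uses $\nu$ only through the integral quantities collected in \ref{nu22} (via $\sigma_r$, $b_r^\sigma$, $\lambda_r$ and the truncated moment bounds), and that \ref{U1} is supplied under \ref{nu33} via Theorem \ref{thm:classical_extension}. Your additional bookkeeping (e.g.\ the reduction $\int_{|z|<r}|z|^k\,d\nu\le r^{k-2}\int_{|z|<r}|z|^2\,d\nu$ for $k=3,4$ in Lemma \ref{lem:small_jump}) only makes explicit what the paper leaves as ``easy to check''.
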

\begin{corollary}
    Corollary \ref{corollary:mfg_convergence_classical} holds when you replace \ref{A2} -- \ref{nu3} by \ref{nu22} -- \ref{nu33}.
\end{corollary}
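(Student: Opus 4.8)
The plan is to rerun the five-step argument used for Corollary~\ref{corollary:mfg_convergence_classical}, replacing each ingredient that was proved under \ref{A2}--\ref{nu3} by its counterpart valid under the weaker hypotheses \ref{nu22}--\ref{nu33}. The only genuinely new point is to confirm that every such counterpart is available; once this is done the logical skeleton (compactness, then subsequential viscosity--very weak limit, then upgrade to classical via weak uniqueness of the two individual equations, then full convergence via classical uniqueness) is unchanged.

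First I would supply the two structural inputs. Existence of classical solutions and the validity of \ref{U1} follow from Theorem~\ref{thm:classical_extension} in place of Theorem~\ref{mfg:classical_solution}. The needed compactness and the identification of subsequential limits as viscosity--very weak solutions follow from the extension of Theorem~\ref{thm:convergence_MFG-nondeg} stated just above: since \ref{A2} implies \ref{nu22} and only the three \ref{nu22}-integrals enter the a priori and consistency estimates of Sections~\ref{sec:HJB} and \ref{sec:FPK} (e.g. the small-jump bound of Lemma~\ref{lem:small_jump}, whose constant is controlled by $r^{-2+\sigma}\int_{|z|<r}|z|^2\,d\nu$), all those estimates survive verbatim with \ref{nu22} in place of \ref{A2}. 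This yields a subsequence $(u_{\rho_n,h_n},m^{\epsilon_n}_{\rho_n,h_n})\to(u,m)$ with $(u,m)$ a viscosity--very weak solution.

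Next I would upgrade the limit to a classical--classical solution through weak uniqueness of the two individual equations. For the HJB equation, the comparison principle and uniqueness of Proposition~\ref{prop:viscosity_sol_HJB}(b) depend only on \ref{A0}, \ref{L2}, \ref{F1} and are therefore untouched; hence $u$ coincides with the $C_b^{1,3}$ classical solution furnished by \cite[Theorem~5.5]{ersland2020classical}, which is proved there under \ref{nu22}--\ref{nu33}. For the FPK equation I would re-examine Proposition~\ref{uniqueness_weak_soln_fp}: its proof uses only the solvability of the adjoint terminal-value problem in $C_b^{1,2}$, and this solvability is exactly what \cite[Theorem~5.5 and Proposition~5.8]{ersland2020classical} provide under the general hypotheses. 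Thus very weak solutions of the FPK equation are unique and $m$ coincides with the $C_b^{1,2}$ solution of \cite[Proposition~6.8]{ersland2020classical}. This proves part~(a). For part~(b), under the additional \ref{F5} and \ref{H5} uniqueness of classical solutions holds by Theorem~\ref{thm:classical_extension}(b), so all subsequential limits agree and the whole sequence converges, exactly as in the final step of the proof of Corollary~\ref{corollary:mfg_convergence_classical}.

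The step I expect to require the most care is the extension of Proposition~\ref{uniqueness_weak_soln_fp}, whose hypotheses are literally stated with \ref{A2}, \ref{nu3}: I would need to verify that the duality computation there uses only the heat-kernel regularity \ref{nu33} (for solvability of the dual equation) and the moment control \ref{nu22} (for the nonlocal terms), and nothing special to absolutely continuous, two-sided \ref{A2}-measures. Granting this, together with the remark that the integrals in \ref{nu22} are the only ones entering the estimates, the generalization is routine.
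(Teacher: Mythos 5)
Your proposal is correct and follows essentially the same route as the paper: the paper justifies this corollary precisely by the ingredients you assemble --- Theorem \ref{thm:classical_extension} in place of Theorem \ref{mfg:classical_solution}, the observation that only the integrals appearing in \ref{nu22} enter the scheme estimates (hence the stated extension of Theorem \ref{thm:convergence_MFG-nondeg}), and the fact that the cited results of \cite{ersland2020classical} (Theorem 5.5, Propositions 5.8 and 6.8) are proved there under the general hypotheses, so the weak-uniqueness steps for the HJB and FPK equations and the final uniqueness argument under \ref{F5}, \ref{H5} carry over verbatim. Your flagged concern about Proposition \ref{uniqueness_weak_soln_fp} being literally stated under \ref{A2}, \ref{nu3} is exactly the point the paper leaves implicit, and your resolution (the proof only needs solvability of the adjoint terminal-value problem, available under \ref{nu22}--\ref{nu33}) is the intended one.
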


\subsection*{The Wasserstein metric $d_1$ versus our metric $d_0$}
The typical setting for the FPK
 equations in the MFG literature seems to be the metric space $ ( P_{1} ( \rd ) , d_{1} )$, that is the $1-$Wasserstein space
 $\mathcal{W}_{1}$ of probability measures with finite first moment. This is also the case in \cite{MR3828859} where convergence results are given for SL schemes for local nondegenerate MFGs in $\rd$. In this paper we can not assume finite first moments if we want
 to cover general non-local operators. An example is the fractional Laplacian $ -( - \Delta)^{\frac{\sigma}{2}}$ 
 for $\sigma < 1$, where the underlying
 $\sigma$-stable process only has finite moments
 of order less than $\sigma$.
 Instead we consider the weaker metric space $(P ( \rd ), d_{0} )$, which
 is just a metrization of the weak (weak-* in $C_{b}$) convergence of probability measures.
In this topology we can consider processes, probability measures and solutions
of the FPK equations that do not have
any finite moments or any restrictions on the tail behaviour of the corresponding L\'evy measures.

 Of course, under additional assumptions convergence in $d_0$ implies convergence in $d_1$.

\begin{lemma}
If $m_{n}$ converges to $m$ in $(P ( \rd ),d_0)$ and $m_{n}$ and $m$ has uniformly bounded $(1+\delta)$-moments for $\delta>0$,
then $m_{n} \to m$ in $ ( P_{1} ( \rd ) , d_{1} ) $.
\end{lemma}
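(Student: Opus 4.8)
The plan is to use Kantorovich--Rubinstein duality to write $d_1$ as a supremum over $1$-Lipschitz test functions, and then to bridge the gap between $d_1$ and $d_0$ by truncating each test function at a large radius $R$: the truncated part is controlled by $d_0$-convergence, while the tail is controlled \emph{uniformly in $n$} by the $(1+\delta)$-moment bound.

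First I would recall that $d_1(\mu,\nu)=\sup\big\{\int_{\rd} f\,d(\mu-\nu):\ \Lip(f)\le 1\big\}$. Since $m_n$ and $m$ are probability measures, adding a constant to $f$ leaves $\int f\,d(m_n-m)$ unchanged, so we may assume $f(0)=0$; then $|f(x)|\le|x|$. Set $M:=\max\big\{\sup_n\int_{\rd}|x|^{1+\delta}\,dm_n,\ \int_{\rd}|x|^{1+\delta}\,dm\big\}$, which is finite by hypothesis; in particular all relevant first moments are finite, so the integrals above are well defined.

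Next, for $R\ge 1$ I would introduce the value-truncation $f_R:=\max(-R,\min(R,f))$. Being a $1$-Lipschitz contraction applied to $f$, the function $f_R$ is again $1$-Lipschitz, and $\|f_R\|_0\le R$, so $\tfrac1R f_R\in\Lip_{1,1}(\rd)$. The definition of $d_0$ then gives $\big|\int_{\rd} f_R\,d(m_n-m)\big|\le R\,d_0(m_n,m)$. The truncation error $f-f_R$ is supported in $\{|x|>R\}$, where $|f(x)-f_R(x)|\le|f(x)|\le|x|\le|x|^{1+\delta}/R^\delta$; integrating against $m_n$ and against $m$ and invoking the moment bound yields $\big|\int_{\rd}(f-f_R)\,dm_n\big|\le M/R^\delta$, and the same for $m$. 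Combining these estimates and taking the supremum over $1$-Lipschitz $f$ gives
\[
 d_1(m_n,m)\ \le\ R\,d_0(m_n,m)+\frac{2M}{R^\delta}.
\]

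Finally I would send $n\to\infty$, using $d_0(m_n,m)\to 0$, to obtain $\limsup_{n}d_1(m_n,m)\le 2M/R^\delta$ for every fixed $R\ge 1$, and then let $R\to\infty$ to conclude $d_1(m_n,m)\to 0$. There is no serious obstacle; the only point requiring care is the order of the two limits — the factor $R$ multiplying $d_0(m_n,m)$ forces one to send $n\to\infty$ first and $R\to\infty$ afterwards — and it is precisely the \emph{strict} gain in integrability ($1+\delta$ rather than $1$) that makes the tail term $2M/R^\delta$ vanish, uniformly in $n$, as $R\to\infty$.
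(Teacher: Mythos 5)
Your proof is correct, but it takes a genuinely different route from the paper. The paper's argument is essentially by citation: it recalls that convergence in $(P_1(\rd),d_1)$ is, by definition (Villani, Definition 6.8), equivalent to weak convergence plus convergence of first moments, and then invokes Proposition 1.1 and Lemma 1.5 of the cited lecture notes to get convergence of first moments from the uniform $(1+\delta)$-moment bound (a uniform-integrability argument). You instead give a direct, self-contained estimate: Kantorovich--Rubinstein duality, normalization $f(0)=0$ so that $|f(x)|\le |x|$, truncation $f_R$ at height $R$ (which stays $1$-Lipschitz, so $\tfrac1R f_R\in\Lip_{1,1}(\rd)$ and the truncated part is controlled by $R\,d_0$), and a tail bound $|f-f_R|\le |x|^{1+\delta}/R^\delta$ on $\{|x|>R\}$ controlled uniformly in $n$ by the moment bound, yielding $d_1(m_n,m)\le R\,d_0(m_n,m)+2M/R^{\delta}$; the iterated limit $n\to\infty$ then $R\to\infty$ finishes the proof, and your care about the order of limits is exactly right. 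What your approach buys is a quantitative relation between the two metrics: optimizing over $R$ gives $d_1(m_n,m)\le C\,M^{1/(1+\delta)}\,d_0(m_n,m)^{\delta/(1+\delta)}$, i.e.\ a H\"older-type modulus, which the paper's soft compactness/uniform-integrability argument does not provide; what the paper's approach buys is brevity and the flexibility of the abstract characterization (it applies verbatim with any uniform-integrability condition on $|x|$, not just a power moment).
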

Convergence in $P_{1} ( \rd )$ \cite[Definition 6.8]{villani2008optimal} is by definition equivalent to  weak convergence plus convergence of first moments, and the result follows from e.g. Proposition 1.1 and Lemma 1.5 in \cite{ACDPS:Note}.

We then have the following version of Theorem \ref{thm:convergence_MFG} and Theorem \ref{thm:convergence_MFG-nondeg}.
\begin{corollary}
    Assume $m_{0} \in P_{1+\delta} ( \mathbb{R} )$, $\int_{\rd \setminus B_{1}} |z|^{1+\delta} d \nu ( z ) < \infty $ for some $\delta>0$,
    and the assumptions of Theorem \ref{thm:convergence_MFG} and Theorem \ref{thm:convergence_MFG-nondeg}. 
    Then the statements of Theorem \ref{thm:convergence_MFG} and Theorem \ref{thm:convergence_MFG-nondeg}
    hold if we replace $(P,d_0)$ by $(P_{1},d_1)$ in part (ii).
\end{corollary}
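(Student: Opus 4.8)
The plan is to deduce the $d_1$-statements directly from Theorems \ref{thm:convergence_MFG} and \ref{thm:convergence_MFG-nondeg} together with the preceding Lemma, the only missing ingredient being a uniform control of the $(1+\delta)$-moments of the approximations. First I would invoke part (ii) of the relevant theorem to obtain, along a subsequence, convergence of $\{m^{\epsilon_n}_{\rho_n,h_n}\}_n$ in $C([0,T],(P(\rd),d_0))$ to some limit $m$. The preceding Lemma upgrades weak convergence to $d_1$-convergence as soon as the measures involved have uniformly bounded $(1+\delta)$-moments, so the whole argument reduces to establishing
\[
\sup_{n}\ \sup_{t\in[0,T]}\ \int_{\rd}|x|^{1+\delta}\,dm^{\epsilon_n}_{\rho_n,h_n}(t)(x)<\infty,
\]
transferring this bound to the limit $m$, and then lifting the pointwise-in-$t$ statement to the $C([0,T],\cdot)$ topology.

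For the moment bound I would work directly with the underlying approximate Markov chain $X_k$ of \eqref{discretization_process}, whose laws are the $\tilde m_{\cdot,k}[\mu]$, since $m^{\epsilon_n}_{\rho_n,h_n}$ is the interpolation of its distribution. Setting $M_k:=\mathbb E\big[(1+|X_k|^2)^{(1+\delta)/2}\big]$ and applying the one-step transition — equivalently, testing the scheme \eqref{Fokker-Planck_discretized} against $\phi(x)=(1+|x|^2)^{(1+\delta)/2}$, whose expected increment is governed by the generator $\hat{\mathcal L}^\alpha$ — I would split the increment into (a) the drift/diffusion part, (b) the medium jumps $r<|z|<1$, and (c) the large jumps $|z|\ge1$. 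The key observations are that the compensator $b_r^\sigma$ hard-wired into $\bar b(\alpha)$ exactly removes the first-order (mean) contribution of the medium jumps, so that (a)+(b) produce only an $O(h)(1+M_k)$ increment, controlled via \ref{A0}, \ref{A2} (the integrability $\int 1\wedge|z|^2\,d\nu<\infty$ and the bound $d\nu/dz\le C|z|^{-d-\sigma}$) together with the boundedness of $\alpha_{\mathrm{num}}=D_pH(\cdot,D\tilde u^{\epsilon}_{\rho,h})$ coming from \ref{H1} and the uniform discrete Lipschitz bound on $u$ (as in Proposition \ref{prop:viscosity_sol_HJB}(c)); while the uncompensated large jumps (c) contribute $O(h)\int_{|z|\ge1}|z|^{1+\delta}\,d\nu$, finite precisely by the standing hypothesis $\int_{\rd\setminus B_1}|z|^{1+\delta}\,d\nu<\infty$. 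This yields a recursion $M_{k+1}\le(1+Ch)M_k+Ch$ with $C$ independent of $n$, and a discrete Gr\"onwall argument gives $M_k\le e^{CT}(M_0+C)$ with $M_0=\int_{\rd}(1+|x|^2)^{(1+\delta)/2}\,dm_0<\infty$ by $m_0\in P_{1+\delta}$. Uniformity in the truncation parameter is the delicate point: both $b_r^\sigma$ and $\lambda_r$ diverge as $r\to0$, so one must check that the CFL conditions (in particular $\tfrac{h}{r^\sigma}=o(1)$) keep $h\,b_r^\sigma$ and $h\,\lambda_r$, and hence all error terms, $o(1)$, so that no $e^{CT\lambda_r}$-type blow-up occurs over the $N=T/h$ steps.

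With the uniform bound in hand, lower semicontinuity of $m\mapsto\int|x|^{1+\delta}\,dm$ under weak convergence (Fatou) transfers it to the limit $m(t)$, and the preceding Lemma then gives $m^{\epsilon_n}_{\rho_n,h_n}(t)\to m(t)$ in $(P_1,d_1)$ for each fixed $t$. To obtain convergence in $C([0,T],(P_1,d_1))$ I would note that on the set of measures with $(1+\delta)$-moments bounded by the above constant the metrics $d_0$ and $d_1$ induce the same topology, so the equicontinuity in time already established in the $d_0$-setting is also equicontinuity in $d_1$; an Arzel\`a--Ascoli argument then upgrades precompactness and convergence to be uniform in $t$. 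Since the limit points are the same measures as before, parts (i) and (iii) of Theorems \ref{thm:convergence_MFG}--\ref{thm:convergence_MFG-nondeg} are unaffected and only the topology in part (ii) changes, as claimed. I expect the main obstacle to be precisely the uniform (in $n$, and especially in $r$) $(1+\delta)$-moment estimate: tracking quantitatively the cancellation of the medium-jump mean against $b_r^\sigma$ and verifying, through the CFL conditions, that the divergence of $\lambda_r$ and $b_r^\sigma$ as $r\to0$ does not destroy the Gr\"onwall bound over $O(1/h)$ steps.
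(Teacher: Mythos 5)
Your proposal is correct and follows what is essentially the paper's intended route: the paper states this corollary without a written proof, meaning it to follow from the lemma immediately preceding it (weak convergence plus uniformly bounded $(1+\delta)$-moments implies $d_1$-convergence) together with uniform propagation of $(1+\delta)$-moments by the scheme, which is exactly the structure of your argument. Your moment estimate — testing the discrete FPK scheme with $(1+|x|^2)^{(1+\delta)/2}$, exploiting the cancellation of the medium-jump mean against $b_r^\sigma$, using $\int_{|z|\geq 1}|z|^{1+\delta}\,d\nu<\infty$ for the large jumps, and closing with a discrete Gr\"onwall bound — is a faithful adaptation of the paper's own proof of Theorem \ref{thm:thightness_m}, so it fills in the omitted step in the way the authors clearly intended.
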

Note that the number of moments of $m$ is determined by the number of moments of  $1_{|z|>1}\nu$ (and $m_0$), see e.g. the discussion in section 2.3 in \cite{ersland2020classical}. Moreover, if $1_{|z|>1}\nu$ has at most $\alpha$ finite moments, then $\mathcal Lu$ is well-defined only if $u$ has at most order $\alpha$ growth at infinity. Hence in the nonlocal case there is "duality" between the moments of $m$ and the growth of $u$. Note that $um$ will always be integrable which is natural since then e.g. $Eu(X_t,t) =\int u(x,t)m(dx,t)$ is finite. 

In our case we assume no moments and have to work with bounded solutions $u$.

\subsection*{On moments and weak compactness in $L^p$ in the degenerate case}
Previous results for Semi-Lagrangian schemes in the first order
and the degenerate second order case \cite{carliniSilva2014semi1st, carlini2015semi} 
cover the case $m_{0} \in P_{1} ( \mathbb{R} ) \cap L^{\infty} ( \mathbb{R} )$, 
which means that $m_{0}$ has finite first-moments.
Our results assume
$m_{0} \in P ( \mathbb{R} ) \cap L^{p} ( \mathbb{R} )$, for $p \in ( 1,\infty ]$, and hence
no moment bounds and possibly unbounded $m_{0}$. When $p < \infty$ we
have weak compactness in $L^{1}$ instead of weak-* compactness in $L^{\infty}$. 

Since our results in the degenerate case allows for $\mathcal L=0$, they immediately give an extension to this $P  \cap L^{p}$ setting for the convergence results for first order problems of \cite{carliniSilva2014semi1st}. Moreover, the same conditions, arguments, and results easily also holds in the local diffusive case considered in \cite{carlini2015semi}. 

\section{On the SL scheme for the HJB equation} \label{sec:HJB}

We prove results for the numerical approximation of the HJB equation, including monotonicity, consistency, and different uniform a priori stability and regularity estimates. Using the ``half-relaxed" limit method \cite{barles1991convergence}, we then show convergence in the form of $v_{\rho_{n}, h_{n}} [ \mu_{n} ] ( t_{n},x_{n} ) \rightarrow v [ \mu ] ( t,x )$, where $v [ \mu ]$ is the (viscosity) solution of the continuous HJB equation. Let $B ( \mathcal{G}_{\rho})$ be the set of all bounded functions defined on  $\mathcal{G}_{\rho}$. 

\begin{thm}\label{thm:schm_prop}
    Assume \ref{A0}, \ref{L1}, $\rho,h,r >0$, $\mu \in  C ( [0,T], P ( \rd ))$, and let $S_{\rho,h,r} [ \mu ]$ denote the scheme defined in \eqref{schme_HJ}.
        \begin{itemize} 

            \item[(i)] (Bounded control) If
                $\phi \in \text{Lip} ( \mathbb{R}^d )$, $S_{\rho,h,r} [ \mu ] ( \phi,i,k )$ 
                 has a minimal control and $|\alpha| \leq K$ 
                where $K$ only depends on  $\| D_{x} \phi  \|_{0} $ and 
                the growth of $L$ as $|x| \rightarrow \infty$. \medskip
       \item[(ii)] (Monotonicity) For all $v,w \in B ( \mathcal{G}_{\rho} ) $ with $v\leq w$ we have,
            \[
                S_{\rho,h,r} [\mu] (v,i,k) \leq S_{\rho,h,r} [\mu] (w,i,k) \text{ for all } i\in\mathcal{G}_{\rho}, \ k= 0,\ldots, N-1.
            \]
        \item[(iii)] (Commutation by constant) For every $c\in \R$ and $w \in B (\mathcal{G}_{\rho} )$,
            \[
                S_{\rho,h,r} [\mu] (w+c,i,k) = S_{\rho,h,r} [\mu] (w,i,k) +c \text{ for all } i\in\mathcal{G}_{\rho}, \ k= 0,\ldots, N-1.
            \]
        \end{itemize}
Assume also \ref{A2} and \ref{F2}.
        \begin{itemize}
        \item[(iv)] (Consistency) Let $\rho_{n},h_{n}, r_{n} \xrightarrow{n \to \infty} 0$ under CFL conditions $\frac{\rho_{n}^{2}}{h_{n}},\frac{h_{n}}{ r_{n}^{\sigma}} = o ( 1 )$, 
            grid points $(t_{k_{n}}, x_{i_{n}}) \to (t,x)$, and  
            $\mu_{n},\mu \in C ( [0,T]; P (\R^d))$ such that 
            $\mu_{n} \to \mu$.  
            Then, for every $\phi \in C_{c}^{\infty} (\R^d \times [0,T))$,  
            \begin{align*}
                \lim_{n \to \infty }\frac{1}{h_{n}} \big[ \phi ( t_{k_{n}+1}, x_{i_{n}}) - & S_{ \rho_{n}, h_{n}, r_n} [ \mu_{n} ] ( \phi_{\cdot,k_{n}+1}, i_{n}, k_{n} ) \big] \\
            = &  - \partial_{t} \phi ( t,x ) - \inf_{\alpha \in \mathbb{R}^d } \big[  L ( x, \alpha ) - D \phi \cdot \alpha  \big]  - \mathcal{L} \phi ( x )  - F ( x, \mu ( t )  ).
            \end{align*}
    \end{itemize}
\end{thm}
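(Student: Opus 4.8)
These follow directly from the structure of \eqref{schme_HJ_S}. For (ii) and (iii), recall the basis functions satisfy $\beta_j\ge0$ and $\sum_j\beta_j\equiv1$, so $I$ is monotone and reproduces constants; since the diffusion weights $\tfrac{e^{-h\lambda_r}}{2d}$ and the jump weight $\tfrac{1-e^{-h\lambda_r}}{\lambda_r}$ are nonnegative and the total mass of the averaging operator is $(1-e^{-h\lambda_r})+e^{-h\lambda_r}=1$ (using $\int_{|z|\ge r}\nu(dz)=\lambda_r$), each fixed-$\alpha$ expression is monotone in its argument and commutes with the addition of a constant, and taking $\inf_\alpha$ preserves both. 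For (i), by \ref{L0} and continuity of $I[\phi]$ the objective is continuous in $\alpha$; comparing any candidate minimizer with $\alpha=0$ and using that linear interpolation preserves the Lipschitz bound $\|D_x\phi\|_0$ gives $L(x_i,\alpha)\le L(x_i,0)+\|D_x\phi\|_0\,|\alpha|$, whence the superlinear growth \ref{L1} forces $|\alpha|\le K$ with $K=K(\|D_x\phi\|_0,L)$, and the infimum is attained on this compact set.

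\textbf{Consistency (iv): set-up.} Fix $\phi\in C_c^\infty(\rd\times[0,T))$ and write $\phi^{k+1}:=\phi(t_{k+1},\cdot)$. The first step is to remove the interpolation: since $\|I[\psi]-\psi\|_0\le\|D^2\psi\|_0\rho^2$ and the averaging weights in \eqref{schme_HJ_S} have total mass $1$, replacing $I[\phi^{k+1}]$ by $\phi^{k+1}$ in every term introduces an error bounded by $C\|D^2\phi\|_0\rho_n^2$, which divided by $h_n$ is $O(\rho_n^2/h_n)=o(1)$ by the CFL condition. After this reduction I expand, for each fixed $\alpha$, the smooth increments around $x_i$ and group the outcome into three pieces: the running-cost piece $h F+h L$, the jump piece $\tfrac{1-e^{-h\lambda_r}}{\lambda_r}\int_{|z|\ge r}\phi^{k+1}(x_i+z)\,\nu(dz)$, and the diffusion piece $\tfrac{e^{-h\lambda_r}}{2d}\sum_m\big(\phi^{k+1}(x_i+h\bar b(\alpha)+\sqrt{hd}\,\sigma_r^m)+\phi^{k+1}(x_i+h\bar b(\alpha)-\sqrt{hd}\,\sigma_r^m)\big)$.

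\textbf{Consistency (iv): the limit.} Taylor-expanding the diffusion piece and using the symmetry of the $\pm$ increments, the first-order term gives $h\,D\phi\cdot\bar b(\alpha)$ and the second-order term reproduces $h\,\mathrm{tr}(\sigma_r^T D^2\phi\,\sigma_r)$, which by Lemma \ref{lem:small_jump} equals $h\,\mathcal L_r\phi$ up to $O(r^{3-\sigma}\|D^3\phi\|_0)$; the odd cross terms cancel and the surviving Taylor remainder is controlled by terms of order $h^{1/2}|\sqrt{hd}\,\sigma_r|^2$ that vanish for $\sigma<2$ under the CFL budget. Writing $\bar b(\alpha)=-\alpha-b_r^\sigma$ with $b_r^\sigma$ as in \eqref{B_r}, the drift splits into the control part $-D\phi\cdot\alpha$ and the compensator $-D\phi\cdot b_r^\sigma=-\int_{r<|z|<1}D\phi\cdot z\,\nu(dz)$. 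For the jump piece, $\tfrac1h(1-e^{-h\lambda_r})\to1$ once $h\lambda_r\sim h/r^\sigma=o(1)$, and after subtracting the mass term it becomes $-\int_{|z|\ge r}[\phi^{k+1}(x_i+z)-\phi^{k+1}(x_i)]\,\nu(dz)$; combining this integral with the compensator recovers exactly $-\mathcal L^r\phi$. Together with $-\mathcal L_r\phi$ from the diffusion piece this yields $-\mathcal L\phi=-(\mathcal L_r+\mathcal L^r)\phi$, the only approximation error being the $O(r^{3-\sigma})$ from Lemma \ref{lem:small_jump}, which vanishes as $r\to0$. The running-cost piece contributes $-L(x_i,\alpha)-F(x_i,\mu_n(t_{k_n}))$, and by \ref{F2}, $\mu_n\to\mu$, and $(t_{k_n},x_{i_n})\to(t,x)$ the cost term converges to $-F(x,\mu(t))$. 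Finally, the backward time-step of the scheme produces $-\partial_t\phi(t,x)$ via a first-order Taylor expansion in time.

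\textbf{Passing the infimum, and the main obstacle.} Since subtracting an infimum gives a supremum, it remains to exchange $\sup_\alpha$ with the limit. Part (i), applied to the smooth (hence Lipschitz) function $\phi^{k+1}$, confines the optimal $\alpha$ to a fixed compact set $\{|\alpha|\le K\}$ independent of $n$; on this set all the above estimates are uniform in $\alpha$ (the only $\alpha$-independent large quantity is $b_r^\sigma$), so the per-$\alpha$ convergence is uniform and the supremum converges to $\sup_\alpha\{-L(x,\alpha)+D\phi\cdot\alpha\}=-\inf_\alpha[L(x,\alpha)-D\phi\cdot\alpha]$, yielding the asserted limit $-\partial_t\phi-\inf_\alpha[L-D\phi\cdot\alpha]-\mathcal L\phi-F$. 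I expect the genuinely delicate point to be the nonlocal balancing: the large-jump integral and the compensating drift $b_r^\sigma$ each diverge individually as $r\to0$ when $\sigma\ge1$, and only their combination stays bounded, so one must perform the cancellation into $\mathcal L^r\phi$ \emph{before} passing to the limit while simultaneously controlling the prefactor $\tfrac{1-e^{-h\lambda_r}}{h\lambda_r}$, the diffusion remainder, and the drift-squared term $h|\bar b(\alpha)|^2$ --- all of which hinge on the precise CFL conditions $\rho^2/h,\,h/r^\sigma=o(1)$ together with Lemma \ref{lem:small_jump}.
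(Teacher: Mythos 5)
Your proposal is correct and follows essentially the same route as the paper's proof: fourth/second-order Taylor expansion of the diffusion piece with Lemma \ref{lem:small_jump} identifying $\mathcal{L}_r$, cancellation of the compensator drift $b_r^\sigma$ against the centered large-jump integral to recover $\mathcal{L}^r$, interpolation error $O(\rho^2/h)$, and uniformity of all estimates over the compact set of controls furnished by part (i), with (ii)--(iii) read off from positivity and unit mass of the weights. The only slip is notational: the prefactor limit should be $\frac{1-e^{-h\lambda_r}}{h\lambda_r}\to 1$ rather than $\frac{1}{h}(1-e^{-h\lambda_r})\to 1$, which is in fact the quantity you correctly invoke later when combining the jump piece with the compensator.
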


\begin{proof}
    
 \noindent   (i) Since 
\begin{align*}
    h ( \alpha ) := \frac{e^{-h \lambda_{r}}}{2d} \sum_{m=1}^{d}  I [ \phi ] ( x_{i} + h \bar{b} ( \alpha ) + \sqrt{h} \sigma_{r}^{m}) + I [ \phi ] ( x_{i} + h \bar{b} ( \alpha ) - \sqrt{h} \sigma_{r}^{m})
\end{align*}
is Lipschitz in $\alpha$ (maximum linear growth at infinity), 
while  $L ( x, \alpha )$ is coercive (more than linear growth at infinity) by \ref{L1}, 
there exists a ball $B_{R}$, where $R$ depends on the Lipschitz constant 
of  $I [ \phi ]$ and the growth of $L$, 
such that the minimizing control $ \bar{\alpha}$ of $S_{\rho,h,r} [ \mu ] ( \phi,i,k )$ 
belongs to $ B_{R}$. \medskip

\noindent (ii) and (iii) Follows directly from the definition of the scheme. \medskip

\noindent (iv)  For ease of notation, we write $\rho,h,r,\mu$ instead of $\rho_{n}, h_{n}, r_{n}, \mu_{n}$. A $4$th order Taylor expansion of $\phi$ gives
    \begin{align*}
        \phi (x+h \bar{b}(\alpha)  \pm & \sqrt{hd} \sigma_{r}^{m})  =  \phi (x) + D \phi (x) \cdot (h \bar{b}(\alpha) \pm \sqrt{hd} \sigma_r^{m} )  \\
        & + \frac{hd}{2} ( \sigma_{r}^{m})^T  D^2 \phi (x) \sigma_r^{m} \pm  \sqrt{d}\,h^{\frac{3}{2}}  b(\alpha)^{T} D^{2} \phi ( x ) \sigma_{r}^{m} + \frac{h^{2}}{2} \bar{b} ( \alpha )^{T} D^{2} \phi ( x ) \bar{b}( \alpha ) \\[0.2cm]
        &  + \sum_{|\beta| = 3} \frac{D^{\beta} \phi ( x )}{\beta !} ( h \bar{b} ( \alpha ) \pm \sqrt{hd}\, \sigma_{r}^{m} )^{\beta} + \sum_{|\beta| = 4} \frac{D^{\beta} \phi ( \xi_{\pm} )}{\beta !} ( h \bar{b} ( \alpha ) \pm \sqrt{hd}\, \sigma_{r}^{m} )^{\beta},
    \end{align*}
    for some $\xi_{\pm} \in \mathbb{R}^{d}$. Using that $\bar{b} ( \alpha ) = - \alpha - \int_{ r \leq |z| \leq 1 } z \nu ( dz )   $, and by \ref{A2} $\int_{r \leq |z| \leq 1} z \nu ( dz ) = O ( r^{1-\sigma} )$, we get that 
    \begin{align}\label{err_bnd-consist00}
  &  \phi (x+h \bar{b}(\alpha) + \sqrt{hd} \sigma_{r}^{m}) + \phi (x+h \bar{b}(\alpha) - \sqrt{hd} \sigma_{r}^{m}) -2\phi (x) \\
   \notag =   -2 h D & \phi ( x ) \cdot \alpha - 2h \int_{ r < |z| <1 } D \phi ( x ) \cdot z \nu ( dz )
        + hd ( \sigma_{r}^{m})^T \cdot D^2\phi(x) \cdot \sigma_r^{m}   + \mathcal{O}\big(h^2r^{2-2 \sigma}\big).
    \end{align}
    We used that $\frac{h^{2}}{2} \bar{b} ( \alpha )^{T} D^{2} \phi ( x ) \bar{b}( \alpha )$ is of order $\mathcal{O} ( h^{2} r^{2-2 \sigma} )$,
    the $3$rd order terms are of order  $\mathcal{O} ( h^{3} r^{3- 3 \sigma} + h^{2} r^{1-\sigma})$, 
     and the $4$th order terms are of order 
     $( \sqrt{hd} \sigma_{r} )^{4} = \mathcal{O} (h^2 r^{4-2 \sigma}  )$. Then the
    error of the Taylor expansion is $O ( h^{2} r^{2- 2 \sigma} )$.
    Using Lemma \ref{lem:small_jump},
\begin{align} \label{consist_esti0}
    \notag &\phi ( x_{i} ) - S_{\rho,h,r} [ \mu ] ( \phi, i,k )\\
\notag  & = \  \phi ( x_{i} ) - \inf_{\alpha} \bigg[ h F ( x_{i}, \mu ( t_{k+1} ) ) + h L (x_{i}, \alpha )  + \frac{e^{-h \lambda_r}}{2d} \sum_{m=1}^{d} \Big(  2\phi ( x_{i} ) -  2h D \phi ( x_{i} ) \cdot \alpha  \\ 
\notag        & \hspace{2.5cm} + h d ( \sigma_{r}^{m} )^{T} D^{2} \phi ( x_{i} ) \sigma_{r}^{m} - 2 h \int_{ r < |z| < 1 } D \phi ( x_{i} ) \cdot z \nu ( dz ) \Big) \\
\notag        & \hspace{2.5cm} + \frac{1-e^{-h\lambda_r}}{\lambda_r} \int_{|z| > r}  \phi (x_{i} + z )  \nu (dz) + \mathcal{O} ( \rho^{2} ) + \mathcal{O}(h^2 r^{2-2\sigma} )\bigg] \\
        & = \  h F ( x_{i}, \mu ( t_{k+1} ) ) - \inf_{\alpha} \bigg[  h L ( x_{i}, \alpha ) - h e^{-h \lambda_{r}}   D \phi ( x_{i} ) \cdot \alpha \bigg] + ( 1 -  e^{-h \lambda_{r}}) \phi ( x_{i})  \\ 
\notag        &\quad  - h e^{-h \lambda_{r}} \Big(\mathcal{L}_{r} \phi ( x_{i} ) +\mathcal{O}(r^{3-\sigma})\Big)  +  h e^{-h \lambda_{r}} \int_{ r < |z|< 1 } D \phi ( x_{i} ) \cdot z \nu ( dz )  \\
\notag        & \quad -  \frac{1-e^{-h\lambda_r}}{\lambda_r} \int_{|z|> r}  \phi (x_{i} + z ) \nu (dz) + \mathcal{O} ( \rho^{2} + h^2 r^{2-2\sigma}).
    \end{align}

 \noindent Using that $\int_{|z|<r} |z|^2 \nu(dz)\leq K r^{2-\sigma}$ (by \ref{A2}), 
 for the small jump operator $\mathcal{L}_{r}$ (defined in \eqref{inner_outer_operator}) we have 
 \begin{align}\label{consist_esti1}
 |\mathcal{L}_r\phi(x_i) - e^{-h\lambda_r}\mathcal{L}_r\phi(x_i)|\leq h\lambda_r \, r^{2-\sigma}\|D^2\phi\|_{0}.
\end{align}    
Again, as $\int_{r<|z|<1} |z|\nu(dz)\leq K r^{1-\sigma}$ and $\int_{|z|>1} \nu(dz)\leq K$, 
for the long jump operator $\mathcal{L}^{r}$ (defined in \eqref{inner_outer_operator}) we have  that
\begin{align}\label{consist_esti2}
  \notag       \Big|\mathcal{L}^r\phi(x_i) & +e^{-h \lambda_{r}} \int_{ r <|z| < 1} D \phi ( x_{i} ) \cdot z \nu ( dz ) - \frac{1-e^{-h\lambda_r}}{h \lambda_r} \int_{|z| > r}  (\phi (x_{i} + z ) - \phi(x_i)) \nu (dz)\Big|\\
 \notag  & \leq    K (1-e^{-h \lambda_{r}}) r^{1-\sigma} \|D\phi\|_0  + K \Big(1-\frac{1-e^{-h\lambda_r}}{h \lambda_r}\Big) \Big( r^{1-\sigma} \|D\phi\|_0  + \|\phi\|_0 \Big) \\
         & \leq  K \Big( h \lambda_r r^{1-\sigma} \|D\phi\|_0 + h\lambda_r \|\phi\|_0\Big). 
\end{align} 

\noindent Recalling that $\mathcal{L} \phi ( x_{i} ) = \mathcal{L}_{r} \phi ( x_{i} ) + \mathcal{L}^{r} \phi ( x_{i} ) $,
combining \eqref{consist_esti0} with \eqref{consist_esti1} and \eqref{consist_esti2}, we find 
\begin{align*}
\phi ( x_{i} ) - S_{\rho,h,r} [ \mu ] ( \phi, i,k ) & = h F ( x_{i}, \mu ( t_{k+1} ) ) - h \inf_{\alpha} \bigg[   L ( x_{i}, \alpha ) -  D \phi ( x_{i} ) \cdot \alpha \bigg]  - h  \mathcal{L} \phi ( x_{i} )  \\[0.2cm] 
        &\hspace*{1cm} + \mathcal{O} \big( h^2\lambda_r + hr^{3-\sigma} + h^2 \lambda_r r^{1-\sigma} + \rho^2 + h^2 r^{2-2\sigma}\big).
\end{align*}
 As $|\lambda_r| \leq C r^{-\sigma}$, we have 
\begin{align*}
& \frac{\phi(t_{k},x_i)- \phi(t_{k+1},x_i)}{h} +  \frac{1}{h} \Big(\phi(t_{k+1},x_i)  - S_{ \rho, h} [ \mu ] ( \phi_{\cdot,k+1}, i, k )\Big) \\
& =  - \partial_t \phi(t_{k},x_i) -   \mathcal{L} \phi (t_{k+1}, x_{i} ) +  F ( x_{i}, \mu ( t_{k+1} ) )   -  \inf_{\alpha} \bigg[   L (x_{i}, \alpha)  -  D \phi ( t_{k+1},x_{i} ) \cdot \alpha \bigg]    \\
& \hspace{4cm} + \mathcal{O} \big(h+ h r^{-\sigma} + r^{3-\sigma} + h r^{1-2\sigma} + \frac{\rho^2}{h} + h r^{2-2\sigma} \big).
\end{align*}  
Hence the result follows by taking the limit $n \to \infty$ with $\frac{\rho_{n}^2}{h_{n}} ,\frac{h_{n}}{r_{n}^{\sigma}} = o ( 1 )$.
\end{proof}

\begin{thm} \label{hjb_discrete_comparison}
    (Comparison) Assume $\mu_1,\mu_2\in C([0,T],P(\rd))$, \ref{A0}, and \ref{L1}.
    Let $u_{\rho,h}[\mu_1]$ and ${u}_{\rho,h}[\mu_2] $ be defined by the scheme 
    \eqref{extnd_dscrt_schm_HJ} for $\mu=\mu_1,\mu_2$, respectively. Then,
    \begin{align*}
        \| u_{\rho,h}[\mu_1] - u_{\rho,h}[\mu_2]  \|_{0} \leq T \| F(\cdot,\mu_1) - F(\cdot,\mu_2)   \|_{0} + \| G(\cdot,\mu_1(T)) - G(\cdot,\mu_2(T))  \|_{0} .  
    \end{align*}
\end{thm}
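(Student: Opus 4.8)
The plan is to exploit that the density $\mu$ enters the scheme $S_{\rho,h,r}[\mu]$ only through the additive running-cost term $hF(x_i,\mu(t_k))$, which sits outside the infimum over $\alpha$ in \eqref{schme_HJ_S}. Writing $\tilde u_{i,k}[\mu]$ for the grid solution of \eqref{schme_HJ} and $e_{i,k} := \tilde u_{i,k}[\mu_1] - \tilde u_{i,k}[\mu_2]$, I would first record the elementary identity
\[
S_{\rho,h,r}[\mu_1](v,i,k) - S_{\rho,h,r}[\mu_2](v,i,k) = h\big(F(x_i,\mu_1(t_k)) - F(x_i,\mu_2(t_k))\big),
\]
valid for any fixed $v \in B(\mathcal{G}_\rho)$, since every remaining term in \eqref{schme_HJ_S} (the Lagrangian, the interpolated jump and diffusion contributions) is independent of $\mu$.

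Next I would run a backward induction in $k$ based on the monotonicity and commutation-by-constant properties of Theorem \ref{thm:schm_prop}(ii)--(iii). At the terminal level $\tilde u_{i,N}[\mu] = G(x_i,\mu(t_N))$, so $\|e_{\cdot,N}\|_0 \leq \|G(\cdot,\mu_1(T)) - G(\cdot,\mu_2(T))\|_0$. For the inductive step, the pointwise bound $\tilde u_{\cdot,k+1}[\mu_1] \leq \tilde u_{\cdot,k+1}[\mu_2] + \|e_{\cdot,k+1}\|_0$ combined with monotonicity and commutation by the constant $\|e_{\cdot,k+1}\|_0$ gives
\[
\tilde u_{i,k}[\mu_1] = S_{\rho,h,r}[\mu_1](\tilde u_{\cdot,k+1}[\mu_1],i,k) \leq S_{\rho,h,r}[\mu_1](\tilde u_{\cdot,k+1}[\mu_2],i,k) + \|e_{\cdot,k+1}\|_0,
\]
and the identity above rewrites the right-hand side as $\tilde u_{i,k}[\mu_2] + \|e_{\cdot,k+1}\|_0 + h(F(x_i,\mu_1(t_k)) - F(x_i,\mu_2(t_k)))$. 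Taking the supremum over $i$, and then repeating the argument with the roles of $\mu_1$ and $\mu_2$ exchanged to obtain the reverse one-sided estimate, yields
\[
\|e_{\cdot,k}\|_0 \leq \|e_{\cdot,k+1}\|_0 + h\,\|F(\cdot,\mu_1(t_k)) - F(\cdot,\mu_2(t_k))\|_0.
\]

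Iterating from level $N$ down to level $k$, bounding each increment $\|F(\cdot,\mu_1(t_k)) - F(\cdot,\mu_2(t_k))\|_0$ by the global quantity $\|F(\cdot,\mu_1) - F(\cdot,\mu_2)\|_0$ and using $hN = T$, I get uniformly in $k$
\[
\|e_{\cdot,k}\|_0 \leq \|G(\cdot,\mu_1(T)) - G(\cdot,\mu_2(T))\|_0 + T\,\|F(\cdot,\mu_1) - F(\cdot,\mu_2)\|_0.
\]
Since the extension \eqref{extnd_dscrt_schm_HJ} applies the interpolation $I$, which is an averaging operator ($\beta_i \geq 0$, $\sum_i \beta_i \equiv 1$), together with piecewise-constant interpolation in $t$, one has $|u_{\rho,h}[\mu_1](t,x) - u_{\rho,h}[\mu_2](t,x)| = |I(e_{\cdot,[\frac{t}{h}]})(x)| \leq \|e_{\cdot,[\frac{t}{h}]}\|_0$, and the stated bound follows. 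I do not expect a genuine obstacle here; the only points requiring care are to run the comparison in both directions (to upgrade the one-sided estimate to the $L^\infty$ difference) and to verify at the outset that $\mu$ truly decouples from the minimisation, so that the $F$-discrepancy enters purely additively at each time step.
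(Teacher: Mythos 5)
Your proof is correct, and it takes a mildly but genuinely different route from the paper's. The paper proves the estimate ``by hand'': for each grid point it picks an $\epsilon$-optimal control $\alpha_\epsilon$ for the $\mu_2$-scheme, inserts it as a suboptimal candidate in the $\mu_1$-scheme, observes that the Lagrangian terms cancel, that the $F$-terms produce the additive discrepancy, and that the remaining interpolation/jump terms form a convex combination (weights summing to $1$) of the differences at level $k+1$; then it symmetrizes and iterates. You instead isolate the structural facts once and for all: (i) $F$ sits outside the infimum, so $S_{\rho,h,r}[\mu_1](v,i,k)-S_{\rho,h,r}[\mu_2](v,i,k)=h\big(F(x_i,\mu_1(t_k))-F(x_i,\mu_2(t_k))\big)$ for a \emph{common} argument $v$, and (ii) the scheme is monotone and commutes with constants (Theorem \ref{thm:schm_prop} (ii)--(iii)), hence is non-expansive on $\ell^\infty$ -- a Crandall--Tartar-type argument. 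This buys modularity and brevity: your argument applies verbatim to any monotone, constant-commuting scheme in which the measure enters purely additively, and it never reopens the infimum. What the paper's explicit control computation buys is reusability for the \emph{other} estimates: the Lipschitz and semiconcavity bounds of Lemma \ref{lem:aprrox_HJB_reg} compare solutions at spatially shifted grid points, where the discrepancy is not a constant and the abstract monotonicity/commutation properties alone do not suffice, so the paper's $\epsilon$-optimal-control template is set up once in Theorem \ref{hjb_discrete_comparison} and then cited in those proofs. One cosmetic caveat in your write-up: Theorem \ref{thm:schm_prop} states (ii)--(iii) for bounded grid functions, so strictly speaking you should either note that the solutions $\tilde u_{\cdot,k}[\mu_j]$ are bounded under the standing assumptions (Lemma \ref{lem:aprrox_HJB_reg} (c), which needs more than \ref{A0} and \ref{L1}), or remark that monotonicity and commutation hold for arbitrary grid functions directly from the definition of the scheme; the paper's phrasing ``follows directly from the definition'' makes the latter the natural fix.
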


\begin{proof}
Let $c^{\pm}_{m}(\alpha) :=  h \bar{b} (\alpha) \pm \sqrt{hd}
\sigma_r^m$, and note that
\begin{align}  \label{partition_unity2}
    I [u_{\cdot, k+1}[\mu_1]] ( x) - I[u _{\cdot,k+1}[\mu_2]]( x) = &  \sum_{p \in \Z^d} \beta_p (x) (u_{p,k+1}[\mu_1]- u _{p,k+1}[\mu_2] ) .
\end{align}
 By \eqref{schme_HJ} and the definition of $\inf$, for any $\epsilon>0$,
there is $\alpha_{\epsilon} \in \R^d$ such that  
\begin{align}
        u_{i,k}& [\mu_2]  \geq   \,h F  (  x_i,\mu_2(t_{k}) ) 
        + h L (x_{i}, \alpha_{\epsilon}) 
        +  \frac{e^{-h \lambda_r}}{2d} \sum_{m=1}^d \Big[ 
            I [ u_{\cdot, k+1}[\mu_2] ] (x_i + c^{+}_{m}(\alpha_{\epsilon})) \notag \\ 
    &+ I [ u_{\cdot,k+1}[\mu_2] ] (x_i+ c^{-}_{m}(\alpha_{\epsilon})) \Big] + \frac{1-e^{-h\lambda_r}}{\lambda_r} \int_{|z| \geq r} I [ u_{\cdot,k+1}[\mu_2] ] (x_i+z) \nu (dz) - \epsilon.
    \label{optimal_alpha2}
\end{align}
We then find, using \eqref{schme_HJ}, \eqref{partition_unity2}, 
\eqref{optimal_alpha2},
\begin{align*}
  u_{i,k}[\mu_1]  & - u_{i,k}[\mu_2]  \leq  h \big( F  (  x_i,\mu_1(t_{k}) ) - F  (  x_i,\mu_2(t_{k}) \big) + h ( L (x_{i}, \alpha_{\epsilon}) - L (x_{i}, \alpha_{\epsilon}) ) \\
     &\quad  +   \sum_{p \in \Z^d} \bigg[ \frac{e^{-h \lambda_r}}{2d} \sum_{m=1}^d \Big(\beta_p ( c^{+}_{m}(\alpha_{\epsilon}))  + \beta_p( c^{-}_{m}(\alpha_{\epsilon})) \Big)\big(u_{p+i,k+1}[\mu_1] - u_{p+i,k+1}[\mu_2] \big) \\
     &\quad  + \frac{1-e^{-h\lambda_r}}{\lambda_r} \int_{|z| \geq r}
    \beta_p (z) \big(u_{p+i,k+1}[\mu_1] - u_{p+i,k+1}[\mu_2]\big) \nu (dz) \bigg] + \epsilon\\
  &\leq h \| F(\cdot,\mu_1) - F(\cdot,\mu_2)   \|_{0} + c\sup_i |u_{i,k+1} - \tilde{u}_{i,k+1}| + \epsilon,
\end{align*}
where since  $\sum_p
\beta_p\equiv 1$, 
\begin{align*}
  c=\frac{e^{-h \lambda_r}}{2d} \sum_{m=1}^d\sum_{p \in \Z^d}  \Big(\beta_p ( c^{+}_{m}(\alpha_{\epsilon}))  + \beta_p( c^{-}_{i}(\alpha_{\epsilon})) \Big)+ \frac{1-e^{-h\lambda_r}}{\lambda_r} \int_{|z| \geq r}
  \sum_{p \in \Z^d}\beta_p (z) \nu (dz)=1.
\end{align*}
Since 
$|u_{i,N}[\mu_1] - u_{i,N}[\mu_2] | \leq \| G(\cdot,\mu_1(t_N)) - G(\cdot,\mu_2(t_N)) \|_0 $, a symmetry and
iteration argument shows that
\begin{align*} 
    \big|u_{i,k}[\mu_1] - u _{i,k}[\mu_2]\big| \leq ( N-k ) h  \| F(\cdot,\mu_1) - F(\cdot,\mu_2) \|_0 +  \| G(\cdot,\mu_1(t_N)) - G(\cdot,\mu_2(t_N)) \|_0 .
\end{align*}
The result then follows from interpolation and $T = Nh$.
\end{proof}

 The SL scheme is very stable in the sense that we have uniform in
$h,\rho,\mu$  boundedness, Lipschitz continuity, and
semi-concavity of the solutions $u_{i,k}$. 

\begin{lemma}\label{lem:aprrox_HJB_reg} 
    Let $\mu \in  C ( [0,T], P ( \rd ))$ and $u_{i,k}[\mu]$ be defined by the scheme \eqref{schme_HJ}.
  \begin{enumerate}[(a)]
      \item (Lipschitz continuity) Assume \ref{A0}, \ref{L2} and \ref{F2}. Then   
                     \begin{align*}
                         \frac{|u_{i,k} - u_{j,k} |}{|x_{i} - x_{j}|} \leq  ( L_{F} + L_{L} )(T-t_k) + L_G  , \quad i,j \in \mathbb{Z}^{d}, \ k \in \{ 0,1, \ldots N \}. 
            \end{align*}
        \item (Semi-concavity) Assume \ref{A0} , \ref{L3} and \ref{F3}.   Then 
            \begin{align*}
                \frac{u_{i+j,k} - 2 u_{i,k} + u_{i-j,k}}{ |x_j|^2} \leq  ( c_F +c_L ) (T-t_k) + c_G, \quad i,j \in \mathbb{Z}^{d}, \ k \in \{ 0,1, \ldots N \} .
            \end{align*}
        \item (Uniformly bounded) Assume \ref{A0}, \ref{L0}--\ref{L2},
  \ref{F1}, and \ref{F2}. Then   
                     \begin{align*}
                         |u_{i,k}| \leq  (C_{F}+C_{L}(K))(T-t_k) + C_G, \quad i,j \in \mathbb{Z}^{d}, \ k \in \{ 0,1, \ldots N \},
                     \end{align*}
                     where $K$ is defined in Theorem \ref{thm:schm_prop} (i). 
    \end{enumerate}
\end{lemma}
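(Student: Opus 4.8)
The plan is to prove all three estimates by backward induction on the time index $k$, starting from the terminal level $k=N$ and propagating each bound one time step at a time using the structural properties of $S_{\rho,h,r}[\mu]$ from Theorem \ref{thm:schm_prop}. At $k=N$ the estimates are immediate consequences of the terminal cost $G$: boundedness $|u_{i,N}|=|G(x_i,\mu(t_N))|\le C_G$ from \ref{F1}, the Lipschitz bound $|u_{i,N}-u_{j,N}|\le L_G|x_i-x_j|$ from \ref{F2}, and the semi-concavity bound with constant $c_G$ from \ref{F3}. The inductive step will propagate each estimate from level $k+1$ to level $k$ at the cost of one factor of $h$ times the corresponding running-cost constant; after $N-k$ steps, using $T-t_k=(N-k)h$, this produces exactly the stated constants.

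The key structural observation driving the induction is that the scheme is spatially translation invariant: every sampling point in $S_{\rho,h,r}[\mu](v,i,k)$---both the diffusion shifts $h\bar b(\alpha)\pm\sqrt{hd}\,\sigma_r^m$ and the jump displacements $z$ in the $\nu$-integral---is added to the base grid point $x_i$ and does not otherwise depend on $i$. Since the basis functions are grid translates, $\beta_q(x+x_p)=\beta_{q-p}(x)$ for any grid shift $x_p$, so interpolation at a shifted argument equals interpolation of the correspondingly shifted data, i.e. $I[v](x+x_p)=I[v_{\cdot+p}](x)$. Combined with $\beta_q\ge0$, $\sum_q\beta_q\equiv1$, and the fact (already exploited in the proof of Theorem \ref{hjb_discrete_comparison}) that the total weight $\tfrac{e^{-h\lambda_r}}{2d}\cdot 2d+\tfrac{1-e^{-h\lambda_r}}{\lambda_r}\lambda_r$ of the averaging part of $S$ equals exactly $1$, this lets me transfer any finite-difference bound on $u_{\cdot,k+1}$ through the averaging operator with the same constant.

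For the Lipschitz estimate (a) I would argue exactly as in the proof of Theorem \ref{hjb_discrete_comparison}: fix an $\epsilon$-optimal control $\alpha_\epsilon$ for $u_{j,k}$ and use it as a test control for $u_{i,k}$. The difference of running costs is controlled by $h(L_F+L_L)|x_i-x_j|$ via \ref{F2} and \ref{L2}, while the averaging terms contribute at most $L_{k+1}|x_i-x_j|$ by the translation identity above and the inductive Lipschitz bound with constant $L_{k+1}$; sending $\epsilon\to0$ and symmetrizing in $i,j$ gives the recursion $L_k=L_{k+1}+h(L_F+L_L)$. For the semi-concavity estimate (b) the only new idea is to use a single control---an $\epsilon$-optimal one for the center point $u_{i,k}$---simultaneously as a test control in both $u_{i+j,k}$ and $u_{i-j,k}$; since $S$ is an infimum this yields the correct one-sided inequalities, and then \ref{F3} and \ref{L3} bound the second difference of the running cost by $h(c_F+c_L)|x_j|^2$ while the averaging terms contribute $s_{k+1}|x_j|^2$, closing the induction with $s_k=s_{k+1}+h(c_F+c_L)$ (writing $s_k$ for the semi-concavity constant to avoid clashing with $S$). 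For the boundedness estimate (c), note that its hypotheses include those of (a), so $u_{\cdot,k+1}$ and hence $I[u_{\cdot,k+1}]$ are Lipschitz; Theorem \ref{thm:schm_prop}(i) then guarantees the infimum is attained at some $|\alpha|\le K$, and \ref{F1} together with \ref{L0} bound $F$ and $L(x_i,\alpha)$ by $C_F$ and $C_L(K)$. Since the averaging part has total weight $1$, this gives both bounds $|u_{i,k}|\le h(C_F+C_L(K))+\sup_q|u_{q,k+1}|$, i.e. $B_k=B_{k+1}+h(C_F+C_L(K))$.

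I expect the main obstacle to be purely bookkeeping: carefully justifying the translation identity $I[v](x+x_p)=I[v_{\cdot+p}](x)$, verifying that the jump-integral term respects the same translation structure (here $\int_{|z|\ge r}\sum_q\beta_q(x_i+z)\,\nu(dz)=\lambda_r$ keeps the total averaging weight equal to $1$), and confirming that in the semi-concavity argument a common control may legitimately be used at all three points. Once these are in place, all three claims reduce to the elementary recursion $C_k=C_{k+1}+h\cdot(\textrm{const})$ anchored at $k=N$.
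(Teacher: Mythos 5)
Your proposal is correct and takes essentially the same route as the paper's own proof: backward iteration anchored at the terminal data, the translation identity $\beta_m(x_j+x)=\beta_{m-j}(x)$ combined with the unit total weight of the averaging part of $S_{\rho,h,r}$, $\epsilon$-optimal controls used as test controls (a single one at the center point for the semi-concavity step), and part (a) together with Theorem \ref{thm:schm_prop}~(i) plus \ref{L0}, \ref{F1} for the uniform bound. No gaps to report.
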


\begin{proof}\textbf{(a)} Note that since
    $\beta_{m} (x_{j}+x) = \beta_{m-j} (x)$,
\begin{align}  \label{partition_unity}
    I [u_{\cdot, k+1}] (x_{j} + x) - I[u_{\cdot,k+1}](x_i + x) = &  \sum_{p \in \Z^d} \beta_p (x) (u_{p+j,k+1} - u_{p+i,k+1} ) .
\end{align}
Then, by \ref{L2}, \ref{F2}, and similar
computations as in Theorem \ref{hjb_discrete_comparison}, we find that
%
\begin{align*}
   u_{j,k}   - u_{i,k} 
  \leq h(L_f+L_L) |x_i-x_j| + \sup_j |u_{i,k+1}-u_{j,k+1}| + \epsilon,
\end{align*}
Since 
$|u_{i,N+1} - u_{j,N+1} | = | G (x_i, m(t_{N+1}) ) - G (x_j,
m(t_{N+1}) ) | \leq L_G |x_i - x_j|$ by \ref{F2}, the result follows
by iteration. \\[0.2cm]
\noindent \textbf{(b)} Similar to \eqref{partition_unity} we see
\begin{align*}
    &I [u_{\cdot, k+1}] (x_{i+j} + x) - 2 I[u_{\cdot,k+1}](x_i + x) + I [u_{\cdot, k+1}] (x_{i-j} + x) \\
  = &  \sum_{p \in \Z^d} \beta_p (x_i +x) (u_{p+j,k+1} - 2 u_{p,k+1} + u_{p-j, k+1} ) .
\end{align*}
Then, by \ref{L3}, \ref{F3}, and similar
computations as in Theorem \ref{hjb_discrete_comparison}, we find that
\begin{align*}
    u_{i+j,k} - 2 u_{i,k} + u_{i-j,k} \leq ( c_L + c_{F}  )h |x_j|^2  + \sup_i(u_{i+j,k+1} - 2 u_{i,k+1} + u_{i-j,k+1}). 
\end{align*}
Since $u_{i+j,N} - 2 u_{i,N} + u_{i-j,N}\leq c_G|x_j|^2$ by
\ref{F3}, the result follows by iteration.

  \smallskip

\noindent 
\textbf{(c)}  By part (a) and Theorem \ref{thm:schm_prop} (i),
$|\alpha|\leq K$, and then a direct calculation shows that
  \begin{align*}
- \sup_{|\alpha|\leq K}\Big( h (|F|+|L|) + \sup_j|u_{j,k+1}|  \Big)\leq  u_{i,k} \leq \sup_{|\alpha|\leq K}\Big( h (|F|+|L|) + \sup_j|u_{j,k+1}|  \Big).
    \end{align*}
The result follows from \ref{L1} and \ref{F1}. 
\end{proof}

\begin{thm} \label{HJ_convergence}
    (Convergence of the HJB   scheme)  
    Assume \ref{A0}, \ref{A2},  \ref{F1}, \ref{F2}, \ref{L2}, $\rho_{n},h_{n}, r_{n} \xrightarrow{n \to \infty}  0$  under CFL conditions
    $\frac{\rho_{n}^{2}}{h_{n}} ,\frac{h_{n}}{r_{n}^{\sigma}} = o ( 1 )$, 
    $\mu_n\rightarrow \mu$ in $C([0,T],P(\rd))$, 
    and $u_{\rho_{n},h_{n}}[\mu_n]$ is the solution of the scheme \eqref{schme_HJ} defined by \eqref{extnd_dscrt_schm_HJ}.  Then there is a continuous bounded function $u[\mu]$ such that $u_{\rho_{n},h_{n}}[\mu_n]\rightarrow u[\mu]$
     locally uniformly  in $\rd \times [ 0,T ]$, and  $u[\mu]$ is the viscosity solution of the HJB equation in \eqref{eqn:MFG} for $m=\mu$.
\end{thm}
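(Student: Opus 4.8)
The plan is to use the half-relaxed limit (Barles--Perthame--Souganidis) method \cite{barles1991convergence}, whose three structural ingredients are already in place: stability (the uniform $L^\infty$ and Lipschitz bounds of Lemma \ref{lem:aprrox_HJB_reg}), monotonicity together with commutation by constants (Theorem \ref{thm:schm_prop}(ii)--(iii)), and consistency (Theorem \ref{thm:schm_prop}(iv)), closed off by the comparison principle for the limit equation (Proposition \ref{prop:viscosity_sol_HJB}(a)). First I would fix $\mu$ and, writing $u_n := u_{\rho_n,h_n}[\mu_n]$ for the interpolated solutions \eqref{extnd_dscrt_schm_HJ}, invoke Lemma \ref{lem:aprrox_HJB_reg}(c) to see that $\{u_n\}$ is uniformly bounded, using \ref{F1}. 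This lets me define the half-relaxed limits
\begin{align*}
\overline u(t,x) := \limsup_{\substack{n\to\infty \\ (s,y)\to(t,x)}} u_n(s,y), \qquad \underline u(t,x) := \liminf_{\substack{n\to\infty \\ (s,y)\to(t,x)}} u_n(s,y),
\end{align*}
which are finite, with $\overline u$ upper semicontinuous, $\underline u$ lower semicontinuous, and $\underline u \le \overline u$ everywhere. The uniform Lipschitz bound of Lemma \ref{lem:aprrox_HJB_reg}(a) additionally passes to $\overline u,\underline u$ in the space variable, which helps control the nonlocal term in the limit.

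The heart of the argument is to show that $\overline u$ is a viscosity subsolution and $\underline u$ a viscosity supersolution of the HJB equation $-u_t - \mathcal L u + H(x,Du) = F(x,\mu(t))$ on $(0,T)\times\rd$, noting that $\inf_{\alpha}[L(x,\alpha) - D\phi\cdot\alpha] = -H(x,D\phi)$ identifies the consistency limit of Theorem \ref{thm:schm_prop}(iv) with this equation. For the subsolution property I would take $\phi$ smooth with $\overline u - \phi$ having a strict local maximum at $(t_0,x_0)$, $t_0<T$ (after the usual perturbation to make it strict), extract grid points $(t_{k_n},x_{i_n})\to(t_0,x_0)$ realising near-maxima of $u_n - \phi$, and use the scheme identity $u_{i_n,k_n} = S_{\rho_n,h_n,r_n}[\mu_n](u_{\cdot,k_n+1},i_n,k_n)$. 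Bounding $u_{\cdot,k_n+1}$ above by $\phi_{\cdot,k_n+1}$ plus the near-maximum value, monotonicity (ii) and commutation by constants (iii) replace $u$ by $\phi$ inside $S$; dividing by $h_n$ and sending $n\to\infty$ through the consistency limit (iv) yields the subsolution inequality at $(t_0,x_0)$. The supersolution property for $\underline u$ is symmetric. For the integral operator I would use the standard nonlocal viscosity convention, testing $\mathcal L$ against $\phi$ near the singularity and using the bound on the solution for the tail, so that the argument closes as in the local case.

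Next I would handle the terminal data, the one place where the interior consistency (iv) (stated for test functions in $C_c^\infty(\rd\times[0,T))$) does not directly apply. Since the scheme imposes $u_{i,N}=G(x_i,\mu_n(t_N))$ exactly and, by the one-step structure of $S$ with running cost controlled through \ref{F1} and the bounded-control estimate of Theorem \ref{thm:schm_prop}(i), the value changes by $O(h_n)$ per step, I would derive a one-sided bound of the form $|u_n(t,x)-G(x,\mu_n(t_N))|\le C(T-t)+o(1)$ near $t=T$; combined with continuity of $G$ (\ref{F2}) and $\mu_n\to\mu$ this gives $\overline u(T,\cdot)\le G(\cdot,\mu(T))\le\underline u(T,\cdot)$. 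With the sub/supersolution properties and matched terminal data in hand, the comparison principle of Proposition \ref{prop:viscosity_sol_HJB}(a) yields $\overline u\le\underline u$ on $[0,T]\times\rd$; together with the always-true reverse inequality this forces $\overline u=\underline u=:u[\mu]$, which is therefore continuous and, being simultaneously sub- and supersolution, the unique viscosity solution. Equality of the half-relaxed limits together with local boundedness then upgrades the convergence to locally uniform, as required.

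The step I expect to be the main obstacle is the rigorous treatment of the terminal layer: unlike an initial value problem one must show the scheme attains $G$ in the limit without help from the interior consistency statement, and this demands a quantitative one-sided estimate near $t=T$ rather than the purely qualitative half-relaxed argument. A secondary technical point is justifying the sub/supersolution property for the nonlocal operator $\mathcal L$ when $\sigma\in[1,2)$, where $\mathcal Lu$ is not classically defined for merely Lipschitz $u$; this is resolved within viscosity theory by evaluating $\mathcal L$ on the smooth test function $\phi$ near the origin, but it must be stated with care so that the global sampling of values inherent in the long-jump part of $S$ is correctly matched to the nonlocal definition.
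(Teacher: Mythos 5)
Your proposal is correct and is exactly the paper's approach: the paper's proof consists of invoking the Barles--Perthame--Souganidis half-relaxed limit method with the monotonicity and commutation properties of Theorem \ref{thm:schm_prop}(ii)--(iii), the consistency of Theorem \ref{thm:schm_prop}(iv), the uniform bound of Lemma \ref{lem:aprrox_HJB_reg}(c), and the comparison principle of Proposition \ref{prop:viscosity_sol_HJB}(a), deferring the details (including the terminal-layer estimate you flag) to the analogous proof of Theorem 3.3 in \cite{carliniSilva2014semi1st}. Your write-up simply makes explicit the same argument the paper cites.
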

\begin{proof}
    The result follows from the  Barles-Perthame-Souganidis relaxed limit method \cite{barles1991convergence},  using the monotonicity, consistency, and $L^\infty$-stability properties of the scheme (cf. Theorem \ref{thm:schm_prop} (ii), (iii), and Lemma \ref{lem:aprrox_HJB_reg} (c)), 
and the strong comparison principle
    for the HJB equation in Proposition \ref{prop:viscosity_sol_HJB} (a). 
    We refer to the proof of \cite[Theorem 3.3]{carliniSilva2014semi1st} for a standard but more detailed proof in a similar case. 
\end{proof}

We recall that the continuous extensions $u_{\rho, h} [ \mu ] ( t,x )$ and 
$u_{\rho, h}^{\epsilon} [ \mu ] ( t,x )$ are defined in 
\eqref{extnd_dscrt_schm_HJ} and \eqref{cont_extnd_dscrt_schm_HJ}, 
respectively. 
The results of Lemma \ref{lem:aprrox_HJB_reg} transfers to $u_{\rho,h}^{\epsilon} [ \mu ] ( t,x )$.

\begin{lemma} \label{semiconcavity_eps}
    Let  $\mu \in  C ( [0,T], P ( \rd ))$ and $u_{\rho,h}^{\epsilon} [ \mu ]$ be given by \eqref{cont_extnd_dscrt_schm_HJ}.
\begin{enumerate}
    \item[(a)]  (Lipschitz continuity)  Assume \ref{A0}, \ref{L2} and \ref{F2}. 
         Then
 \begin{align*}
     \big|  u_{\rho, h}^{\epsilon} [ \mu ] ( t,x ) - u_{\rho, h}^{\epsilon} [ \mu ] ( t,y ) \big| \leq ( ( L_{L} + L_{F} )T + L_{G} ) |x-y|.
 \end{align*}
\item[(b)] (Approximate semiconcavity) 
        Assume \ref{A0}, \ref{L2},\ref{L3}, \ref{F2}, and \ref{F3}.
        Then  there exist a constant $c_1>0$, independent of $\rho, h, \epsilon$ and $\mu$, such that 
        \begin{align*}
            \qquad\qquad& u_{\rho, h}^{\epsilon} [ \mu ] ( t, x + y ) - 2 u_{\rho, h}^{\epsilon} [ \mu ] ( t,x ) + u_{\rho, h}^{\epsilon} [ \mu ] ( t,x - y ) \leq c_{1} (  |y|^{2} + \rho^{2} + \frac{\rho^2}{\epsilon} ), \ \mbox{and}  \\
             & \langle D u_{\rho, h}^{\epsilon} [ \mu ] (  t,y ) - D u_{\rho, h}^{\epsilon} [ \mu ] ( t,x ) , y-x \rangle \leq c_{1} \Big(|x-y|^{2} + \frac{\rho^2}{\epsilon^2}\Big).     
        \end{align*}
\item[(c)]   Assume $d=1$, \ref{A0}, \ref{L3}, and \ref{F3}. Then there exists a constant $c_{2} >0$, independent of $\rho, h, \epsilon$ and $\mu$, such that for each $i,j \in \zd$ and $k \in \mathcal  N_h$
        \begin{align*}
            \langle D u_{\rho, h}^{\epsilon} [ \mu ] (  t_k,x_j ) - D u_{\rho, h}^{\epsilon} [ \mu ] ( t_k,x_i ) , x_j-x_i \rangle \leq c_{2} |x_j-x_i|^{2}  .  
        \end{align*}
\end{enumerate}
\end{lemma}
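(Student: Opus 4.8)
The plan is to transfer the discrete regularity of Lemma~\ref{lem:aprrox_HJB_reg} through the interpolation \eqref{extnd_dscrt_schm_HJ} and the mollification \eqref{cont_extnd_dscrt_schm_HJ}, using the mollifier estimates of Lemma~\ref{regularized_Lipschitz}. For (a) I would note first that the piecewise (multi)linear interpolant $u_{\rho,h}[\mu](t,\cdot)=I[u_{\cdot,[t/h]}]$ inherits the Lipschitz constant of the underlying grid function; since Lemma~\ref{lem:aprrox_HJB_reg}(a) gives grid Lipschitz constant $(L_F+L_L)(T-t_k)+L_G\le (L_L+L_F)T+L_G$, the interpolant is Lipschitz with this constant uniformly in $t$. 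Convolution with the probability density $\rho_\epsilon$ preserves Lipschitz constants, so $u^\epsilon_{\rho,h}[\mu](t,\cdot)$ has the same bound, which is exactly (a); no smallness of $\rho,h,\epsilon$ is needed.

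For (b) the central object is the smooth function $u^\epsilon:=u^\epsilon_{\rho,h}[\mu](t,\cdot)$. Writing $\beta_i(\cdot)=\beta_0(\cdot-x_i)$ and $\Gamma:=\beta_0*\rho_\epsilon$, one has $u^\epsilon=\sum_i u_{i,k}\,\Gamma(\cdot-x_i)$, hence for a unit vector $\xi$, $\xi^TD^2u^\epsilon(x)\xi=\sum_i u_{i,k}\,\xi^TD^2\Gamma(x-x_i)\xi$. The reproduction identities $\sum_i\beta_i\equiv1$ and $\sum_i x_i\beta_i(x)=x$ pass to $\Gamma$ and give $\sum_i\xi^TD^2\Gamma(x-x_i)\xi\approx0$ and $\sum_i(x_i-x_{i_0})\xi^TD^2\Gamma(x-x_i)\xi\approx0$ up to Riemann-sum errors, so I may subtract from the $u_{i,k}$ an arbitrary affine function of $x_i$ without changing the sum. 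Choosing the supporting affine function furnished by the discrete semiconcavity of Lemma~\ref{lem:aprrox_HJB_reg}(b), the remainder $b_i$ obeys $b_i\le\tfrac12\big((c_F+c_L)T+c_G\big)|x_i-x_{i_0}|^2$; testing this leading quadratic via the second-moment identity $\int|z|^2\,\xi^TD^2\Gamma(z)\xi\,dz=2|\xi|^2\!\int\Gamma$ together with $\sum_i\approx\rho^{-d}\!\int$ reproduces the genuine constant $C:=(c_F+c_L)T+c_G$, while the deviation of $b_i$ from this quadratic, the $O(\rho^2)$ interpolation error, and the Riemann-sum errors are controlled through $\|D^p\Gamma\|_{L^1}\lesssim\rho^d\epsilon^{-p}$ (from Lemma~\ref{regularized_Lipschitz}). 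This gives $\xi^TD^2u^\epsilon\le C+c\,\rho^2/\epsilon^2$, and integrating along segments, combined with the Lipschitz bound of (a) for well-separated points, yields the gradient inequality $\langle Du^\epsilon(t,y)-Du^\epsilon(t,x),y-x\rangle\le c_1(|x-y|^2+\rho^2/\epsilon^2)$. For the second-difference inequality I would keep the difference unexpanded, writing $u^\epsilon(x+y)-2u^\epsilon(x)+u^\epsilon(x-y)=\sum_i b_i\big(\Gamma(x-x_i+y)-2\Gamma(x-x_i)+\Gamma(x-x_i-y)\big)$, so that effectively only one factor of $\epsilon^{-1}$ is spent, producing the sharper error $\rho^2+\rho^2/\epsilon$.

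For (c) the obstruction of (b) is absent in $d=1$: the interpolant $w:=u_{\rho,h}[\mu](t_k,\cdot)$ is piecewise linear, and by Lemma~\ref{lem:aprrox_HJB_reg}(b) with $|x_j|=\rho$ its slopes jump \emph{upward} by at most $C_0\rho$ at each grid node, where $C_0:=(c_F+c_L)T+c_G$. Thus the distributional second derivative $w''=\sum_i(s_i-s_{i-1})\delta_{x_i}$ has positive atoms of size $\le C_0\rho$, and convolving with $\rho_\epsilon$ and using $\sum_i\rho_\epsilon(\cdot-x_i)\le C'/\rho$ (valid since $\epsilon\ge\rho$ under the CFL conditions) gives the clean pointwise bound $(u^\epsilon)''\le C_0C'=:c_2$. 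Integrating over $[x_i,x_j]$ then yields the stated grid inequality with $c_2$ independent of $\rho,h,\epsilon,\mu$.

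I expect the error analysis of (b) for $d\ge2$ to be the main obstacle. There the multilinear interpolant carries interior mixed second derivatives of size $O(1/\rho)$ whose sign oscillates from cell to cell, so a crude estimate using $\rho_\epsilon\ge0$ fails (it would leave a spurious $|y|^2/\rho$); one must genuinely exploit the cancellation encoded in $\xi^TD^2u^\epsilon\xi=\sum_i b_i\,\xi^TD^2\Gamma(x-x_i)\xi$ together with the discrete semiconcavity, so that only the $\epsilon$-dependent corrections survive. In $d=1$ this defect does not occur, which is precisely why (c) holds without any $\rho/\epsilon$ loss.
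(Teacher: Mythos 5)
Your part (a) is correct and is exactly the paper's argument: piecewise (multi)linear interpolation and convolution with the probability density $\rho_\epsilon$ both preserve the Lipschitz constant supplied by Lemma \ref{lem:aprrox_HJB_reg}(a), so nothing more is needed there.

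Part (b) is where the proposal has genuine gaps, and they are not repairable as written. First, the ``supporting affine function furnished by the discrete semiconcavity'' need not exist when $d\ge 2$. Lemma \ref{lem:aprrox_HJB_reg}(b) only gives midpoint concavity of $v_i:=u_{i,k}-\tfrac{C}{2}|x_i|^2$ along lattice directions, and such grid functions need not be restrictions of concave functions. Concretely (in $d=2$, with $C=2$): take $u_{i,k}=\tfrac32\rho^2$ at the three grid points $(\rho,0),(0,\rho),(-\rho,-\rho)$ and $u_{i,k}=0$ elsewhere. Since no two of these three points are symmetric about a grid point, one checks $u_{i+j,k}-2u_{i,k}+u_{i-j,k}\le \tfrac32\rho^2\le 2|x_j|^2$ for all $i,j$, so discrete semiconcavity holds; yet $v_i=u_{i,k}-|x_i|^2$ satisfies $v_{(0,0)}=0<\tfrac16\rho^2=\tfrac13\big(v_{(\rho,0)}+v_{(0,\rho)}+v_{(-\rho,-\rho)}\big)$, and since the origin is the average of those three points, no affine function can touch $v$ from above at the origin. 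Second, even granting the decomposition $u_{i,k}=\ell(x_i)+b_i$ with $b_i\le\tfrac C2|x_i-x_{i_0}|^2$, the sum $\sum_i b_i\,\xi^TD^2\Gamma(x-x_i)\xi$ cannot be estimated termwise: the $b_i$ are bounded only from above, $D^2\Gamma$ oscillates with $\|D^2\Gamma\|_\infty\sim(\rho\epsilon)^{-1}$, and you yourself flag the required cancellation as ``the main obstacle'' and leave it open. So the core of (b) is not proved. The paper's proof avoids pointwise Hessian bounds entirely and works in every dimension by staying at the level of second differences: multiply the discrete inequality $u_{i+j,k}-2u_{i,k}+u_{i-j,k}\le c|x_j|^2$ by $\beta_i(x)\ge0$ and sum over $i$, using the shift identity $\beta_{i-j}(x)=\beta_i(x+x_j)$, to get $u_{\rho,h}(x+x_j)-2u_{\rho,h}(x)+u_{\rho,h}(x-x_j)\le c|x_j|^2$ for every $x\in\rd$ and every grid vector $x_j$; convolution with $\rho_\epsilon\ge0$ preserves this inequality verbatim; finally multiply by $\beta_j(y)\ge0$ and sum over $j$ (i.e.\ interpolate in the increment variable) to get $I[u^\epsilon_{\rho,h}(x+\cdot\,)](y)+I[u^\epsilon_{\rho,h}(x-\cdot\,)](y)-2u^\epsilon_{\rho,h}(x)\le c\,I[|\cdot|^2](y)\le c(|y|^2+\rho^2)$, and remove the interpolants at cost $K\rho^2\|D^2u^\epsilon_{\rho,h}\|_0\le K\rho^2/\epsilon$ using Lemma \ref{regularized_Lipschitz} and part (a). Each step is exact, the only losses being $\rho^2$ and $\rho^2/\epsilon$; the gradient form is then quoted from the cited remark of Achdou--Camilli--Corrias.

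Part (c) also has a gap: your bound $\sum_i\rho_\epsilon(\cdot-x_i)\le C'/\rho$ holds only when $\epsilon\gtrsim\rho$ (otherwise the sum is of order $1/\epsilon$ and your constant degrades to $C_0(1+\rho/\epsilon)$), whereas the lemma asserts a constant independent of $\rho,h,\epsilon$ with no CFL relation assumed. That unconditional independence is precisely why the statement is restricted to grid points, and the paper's proof (from Carlini--Silva, Lemma 3.6) exploits this: write $Du^\epsilon_{\rho,h}(t_k,x_j)-Du^\epsilon_{\rho,h}(t_k,x_i)=\int\big[Du_{\rho,h}(t_k,x_j-z)-Du_{\rho,h}(t_k,x_i-z)\big]\rho_\epsilon(z)\,dz$; since $x_j-x_i$ is a grid vector, for every $z$ the two piecewise-constant slopes differ by a sum of $|j-i|$ consecutive slope jumps, each at most $c\rho$ by Lemma \ref{lem:aprrox_HJB_reg}(b), which after multiplying by $x_j-x_i$ and integrating against the probability density $\rho_\epsilon$ gives $c_2|x_j-x_i|^2$ for all $\rho,\epsilon$ without any constraint. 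Your argument does prove a stronger pointwise bound $(u^\epsilon_{\rho,h})''\le c_2$, and under the CFL conditions of the convergence theorems (which force $\rho\ll\epsilon$) it would suffice for the paper's applications, but as a proof of the lemma as stated it is incomplete.
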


\begin{proof}
    \noindent   (a) Since $u_{i,k}$ satisfies the discrete Lipschitz bound of Lemma \ref{lem:aprrox_HJB_reg}~(a), $u_{\rho,h} [ \mu ]$ is Lipschitz with same Lipschitz constant as  $u_{i,k}$ by properties of linear interpolation, and $u_{\rho,h}^{\epsilon} [ \mu ]$ is Lipschitz with same constant as $u_{\rho,h} [ \mu ]$ by properties of mollifiers (Lemma \ref{regularized_Lipschitz}).
     
    \medskip

 \noindent (b)  For $i,j \in  \mathbb{Z}^{d}$ we have by Lemma \ref{lem:aprrox_HJB_reg}~(b),  $u_{i+j} + u_{i-j} - 2 u_{i} \leq c |x_{j}|^{2}.$
 Multiplying both sides by $\beta_{i} ( x )$, and summing over $\mathbb{Z}^{d}$, we get
 \begin{align*}
     u_{\rho,h} ( x + x_{j} ) + u_{\rho,h} ( x-x_{j} ) -2 u_{\rho,h} ( x ) \leq c |x_{j}|^{2}.
 \end{align*}
 Letting $x \to x - z$, multiplying by a positive mollifier $\rho_{\epsilon} ( z )$ and integrating, we get
 \begin{align*}
     u_{\rho,h}^{\epsilon} ( x + x_{j} ) + u_{\rho,h}^{\epsilon} ( x-x_{j} ) -2 u_{\rho,h}^{\epsilon} ( x ) \leq c |x_{j}|^{2}.
 \end{align*}
 We multiply both sides with $\beta_{j} ( y )$, and sum over $\mathbb{Z}^{d}$,
 \begin{align*}
     I [ u_{\rho,h}^{\epsilon} ] ( x+y ) + I [ u_{\rho,h}^{\epsilon} ] ( x-y ) - 2 I [ u_{\rho,h}^{\epsilon} ] ( x ) \leq c I [ |\cdot|^{2} ] ( y ) \leq  c (  |y|^{2} + \rho^{2} ).
 \end{align*}
By Lemma \ref{regularized_Lipschitz} and part (a), we have that $|I [ u_{\rho,h}^{\epsilon} ] ( \xi ) - u_{\rho,h}^{\epsilon} ( \xi )|\leq  K \| D^{2} u_{\rho,h}^{\epsilon}  \|_{0} \rho^{2} \leq  K \frac{\rho^{2}}{\epsilon}$,  where the Lipschitz bound  $K$ depends on the constants in \ref{L2} and \ref{F2}. Thus,
 \begin{align*}
      u_{\rho,h}^{\epsilon}  ( x+y ) + u_{\rho,h}^{\epsilon}  ( x-y ) - 2 u_{\rho,h}^{\epsilon}  ( x ) \leq  c (  |y|^{2} + \rho^{2} + \frac{\rho^{2}}{\epsilon} ).
 \end{align*}
 The second part of (b) then follows as in  \cite[Remark 6]{achdou2014numerical}.\medskip

 \noindent (c) The proof is given in \cite[Lemma 3.6]{carliniSilva2014semi1st}.
\end{proof}
\normalcolor

Under our assumptions, the continuous HJB equation
has a (viscosity) solution  $u ( t ) \in W^{1,\infty} ( \mathbb{R}^d )$, 
that is, the derivative exists almost everywhere \cite[Theorem 4.3]{ersland2020classical}. 
We have the following result for $Du_{\rho,h}^{\epsilon} [ \mu ]$.

\begin{thm} \label{thm:HJB-convergence-smoothsol}
    Assume \ref{A0}, \ref{A2}, \ref{L1}--\ref{L2}, \ref{F1}--\ref{F2},  $\rho_{n},h_{n}, r_{n}, \epsilon_n \xrightarrow{n \to \infty} 0$  under CFL conditions
    $\frac{\rho_{n}^{2}}{h_{n}} ,\frac{h_{n}}{r_{n}^{\sigma}} = o ( 1 )$, 
    and $\mu_n\rightarrow \mu$ in $C([0,T],P(\rd))$.
Let $u_{\rho_{n},h_{n}}^{\epsilon_n}[\mu_n]$ be defined by 
    \eqref{cont_extnd_dscrt_schm_HJ} and $u[\mu]$ the viscosity solution of the HJB equation in \eqref{eqn:MFG} for $m=\mu$. Then 
    
    \begin{itemize}
    \item[(i)]   $u_{\rho_{n}, h_{n}}^{\epsilon_{n}} [ \mu_{n} ] \rightarrow u [ \mu ]$
        locally uniformly,
    \item[(ii)] Assume also \ref{L3}, \ref{F3} and $\frac{\rho_n}{\epsilon_n}=o(1)$.  Then 
        $Du_{\rho_{n}, h_{n}}^{\epsilon_{n}} [ \mu_{n} ] ( t,x ) \rightarrow Du [ \mu ] ( t,x )$ whenever $Du ( t,x )$ exists, that is, the convergence is almost everywhere. 
    
    \item[(iii)] Assume also \ref{L3}, \ref{F3}, $\frac{\rho_n}{\epsilon_n}=o(1)$,  and  \ref{U1}. Then  $Du_{\rho_{n}, h_{n}}^{\epsilon_{n}} [ \mu_{n} ] \rightarrow Du [ \mu ]$ locally uniformly.
    \end{itemize}   
\end{thm}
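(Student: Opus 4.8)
The plan is to bootstrap the three conclusions from the convergence of the HJB scheme already proved in Theorem \ref{HJ_convergence} together with the one-sided regularity estimates of Lemma \ref{semiconcavity_eps}, in order of increasing difficulty. Throughout write $v_n := u_{\rho_n,h_n}^{\epsilon_n}[\mu_n]$ and $p_n := Dv_n(t,\cdot)$ for a fixed time $t$; recall that $v_n$ is $C^\infty$ in $x$ by Lemma \ref{regularized_Lipschitz}.

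\emph{Part (i).} Since $v_n = u_{\rho_n,h_n}[\mu_n]*\rho_{\epsilon_n}$ and the interpolants $u_{\rho_n,h_n}[\mu_n]$ are Lipschitz in $x$ with a constant $C := (L_L+L_F)T+L_G$ independent of $n$ (Lemma \ref{lem:aprrox_HJB_reg}(a)), Lemma \ref{regularized_Lipschitz} gives $\|v_n - u_{\rho_n,h_n}[\mu_n]\|_0 \le C\epsilon_n \to 0$. Combining this with $u_{\rho_n,h_n}[\mu_n] \to u[\mu]$ locally uniformly (Theorem \ref{HJ_convergence}) and the triangle inequality yields $v_n \to u[\mu]$ locally uniformly.

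\emph{Part (ii).} Fix $(t,x)$ at which the spatial gradient $Du[\mu](t,x)$ exists. The gradients $p_n(x)$ are bounded by $C$, so it suffices to show that every subsequential limit $q$ of $\{p_n(x)\}$ equals $Du[\mu](t,x)$. The key tool is the gradient-monotonicity estimate of Lemma \ref{semiconcavity_eps}(b), which gives $\langle p_n(x+\tau y)-p_n(x),\tau y\rangle \le c_1(\tau^2|y|^2 + \rho_n^2/\epsilon_n^2)$ for all $\tau>0$ and $y$. Dividing by $\tau$ and integrating over $\tau\in[\eta,s]$ with $0<\eta<s\le1$ (staying away from the singular scale $\tau=0$), I obtain
\[
p_n(x)\cdot y \ge \frac{v_n(t,x+sy)-v_n(t,x+\eta y)}{s-\eta} - \frac{c_1(s+\eta)}{2}|y|^2 - \frac{c_1\,\rho_n^2/\epsilon_n^2}{s-\eta}\,\ln\frac{s}{\eta}.
\]
The decisive point is the order of limits. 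Sending $n\to\infty$ first with $s,\eta$ fixed kills the last term because $\rho_n/\epsilon_n=o(1)$, and replaces $v_n$ by $u[\mu]$ through part (i); then $\eta\to0$ (continuity of $u[\mu]$) and $s\to0$ (differentiability of $u[\mu](t,\cdot)$ at $x$) give $q\cdot y \ge Du[\mu](t,x)\cdot y$. Applying this to both $y$ and $-y$ forces $q = Du[\mu](t,x)$, and boundedness of $\{p_n(x)\}$ upgrades this to convergence of the whole sequence.

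\emph{Part (iii).} Under \ref{U1} the limiting gradient $Du[\mu](t,\cdot)$ exists everywhere and is continuous, so I upgrade the almost-everywhere convergence of part (ii) to local uniform convergence by contradiction. If it failed on some compact $K$, I could find $x_{n_k}\to\bar x\in K$ and a further subsequence with $p_{n_k}(x_{n_k})\to q$ and $|p_{n_k}(x_{n_k})-Du[\mu](t,x_{n_k})|\ge\epsilon_0>0$. Running the integration argument of part (ii) verbatim with the moving base point $x_{n_k}$ (the constant $c_1$ and the error $\rho_n^2/\epsilon_n^2$ are uniform in the base point, and $v_{n_k}(t,x_{n_k}+sy)\to u[\mu](t,\bar x+sy)$ by local uniform convergence) yields $q\cdot y\ge Du[\mu](t,\bar x)\cdot y$ for all $y$, hence $q=Du[\mu](t,\bar x)$; continuity of $Du[\mu](t,\cdot)$ then makes $|p_{n_k}(x_{n_k})-Du[\mu](t,x_{n_k})|\to|q-Du[\mu](t,\bar x)|=0$, contradicting $\ge\epsilon_0$. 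The joint local uniform statement follows identically once $Du[\mu]$ is known to be jointly continuous in $(t,x)$, which holds in the regimes where \ref{U1} is in force (cf. the $C^{1,3}$ regularity in Theorem \ref{mfg:classical_solution}).

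\emph{Main obstacle.} The difficulty concentrates in parts (ii) and (iii): the mollified solutions $v_n$ are only \emph{approximately} semiconcave, so the estimates of Lemma \ref{semiconcavity_eps}(b) carry scale-breaking errors ($\rho_n^2+\rho_n^2/\epsilon_n$ and $\rho_n^2/\epsilon_n^2$) that blow up as the scale tends to $0$ and thus provide no uniform Hessian bound. The device that circumvents this is exactly the interchange of limits above—integrating the gradient monotonicity over an interval bounded away from $0$ and letting $n\to\infty$ \emph{before} shrinking the interval—so that the CFL condition $\rho_n/\epsilon_n=o(1)$ disposes of the error term before the singular small-scale regime is reached.
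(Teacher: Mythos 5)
Your parts (i) and (ii) are correct and rest on exactly the same ingredients as the paper's proof (Appendix \ref{app:gradlim}): the uniform Lipschitz bound of Lemma \ref{semiconcavity_eps}(a) and the gradient-monotonicity estimate in Lemma \ref{semiconcavity_eps}(b), combined with part (i) and an interchange of limits that disposes of the $\rho_n^2/\epsilon_n^2$ error before the singular scale is reached. The only difference is cosmetic: you integrate the monotonicity inequality over $\tau\in[\eta,s]$ with $\eta>0$ fixed and send $n\to\infty$ first, whereas the paper splits the Taylor integral $\int_0^1$ at the $n$-dependent scale $\rho_n/(\epsilon_n|y|)$, bounds the small-$\tau$ piece by the Lipschitz estimate and absorbs the rest into $c_1|y|^2$, concluding that every subsequential limit $p$ lies in the superdifferential $D^+u[\mu](t,x)$, hence equals $Du[\mu](t,x)$ at points of differentiability. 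Both routes are valid and give (ii).

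The genuine gap is in (iii). Your contradiction argument is run at a \emph{fixed} time $t$, so it only yields $Du^{\epsilon_n}_{\rho_n,h_n}[\mu_n](t,\cdot)\to Du[\mu](t,\cdot)$ locally uniformly in $x$ for each $t$ separately; the theorem needs local uniform convergence jointly in $(t,x)$ (this is what is actually used in Step 5 of the proof of Theorem \ref{thm:convergence_MFG-nondeg}, where $\sup_{s\in[0,T]}\|Dw(s,\cdot)-Du(s,\cdot)\|_{L^\infty(\mathrm{supp}\,\phi)}\to 0$ is required). You defer the joint statement to joint continuity of $Du[\mu]$ in $(t,x)$, justified ``cf.\ the $C^{1,3}$ regularity in Theorem \ref{mfg:classical_solution}'' --- but that theorem's hypotheses (\ref{nu3}, \ref{F4}, \ref{H3}, \ref{H4}, \ref{M111}) are strictly stronger than those of the present theorem, and \ref{U1} by itself only gives $u[\mu](t)\in C^1(\rd)$ for each fixed $t$, not continuity of $Du[\mu]$ in time. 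So, as written, (iii) is not proved under the stated assumptions. The repair is available with your own tools and is exactly what the paper does: run the moving-base-point argument with sequences $(t_n,x_n)\to(\bar t,\bar x)$ moving in \emph{both} variables (part (i) gives $v_n(t_n,x_n+sy)\to u[\mu](\bar t,\bar x+sy)$, so the computation is unchanged), concluding that every subsequential limit of $Du^{\epsilon_n}_{\rho_n,h_n}[\mu_n](t_n,x_n)$ equals $Du[\mu](\bar t,\bar x)$. This ``diagonal'' convergence first forces $Du[\mu]$ to be jointly continuous (approximate $Du[\mu](t_j,x_j)$ by $Du^{\epsilon_{n_j}}_{\rho_{n_j},h_{n_j}}[\mu_{n_j}](t_j,x_j)$ for suitably chosen $n_j$, using (ii)), and then joint local uniform convergence follows by the standard compactness argument --- this is precisely the content of the lemma the paper invokes, \cite[Chapter V, Lemma 1.9]{BCD:book}.
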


\begin{proof}
    (i) This follows from the convergence result Theorem \ref{HJ_convergence} and Lemma \ref{regularized_Lipschitz}.   

    \noindent (ii) and (iii). We refer to \cite[Theorem $3.5$]{carliniSilva2014semi1st} and \cite[Proposition $5.1$]{MR3828859}. Estimates from Lemma \ref{semiconcavity_eps} are needed. For completeness we give the proof in Appendix \ref{app:gradlim}.
\end{proof}

\section{On the dual SL scheme for the FPK equation}\label{sec:FPK}

In this section we establish more properties of the discrete FPK equation \eqref{Fokker-Planck_discretized}, including tightness, equicontinuity in time, $L^1$-stability of solutions with respect to $\mu$, and $L^p$-bounds in dimension $d=1$.
 To prove tightness we will use a result from \cite{Espen-Indra-Milosz-2020}.
\begin{proposition}  \label{prop:tail-control-function}
Assume \ref{A0} and \ref{M1}. Then there exists a function $0\leq \Psi \in C^{2}(\rd)$ with $\|D\Psi\|_{0},$ $\|D^2\Psi\|_{0} < \infty$, and  $\displaystyle \lim_{|x|\rightarrow \infty} \Psi(x) = \infty$, such that 
\begin{align}\label{eq:tail-control-function}
\sup_{x\in \rd}\Big|\int_{|z|>1} \big(\Psi(x+z) - \Psi(z) \big)\nu(dz)\Big| < \infty \quad \mbox{and} \quad \int_{\rd} \Psi(x) \, m_0(dx)<\infty.
\end{align}
\end{proposition}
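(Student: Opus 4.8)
The plan is to construct $\Psi$ as a slowly growing, radial, \emph{concave} profile, chosen so that its growth is dominated simultaneously by the decay of $m_0$ and by the decay of the tail of $\nu$; concavity is what converts the nonlocal increment into an integrable quantity in the absence of any moment assumption. First I would extract the only structural fact available from \ref{A0}: since $1\wedge|z|^2=1$ on $\{|z|>1\}$, we have $\nu_1:=\nu|_{\{|z|>1\}}$ is a \emph{finite} measure, with $\int_{|z|>1}d\nu=\int_{|z|>1}(1\wedge|z|^2)\,d\nu<\infty$. Together with \ref{M1}, this gives two finite Borel measures on $\rd$. Pushing both forward under $x\mapsto|x|$ and adding, I obtain a single finite measure $\theta$ on $[0,\infty)$ that dominates both tails.

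The key tool is a de la Vall\'ee--Poussin type statement: \emph{for any finite measure $\theta$ on $[0,\infty)$ there is an increasing, concave $g\in C^2([0,\infty))$ with $g\geq0$, bounded $g'$ and $g''$, $g(r)\to\infty$ as $r\to\infty$, and $\int_0^\infty g(1+r)\,\theta(dr)<\infty$.} I would prove this by choosing $R_n\uparrow\infty$ with $\theta([R_n,\infty))\leq 2^{-n}$, spaced super-geometrically, letting $g$ gain one unit of height across each interval $[R_n,R_{n+1}]$ so that the successive slopes are nonincreasing (hence $g$ is concave), and then smoothing to $C^2$ while preserving concavity and boundedness of $g',g''$. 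The resulting bound $\int g(1+r)\,\theta(dr)\leq C\sum_n n\,2^{-n}<\infty$ is immediate.

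I then set $\Psi(x):=g\big(\phi(x)\big)$ with $\phi(x):=\sqrt{1+|x|^2}$. Nonnegativity, $C^2$-regularity and $\Psi\to\infty$ are clear, and since $D\Psi=g'(\phi)\,D\phi$ and $D^2\Psi=g''(\phi)\,D\phi\otimes D\phi+g'(\phi)\,D^2\phi$, boundedness of $g',g''$ and of $D\phi,D^2\phi$ gives $\|D\Psi\|_0,\|D^2\Psi\|_0<\infty$. Integrability against $m_0$ follows from $\phi(x)\leq 1+|x|$ and monotonicity of $g$, which yield $\int_{\rd}\Psi\,dm_0\leq\int_{\rd} g(1+|x|)\,m_0(dx)\leq\int_0^\infty g(1+r)\,\theta(dr)<\infty$.

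The decisive point is the nonlocal bound on $\mathcal L^1\Psi(x)=\int_{|z|>1}\big(\Psi(x+z)-\Psi(x)\big)\nu(dz)$. Using that $\phi$ is $1$-Lipschitz with $\phi\geq1$, that $g$ is increasing, and that the increment $a\mapsto g(a+|z|)-g(a)$ is nonincreasing in $a$ by concavity, I would establish the pointwise estimate $|\Psi(x+z)-\Psi(x)|=|g(\phi(x+z))-g(\phi(x))|\leq g(1+|z|)-g(1)\leq g(1+|z|)$, valid for all $x,z$ (the lower bound follows by the same argument applied to $w=x+z$). Integrating over $\{|z|>1\}$ and recalling the definition of $\theta$ gives $\sup_{x}\big|\mathcal L^1\Psi(x)\big|\leq\int_{|z|>1} g(1+|z|)\,\nu(dz)\leq\int_0^\infty g(1+r)\,\theta(dr)<\infty$, uniformly in $x$. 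The \textbf{main obstacle} is precisely this last step: with no moment hypothesis on $\nu$, the naive Lipschitz estimate $|\Psi(x+z)-\Psi(x)|\leq\|D\Psi\|_0|z|$ is useless since $\int_{|z|>1}|z|\,d\nu$ may diverge. Replacing the linear factor $|z|$ by the slowly growing $g(1+|z|)$, which is integrable against the finite measure $\nu_1$ exactly because $g$ was built to grow slowly, and coupling this with the $m_0$-integrability requirement inside a single concave profile, is the heart of the argument.
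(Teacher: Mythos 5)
Your proposal is correct, and its overall architecture coincides with the paper's: both proofs produce $\Psi$ as a slowly growing one-dimensional profile composed with $\sqrt{1+|x|^2}$, chosen to be integrable against the two finite measures at hand (the tail measure $\nu_1=\nu\mathbbm{1}_{|z|>1}$, finite by \ref{A0}, and $m_0$), and both obtain the uniform nonlocal bound from an increment property of that profile rather than the useless Lipschitz bound --- exactly the obstacle you single out as the heart of the matter. The difference lies in how the profile is obtained and which increment property is invoked. The paper's proof is essentially two citations: it applies \cite[Lemma 4.9]{Espen-Indra-Milosz-2020} to the family $\{\nu_1,m_0\}$ to get $\Psi(x)=V_0(\sqrt{1+|x|^2})$ with $V_0$ non-decreasing and \emph{sub-additive} with bounded first and second derivatives, and then invokes \cite[Lemma 4.13(ii)]{Espen-Indra-Milosz-2020}, where sub-additivity gives $|\Psi(x+z)-\Psi(x)|\leq V_0(|z|)$ and hence the uniform bound. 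You instead build the profile by hand (a de la Vall\'ee--Poussin construction with thresholds $R_n$ chosen so that $\theta([R_n,\infty))\leq 2^{-n}$ and with nonincreasing slopes) and use \emph{concavity}: $g(a+s)-g(a)\leq g(1+s)-g(1)$ for $a\geq 1$. Since a nonnegative, increasing, concave function is automatically sub-additive, your mechanism is a special case of the paper's; what your version buys is self-containedness --- you prove the existence lemma rather than import it --- at the modest cost of the $C^2$-smoothing step, which you only sketch but which is routine since the corners $R_n$ are uniformly separated. One final remark: the display \eqref{eq:tail-control-function} literally reads $\Psi(x+z)-\Psi(z)$, but the paper's own use of the proposition (in the tightness proof of Theorem \ref{thm:thightness_m}) shows the intended integrand is $\Psi(x+z)-\Psi(x)$; your reading is the correct one, as the literal version could not hold uniformly in $x$ for an unbounded $\Psi$.
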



\begin{proof}
We use \cite[Lemma 4.9]{Espen-Indra-Milosz-2020} on the family of measures $\{\nu_1 , m_0\}$, where $\nu_1$ is defined in \eqref{lambda_r}, to get  
 a function $\Psi(x) = V_0(\sqrt{1+|x|^2})$ such that $V_0:[0,\infty)\rightarrow [0,\infty)$ is a non-decreasing sub-additive function, $\|V_0'\|_{0}, \|V_0''\|_0<\infty$, $\displaystyle \lim_{x\rightarrow \infty} V_0(x) = \infty$,  and 
\begin{align*}
\int_{\rd} \Psi(x) \, \mu(dx)<\infty \qquad \mbox{for} \qquad \mu\in \{\nu_1 , m_0\}. 
\end{align*}
We immediately get the result except for the first part of \eqref{eq:tail-control-function}. But this estimate follows from sub-additivity and $\nu_1$-integrability of $V_0$, see \cite[Lemma 4.13 (ii)]{Espen-Indra-Milosz-2020}.
\end{proof}

\begin{remark}
\textit{(a)} If  $\frac{d\nu}{dz}\leq \frac{C}{|z|^{d+\sigma_1}}$ for $|z|>1$ and $\int_{\rd} |x|^{\sigma_2} \, m_0(dx)< \infty$ for $\sigma_1,\sigma_2>0$, then  $\Psi(z)= \log(\sqrt{1+|z|^2})$ is a possible
explicit choice for the function in Proposition \ref{prop:tail-control-function}.
 \smallskip 

\noindent \textit{(b)} 
Since $\Psi \in C^2(\rd)$, the first part of   \eqref{eq:tail-control-function} is equivalent to  $\|\mathcal{L}\Psi\|_0 < \infty$ (see \cite[Lemma 4.13 (ii)]{Espen-Indra-Milosz-2020}).
\end{remark} 

\begin{lemma}
Assume $\{\mu_\alpha\}_{\alpha\in A}\subset P(\rd)$ and there exists a function $0\leq \psi \in C(\rd)$ such that $\lim_{|x|\rightarrow \infty} \psi(x)=\infty$ and  $\sup_{\alpha}\int_{\rd} \psi(x) \mu_\alpha(dx) \leq C$. Then $\{\mu_\alpha\}_{\alpha}$ is tight. 
\end{lemma}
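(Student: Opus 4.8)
The plan is to verify the definition of tightness directly: given any $\varepsilon>0$, I must exhibit a compact set $K\subset\rd$ for which $\mu_\alpha(\rd\setminus K)\leq\varepsilon$ holds uniformly in $\alpha\in A$. The natural candidate is a closed ball $K=\overline{B_R}$ of a suitably large radius $R=R(\varepsilon)$, and the whole argument is just the classical moment-function tightness criterion exploiting that $\psi$ blows up at infinity together with the uniform integral bound $\sup_\alpha\int_{\rd}\psi\,d\mu_\alpha\leq C$.

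First I would fix $\varepsilon>0$ and introduce the threshold $M:=C/\varepsilon$. Since $\lim_{|x|\to\infty}\psi(x)=\infty$, there is a radius $R>0$ such that $\psi(x)>M$ whenever $|x|>R$; equivalently, the complement of the closed ball is contained in the superlevel set, $\{x:|x|>R\}\subseteq\{x:\psi(x)>M\}$. Here I would note that continuity of $\psi$ plays no essential role beyond ensuring measurability of these sets; what matters is the growth at infinity, which forces every superlevel set $\{\psi>M\}$ to be bounded.

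Next I would apply Markov's (Chebyshev's) inequality, using $\psi\geq0$ and the uniform bound to estimate, for every $\alpha\in A$,
\[
\mu_\alpha\big(\{x:|x|>R\}\big)\leq\mu_\alpha\big(\{x:\psi(x)>M\}\big)\leq\frac{1}{M}\int_{\rd}\psi(x)\,\mu_\alpha(dx)\leq\frac{C}{M}=\varepsilon.
\]
Consequently $\mu_\alpha(\overline{B_R})\geq1-\varepsilon$ for all $\alpha\in A$, and since $\overline{B_R}$ is compact in $\rd$, this is exactly the assertion that $\{\mu_\alpha\}_{\alpha}$ is tight.

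I do not anticipate any genuine obstacle in this proof: it is the standard tightness lemma and every step is elementary. The only two points requiring (minor) care are that the constant $C$ is uniform in $\alpha$, so that the radius $R$ can be chosen independently of $\alpha$, and that the growth condition $\psi(x)\to\infty$ guarantees $R$ is finite for each $\varepsilon$.
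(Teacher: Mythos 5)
Your proof is correct and is precisely the argument the paper intends: the paper dispatches this lemma with the single remark that it ``is classical and can be proved in a similar way as the Chebychev inequality,'' which is exactly your Markov-inequality estimate on the superlevel set $\{\psi>M\}$ containing $\{|x|>R\}$. Nothing is missing.
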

This result is classical and can be proved in a similar way as the Chebychev inequality. 


\begin{thm}[Tightness]\label{thm:thightness_m}
    Assume \ref{A0}, \ref{A2}, \ref{L1}--\ref{L2}, \ref{F2}, \ref{H1}, \ref{M1},  the CFL condition $\frac{\rho^2} h,hr^{1-2\sigma}=\mathcal{O}(1)$,  $\mu \in  C ( [0,T], P ( \rd ))$, and $m^{\epsilon}_{\rho,h}[\mu]$ is defined by \eqref{exten_disc_measure}. Take $\Psi$ as in Proposition \ref{prop:tail-control-function}. Then there exists $C>0$, independent  of $\rho, h, \epsilon$  and $\mu$, such that
\begin{align*}
\int_{\R^d} \Psi(x) \, dm^{\epsilon}_{\rho,h}[\mu](t) \leq C \qquad \mbox{for any } \qquad t\in[0,T].
\end{align*}
\end{thm}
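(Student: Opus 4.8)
The plan is to test the discrete FPK equation \eqref{Fokker-Planck_discretized} against the tail-control function $\Psi$ from Proposition \ref{prop:tail-control-function} and to control the discrete moment $M_k:=\sum_{i\in\zd}\Psi(x_i)\tilde m_{i,k}[\mu]$. First I would show that $M_0$ is finite and bounded uniformly: since $\tilde m_{j,0}=\int_{E_j}dm_0$, comparing $M_0=\sum_j\Psi(x_j)m_0(E_j)$ with $\int_{\rd}\Psi\,dm_0$ costs only $\|D\Psi\|_0\,\tfrac{\sqrt d}2\rho$ (the cell diameter), so $M_0\le\int_{\rd}\Psi\,dm_0+O(\rho)<\infty$ by \eqref{eq:tail-control-function}. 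The goal is then the one-step bound $M_{k+1}\le M_k+Ch$ with $C$ independent of $\rho,h,\epsilon,\mu$; iterating over $k\le N=T/h$ gives $\sup_k M_k\le C$, and transferring from the grid values to the extension \eqref{exten_disc_measure} (again an $O(\rho)$ error from averaging $\Psi$ over each $E_i$, plus a convexity-in-$t$ argument on the linear interpolation) yields the claimed bound on $\int_{\rd}\Psi\,dm^{\epsilon}_{\rho,h}[\mu](t)$.

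For the one-step estimate I would change the order of summation and use $\sum_i\phi(x_i)\beta_i(\cdot)=I[\phi](\cdot)$ to write $M_{k+1}=\sum_j\tilde m_{j,k}\,T[\Psi](x_j)$, where, with $\alpha_{\textup{num}}(t_k,x_j)=D_pH(x_j,D\tilde u^{\epsilon}_{\rho,h}[\mu](t_k,x_j))$ and $\Phi^{\epsilon,\pm}_{j,k,p}$ as in \eqref{defn_characteristics},
$$T[\Psi](x_j):=\frac{e^{-\lambda_r h}}{2d}\sum_{p=1}^{d}\big(I[\Psi](\Phi^{\epsilon,+}_{j,k,p})+I[\Psi](\Phi^{\epsilon,-}_{j,k,p})\big)+\frac{1-e^{-\lambda_r h}}{\lambda_r}\int_{|z|>r}I[\Psi](x_j+z)\,\nu(dz).$$
I would replace $I[\Psi]$ by $\Psi$ at the price of $\|D^2\Psi\|_0\rho^2$ per term; as the weights sum to $1$ (cf. Lemma \ref{mass_conservation}) this contributes $O(\rho^2)=O(h)$ by the CFL $\rho^2/h=\mathcal O(1)$. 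The target is then $T[\Psi](x_j)\le\Psi(x_j)+Ch$ uniformly in $j$.

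This I would prove by a second-order Taylor expansion of $\Psi$ about $x_j$ (only $C^2$ is available, so no higher-order term), treating the three pieces separately. The diffusion terms give $e^{-\lambda_r h}\Psi(x_j)$, a first-order drift $-e^{-\lambda_r h}h\,D\Psi(x_j)\cdot(\alpha_{\textup{num}}+b_r^\sigma)$ with $b_r^\sigma=\int_{r<|z|<1}z\,\nu(dz)$ (cf.\ \eqref{B_r}), and a quadratic remainder bounded by $\|D^2\Psi\|_0\big(h^2+h^2r^{2-2\sigma}+hr^{2-\sigma}\big)$, all $O(h)$ under the CFL (using $\sum_p|\sigma_r^p|^2=\mathrm{tr}(\sigma_r\sigma_r^T)\le Cr^{2-\sigma}$ from \ref{A2} and \eqref{sigma_r}). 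Splitting $\int_{|z|>r}$: the part $r<|z|<1$ yields $\tfrac{1-e^{-\lambda_r h}}{\lambda_r}\big(D\Psi(x_j)\cdot b_r^\sigma+O(\int_{|z|<1}|z|^2\nu)\big)$ by a compensated expansion, while the part $|z|>1$ is bounded because $\|\mathcal L\Psi\|_0<\infty$ — equivalently the first bound in \eqref{eq:tail-control-function} — which is exactly what lets the \emph{unbounded} $\Psi$ be used. The surviving transport term $D\Psi\cdot\alpha_{\textup{num}}$ is bounded since $\|D\Psi\|_0<\infty$ and, by \ref{H1} together with the uniform Lipschitz bound $\|D\tilde u^{\epsilon}_{\rho,h}[\mu]\|_0\le(L_L+L_F)T+L_G$ of Lemma \ref{semiconcavity_eps}(a), $|\alpha_{\textup{num}}|$ is bounded uniformly in $j,\mu,\rho,h,\epsilon$.

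The main obstacle — and the only place the precise CFL $hr^{1-2\sigma}=\mathcal O(1)$ is needed — is that the compensator drift $b_r^\sigma=O(r^{1-\sigma})$ enters with weight $e^{-\lambda_r h}h$ in the diffusion drift and with weight $\tfrac{1-e^{-\lambda_r h}}{\lambda_r}$ in the small-jump term, and the two must cancel. They do not cancel exactly: the residual is $\big(\tfrac{1-e^{-\lambda_r h}}{\lambda_r}-e^{-\lambda_r h}h\big)D\Psi(x_j)\cdot b_r^\sigma=h\,g(\lambda_r h)\,D\Psi(x_j)\cdot b_r^\sigma$ with $g(x)=\tfrac{1-e^{-x}}{x}-e^{-x}$. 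Since $g(x)\le Cx$ for all $x\ge0$ and $\lambda_r\le Cr^{-\sigma}$, this is bounded by $Ch\,\lambda_r h\,r^{1-\sigma}\le C'h\,(hr^{1-2\sigma})=O(h)$. Collecting all contributions gives $T[\Psi](x_j)\le\Psi(x_j)+Ch$; summing against $\tilde m_{j,k}$ (summing to $1$ by Lemma \ref{mass_conservation}) gives $M_{k+1}\le M_k+Ch$, and iteration completes the argument.
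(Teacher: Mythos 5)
Your proof is correct and follows essentially the same route as the paper's own proof: test the scheme against $\Psi$, use the exact cancellation $e^{-\lambda_r h}\Psi(x_j)+(1-e^{-\lambda_r h})\Psi(x_j)=\Psi(x_j)$ together with the near-cancellation of the compensator drift terms (residual of size $h\cdot\lambda_r h\,|b_r^\sigma|\lesssim h\,(hr^{1-2\sigma})$, which is exactly where the CFL condition enters), bound the large-jump contribution by the first estimate in \eqref{eq:tail-control-function}, and iterate over the $N\sim T/h$ steps. The only (harmless) difference is bookkeeping: you track the grid moment $\sum_i\Psi(x_i)m_{i,k}$ and pass to $\int\Psi\,dm^\epsilon_{\rho,h}$ once at the start and end, whereas the paper converts between the integral and the grid sum at every step via the midpoint/interpolation errors — both incur the same accumulated $O(\rho^2/h)=O(1)$ error.
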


\begin{proof}
Essentially we start by
multiplying the scheme \eqref{Fokker-Planck_discretized} by $\Psi$ and integrating in space.
By the definition of $m^{\epsilon}_{\rho,h}=m^{\epsilon}_{\rho,h}[\mu]$ in \eqref{exten_disc_measure}  and \eqref{Fokker-Planck_discretized},
we find that
\begin{align*}
 \int_{\R^d} \Psi(x) dm^{\epsilon}_{\rho,h}(t_{k+1})  &  =  \frac{1}{\rho^d}\sum_{i \in \Z^d} m_{i,k+1} \int_{E_i} \Psi(x) dx \\ 
& =    \sum_{i \in \Z^d} \frac{1}{\rho^d} \int_{E_i} \Psi(x) dx  \sum_{j} m_{j,k}  \, \mathbf{B}_{\rho,h,r} [ H_{p} ( \cdot, Du_{\rho,h}^{\epsilon} ) ] ( i,j,k ).
 \end{align*}
By the definition of $\mathbf{B}_{\rho,h,r}$ in \eqref{fokker-planck-general-discretized} and interchanging the order of summation and integration, we have
\begin{align*}
 \int_{\R^d} \Psi(x) d\,m^{\epsilon}_{\rho,h}(t_{k+1}) 
=&   \sum_{j\in \Z^d} \frac{m_{j,k}}{\rho^{d}} \bigg[ \frac{e^{-\lambda_r h}}{2d}\sum_{p=1}^d\sum_{i\in \Z^d} \int_{E_i} \Psi(x) \big(\beta_i(\Phi^{\epsilon,+}_{j,k,p})+ \beta_i(\Phi^{\epsilon,-}_{j,k,p})\big) dx \\
& \hspace{1.7cm} +\frac{1-e^{-\lambda_r h}}{\lambda_r}\int_{|z|>r} \sum_{i\in \Z^d} \int_{E_i}\Psi(x) \beta_i(x_j  +z) dx \, \nu(dz) \bigg].
\end{align*}
Since $\Psi \in C^2(\rd)$, by properties of midpoint approximation  and linear/multilinear interpolation we have $\big| \frac{1}{\rho^d} \int_{E_i} \Psi(x) dx - \Psi(x_i)\big|  
+  \big|\Psi(x) - \sum_{i\in \Z^d} \beta_i(x) \Psi(x_i)\big|\leq \mathcal{O}(\rho^{2})$.
Therefore
\begin{align}\label{esti_1}
  \int_{\R^d} \Psi(x) d\,m^{\epsilon}_{\rho,h}(t_{k+1})
\leq &  \sum_{j\in \Z^d} m_{j,k}\bigg[\frac{e^{-\lambda_r h}}{2d} \sum_{p=1}^d \big( \Psi\big(\Phi^{\epsilon,+}_{j,k,p}\big) + \Psi\big(\Phi^{\epsilon,-}_{j,k,p}\big)  \big) \\
&\notag  \hspace{1cm}+ \frac{1-e^{-\lambda_r h}}{\lambda_r}\int_{|z|>r} \Psi(x_j+z) \, \nu(dz)\bigg] + \mathcal{O}(\rho^{2}).
\end{align}

We estimate the terms on the right  hand side. 
Let $\Phi^{\epsilon,\pm}_{j,k,p}= x_j\pm a^{\pm}_{h,j}$ where
\begin{align}\label{charectatistic_dif}
& a^{\pm}_{h,j}= h\,\Big( D_p H \big(x_j, Du^{\epsilon}_{\rho,h}(t_k,x_j)\big) + B_r^{\sigma}\Big)  \pm \sqrt{h} \sigma_r^p. 
\end{align}
By the fundamental theorem of Calculus, 
\begin{align}\label{esti_2}
\Psi(x_j-a^{+}_{h,j}) + \Psi(x_j-a^{-}_{h,j})  = 2 \Psi(x_j) - (a^{+}_{h,j} + a^{-}_{h,j})\cdot D \Psi(x_j) +E_1
\end{align}
where $a^{+}_{h,j} + a^{-}_{h,j} = 2 h\,\big( D_p H \big(x_j, Du^{\epsilon}_{\rho,h}(t_k,x_j)\big) + B_r^{\sigma}\big)$ and 
\begin{align*}
E_1=- \int_{0}^1 \Big[a^{+}_{h,j} \cdot \big(D\Psi(x_j-t a^{+}_{h,j}) - D \Psi(x_j)\big)  + a^{-}_{h,j} \cdot \big(D\Psi(x_j-t a^{-}_{h,j}) - D \Psi(x_j)\big)\Big] dt. 
\end{align*}
By Lemma \ref{semiconcavity_eps}~(a) and 
 \ref{H1}, we find that $\|D_p H(\cdot, Du_{\rho,h}^{\epsilon})\|_0\leq C_R$ with $R=( L_{L} + L_{F} )T + L_{G}+1$,
and then that 
\begin{align*}
|E_1| 
& \leq  \|D^2 \Psi \|_{0} ( |a^{+}_{h,j}|^2 + |a^{-}_{h,j}|^2 ) 
\leq  4 \|D^2 \Psi \|_{0} \big( h^2 (C_R^2+|B_r^{\sigma}|^2) + h|\sigma_r^{p}|^2 \big).
\end{align*}
To estimate the nonlocal term, we write
\begin{align*}
\notag &  \int_{|z|>r} \Psi(x_j+z) \, \nu(dz)  =   \int_{|z|>1} \Psi(x_j+z) \nu(dz)\\
&\quad +  \int_{r<|z|<1} \Big\{\Psi(x_j) + z\cdot D\Psi(x_j) 
 + \int_0^1  z \cdot \Big[ D \Psi(x_j+tz) - D \Psi(x_j)\Big]dt \Big\} \, \nu(dz)  \\ 
\notag & \leq \Big| \int_{|z|>1} \big(\Psi(x_j+z) - \Psi(x_j) \big) \nu(dz)\Big|  + \lambda_r \Psi(x_j) + B_r^{\sigma} \cdot D\Psi(x_j) \\ 
\notag &  \hspace*{6.5cm} + \|D^2\Psi\|_{0} \int_{r<|z|<1} |z|^2 \nu(dz) \\
& \leq \,  \lambda_r \Psi(x_j) + B_r^{\sigma} \cdot D\Psi(x_j) + E_2,
\end{align*}
where $E_2$ is finite and independent of $\rho, h,\epsilon$ by Proposition \ref{prop:tail-control-function} and $\int_{|z|<1} |z|^2 \nu(dz)<\infty$.
Going back to \eqref{esti_1} and using the above estimates then leads to
\begin{align*}
& \int_{\R^d} \Psi(x) d\,m^{\epsilon}_{\rho,h}(t_{k+1}) \\
& \leq  \sum_{j \in \Z^d} m_{j,k} \bigg[ \frac{e^{-\lambda_r h}}{2d} \sum_{p=1}^d\Big( \notag 2 \Psi(x_j) - 
2 h\,\big[ D_p H \big(x_j, Du^{\epsilon}_{\rho,h}(t_k,x_j)\big) + B_r^{\sigma}\big] \cdot D\Psi(x_j) + |E_1| \Big) \\
&  \hspace{2cm} + \frac{1-e^{-\lambda_r h}}{\lambda_r} \Big(\lambda_r \Psi(x_j)  + B_r^{\sigma} \cdot  D\Psi(x_j) + E_2 \Big) \bigg] + C   \rho^2  \\
& \leq   \sum_{j\in \Z^d} m_{j,k}\, \Psi(x_j)  +C \Big( h^2 \lambda_r |B_r^{\sigma}| + h^2 |B_r^{\sigma}|^2+ h + \rho^{2}\Big), 
\end{align*}
where we used 
$|-he^{-\lambda_rh} +\frac{1-e^{-\lambda_r h}}{\lambda_r}|\leq \frac{3}{2} \lambda_r h^2$ and $\frac{1-e^{-\lambda_r h}}{\lambda_r}\leq h$ to get the last inequality. 

With $A_{k+1} = \int_{\R^d} \Psi(x) d\,m^{\epsilon}_{\rho,h}(t_{k+1})$, the above estimate becomes
$A_{k+1} \leq  A_k + E$ where $E=C (\lambda_r h^2 |B_r^{\sigma}| + h^2 |B_r^{\sigma}|^2 + h + \rho^{2})$. 
 By iteration, $ |B^{\sigma}_r|^2\leq \lambda_r |B_r^{\sigma}| \leq C r^{1-2\sigma}$ (by \ref{A0}, \ref{A2}), and $k\leq N \leq \frac Ch$, we find that
\begin{align} \label{esti_tight_m_rho-h}
 A_{k+1} & \leq  \, A_0 +  (k+1) E 
\leq  A_0 + C\Big( h r^{1-2\sigma} +1 +\frac{\rho^{2}}{h} \Big) .
\end{align}
By assumption $\frac{\rho^{2}}{h},h r^{1-2\sigma}=\mathcal{O}(1)$, and by Proposition \ref{prop:tail-control-function}, $A_0 = \int_{\R^d} \Psi(x) d\,m_0<\infty$. Therefore
\begin{align*}
\int_{\R^d} \Psi(x) d\,m^{\epsilon}_{\rho,h}(t_k) \leq C \qquad \mbox{for} \qquad k=0,1,\dots,N ,
\end{align*} 
for some constant $C>0$ independent of $\rho,h,\epsilon, \mu$, and hence by \eqref{exten_disc_measure} the result follows for $t\in[0,T]$.
\end{proof}

\begin{thm}[Equicontinuity in time]\label{thm:time_equicont_m}
    Assume \ref{A0}, \ref{A2}, \ref{L1}--\ref{L2},  \ref{F2}, \ref{H1}, \ref{M1}, $\mu \in  C ( [0,T], P ( \rd ))$, and  $m^{\epsilon}_{\rho,h}[\mu]$ is defined by \eqref{exten_disc_measure}.  Let  $\frac{\rho^{2}}{h},\frac h{r^{\sigma}}=\mathcal{O}(1)$ if $\sigma\in (0,1)$, or  $\frac{\rho^{2}}{h},h r^{1-2\sigma}=\mathcal{O}(1)$ if $\sigma\in (1,2)$. Then there exists a constant $C_0>0$, independent of $\rho, h, \epsilon$ and $\mu$, such that for any $t_1,t_2 \in [0,T]$,
\begin{align*}
d_0 (m^\epsilon_{\rho,h}[\mu](t_1),m^\epsilon_{\rho,h}[\mu](t_2)) \leq C_0 \sqrt{|t_1-t_2|}. 
\end{align*}
\end{thm}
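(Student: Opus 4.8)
The plan is to prove the estimate first for grid times and then extend to all $t_1,t_2$. Write $m:=m^\epsilon_{\rho,h}[\mu]$ and $\tau:=t_{k_2}-t_{k_1}$. By the piecewise-linear-in-$t$ definition \eqref{exten_disc_measure}, if $t_1,t_2\in[t_{k},t_{k+1}]$ then $m(t_1)-m(t_2)=\tfrac{t_1-t_2}{h}\big(m(t_{k+1})-m(t_k)\big)$, so $d_0$ scales linearly inside each interval; since $|t_1-t_2|\le h$ there, any one-step bound $C\sqrt h$ transfers to $C|t_1-t_2|/\sqrt h\le C\sqrt{|t_1-t_2|}$. Combining this with the triangle inequality, everything reduces to the grid-to-grid bound $d_0(m(t_{k_1}),m(t_{k_2}))\le C\sqrt{\tau}$ for $k_1\le k_2$ (the one-step bound being the case $\tau=h$). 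To estimate $d_0$ I fix $f\in\Lip_{1,1}(\rd)$ and control $\int_{\rd}f\,d\big(m(t_{k_2})-m(t_{k_1})\big)$. Because the diffusion part of the scheme requires second derivatives while $f$ is merely Lipschitz, I replace $f$ by a mollification $f_\delta$ at scale $\delta$; by Lemma \ref{regularized_Lipschitz} (with $u=f$), $\|f-f_\delta\|_0\le\delta$, $\|Df_\delta\|_0\le1$ and $\|D^2f_\delta\|_0\le c_\rho\delta^{-1}$, so this costs at most $2\delta$.

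I then telescope $\int f_\delta\,d(m(t_{k_2})-m(t_{k_1}))=\sum_{k=k_1}^{k_2-1}\big(\int f_\delta\,dm(t_{k+1})-\int f_\delta\,dm(t_k)\big)$ and analyse a single step exactly as in the proof of Theorem \ref{thm:thightness_m}, with $f_\delta$ in place of $\Psi$. Inserting \eqref{Fokker-Planck_discretized}, using the midpoint quadrature and the linear interpolation (whose errors are $O(\|D^2f_\delta\|_0\rho^2)=O(\delta^{-1}\rho^2)$ per step, since the weights in $\mathbf B_{\rho,h,r}$ sum to one), and Taylor-expanding $f_\delta$ to second order at the characteristics $\Phi^{\epsilon,\pm}_{j,k,p}$ and in the small-jump part of the nonlocal integral, one obtains a step increment of the form $\sum_j m_{j,k}\big(B_j-f_\delta(x_j)\big)+O(\delta^{-1}\rho^2)$, where $B_j-f_\delta(x_j)$ consists of a transport term $-e^{-\lambda_rh}hD_pH(x_j,Du^\epsilon_{\rho,h})\cdot Df_\delta(x_j)$, a second-order diffusion term $\tfrac{h}{2}e^{-\lambda_rh}\,\mathrm{tr}\big(\sigma_r^{T}D^2f_\delta(x_j)\sigma_r\big)$, the long-range part $\tfrac{1-e^{-\lambda_rh}}{\lambda_r}\mathcal L^r f_\delta(x_j)$, and a remainder. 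The crucial cancellation, just as for $\Psi$, is that the non-symmetric drift $B_r^\sigma$ enters both the diffusion part (through $\bar b$) and the first-order part of $\int_{|z|>r}\big(f_\delta(x_j+z)-f_\delta(x_j)\big)\,\nu(dz)$ with opposite signs, and the coefficient mismatch $-e^{-\lambda_rh}+\tfrac{1-e^{-\lambda_rh}}{\lambda_rh}=O(\lambda_rh)$ leaves a remainder only of size $O(h^2\lambda_r|B_r^\sigma|)$.

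It remains to bound the step increment. Using $\|D_pH(\cdot,Du^\epsilon_{\rho,h})\|_0\le C_R$ (Lemma \ref{semiconcavity_eps}(a) together with \ref{H1}, for the fixed $R=(L_L+L_F)T+L_G+1$), $|\sigma_r|^2\le Cr^{2-\sigma}$, $\int_{|z|<1}|z|^2\,d\nu\le C$, $\lambda_r\le Cr^{-\sigma}$ and $|B_r^\sigma|\le C(1+r^{1-\sigma})$, the transport term is $O(h)$, the diffusion term is $O(\delta^{-1}hr^{2-\sigma})=O(\delta^{-1}h)$, the $\mathcal L^r$-term is $O(\delta^{-1}h)+O(h)$ (small and large jumps), and the $B_r^\sigma$-remainder and Taylor errors are $O(h)$ under the CFL conditions. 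This is exactly where the hypotheses enter: $h\lambda_r=O(1)$ (from $h/r^{\sigma}=O(1)$) for $\sigma\in(0,1)$, and $h\lambda_r|B_r^\sigma|\le Chr^{1-2\sigma}=O(1)$ for $\sigma\in(1,2)$, so that $h^2\lambda_r|B_r^\sigma|=O(h)$; while $\rho^2/h=O(1)$ turns the accumulated interpolation error into $O(\delta^{-1}h)$ after multiplying by the step count. Altogether each step contributes $O(h)+O(\delta^{-1}h)$.

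Summing the $n=k_2-k_1$ steps ($nh=\tau$) gives
\[
\Big|\int_{\rd}f_\delta\,d\big(m(t_{k_2})-m(t_{k_1})\big)\Big|\le C\tau+C\delta^{-1}\tau,
\]
and, after adding back the mollification error and taking the supremum over $f\in\Lip_{1,1}(\rd)$,
\[
d_0\big(m(t_{k_1}),m(t_{k_2})\big)\le C\tau+C\delta^{-1}\tau+2\delta.
\]
Choosing $\delta=\sqrt{\tau}$ and using $\tau\le T$ yields the grid-to-grid bound $C_0\sqrt{\tau}$ with $C_0$ independent of $\rho,h,\epsilon,\mu$; the first paragraph then gives the claim for all $t_1,t_2\in[0,T]$. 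The main obstacle is precisely the accumulation of the diffusion part: a direct Lipschitz estimate costs $O(\sqrt h)$ per step and hence $O(\tau/\sqrt h)$ over $[t_{k_1},t_{k_2}]$, which diverges as $h\to0$. Mollification unlocks the second-order cancellation (only $O(h/\delta)$ per step) at the price of the term $2\delta$, and the balance $\delta=\sqrt{\tau}$ produces the Hölder-$\tfrac12$ rate, with the non-symmetric drift $B_r^\sigma$ and the CFL conditions controlling the remaining first-order contributions.
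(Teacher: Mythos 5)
Your proof is correct and follows essentially the same route as the paper: mollify the test function at scale $\delta$, bound the change of $\int f_\delta\,dm^\epsilon_{\rho,h}$ per time step using the scheme's structure (the same transport/diffusion/large-jump decomposition, the same $B_r^\sigma$-cancellation via $\big|\tfrac{1-e^{-\lambda_r h}}{\lambda_r}-he^{-\lambda_r h}\big|\leq h^2\lambda_r$, and the same CFL bookkeeping in the two regimes of $\sigma$), then balance with the mollification cost by taking $\delta=\sqrt{|t_1-t_2|}$. The only difference is organizational: you reduce to grid times and telescope, whereas the paper observes that $t\mapsto\int\phi_\delta\,dm^\epsilon_{\rho,h}(t)$ is piecewise affine in $t$, hence Lipschitz with constant $\sup_k|I_k|$, which covers non-grid times directly; your per-step bound is exactly the paper's estimate $|I_k|\leq C(1+\tfrac1\delta)$ multiplied by $h$.
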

\begin{proof}
We start by the case $\sigma>1$. For $\delta>0$, let $\phi_{\delta}:= \phi*\rho_{\delta}$ for $\rho_{\delta}$ defined just before Lemma \ref{regularized_Lipschitz}. With $m^\epsilon_{\rho,h}=m^\epsilon_{\rho,h}[\mu]$ we first note that
\begin{align} 
  & d_0  (m^\epsilon_{\rho,h}(t_1), m^\epsilon_{\rho,h}(t_2))=  \sup_{\phi\in \Lip_{1,1}}\int_{\rd} \phi(x) (m^\epsilon_{\rho,h}(t_1)-m^\epsilon_{\rho,h}(t_2))dx \nonumber\\
 &=  \sup_{\phi\in \Lip_{1,1}}\Big\{\int_{\rd}(\phi-\phi_\delta)(m^\epsilon_{\rho,h}(t_1)-m^\epsilon_{\rho,h}(t_2))dx + \int_{\rd} \phi_\delta \,  (m^\epsilon_{\rho,h}(t_1)-m^\epsilon_{\rho,h}(t_2))dx\Big\} \nonumber\\
 & \leq \, 2 \delta \| D\phi \|_0  + \sup_{\phi\in \Lip_{1,1}}\int_{\rd} \phi_\delta \, (m^\epsilon_{\rho,h}(t_1)-m^\epsilon_{\rho,h}(t_2))dx, \label{regular_KR_dist}
\end{align}
where Lemma \ref{regularized_Lipschitz} was used to estimate the $\phi-\phi_\delta$ term and $\int m^\epsilon_{\rho,h}dx=1$. Since $m^\epsilon_{\rho,h}$ and $\int_{\R^d} \phi_{\delta}(x) m^\epsilon_{\rho,h}(t,x)dx$ are affine on each interval $[t_k,t_{k+1}]$,  $\int_{\R^d} \phi_{\delta}(x) \, m^\epsilon_{\rho,h}(\cdot,x)dx\in W^{1,\infty}[0,T]$ and 
\begin{align*}
\Big\|\frac{d}{dt} \int_{\R^d} \phi_{\delta}(x) \, m^\epsilon_{\rho,h}(\cdot,x) dx\Big\|_{0} \leq \sup_k |I_k|.
\end{align*}
where $I_k= \int_{\R^d} \phi_{\delta}(x) \, \frac{m^\epsilon_{\rho,h}(t_{k+1},x)-m^\epsilon_{\rho,h}(t_k,x)}{h}dx$.
It follows that
\begin{align}\label{diff-m}
 &\int_{\R^d} \phi_{\delta} \, (m^{\epsilon}_{\rho,h}(t_1,x)- m^{\epsilon}_{\rho,h}(t_2,x))dx
  \leq  |t_1-t_2| \sup_k |I_k|. 
\end{align}

Let us estimate $I_k$.
By \eqref{exten_disc_measure}, \eqref{Fokker-Planck_discretized}, \eqref{fokker-planck-general-discretized}, the  midpoint quadrature approximation error bound, and the linear/multi-linear interpolation error bound, we have
\begin{align*}
I_k 
=&\frac{1}{h} \sum_i  \frac{1}{\rho^d}  \int_{E_i} \phi_{\delta}(x)\, dx[m_{i,k+1} - m_{i,k}]\\
  =& \frac{1}{h\rho^d} \sum_{j,i}  \Big(\int_{E_i} \phi_{\delta}(x) dx \Big)\Big[ m_{j,k}   \, \mathbf{B}_{\rho,h,r} [ H_{p} ( \cdot, Du_{\rho,h}^{\epsilon} ) ] ( i,j,k ) 
 - m_{i,k}\, \delta_{i,j}\Big]  \\
=& \frac{1}{h} \sum_{j} m_{j,k} \Big[ \sum_i \phi_{\delta}(x_i)\mathbf{B}_{\rho,h,r} [ H_{p} ( \cdot, Du_{\rho,h}^{\epsilon} ) ] ( i,j,k ) - \phi_{\delta}(x_j) + C\|D^2 \phi_{\delta}\|_{0}\rho^2 \Big]      \\
=& \frac{1}{h} \sum _j m_{j,k}\Big[ \frac{e^{-\lambda_r h}}{2d} \Big( \sum^d_{p=1}\phi_{\delta}(\Phi^{\epsilon,+}_{j,k,p}) + \phi_{\delta} (\Phi^{\epsilon,-}_{j,k,p}) -2\phi_{\delta}(x_j)\Big) \\
 & \hspace{2cm}+ \frac{1-e^{-\lambda_r h}}{\lambda_r}\int_{| z | > r} \big(\phi_{\delta}(x_j + z)-\phi_{\delta}(x_j) \big)\nu(dz) + C\|D^2 \phi_{\delta}\|_{0}\rho^2 \Big] .
\end{align*}
Since $\Phi^{\epsilon,\pm}_{j,k,p}= x_j + a^{\pm}_{h,j}$ by~\eqref{charectatistic_dif}, a 2nd order Taylor's expansion gives us
\begin{align*}
\big|I_k   \big|
\leq  & \frac{1}{h}\sum_j m_{j,k}\bigg[e^{-\lambda_r h}\Big( (-h D_p H \big(x_j, Du^{\epsilon}_{\rho,h}[\mu](t_k,x_j)\big)  -hB_r^{\sigma})\cdot D\phi_{\delta}(x_j)  \\
 & \hspace*{1cm} +\frac{\|D^2\phi_{\delta}\|_{0}}{2d}\sum^d_{p=1}  \big(|a^{+}_{h,j}|^ 2+|a^{-}_{h,j}|^2\big)    
+ \frac{1-e^{-\lambda_r h}}{\lambda_r}\Big(B^\sigma_r \cdot D\phi_{\delta}(x_j)  \\
 & \hspace*{1cm}+ \|D^2\phi_{\delta}\|_{0}\int_{r< | z | <1}|z|^2 \nu(dz)  + 2\|\phi_{\delta}\|_{0} \int_{|z|>1} \nu(dz) + C\|D^2 \phi_{\delta}\|_{0}\rho^2 \Big) \bigg] \\
\leq  &\,  \frac{1}{h} \bigg[\Big(h  \| D_pH(\cdot,Du_{\rho,h}^{\epsilon}) \|_0  + h^2 \lambda_r | B^\sigma_r|\Big)\|D\phi_{\delta}\|_0 + c_3 h {\|  \phi_{\delta} \|}_0 
			\\ & \hspace{0.5cm} + c_1 \Big(h^2 {\| D_pH(\cdot,Du_{\rho,h}^{\epsilon}) \|}^2 + h^2 {| B^\sigma_r | }^2 + h{|\sigma_r|}^2 + h + \rho^2\Big ) {\| D^2 \phi_{\delta} \|}_0  \bigg]  \sum_j m_{j,k}.
\end{align*}
The above inequality follows since $(\frac{1-e^{-\lambda_r h}}{\lambda_r} -h e^{-h\lambda_r}) \leq h^2\lambda_r$ 
 (used for the $B_r^{\sigma}\cdot D\phi_{\delta}$-terms), and $\int_{r<| z | <1} {| z | }^2  \nu(dz) + \int_{|z|>1} \nu(dz) \leq C$ independently of $r$  by \ref{A0} and \ref{A2}.  
By Lemma \ref{semiconcavity_eps}~(a) and 
 \ref{H1}, $\|D_pH(\cdot, Du_{\rho,h}^{\epsilon})\|_0\leq C_R$ with $R=( L_{L} + L_{F} )T + L_{G}+1$. Since $\sum m_{j,k} =1$, $\phi\in \textup{Lip}_{1,1}$, $\|D^2\phi_{\delta}\|_{0} \leq \frac{\|D\phi\|_{0}}{\delta}$, and $ |B^{\sigma}_r|^2\leq \lambda_r |B_r^{\sigma}| \leq K r^{1-2\sigma}$ (by \ref{A0}, \ref{A2}), we get that 
\begin{align*} 
|I_k| 
\leq  C(1+h r ^{1-2 \sigma} )  + C\big(1 +   h +h r ^{1-2 \sigma}  + \frac{\rho^2}{h}\big) \frac{1}{\delta}.
\end{align*}

To conclude the proof in the case $\sigma>1$, we go back to \eqref{regular_KR_dist} and \eqref{diff-m}. In view of the above estimate on $I_k$ and the assumption that $\frac{\rho^2}{h},h r^{1-2\sigma}=\mathcal{O}(1)$, we find that
\begin{align*}
d_0(m^{\epsilon}_{\rho,h}(t_1),m^{\epsilon}_{\rho,h}(t_2))
&\leq 2\delta + C |t_1-t_2|\Big(1+\frac{1}{\delta}\Big). 
\end{align*}
Finally taking $ \delta = \sqrt{|t_1-t_2|}$ we get 
$d_0(m^{\epsilon}_{\rho,h}(t_1),m^{\epsilon}_{\rho,h}(t_2)) \leq C\sqrt{|t_1-t_2|}.$ \medskip

When $\sigma<1$, we find that $|B_r^{\sigma}|\leq C$ and hence that
\begin{align*} 
|I_k| \leq  C(1+h r ^{-\sigma} )  + C\big( 1 + h   +h r ^{-\sigma} + \frac{\rho^2}{h}\big) \frac{1}{\delta}.
\end{align*}
By assumption $h r^{-\sigma}+ \frac{\rho^2}{h}= \mathcal{O}(1)$, so again we find that
\begin{align*}
d_0(m^{\epsilon}_{\rho,h}(t_1),m^{\epsilon}_{\rho,h}(t_2)) \leq 2\delta + C |t_1-t_2|\Big(1+\frac{1}{\delta}\Big), 
\end{align*}
and can conclude as before.
\end{proof} 

We also need a $L^1$-stability result for $m^{\epsilon}_{\rho,h}[\mu]$ with respect to variations in $\mu$.

\begin{lemma}[$L^{1}$-stability] \label{L1_stability_fp}
Assume \ref{A0}, \ref{H1}, and $m^{\epsilon}_{\rho,h}[\mu]$ is defined by \eqref{exten_disc_measure}.
    Then for $\mu_1,\mu_2 \in C([0,T],P(\rd))$,
    \begin{align*}
      &  \sup_{t \in  [ 0,T ]} \| m^{\epsilon}_{\rho,h}[\mu_1] ( t, \cdot ) - m^{\epsilon}_{\rho,h}[\mu_2] ( t,\cdot )   \|_{L^{1} ( \R^d )}  \\
        &\leq \frac{cKT}{\rho} e^{- h \lambda_{r}} \big\| D_p H(\cdot, Du^{\epsilon}_{\rho,h}[\mu_1]) - D_p H(\cdot, Du^{\epsilon}_{\rho,h}[\mu_2])  \big\|_{0} .
    \end{align*}
\end{lemma}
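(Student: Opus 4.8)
The plan is to propagate the discrete error $e_{j,k} := m_{j,k}[\mu_1] - m_{j,k}[\mu_2]$ through the scheme and reduce the estimate to a one-step linear recursion. Since the initial data $m_{i,0}=\int_{E_i}dm_0$ in \eqref{Fokker-Planck_discretized} does not depend on $\mu$, we start from $e_{j,0}=0$. Abbreviating $\mathbf{B}_{\rho,h,r}[\mu]:=\mathbf{B}_{\rho,h,r}[H_p(\cdot,Du^{\epsilon}_{\rho,h}[\mu])]$, subtracting the two instances of \eqref{Fokker-Planck_discretized} and inserting $\pm\, m_{j,k}[\mu_2]\mathbf{B}_{\rho,h,r}[\mu_1](i,j,k)$ gives
\begin{align*}
e_{i,k+1} = \sum_{j\in\zd} e_{j,k}\,\mathbf{B}_{\rho,h,r}[\mu_1](i,j,k) + \sum_{j\in\zd} m_{j,k}[\mu_2]\big(\mathbf{B}_{\rho,h,r}[\mu_1](i,j,k) - \mathbf{B}_{\rho,h,r}[\mu_2](i,j,k)\big).
\end{align*}
Taking absolute values, summing over $i\in\zd$, and using the column-stochasticity $\sum_i\mathbf{B}_{\rho,h,r}[\mu_1](i,j,k)=1$ established in the proof of Lemma \ref{mass_conservation}, the first sum contributes exactly $\sum_{j}|e_{j,k}|$.

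For the second sum the key observation is that only the transport part of $\mathbf{B}_{\rho,h,r}$ depends on $\mu$: in \eqref{fokker-planck-general-discretized} the nonlocal term $\frac{1-e^{-\lambda_r h}}{\lambda_r}\int_{|z|>r}\beta_i(x_j+z)\nu(dz)$ is $\mu$-independent, so it cancels, and by \eqref{defn_characteristics}
\begin{align*}
\Phi^{\epsilon,\pm}_{j,k,p}[\mu_1]-\Phi^{\epsilon,\pm}_{j,k,p}[\mu_2] = -h\big(D_pH(x_j,Du^{\epsilon}_{\rho,h}[\mu_1](t_k,x_j)) - D_pH(x_j,Du^{\epsilon}_{\rho,h}[\mu_2](t_k,x_j))\big),
\end{align*}
which is independent of the sign and of $\sigma_r^p$, hence bounded by $h\,\|D_pH(\cdot,Du^{\epsilon}_{\rho,h}[\mu_1]) - D_pH(\cdot,Du^{\epsilon}_{\rho,h}[\mu_2])\|_0$ uniformly in $j,k,p,\pm$. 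I would then bound the $i$-sum of the difference using the elementary interpolation inequality $\sum_i|\beta_i(x)-\beta_i(y)|\leq \frac{K}{\rho}|x-y|$ for the linear/multilinear basis functions of \eqref{linear_interpolation} (each $\beta_i$ is Lipschitz with constant of order $\rho^{-1}$ and only finitely many are active along the segment from $y$ to $x$). Summing the $2d$ transport terms against the prefactor $\frac{e^{-\lambda_r h}}{2d}$ yields
\begin{align*}
\sum_{i\in\zd} \big|\mathbf{B}_{\rho,h,r}[\mu_1](i,j,k)-\mathbf{B}_{\rho,h,r}[\mu_2](i,j,k)\big| \leq \frac{cK\,h}{\rho}\,e^{-\lambda_r h}\,\big\|D_pH(\cdot,Du^{\epsilon}_{\rho,h}[\mu_1]) - D_pH(\cdot,Du^{\epsilon}_{\rho,h}[\mu_2])\big\|_0,
\end{align*}
and combined with the conservation $\sum_j m_{j,k}[\mu_2]=1$ from Lemma \ref{mass_conservation}, the second sum is dominated by the same right-hand side.

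Putting the two pieces together gives the recursion $\sum_i|e_{i,k+1}|\leq\sum_j|e_{j,k}| + \frac{cK\,h}{\rho}e^{-\lambda_r h}\,\|D_pH(\cdot,Du^{\epsilon}_{\rho,h}[\mu_1]) - D_pH(\cdot,Du^{\epsilon}_{\rho,h}[\mu_2])\|_0$. Iterating from $e_{\cdot,0}=0$ over $k\leq N$ and using $Nh=T$ produces $\sum_i|e_{i,k}|\leq \frac{cKT}{\rho}e^{-\lambda_r h}\,\|D_pH(\cdot,Du^{\epsilon}_{\rho,h}[\mu_1]) - D_pH(\cdot,Du^{\epsilon}_{\rho,h}[\mu_2])\|_0$ for all $k\in\mathcal N_h$. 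Finally, the extension \eqref{exten_disc_measure} satisfies $\|m^{\epsilon}_{\rho,h}[\mu_1](t_k)-m^{\epsilon}_{\rho,h}[\mu_2](t_k)\|_{L^1(\rd)}=\frac{1}{\rho^d}\sum_i|e_{i,k}|\,|E_i|=\sum_i|e_{i,k}|$ since $|E_i|=\rho^d$, and it is affine in $t$ on each $[t_k,t_{k+1}]$, so the $L^1$-norm of the difference at intermediate times is dominated by its values at the endpoints; taking the supremum over $t\in[0,T]$ then yields the claim. The only nonroutine point is the interpolation inequality $\sum_i|\beta_i(x)-\beta_i(y)|\leq\frac{K}{\rho}|x-y|$ in arbitrary dimension, which supplies the crucial $\rho^{-1}$ factor; the rest is bookkeeping with the partition-of-unity and stochasticity identities.
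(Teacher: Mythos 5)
Your proof is correct and follows essentially the same route as the paper's: the same insertion of a cross term to split the one-step error into a stochasticity part (handled by $\sum_i\mathbf{B}(i,j,k)=1$) and a coefficient part (handled by the $c/\rho$-Lipschitz bound on the finitely many active $\beta_i$'s, cancellation of the $\mu$-independent nonlocal term, and $\sum_j m_{j,k}=1$), followed by iteration from equal initial data and interpolation in space and time. The only difference is the symmetric choice of which measure multiplies the $\mathbf{B}$-difference, which is immaterial.
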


\begin{proof}
    Let  $\alpha=D_p H(\cdot, Du^{\epsilon}_{\rho,h}[\mu_1])$, $\tilde \alpha=D_p H(\cdot, Du^{\epsilon}_{\rho,h}[\mu_2])$, 
    $m_{j,k}=m_{j,k}[\mu_1]$, and $\tilde m_{j,k}=m_{j,k}[\mu_2]$.
    By \eqref{fokker-planck-general-discretized} and Lemma \ref{mass_conservation}, 
    $\mathbf{B}_{\rho,h,r} [ \alpha ] ( i,j,k ) \geq 0$ and  $m_{j,k} \geq 0 
    $, so that
    \begin{align*}
        & \sum_{i} \big|m_{i,k+1} - \tilde{m}_{i,k+1} \big|  =  
    \sum_{i} \big| \sum_{j} ( m_{j,k}  \, \mathbf{B}_{\rho,h,r} [ \alpha ] ( i,j,k )  - \tilde m_{j,k}  \, \mathbf{B}_{\rho,h,r} [ \tilde{\alpha}  ] ( i,j,k )    ) \big| \\
      & \leq  \sum_{i} \sum_{j} \Big(  m_{j,k} \big| \mathbf{B}_{\rho,h,r} [ \alpha ] ( i,j,k ) 
    - \mathbf{B}_{\rho,h,r} [ \tilde{\alpha}  ] ( i,j,k )    \big|  +  \big| m_{j,k} - \tilde{m}_{j,k}  \big| \mathbf{B}_{\rho,h,r} [ \tilde{\alpha}  ] ( i,j,k ) \Big)  .
    \end{align*}
    Since $\sum_{i} \mathbf{B}_{\rho,h,r} [ \tilde{\alpha}  ] ( i,j,k ) = 1$ (follows from $\sum_i\beta_i=1$ and \eqref{fokker-planck-general-discretized}), 
    \begin{align*}
  \sum_{i} \sum_{j}  \big| m_{j,k} - \tilde{m}_{j,k}  \big| \mathbf{B}_{\rho,h,r} [ \tilde{\alpha}  ] ( i,j,k ) = \sum_{j} \big| m_{j,k} - \tilde{m}_{j,k} \big|. 
    \end{align*}
   Moreover, since only a finite number $K_{d}$ of $\beta_{i}$'s are non-zero at any given point,
    $\beta_{i}$ is Lipschitz with constant $\frac{c}{\rho}$, and $\sum_{j} m_{j,k}  = 1$ by Lemma \ref{mass_conservation}, by the definitions of $\mathbf{B}_{\rho,h,r}$ \eqref{fokker-planck-general-discretized} and $\Phi_{j,k,p}^{ \pm}$  \eqref{defn_characteristics},
    \begin{align*}
       & \sum_{i} \sum_{j}  m_{j,k} \big| \mathbf{B}_{\rho,h,r} [ \alpha ] ( i,j,k ) 
    - \mathbf{B}_{\rho,h,r} [ \tilde{\alpha}  ] ( i,j,k )    \big|  \\
    &\leq  \sum_{j} m_{j,k} \frac{e^{-h \lambda_{r}}}{2d} \sum_{p=1}^{d} \sum_i   \, \big| \beta_{i} ( \Phi_{j,k,p}^{+} [\mu_1] ) - \beta_{i} ( \Phi_{j,k,p}^{+} [\mu_2] )  \\
   & \hspace*{2cm}+ \beta_{i} ( \Phi_{j,k,p}^{-} [\mu_1] ) - \beta_{i} (  \Phi_{j,k,p}^{-} [\mu_2] )  \big|  \leq K_{d} \frac{c h  e^{-h \lambda_{r}}  }{\rho}  \| \alpha - \tilde{\alpha}   \|_{0} .
    \end{align*}
  An iteration then shows that 
     \begin{align*}
        \sum_{i} \big|  m_{i,k+1} - \tilde{m}_{i,k+1} \big| \leq    \sum_{i} \big| m_{i,0} - \tilde{m}_{i,0}  \big| + \frac{cK_{d}T}{\rho} e^{- h \lambda_{r}} \| \alpha - \tilde{\alpha}   \|_{0} .
    \end{align*}
Since $ m_{i,0} = \tilde{m}_{i,0}=\int_{E_i}m_0 \, dx$, the result follows by interpolation.
\end{proof}

We end this section by  a uniform $L^p$-bound on $m^{\epsilon}_{\rho,h}$ in dimension $d=1$. 
\begin{thm}[$L^p$ bounds]\label{thm:Lp_esti}
 Assume $d=1$, \ref{A0}, \ref{A2}, \ref{L1}, \ref{L3}, \ref{F3}, \ref{H2}, \ref{M11},  $\mu \in  C ( [0,T], P ( \rd ))$, and  $m^{\epsilon}_{\rho,h}[\mu]$ be defined by \eqref{exten_disc_measure}. Then $m_{\rho,h}^{\epsilon}[\mu] \in L^p(\R)$ and there exist a constant $K>0$ independent of $\epsilon,h, \rho$ and $\mu$ such that
\begin{align*}
\|m^{\epsilon}_{\rho, h}[\mu](\cdot, t)\|_{L^{p}(\R)} \leq e^{KT} \|m_{0}\|_{L^{p}(\R)}. 
\end{align*} 
\end{thm}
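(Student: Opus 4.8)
The plan is to reduce the continuous $L^p$ bound to a discrete $\ell^p$ estimate on the grid values $(m_{i,k})$ and then iterate a one-step estimate. Since $m^\epsilon_{\rho,h}(t_k,\cdot)=\rho^{-1}\sum_i m_{i,k}\mathbbm{1}_{E_i}$ is piecewise constant with $|E_i|=\rho$, one has $\|m^\epsilon_{\rho,h}(t_k,\cdot)\|_{L^p(\R)}=\rho^{1/p-1}\big(\sum_i m_{i,k}^p\big)^{1/p}$, while Hölder's inequality applied to $m_{i,0}=\int_{E_i}m_0$ gives $\big(\sum_i m_{i,0}^p\big)^{1/p}\leq\rho^{1-1/p}\|m_0\|_{L^p}$. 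Hence it suffices to prove $\big(\sum_i m_{i,k}^p\big)^{1/p}\leq e^{KT}\big(\sum_i m_{i,0}^p\big)^{1/p}$ for the discrete solution, since the powers of $\rho$ cancel exactly; the linear-in-time interpolation \eqref{exten_disc_measure} then extends the bound from $t=t_k$ to all $t\in[0,T]$ because the $L^p$-norm is convex. For $p=\infty$ the same computation applies with $\sup_i$ in place of the $\ell^p$-sums.

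For the one-step estimate I would view \eqref{Fokker-Planck_discretized} as $m_{\cdot,k+1}=\mathbf{B}_k\, m_{\cdot,k}$ with the nonnegative matrix $[\mathbf{B}_k]_{ij}=\mathbf{B}_{\rho,h,r}[H_p(\cdot,Du^\epsilon_{\rho,h})](i,j,k)$. By Lemma \ref{mass_conservation} the columns sum to one, so $\|\mathbf{B}_k\|_{\ell^1\to\ell^1}=1$. By Riesz--Thorin interpolation it then suffices to control the row sums, i.e. $\|\mathbf{B}_k\|_{\ell^\infty\to\ell^\infty}=\sup_i\sum_j[\mathbf{B}_k]_{ij}$, since $\|\mathbf{B}_k\|_{\ell^p\to\ell^p}\leq\|\mathbf{B}_k\|_{\ell^1\to\ell^1}^{1/p}\|\mathbf{B}_k\|_{\ell^\infty\to\ell^\infty}^{1-1/p}$. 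I claim $\sup_i\sum_j[\mathbf{B}_k]_{ij}\leq 1+Ch$ with $C$ independent of $\rho,h,\epsilon,\mu$; iterating over $k\leq N=T/h$ and using $(1+Ch)^{N}\leq e^{CT}$ then closes the discrete estimate.

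It remains to bound the row sums, which is the heart of the matter. The nonlocal part is exact: by the partition-of-unity identity $\sum_j\beta_i(x_j+z)=1$ and $\int_{|z|>r}\nu(dz)=\lambda_r$, summing the nonlocal term of \eqref{fokker-planck-general-discretized} over $j$ yields precisely $1-e^{-\lambda_r h}$. For the local part I must prove the expansiveness bound $\sup_i\sum_j\beta_i(\Phi^{\epsilon,\pm}_{j,k})\leq 1+Ch$. Writing $\Phi^{\epsilon,\pm}_{j,k}=x_j+g_j^\pm$ with $g_j^\pm=-h\big(H_p(x_j,Du^\epsilon_{\rho,h}(t_k,x_j))+B_r^\sigma\big)\pm\sqrt h\,\sigma_r$, the last two terms are independent of $j$, so $\Phi^{\epsilon,\pm}_{j+1,k}-\Phi^{\epsilon,\pm}_{j,k}=\rho-h\big(H_p(x_{j+1},Du^\epsilon_{\rho,h}(t_k,x_{j+1}))-H_p(x_j,Du^\epsilon_{\rho,h}(t_k,x_j))\big)$. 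Using that $H$ is convex in $p$ (so $0\leq H_{pp}\leq C_R$ by \ref{L1} and \ref{H2}, evaluated at the bounded arguments $Du^\epsilon_{\rho,h}$ controlled via Lemma \ref{semiconcavity_eps}(a) and \ref{H1}), the bound $|H_{px}|\leq C_R$ from \ref{H2}, and the one-sided semiconcavity estimate of Lemma \ref{semiconcavity_eps}(c), namely $Du^\epsilon_{\rho,h}(t_k,x_{j+1})-Du^\epsilon_{\rho,h}(t_k,x_j)\leq c_2\rho$, one obtains the one-sided Lipschitz bound $H_p(x_{j+1},Du^\epsilon_{\rho,h}(t_k,x_{j+1}))-H_p(x_j,Du^\epsilon_{\rho,h}(t_k,x_j))\leq C\rho$ on the drift. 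Hence $\Phi^{\epsilon,\pm}_{j+1,k}-\Phi^{\epsilon,\pm}_{j,k}\geq\rho(1-Ch)$, so for $h$ small the characteristics $\{\Phi^{\epsilon,\pm}_{j,k}\}_j$ form a strictly increasing sequence with consecutive gaps at least $\rho(1-Ch)$. A short elementary argument then shows that any increasing sequence with gaps $\geq\rho(1-Ch)$ meets the support $(x_i-\rho,x_i+\rho)$ of the hat function $\beta_i$ in at most three points, which are forced so far apart that the two extreme ones have $\beta_i$-value $\leq Ch$; thus $\sum_j\beta_i(\Phi^{\epsilon,\pm}_{j,k})\leq 1+Ch$ uniformly in $i$. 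Combining the two parts gives $\sum_j[\mathbf{B}_k]_{ij}\leq e^{-\lambda_r h}(1+Ch)+(1-e^{-\lambda_r h})\leq 1+Ch$, proving the claim. The main obstacle is exactly this row-sum bound: converting the semiconcavity of the value function into the minimal-gap property of the discrete characteristics and then into the uniform estimate on $\sum_j\beta_i(\Phi^{\epsilon,\pm}_{j,k})$. Everything else --- the operator-norm interpolation, mass conservation, the passage between discrete and continuous norms, and the time interpolation --- is routine.
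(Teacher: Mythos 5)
Your proposal is correct and follows essentially the same route as the paper's proof: the reduction to a discrete $\ell^p$ estimate, the column-sum identity $\sum_i \mathbf{B}_{\rho,h,r}(i,j,k)=1$ from mass conservation, and above all the row-sum bound $\sup_i\sum_j \mathbf{B}_{\rho,h,r}(i,j,k)\leq 1+Ch$, which you obtain exactly as the paper does (Lemmas \ref{lem:dimone-semicon}, \ref{lem:bound_char}, \ref{lem:bound_jump_char} and Appendix \ref{app:pf}) from the semiconcavity-based one-sided Lipschitz estimate on the discrete drift, the resulting minimal gap between characteristics, and the counting argument for the hat functions $\beta_i$. Your Riesz--Thorin interpolation between the $\ell^1$ and $\ell^\infty$ operator norms is merely a repackaging of the paper's direct Jensen-inequality computation, so the two arguments coincide in substance.
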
\medskip

To prove the theorem we need few technical lemmas.

\begin{lemma} \label{lem:dimone-semicon}
Assume $d=1$, \ref{A0}, \ref{A2}, \ref{L1}, \ref{L3}, \ref{F3}, and \ref{H2}. There exists a constant $c_0>0$ independent of $\rho,h,\epsilon,\mu$ such that
 $$\Big(D_pH\big(x_j,Du^{\epsilon}_{\rho,h}(t_k,x_j)\big)- D_p H \big(x_i, Du^{\epsilon}_{\rho,h}(t_k,x_i)\big)\Big) (x_j-x_i) \leq c_0 |x_j-x_i|^2. $$
\end{lemma}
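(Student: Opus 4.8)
The plan is to exploit the one-dimensional structure together with the convexity of $H$ in $p$. Write $p_i := Du^{\epsilon}_{\rho,h}[\mu](t_k,x_i)$ and $p_j := Du^{\epsilon}_{\rho,h}[\mu](t_k,x_j)$, and recall from Lemma \ref{semiconcavity_eps}(a) (together with \ref{H1}) that $|p_i|,|p_j|\leq R$ with $R:=(L_L+L_F)T+L_G+1$ independently of $\rho,h,\epsilon,\mu$. The first step is to split the gradient increment into an $x$-part and a $p$-part,
$$D_pH(x_j,p_j)-D_pH(x_i,p_i)=\big[D_pH(x_j,p_j)-D_pH(x_i,p_j)\big]+\big[D_pH(x_i,p_j)-D_pH(x_i,p_i)\big],$$
and to estimate each bracket after pairing with $x_j-x_i$.

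For the first (``$x$-variation'') bracket, the mean value theorem and the bound $|D_{px}H|\leq C_R$ from \ref{H2} (applicable since $p_j\in B_R$) give $|D_pH(x_j,p_j)-D_pH(x_i,p_j)|\leq C_R|x_j-x_i|$, so this term contributes at most $C_R|x_j-x_i|^2$. For the second (``$p$-variation'') bracket I would use that, by \ref{L1}, $H=L^*$ is a Legendre conjugate and hence convex in $p$; in dimension $d=1$ this means the scalar $D_{pp}H\geq 0$. Writing $D_pH(x_i,p_j)-D_pH(x_i,p_i)=a\,(p_j-p_i)$ with $a:=\int_0^1 D_{pp}H\big(x_i,p_i+t(p_j-p_i)\big)\,dt$, the bound in \ref{H2} together with nonnegativity of $D_{pp}H$ yields $0\leq a\leq C_R$.

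The crucial input is Lemma \ref{semiconcavity_eps}(c), which in $d=1$ reads $(p_j-p_i)(x_j-x_i)\leq c_2|x_j-x_i|^2$. Multiplying by $a\geq 0$ and distinguishing the sign of $(p_j-p_i)(x_j-x_i)$ — when it is nonnegative use $a\leq C_R$, when it is negative note the product is $\leq 0$ — gives $a\,(p_j-p_i)(x_j-x_i)\leq C_R c_2|x_j-x_i|^2$, so the $p$-variation contributes at most $C_R c_2|x_j-x_i|^2$. Adding the two estimates yields the claim with $c_0:=C_R(1+c_2)$, which is independent of $\rho,h,\epsilon,\mu$ by \ref{H2} and Lemma \ref{semiconcavity_eps}(c).

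The main obstacle — and the reason the lemma is confined to $d=1$ — is the passage from the one-sided bound on $(p_j-p_i)(x_j-x_i)$ to a one-sided bound on $\big[D_pH(x_i,p_j)-D_pH(x_i,p_i)\big](x_j-x_i)$. This relies on factoring the gradient increment as a \emph{nonnegative scalar} $a$ times $(p_j-p_i)$, which is possible only because $D_{pp}H$ is scalar in one dimension; in higher dimensions $D_{pp}H$ is a positive semidefinite matrix and one cannot transfer the scalar monotonicity estimate of Lemma \ref{semiconcavity_eps}(c) through it in this direct way.
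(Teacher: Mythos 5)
Your proposal is correct and follows essentially the same route as the paper's own proof: split the increment of $D_pH$ into an $x$-variation (controlled by $|D_{px}H|\leq C_R$ from \ref{H2}) and a $p$-variation written as $\int_0^1 D_{pp}H\,dt\cdot(p_j-p_i)$, then combine the scalar bounds $0\leq D_{pp}H\leq C_R$ (convexity via \ref{L1} plus \ref{H2}) with the one-sided estimate of Lemma \ref{semiconcavity_eps}(c). The only cosmetic difference is which variable you freeze in each bracket ($x_i$/$p_j$ versus the paper's $x_j$/$p_i$), and you make explicit the sign-distinction step that the paper leaves implicit.
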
 
\begin{proof}
By \ref{L1} and \ref{H2} for $R = (  ( L_{F} + L_{L} )T + L_G)  +1$ we have 
 \begin{align*}
&  \Big(D_pH\big(x_j,Du^{\epsilon}_{\rho,h}(t_k,x_j)\big)- D_p H \big(x_i, Du^{\epsilon}_{\rho,h}(t_k,x_i)\big)\Big) (x_j-x_i) \\
& = (x_j-x_i)\int_0^1 \frac{d}{dt} \Big( D_pH\big(x_j,t \, Du^{\epsilon}_{\rho,h}(t_k,x_j) + (1-t)Du^{\epsilon}_{\rho,h}(t_k,x_i)\big) \Big) dt  \\
& \quad \ + (x_j-x_i) \Big(D_pH\big(x_j,Du^{\epsilon}_{\rho,h}(t_k,x_i)\big)- D_p H \big(x_i, Du^{\epsilon}_{\rho,h}(t_k,x_i)\big)\Big) \\
&   =  (x_j-x_i)\int_0^1 D_{pp} H\Big(x_j,t \, Du^{\epsilon}_{\rho,h}(t_k,x_j)  \\ 
& \hspace*{4cm}  + (1-t)Du^{\epsilon}_{\rho,h}(t_k,x_i)\Big) \big(Du^{\epsilon}_{\rho,h}(t_k,x_j) - Du^{\epsilon}_{\rho,h}(t_k,x_i)\big)  dt \\
& \quad \ + (x_j-x_i) \Big(D_pH\big(x_j,Du^{\epsilon}_{\rho,h}(t_k,x_i)\big)- D_p H \big(x_i, Du^{\epsilon}_{\rho,h}(t_k,x_i)\big)\Big)  \\
& \leq C_R \, c_2 |x_j-x_i|^2 +C_R |x_j-x_i|^2,
 \end{align*}
 where the last inequality follows from convexity of $H$ (since $L$ is convex by \ref{L1}), semiconcavity of $u^{\epsilon}_{\rho,h}$ in Lemma \ref{semiconcavity_eps} (c), and regularity of $H$ in \ref{H2}.
\end{proof}
 
 \begin{lemma} \label{lem:bound_char}
Assume $d=1$, \ref{A0}, \ref{A2}, \ref{L1}, \ref{L3}, \ref{F3}, \ref{H2},  $\mu \in  C ( [0,T], P ( \rd ))$, and  let $\Phi^{\epsilon,\pm}_{j,k}[\mu]$ be defined in \eqref{defn_characteristics}. There exist a constant $K_0>0$ independent of $\epsilon, \rho, h,\mu$, such that for all $i \in \Z$ and $k=\mathcal{N}_h$, 
 \begin{align*}
 \max\Big\{ \sum_{j\in \Z}\beta_i(\Phi^{\epsilon,+}_{j,k})[\mu],\sum_{j\in \Z} \beta_i(\Phi^{\epsilon,-}_{j,k})[\mu]  \Big\} \leq 1+K_0 h. 
 \end{align*}
 \end{lemma}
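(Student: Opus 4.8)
The plan is to exploit the one-sided (semi-monotonicity) estimate of Lemma~\ref{lem:dimone-semicon} to show that in dimension $d=1$ the discrete characteristic map $j\mapsto \Phi^{\epsilon,+}_{j,k}[\mu]$ is strictly increasing with a uniform lower bound on the gaps between consecutive images, and then to combine this with the explicit piecewise-linear (hat) structure of the basis functions $\beta_i$. Since $\Phi^{\epsilon,+}_{j,k}$ and $\Phi^{\epsilon,-}_{j,k}$ differ only by the constant shift $2\sqrt h\,\sigma_r$, which drops out of all differences $\Phi^{\epsilon,\pm}_{j+1,k}-\Phi^{\epsilon,\pm}_{j,k}$, it suffices to treat the $+$ case; the $-$ case is verbatim.

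Write $p_j:=D_pH(x_j,Du^{\epsilon}_{\rho,h}(t_k,x_j))$, so that $\Phi^{\epsilon,+}_{j,k}=x_j-h(p_j+B_r^{\sigma})+\sqrt h\,\sigma_r$. First I would observe that the $j$-independent shifts $B_r^\sigma$ and $\sqrt h\,\sigma_r$ cancel between neighbours, giving
\[
\Phi^{\epsilon,+}_{j+1,k}-\Phi^{\epsilon,+}_{j,k}=(x_{j+1}-x_j)-h(p_{j+1}-p_j)=\rho-h(p_{j+1}-p_j).
\]
Applying Lemma~\ref{lem:dimone-semicon} with $x_{j+1}>x_j$ yields $p_{j+1}-p_j\le c_0\rho$, whence
\[
\Phi^{\epsilon,+}_{j+1,k}-\Phi^{\epsilon,+}_{j,k}\ge(1-c_0h)\rho>0
\]
as soon as $c_0h<1$. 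Thus $\{\Phi^{\epsilon,+}_{j,k}\}_j$ is strictly increasing with consecutive gaps at least $(1-c_0h)\rho$; since $\beta_i$ is supported on $(x_i-\rho,x_i+\rho)$, of length $2\rho$, at most three images can lie in this support once $c_0h\le 1/3$.

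The decisive step is to convert this geometric picture into the bound $1+K_0h$. Let $\Phi^{\epsilon,+}_{m,k}\le x_i<\Phi^{\epsilon,+}_{m+1,k}$ be the consecutive pair straddling the peak $x_i$ of $\beta_i$ (the at most one remaining image in the support then lies strictly to one side of the peak). Using $\beta_i(y)=(1-|y-x_i|/\rho)_+$, a direct computation for the straddling pair gives
\[
\beta_i(\Phi^{\epsilon,+}_{m,k})+\beta_i(\Phi^{\epsilon,+}_{m+1,k})=2-\frac{\Phi^{\epsilon,+}_{m+1,k}-\Phi^{\epsilon,+}_{m,k}}{\rho}\le 2-(1-c_0h)=1+c_0h,
\]
by the gap bound above, while the single possible extra image (say to the left) satisfies $\Phi^{\epsilon,+}_{\cdot,k}\le x_i-(1-c_0h)\rho$ and hence contributes $\beta_i\le c_0h$. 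Summing yields $\sum_{j}\beta_i(\Phi^{\epsilon,+}_{j,k})\le 1+2c_0h$, so $K_0=2c_0$ works (and identically for the $-$ sum). I expect the main obstacle to be precisely this cancellation: a crude count of the contributing images only delivers a uniform $O(1)$ bound (close to $2$), and it is the linearity of $\beta_i$ together with the gap estimate — packaged in the straddling-pair identity — that sharpens this to $1+O(h)$, reflecting that the discrete characteristic flow is nearly volume preserving. The degenerate configurations (an image coinciding with $x_i$, or only one or two images in the support) only improve the estimate and are checked directly.
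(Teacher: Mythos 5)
Your proof is correct and follows essentially the same route as the paper's: both rest on Lemma \ref{lem:dimone-semicon} to get a uniform separation of the characteristics (your consecutive-gap bound $\Phi^{\epsilon,+}_{j+1,k}-\Phi^{\epsilon,+}_{j,k}\geq(1-c_0h)\rho$ is just a cleaner, monotone form of the paper's estimate $|\Phi^{\epsilon,\pm}_{j,k}-\Phi^{\epsilon,\pm}_{i,k}|\geq\sqrt{1-c_0h}\,|x_j-x_i|$), and then on the same counting argument that at most three characteristics meet $\mathrm{supp}(\beta_i)$, with the linearity of the hat functions turning the gap bound into the $1+K_0h$ estimate. Your organization via a straddling pair plus at most one extra image is only a cosmetic repackaging of the paper's one/two/three-characteristics case analysis.
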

The proof of this result is similar to the proof of \cite[Lemma 3.8]{carliniSilva2014semi1st} -- 
a slightly expanded proof is given in Appendix \ref{app:pf}. A similar result holds for the integral-term:
\begin{lemma} \label{lem:bound_jump_char}
Assume $d=1$.  Then we have 
\begin{align*}
\frac{1}{\lambda_r} \sum_{j\in \Z} \int_{|z|>r} \beta_i(x_j+z) \nu(dz) =1. 
\end{align*}
\end{lemma}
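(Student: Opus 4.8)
The plan is to reduce the statement to the partition-of-unity property of the interpolation basis combined with the translation invariance of the $\beta_j$'s on the uniform grid, after which the integral collapses to the total mass $\lambda_r$ of $\nu$ on $\{|z|>r\}$. Since everything in sight is nonnegative and $\nu$ restricted to $\{|z|>r\}$ is a finite measure (with total mass $\lambda_r$ by \eqref{lambda_r}), the manipulations are elementary.

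First I would interchange the summation over $j$ and the integration in $z$. This is justified by Tonelli's theorem, since $\beta_i\geq 0$ and $\int_{|z|>r}\nu(dz)=\lambda_r<\infty$ by \ref{A0}. This gives
\begin{align*}
\sum_{j\in\Z}\int_{|z|>r}\beta_i(x_j+z)\,\nu(dz) = \int_{|z|>r}\Big(\sum_{j\in\Z}\beta_i(x_j+z)\Big)\nu(dz).
\end{align*}

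Next I would evaluate the inner sum using the translation identity already recorded in the proof of Theorem \ref{hjb_discrete_comparison}, namely $\beta_m(x_j+x)=\beta_{m-j}(x)$. Taking $m=i$ and $x=z$ gives $\beta_i(x_j+z)=\beta_{i-j}(z)$, and reindexing by $m=i-j$ (a bijection of $\Z$ onto itself) together with the partition-of-unity property $\sum_{m\in\Z}\beta_m(z)=1$ yields
\begin{align*}
\sum_{j\in\Z}\beta_i(x_j+z)=\sum_{m\in\Z}\beta_m(z)=1 \qquad \text{for every } z.
\end{align*}
Substituting back, the integral becomes $\int_{|z|>r}1\,\nu(dz)=\lambda_r$, and dividing by $\lambda_r$ closes the argument.

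There is essentially no serious obstacle here: the only point requiring a word of justification is the interchange of the infinite sum and the integral, which is immediate from the nonnegativity of $\beta_i$ and the finiteness of $\nu$ on $\{|z|>r\}$. The result is the exact (rather than approximate) mass-preservation identity for the jump part of the dual scheme, complementing the estimate for the diffusive part in Lemma \ref{lem:bound_char}.
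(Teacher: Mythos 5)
Your proof is correct and is essentially identical to the paper's: the paper also swaps the sum and the integral, applies the translation identity $\beta_i(x_j+z)=\beta_{i-j}(z)$ and the partition-of-unity property, and then identifies the remaining integral with $\lambda_r$ via \eqref{lambda_r}. The only difference is that you spell out the Tonelli justification for the interchange, which the paper leaves implicit.
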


\begin{proof}
By \eqref{lambda_r} and properties of the basis functions $\beta_j$ we have
\begin{align*}
    \frac{1}{\lambda_r} \sum_{j\in \Z} \int_{|z|>r} \beta_i(x_j+z) \nu(dz) = 
    \frac{1}{\lambda_r} \int_{|z|>r} \sum_{j\in \Z} \beta_{i-j}(z) \nu(dz) =\frac{1}{\lambda_r} \int_{|z|>r} \nu(dz) =1.  \qquad \quad \qedhere 
\end{align*}
\end{proof} 

\medskip

%

\begin{proof}[Proof of Theorem \ref{thm:Lp_esti}]
By definition of $m^{\epsilon}_{\rho, h}$ in \eqref{exten_disc_measure} and the scheme \eqref{Fokker-Planck_discretized},
\begin{align*}
&\int_{\R} ( m^{\epsilon}_{\rho, h}(x, t_{k+1}))^p dx 
  =  \int_{\R} \Big( \frac{1}{\rho} \sum_{i} m_{i,k+1} \mathbbm{1}_{E_i}(x)\Big)^p dx 
\\
& =  \frac{1}{\rho^{p-1}} \sum_{i\in \Z} (m_{i,k+1})^p  = \frac{1}{\rho^{p-1}} \sum_{i} \Big( \sum_{j} m_{j,k} \, \mathbf{B}_{\rho,h,r} ( i,j,k )\Big)^p, 
\end{align*} 
where $\mathbf{B}_{\rho,h,r} =\mathbf{B}_{\rho,h,r} [ H_{p} ( \cdot, Du_{\rho,h}^{\epsilon}[\mu] ) ]$ is defined in \eqref{fokker-planck-general-discretized}. 
By  Jensen's inequality we have 
\begin{align*}
\sum_{i\in \Z} \Big( \sum_{j} m_{j,k}\, \mathbf{B}_{\rho,h,r} ( i,j,k )\Big)^p
\leq \sum_{i\in \Z} \Big(\sum_{p\in \Z}\mathbf{B}_{\rho,h,r} ( i,p,k )\Big)^{p-1} \Big(\sum_{j} \big(m_{j,k}\big)^p \, \mathbf{B}_{\rho,h,r} ( i,j,k ) \Big),
\end{align*}
and by Lemma \ref{lem:bound_char} and  \ref{lem:bound_jump_char},
$$\sum_{p\in \Z}\mathbf{B}_{\rho,h,r} ( i,p,k ) \leq 1+ K_0h,$$
where $K_0$ is independent of  $i, \rho, h, \epsilon$ and $\mu$. Since $\sum_{i} \mathbf{B}_{\rho,h,r} ( i,p,k ) =1$ (follows from $\sum_{i} \beta_i =1$), we find that
\begin{align*}
\sum_{i\in \Z} (m_{i,k+1})^p  
 &\leq (1+K_0h)^{p-1} \sum_{j}\big(m_{j,k}\big)^p \sum_{i} \mathbf{B}_{\rho,h,r} ( i,j,k ) \\
 &\leq \rho^{p-1} \|m^{\epsilon}_{\rho,h}(t_k,\cdot)\|^p_{L^p(\R)} (1+ K_0h)^{p-1}.
 \end{align*} 
By iteration and $\|m^{\epsilon}_{\rho, h}(\cdot, t_0)\|_{L^{p}} = \|m_0\|_{L^{p}}$, $\|m^{\epsilon}_{\rho,h}(t_{k+1},\cdot)\|_{L^p} \leq e^{K_0T(1-\frac 1p)} \|m_0\|_{L^p}$, and the result follows for $p\in[1,\infty)$.

The proof of $p=\infty$ is simpler, and in view of Lemma \ref{lem:bound_char} and \ref{lem:bound_jump_char}, the proof follows as in  \cite{carlini2015semi} for 2nd order case. 
\end{proof}

\section{Proof of convergence -- Theorem  \ref{thm:convergence_MFG} and \ref{thm:convergence_MFG-nondeg}} \label{sec:proof-main}

The main structure of the proofs are similar, so we present the proofs together.  We proceed by several steps. \smallskip

\noindent \textbf{Step 1.} (Compactness of $m^{\epsilon_n}_{\rho_n,h_n}$) \ 
 In view of Theorem \ref{thm:thightness_m} and  \ref{thm:time_equicont_m},  $m^{\epsilon}_{\rho,h}$ is precompact in $C([0,T], P(\rd))$ by the Prokhorov and Arzel\`a-Ascoli Theorem.   Hence there exist a subsequence $\{m^{\epsilon_n}_{\rho_n,h_n}\}$ and $m$ in $ C([0,T], P(\rd))$ such that 
\begin{align*}
m^{\epsilon_n}_{\rho_n,h_n} \rightarrow m \quad \mbox{in} \quad C([0,T],P(\rd)).
\end{align*} 
This proves Theorem \ref{thm:convergence_MFG-nondeg} (a) (ii) and the first part of Theorem \ref{thm:convergence_MFG}~(a) (ii).\smallskip

If \ref{M11} holds with $p=\infty$, then Theorem \ref{thm:Lp_esti} and Helly's weak $*$ compactness theorem imply that $\{m^{\epsilon}_{\rho,h}\}$ is weak $*$ precompact  in $L^{\infty}([0,T]\times \R)$ and there is a subsequence $\{m^{\epsilon_n}_{\rho_n,h_n}\}$ and function $m$  such that $m^{\epsilon_n}_{\rho_n,h_n} \overset{\ast}{\rightharpoonup} m$ in $L^{\infty}([0,T]\times \R)$. If \ref{M11} holds with  $p \in (1,\infty)$, then $\{m^{\epsilon}_{\rho,h}\}$ is equiintegrable in $[0,T]\times \R$ by Theorem 
\ref{thm:thightness_m} and \ref{thm:Lp_esti} and  de la Vall\'ee Poussin's theorem. By Dunford-Pettis' theorem, it is then weakly precompact in $L^1([0,T]\times \R)$ and there exists a subsequence $\{m^{\epsilon_n}_{\rho_n,h_n}\}$ and function $m$ such that $m^{\epsilon_n}_{\rho_n,h_n} \rightharpoonup  m$ in $L^{1}([0,T]\times \R)$. 
The second part  of Theorem \ref{thm:convergence_MFG}~(a) (ii) follows. \medskip

\noindent \textbf{Step 2.} (Compactness and limit points for $u_{\rho_n,h_n}$) \ Part (i)  and limit points $u$ as viscosity solutions in part (iii)  of both Theorem \ref{thm:convergence_MFG} and \ref{thm:convergence_MFG-nondeg} follow from step 1 and Theorem  \ref{thm:HJB-convergence-smoothsol}~(i). \medskip

\noindent \textbf{Step 3.} (Consistency for $m^{\epsilon_n}_{\rho_n,h_n}$) \ Let   $(u,m)$ be a limit point of $\{(u^{\epsilon_n}_{\rho_n,h_n},m^{\epsilon_n}_{\rho_n,h_n})\}_n$. Then by step 2, $u$ is a viscosity solution of the HJB equation in \eqref{eqn:MFG}.
We now   show that $m$ is a very weak solution of the FPK equation in \eqref{eqn:MFG} with $u$ as the input data, i.e. $m$ satisfies \eqref{dist_sol_FPK} 
for $t\in[0,T]$ and $\phi \in C_c^\infty(\rd)$.  In the rest of the proof we use  $\rho, h, r, \epsilon$ instead of $\rho_{n}, h_{n}, r_{n}, \epsilon_n$ to simplify. We also let 
$\widehat{\widehat{m}} = m^{\epsilon_n}_{\rho_n,h_n}$, $w=u_{\rho_n,h_n}^{\epsilon_n}[\widehat{\widehat{m}}]$, and take
$t_n=\big[\frac{t}{h_n}\big]h_n$. Then we note that
\begin{align*}
\int_{\rd}\phi(x)d\widehat{\widehat{m}}(t_n)(x)=\int_{\rd}\phi(0)dm_0(x) + \sum_{k=0}^{n-1}\int_{\rd}\phi(x)d[\widehat{\widehat{m}}(t_{k+1})-\widehat{\widehat{m}}(t_k)],
\end{align*}
so to prove \eqref{dist_sol_FPK}, we must estimate the sum on the right. 

By the midpoint approximation and \eqref{exten_disc_measure}, the scheme \eqref{Fokker-Planck_discretized}, 
  and \eqref{fokker-planck-general-discretized} combined with linear/multilinear interpolation, and finally midpoint approximation again, we find that
\begin{align*}
& \int_{\rd} \phi(x) d\widehat{\widehat{m}}(t_{k+1}) =  \frac{1}{\rho^d} \sum_{i\in \zd} m_{i,k+1}  \int_{E_i} \phi(x)dx = \sum_{i} m_{i,k+1} \phi(x_i) + \mathcal{O} (\rho^2) \\
& = \sum_{i} \phi(x_i) \sum_{j} m_{j,k} \, \mathbf{B}_{\rho,h,r} [ H_{p} ( \cdot, Dw ) ] ( i,j,k ) + \mathcal{O} (\rho^2)\\
& =  \sum_j m_{j,k} \Big( \frac{e^{-\lambda_r h}}{2d}\sum_{p=1}^d  [ \phi(\Phi_{j,k,p}^{\epsilon,+})+ \phi(\Phi_{j,k,p}^{\epsilon,-})  ]
+ \frac{1-e^{-\lambda_r h}}{\lambda_r} \int_{|z|>r} \phi(x_j+z) \nu(dz) \Big)
+ \mathcal{O} (\rho^2)\\
&=\sum_j \frac{m_{j,k}}{\rho^d} \int_{E_j} \Big( \frac{e^{-\lambda_r h}}{2d}\sum_{p=1}^d [\phi(\Phi_{k,p}^{\epsilon,+})(x)+ \phi(\Phi_{k,p}^{\epsilon,-})(x)] \\
&\hspace{2.7cm}  + \frac{1-e^{-\lambda_r h}}{\lambda_r} \int_{|z|>r} \phi(x+z) \nu(dz) \Big) dx +  \mathcal{O} (\rho^2) +E_\Phi+E_\nu,
\end{align*}
where $\Phi_{j,k,p}^{\epsilon,\pm}$ is defined in \eqref{defn_characteristics}, $\Phi^{\epsilon,\pm}_{k,p}(x) = x - h\,\big( 
  H_{p} ( x, D w (t_k,x) ) + B_r^{\sigma} \big)
 \pm \sqrt{hd} \sigma_r^p$, and $E_\Phi+E_\nu$ is the error of the last midpoint approximation. 
 Since $\phi$ is  smooth, $u_{\rho,h}$ uniformly Lipschitz (Lemma \ref{semiconcavity_eps} (a)), $\|D^2w\|_{0}\leq \frac{C \|Du_{\rho,h}\|_{0}}{\epsilon}$, and by assumption \ref{H2},
\begin{align*}
& \Big| \phi(\Phi^{\epsilon,\pm}_{j,k,p}) - \frac{1}{\rho^d} \int_{E_j} \phi(\Phi^{\epsilon,\pm}_{k,p})(x) dx \Big| \\
& \leq \frac{\|D\phi\|_0}{\rho^d} \int_{E_j} |x-x_j| dx + \frac{h\|D\phi\|_0}{\rho^d} \int_{E_j} \big|D_p H(x_j, Dw(t_k,x_j)) - D_p H(x, Dw(t_k,x))\big| dx \\
& \leq  K \rho \big(1+ h (\| H_{pp}\|_{0}\| D^2 w\|_0 + \|H_{px}\|_0)\big)
\leq K\rho \big(1+ \frac{h}{\epsilon} \|D u_{\rho,h} \|_0\big),
\end{align*}
and hence $E_\Phi=\mathcal{O}(\frac{h\rho}{\epsilon})$. Similarly, $E_\nu=\mathcal{O}(h\rho^2\lambda_r)=\mathcal{O}(\frac{h\rho^2}{r^{\sigma}})$.

From the above estimates, we find that
\begin{align*}
&\int_{\rd} \phi(x) d\big(\widehat{\widehat{m}}(t_{k+1}) - \widehat{\widehat{m}}(t_k)\big)(x) 
 =  \int_{\rd} \Big( \frac{e^{-\lambda_r h}}{2d}\sum_{p=1}^d [\phi(\Phi_{k,p}^{\epsilon,+})(x)+ \phi(\Phi_{k,p}^{\epsilon,-})(x)- 2\phi(x)]  \\
 & \qquad \quad   + \frac{1-e^{-\lambda_r h}}{\lambda_r} \int_{|z|>r} \big(\phi(x+z) - \phi(x)\big) \nu(dz) \Big) d\widehat{\widehat{m}}(t_k)(x) +  \mathcal{O} \big(\rho^2 + \frac{h\rho}{\epsilon} +\frac{h\rho^2}{r^{\sigma}}\big).
\end{align*}
By a similar argument as in \eqref{err_bnd-consist00} and using Lemma \ref{lem:small_jump},
\begin{align*}
\phi(\Phi_{k,p}^{\epsilon,+})(x)+ \phi(\Phi_{k,p}^{\epsilon,-})(x)- 2\phi(x) = & - 2h \Big(D\phi(x)\cdot D_p H(x, Dw(t_k,x)) + B_r^{\sigma}\cdot D\phi(x)\Big)\\
& \, + 2h \mathcal{L}_r[\phi](x) + \mathcal{O}(h^2r^{2-2\sigma} + hr^{3-\sigma} ).
\end{align*}
Hence using \eqref{consist_esti1} and \eqref{consist_esti2} we have 
\begin{align*}
&\int_{\rd} \phi(x)d(\widehat{\widehat{m}}(t_{k+1}) -\widehat{\widehat{m}}(t_k))(x)  \\
& = h\int_{\rd} \big[- D\phi(x)\cdot D_p H(x, Dw(t_k,x)) + \mathcal{L}_r[\phi](x)+\mathcal{L}^r[\phi](x)\big] d\widehat{\widehat{m}}(t_k)(x) \\
  & \quad+\mathcal{O}(h^2 r^{-\sigma} + h^2 r^{1-2\sigma} + h^2 r^{2-2\sigma} ) +  \mathcal{O} (\rho^2 + \frac{h\rho}{\epsilon} +\frac{h\rho^2}{r^{\sigma}} +h^2r^{2-2\sigma} + hr^{3-\sigma}).
\end{align*}
Summing from $k=0$ to $k=n-1$ and approximating sums by integrals, we obtain
\begin{align} \label{eq:proof_main_1}
\begin{split}
  &\int_{\rd} \phi(x)d\widehat{\widehat{m}}(t_n)(x)-\int_{\rd}\phi(x) d\widehat{\widehat{m}}(t_0)\\
&= h\sum_{k=0}^{n-1} \int_{\rd} \big[- D\phi(x)\cdot D_p H(x,Dw(t_k,x)) + \mathcal{L}[\phi](x)\big] d\widehat{\widehat{m}}(t_k)(x) \\
 & \qquad \qquad + n \,\mathcal{O} (\rho^2 + \frac{h\rho}{\epsilon} +\frac{h\rho^2}{r^{\sigma}} +h^2r^{-\sigma} + hr^{3-\sigma})\\
 & = \int_{\rd}\int_0^{t_n}  \big[- D\phi(x)\cdot D_p H(x, Dw(s, x)) + \mathcal{L}[\phi](x)\big] d\widehat{\widehat{m}}(s)(x) \, ds\\
& \qquad \qquad + \mathcal{O} \Big(\frac{\rho^2}{h} + \frac{\rho}{\epsilon}+ \frac{\rho^2+h}{r^{\sigma}}+ r^{3-\sigma}\Big) + E,
\end{split}
\end{align}
where $E$ is Riemann sum approximation error. Let $I_k(x):= - D\phi(x)\cdot D_p H(x, Dw(t_k,x))$ $+ \mathcal{L}[\phi](x)$ and use time-continuity $\widehat{\widehat{m}}$ in the $d_0$-metric (Theorem \ref{thm:time_equicont_m}), 
that $w(\cdot,x)$ is constant on $[t_k, t_{k+1})$, \ref{H1}, \ref{H2} and  $\|D^2w\|_{0}\leq \frac{C \|Du_{\rho,h}\|_{0}}{\epsilon}$, to conclude that  for $s\in [t_k, t_{k+1})$  
\begin{align*}
& \int_{t_k}^{t_{k+1}} \int_{\rd}  I_k(x) d\big(\widehat{\widehat{m}}(t_k) - \widehat{\widehat{m}}(s)\big)(x) ds \leq h \big(\|I_k\|_{0} + \|DI_k\|_{0}\big) C_0 \sup_{s\in[t_k,t_{k+1})} \sqrt{s-t_k} \\
&\leq K h \Big( 1 + \|Dw\|_{0} + \|D^2w\|_{0} \Big)   \sqrt{h} \leq K h \Big(1+ \frac{1}{\epsilon}\Big) \sqrt{h}.
\end{align*}
Summing over $k$, we have $E= \big| \sum_{k=0}^{n-1}\int_{t_k}^{t_{k+1}} \int_{\rd}  I_k(x) d\big(\widehat{\widehat{m}}(t_k) - \widehat{\widehat{m}}(s)\big)(x) ds\big| = \mathcal O(\frac{\sqrt h}{\epsilon})$.

Since $\widehat{\widehat{m}}$ converges to $m$ in $C([0,T], P(\rd))$ and $\phi\in C_c^{\infty}(\rd)$ implies $\mathcal{L}[\phi] \in C_b(\rd)$, we have 
\begin{align}\label{conv_m_n}
\int_{\rd}\int_0^{t_n}  \mathcal{L}[\phi](x) d\widehat{\widehat{m}}(s)(x) \xrightarrow{n\rightarrow\infty}\int_{\rd}\int_0^{t}  \mathcal{L}[\phi](x) d{m}(s)(x).
\end{align} 
It now remains to show convergence of the $D_p H$-term and pass to the limit in \eqref{eq:proof_main_1} to get that $m$ is a very weak solution satisfying \eqref{dist_sol_FPK}.
\medskip

\noindent \textbf{Step 4} \textit{(Proof of Theorem \ref{thm:convergence_MFG} (a) (iii))}\textbf{.}\quad  
Now $d=1$ and part (ii) of Theorem \ref{thm:convergence_MFG} (a) implies that  $\widehat{\widehat{m}} \overset{\ast}{\rightharpoonup} m$ in $L^{\infty}([0,t]\times \R)$ if $m_0 \in L^{\infty}(\R)$, or $\widehat{\widehat{m}}\rightharpoonup m$ in $L^{1}([0,t]\times \R)$ if $m_0 \in L^{p}(\R)$ for $p \in (1,\infty)$. We also have  $Dw(t,x)=Du_{\rho,h}^{\epsilon}(t,x)\rightarrow Du(t,x)$ almost everywhere in $[0,T]\times \R$ by Theorem \ref{thm:HJB-convergence-smoothsol}~(ii). Since $D\phi \in C_c^{\infty}(\R)$ and $D_p H(\cdot,Dw)$ uniformly bounded, by the triangle inequality and the  dominated convergence Theorem we find that 
\begin{align*}
\int_{\R} \int_{0}^{t_n}   D\phi(x)\cdot & D_p H(x,Dw(s,x)) \,   d\widehat{\widehat{m}}(s)(x) \\ 
& \longrightarrow \int_{\R} \int_0^t D\phi(x)\cdot D_p H(x,Du(s,x)) \, d{m}(s)(x).
\end{align*}
 Then by passing to the limit in \eqref{eq:proof_main_1} using the above limit,  \eqref{conv_m_n}, and the CFL conditions $\frac{\rho^2}{h},\frac{h}{r^{\sigma}},\frac{\sqrt{h}}{\epsilon}=\mathit{o}(1)$ (note that $\rho^2\leq h$ for large $n$), we see that \eqref{dist_sol_FPK} holds and ${m}$ is a very weak solution of the FPK equation. This completes the proof of Theorem \ref{thm:convergence_MFG} (a) (iii). \medskip

\noindent \textbf{Step 5}\textit{(Proof of Theorem \ref{thm:convergence_MFG-nondeg}(iii))}\textbf{.} \quad Now \ref{U1} holds and  $Dw=Du_{\rho,h}^{\epsilon}\rightarrow Du$ locally uniformly by Theorem \ref{thm:HJB-convergence-smoothsol}~(iii).  
Since $D \phi \in C^{\infty}_c(\rd)$ and $\int_{\rd} d\widehat{\widehat{m}}(s)(x) =1$, by continuity and uniform boundedness of $D_p H(\cdot, Dw)$, it follows that 
\begin{align} \label{eq:main_proof_2}
\begin{split}
 & \Big|\int_{\rd} \int_{0}^{t_n}  D\phi(x)\cdot D_p H(x,Dw(s,x)) \, d\widehat{\widehat{m}}(s)(x) \\
 & \hspace*{3cm} - \int_{\rd} \int_0^{t_n} D\phi(x)\cdot D_p H(x,Du(s,x)) \, d{\widehat{\widehat{m}}}(s)(x)\Big|  \\
& \leq T \|D\phi\|_0 \|D_{pp} H\|_0 \|Dw - Du \|_{L^{\infty}(\textup{supp}(\phi))}  \int_{\rd} d\widehat{\widehat{m}}(s)(x)  \longrightarrow 0.
\end{split}
\end{align} 
Since $\widehat{\widehat{m}} \rightarrow m$ in $C([0,T], P(\rd))$ and $D\phi\cdot D_p H(\cdot,Du)(t) \in C_b(\rd)$ by \ref{U1}, we get
\begin{align*}
 \int_{\rd} \int_{0}^{t_n}  D\phi(x)\cdot & D_p H(x,Du(s,x)) \, d\widehat{\widehat{m}}(s)(x) \\ 
 & \longrightarrow \int_{\rd} \int_0^t D\phi(x)\cdot D_p H(x,Du(s,x)) \, d{m}(s)(x).
\end{align*}
Then by passing to the limit in \eqref{eq:proof_main_1} using the above limit,  \eqref{eq:main_proof_2}, \eqref{conv_m_n}, and the CFL conditions $\frac{\rho^2}{h},\frac{h}{r^{\sigma}},\frac{\sqrt{h}}{\epsilon}=\mathit{o}(1)$,  we see that \eqref{dist_sol_FPK} holds and ${m}$ is a  very weak solution of the FPK equation. This completes the proof of Theorem \ref{thm:convergence_MFG-nondeg}(iii).   

\section{Numerical examples} \label{sec:numerics}

For numerical experiments we look at

\begin{align}
 \begin{cases}
     -u_t - \sigma^2 \mathcal{L} u + \frac{1}{2} |u_x|^{2} = f ( t,x ) + K \ \phi_{ \delta } \ast m ( t,x ) \quad &\text{ in } (0,T)\times [ a,b ], \\
        m_t - \sigma^2 \mathcal{L}^{*} m - \text{div} (m u_x ) = 0 \quad &\text{ in } (0,T)\times [ a,b ], \\
        u (T,x) = G(x,m(T)), \qquad m(x,0) = m_0 (x) \quad &\text{ in } [ a,b ],
    \end{cases}
    \label{eqn:MFG_numerical}
\end{align}
where $a<b$ are real numbers, $\mathcal{L}$ is a diffusion operator, $\phi_{\delta} = \frac{1}{\delta \sqrt{2 \pi}} e^{-\frac{x^2}{2 \delta^2}} $, $K$ some real number, and $f$ is some bounded smooth function. We will specify these quantities in the examples below. 

\subsection*{Artificial boundary conditions}
Our schemes \eqref{schme_HJ} and \eqref{Fokker-Planck_discretized} for approximating \eqref{eqn:MFG_numerical} are posed in all of $\mathbb{R}$. To work in a bounded domain we impose (artificial) exterior conditions: 
\begin{enumerate}
    \item[(U1)] $u \equiv \| u_{0}  \|_{0} + T \cdot \| f  \|_{L^{\infty} ( ( 0,T ) \times ( a,b ) )}  $ in $(\mathbb{R} \setminus [ a,b ] ) \times [ 0,T ]$, 
        \medskip
    \item[(M1)]  $m \equiv 0$ in $(\mathbb{R} \setminus [ a,b ] ) \times [ 0,T ]$, and
        $m_{0}$ is compactly supported in $ [ a,b ]$.
        \medskip
\end{enumerate}
Condition (U1)  penalize being in $[a,b]^c$  ensuring that optimal controls  $\alpha$ in \eqref{schme_HJ} are such that $x_{i} - h \alpha \pm \sqrt{h} \sigma_{r} \in [a,b]$. 
Moreover, the contributions to non-local operators of $u$ from $[a,b ]^c$ will be small away from the boundary.
Condition (M1) ensures that the mass of $m$ is essentially contained in $[a,b]$ up to some finite time (but some mass will leak out due to nonlocal effects), and there is no contribution from $[ a,b ]^c$ when we compute non-local operators of $m$.  We will present numerical results from a region of interest that is far away from the boundary of $[a,b ]$, and where the influence of the (artificial) exterior data is expected to be negligible.

\subsection*{Evaluating the integrals}
To implement the scheme, we need to evaluate the integral 
\begin{align*}
    \int_{|z| \geq r} I [ f ] ( x_{i} + z ) \nu ( dz ) = \sum_{j \in \mathbb{Z}} f [ x_{i} ] \omega_{j-i, \nu} ,  
\end{align*}
where 
\begin{align*}
    \omega_{j-i, \nu} = \int_{|z| \geq r } \beta_{j-i} ( z ) \nu ( dz ),
\end{align*}
see \eqref{linear_interpolation}. 
In addition, we need to compute the values of $\sigma_{r}, b_{r}$,  and $\lambda_{r}$ (see \eqref{sigma_r}, \eqref{B_r}, and \eqref{lambda_r}).
To compute the weights $\omega_{j-i, \nu}$ we use two different methods. 
For the fractional Laplacians, 
we use the explicit weights of \cite{huang2014numerical}, while for CGMY diffusions we calculate the weights numerically using the inbuilt integral function in MATLAB. 
When tested on the fractional Laplacian, the MATLAB integrator produced an error of less than $10^{-15}$. Below the quantities $\sigma_{r}, b_{r}, \lambda_{r}$ are computed explicitly, except in the CGMY case where we use numerical integration. 

\subsection*{Solving the coupled system}
We use a fixed point iteration scheme: (i) Let $\mu = m_0$, and solve for $u_{\rho,h}$ in \eqref{schme_HJ}--\eqref{extnd_dscrt_schm_HJ}. (ii)
With approximate optimal control $Du_{\rho,h}^{\epsilon}$ as in \eqref{alp_num}, we solve for 
$m_{\rho,h}^{\epsilon}$ in \eqref{Fokker-Planck_discretized}. (iii) Let
$\mu_{\text{new}} = ( m_{\rho,h}^{\epsilon} + \mu)/2$, and repeat the process with $\mu=\mu_{\text{new}}$. We continue until we have converged to a fixed point to within machine accuracy.

\begin{remark}
Instead $\mu_{\text{new}} = m_{\rho,h}^{\epsilon}$, we take $\mu_{\text{new}} =( m_{\rho,h}^{\epsilon} + \mu)/2$.  I.e. we use a fixed point iteration with some memory. This gives much faster convergence in our examples. 
\end{remark}

\begin{example}\label{ex1}
Problem \eqref{eqn:MFG_numerical} with $ [ 0,T ] \times [ a,b ] = [ 0,2 ] \times [ 0,1  ]$, $G = 0$, $f ( t,x ) = 5 (  x - 0.5(1 - \sin(2 \pi t)))^2$,  
$m_{0} ( x ) = C e^{-\frac{(x-0.5)^2}{0.1^2}} $, where $C$ 
is such that $\int_{a}^b m_{0} = 1$. 
Furthermore, 
in accordance with the CFL-conditions of 
Theorem \ref{thm:convergence_MFG},
we let $h = \rho = 0.005$, $r = h^{\frac{1}{2 s}}$,
$\epsilon = \sqrt{h} \approx 0.0707$, $\sigma=0.09$, $\delta = 0.4$, $K=1$. 

For the diffusions, we consider $\mathcal{L} = ( - \Delta )^{\frac{s}{2}}$ 
for $s =0.5, 1.5, 1.9$, 
$\mathcal{L} = \Delta$, and $\mathcal{L} \equiv 0$.
In figure \ref{fig:all_times} we plot
the different solutions at time $t=0.5$ and  $t=1.5$. 
\end{example}
\begin{figure}[ht!]
    \centering
    \begin{subfigure}[b]{0.49\textwidth}
        \includegraphics[width=\textwidth]{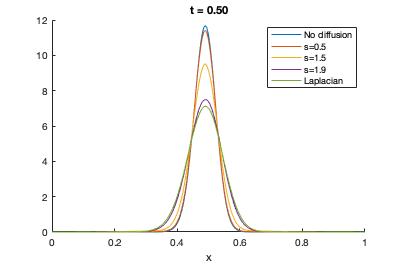}
        \caption{$t=0.5$}
        \label{fig:ex1m05}
    \end{subfigure}
    ~ 
    \begin{subfigure}[b]{0.49\textwidth}
        \includegraphics[width=\textwidth]{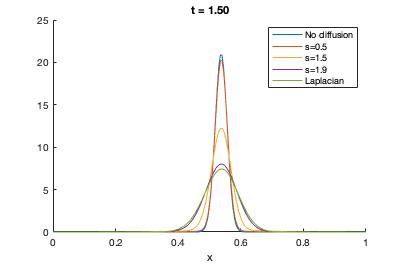}
        \caption{$t=1.5$}
        \label{fig:ex1u05}
    \end{subfigure}
    \caption{The solutions $m$ in Example 1.}
    \label{fig:all_times}
\end{figure}

In figure \ref{fig:s_15} we plot the solution with $s=1.5$ on 
the time interval $[ 0,2 ]$.

\begin{figure}[ht!]
    \centering
    \begin{subfigure}[b]{0.49\textwidth}
        \includegraphics[width=\textwidth]{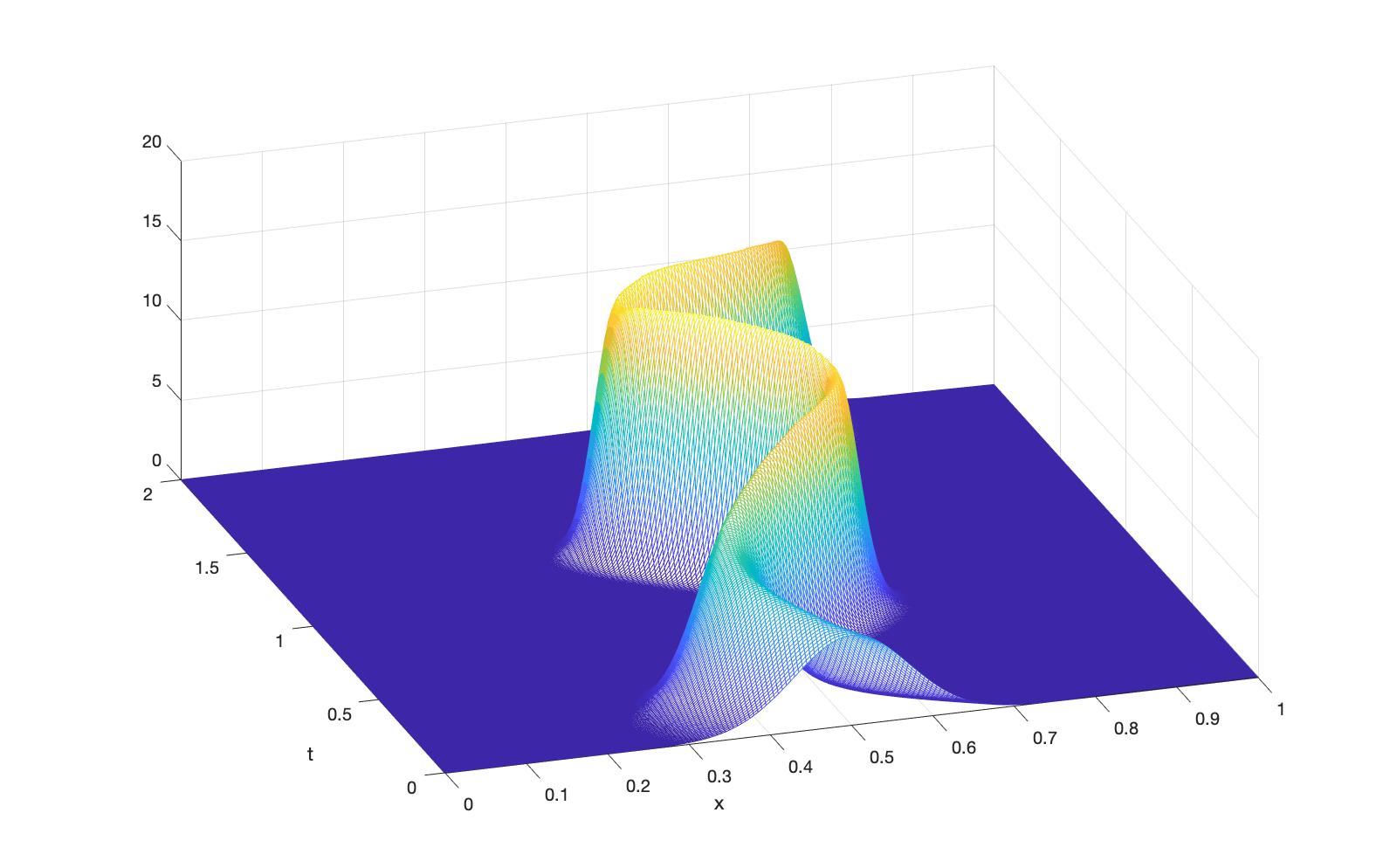}
        \caption{$m ( t,x )$}
        \label{fig:ex1m15}
    \end{subfigure}
    ~ 
    \begin{subfigure}[b]{0.49\textwidth}
        \includegraphics[width=\textwidth]{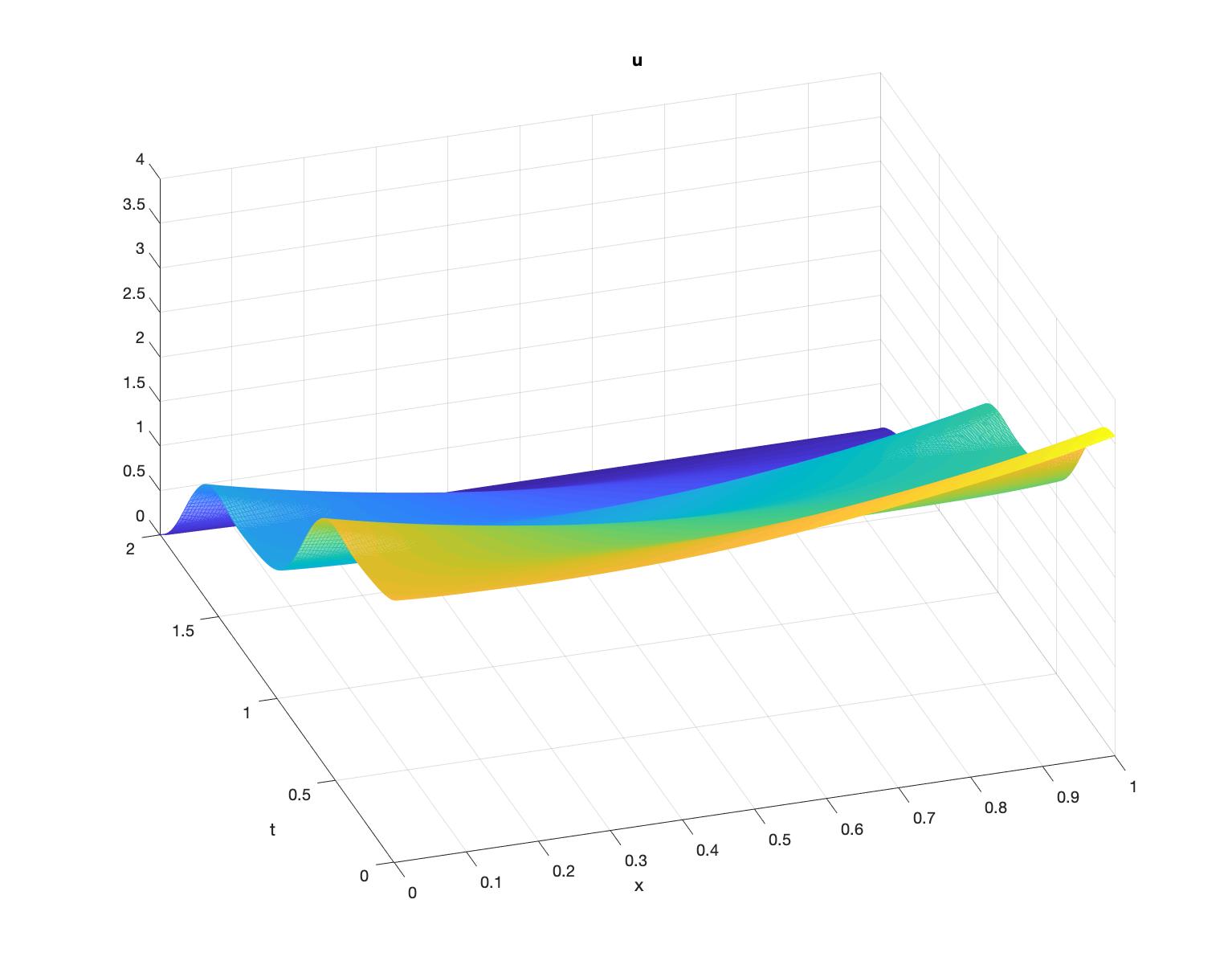}
        \caption{$u(t,x)$}
        \label{fig:ex1u15}
    \end{subfigure}
    \caption{Solution $m$ and $u$ in Example 1 with diffusion parameter $s=1.5$} 
    \label{fig:s_15}
\end{figure}

\begin{example}\label{ex2}
Problem \eqref{eqn:MFG_numerical} with the same cost functions as in Example 1, but  different diffusions with parameter
$s=1.5$:

\begin{enumerate}
\item $\mathcal{L} = \sigma^2 ( - \Delta )^{\frac{s}{2}},$
\item $\mathcal{L} =  \sigma^2 C_{d,s} \int_{\mathbb{R}} [   u ( x+y ) - u ( x ) - Du ( x ) \cdot y \mathbbm{1}_{|y|<1} ] \, \mathbbm{1}_{ [ 0, + \infty )}    \, \frac{dy}{|y|^{1+s}}$,  
\item $ \mathcal{L} = \sigma^2 C_{d,s} \int_{\mathbb{R}} [   u ( x+y ) - u ( x ) - Du ( x ) \cdot y \mathbbm{1}_{|y|<1} ] \, \mathbbm{1}_{ [ -0.5,0.5 ]^{c}} \, \frac{dy}{|y|^{1+s}}$,
\item $\mathcal{L} = \sigma^2 C_{d,s} \int_{\mathbb{R}} [   u ( x+y ) - u ( x ) - Du ( x ) \cdot y \mathbbm{1}_{|y|<1} ] \, e^{-10 y^{-} -y^{+}} \,  \frac{dy}{|y|^{1+s}}  $,
\end{enumerate}
where $C_{d,s}$ is the normalizing constant for the fractional Laplacian (see \cite{huang2014numerical}).
Case (i) is the reference solution, a symmetric and uniformly elliptic operator.  Case (ii) is non-symmetric and non-degenerate,  case (iii) is symmetric and degenerate, and case (iv) is a CGMY-diffusion (see e.g. \cite{tankov2003financial}). We have plotted $m$ at $t=0.5$ and  $t=1.5$ in Figure \ref{fig:deg_diff}.  
\end{example}

\begin{figure}[ht!]
    \centering
    \begin{subfigure}[b]{0.49\textwidth}
        \includegraphics[width=\textwidth]{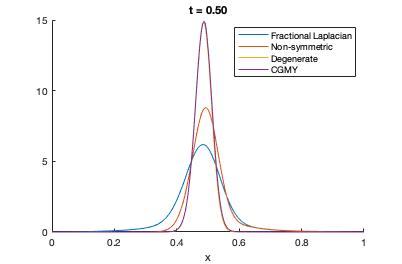}
        \caption{$t=0.5$}
        \label{fig:deg_t05}
    \end{subfigure}
    ~ 
    \begin{subfigure}[b]{0.49\textwidth}
        \includegraphics[width=\textwidth]{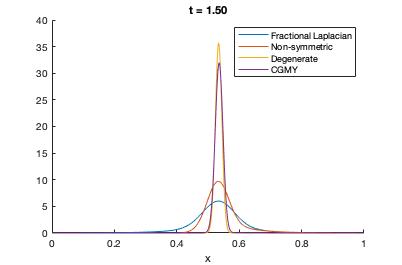}
        \caption{$t=1.5$}
        \label{fig:deg_t15}
    \end{subfigure}
    ~ 
    \caption{The solutions $m$ in Example 2}\label{fig:deg_diff}
\end{figure}

\begin{example}\label{ex3}
(Long time behaviour).
Under certain conditions (see e.g. \cite{cardaliaguet2012long, cardaliaguet2013long}), 
the solution of time dependent
MFG systems will quickly converge to the solution of the corresponding
stationary ergodic MFG system, as the time horizon $T$ increases. 
We check numerically that this is also the case for nonlocal diffusions. 
In \eqref{eqn:MFG_numerical}, we take $\mathcal{L} = ( -\Delta )^{\frac{s}{2}}$, with $s=1.5$, $ [ 0,T ] \times [ a,b ] = [ 0,10 ] \times [ -1,2 ]$, 
$G ( x ) = ( x-2 )^{2}$,
 $f ( t,x ) = x^2$, and $m_{0} ( x ) = \mathbbm{1}_{ [ 1,2 ] } ( x )$.
We expect (from the cost functions $f$ and $G$) that
the solution $m$ will approach the line $x=0$ 
quite fast, and then travel along this line, 
until it goes towards the point $x=2$ in the very end. 
Our numerical simulations shows that this is the case 
also for nonlocal diffusions. 
Here we have considered the cases $K=0$ (no coupling in the $u$ equation) and $K = 0.4$ (some coupling).
The parameters used in the simulations are 
$h = \rho = 0.01$, $\epsilon = \sqrt{h}$, $r = h^{1 / 2s}$,
and the results are shown in Figure \ref{fig:long_time}.
\end{example}
\begin{figure}[ht!]
    \centering
    \begin{subfigure}[b]{0.49\textwidth}
        \includegraphics[width=\textwidth]{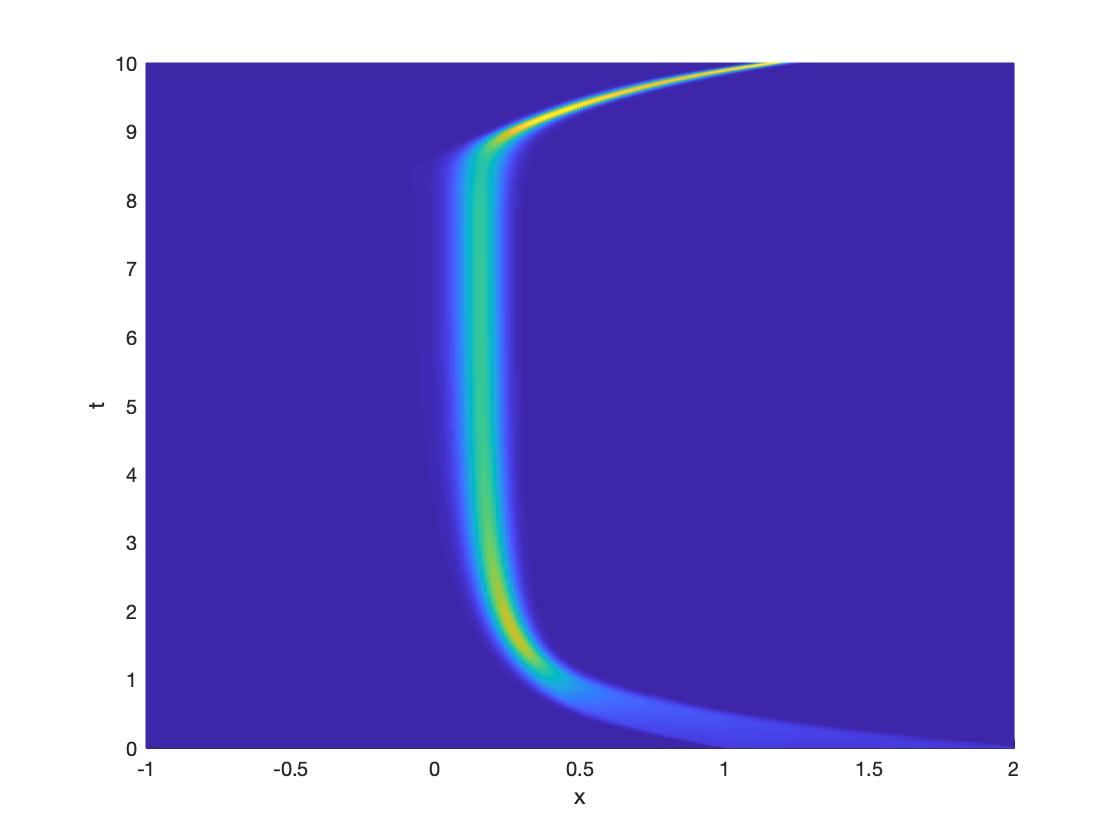}
        \caption{$f$}
        \label{fig:ex3m15}
    \end{subfigure}
    ~ 
    \begin{subfigure}[b]{0.49\textwidth}
        \includegraphics[width=\textwidth]{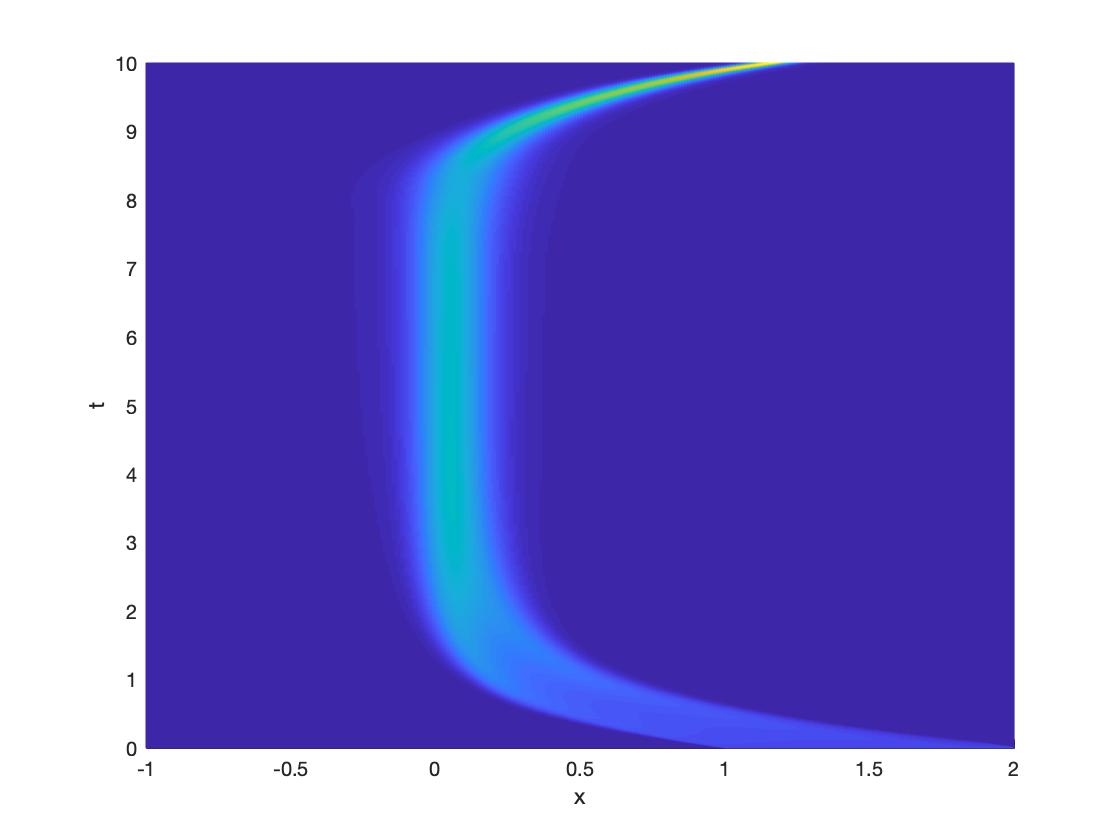}
        \caption{$f+ 0.4 \phi_{\delta}$}
        \label{fig:ex3u15}
    \end{subfigure}
    ~ 
    \caption{ The solutions $m$ in Example 3}\label{fig:long_time}
\end{figure}

The players want to avoid each other in the case of $K=0.4$, so the solution is more spread out in space direction than in the case of $K=0$.

\begin{example}\label{ex4}
We compute the convergence rate
when $f$, $G$, $m_{0}$ are as in Example 1, $s=1.5$, $\nu=0.2$, $\delta=0.4$, and 
the domain $ [ 0,T ] \times [ a,b ] = [ 0,0.5 ] \times [ 0,1  ]$. 
We take $\rho = h$, $r = h^{\frac{1}{2 s}}$, and for simplicity $\epsilon = 0.25$. 

We calculate solutions for different values of $h$, and compare with a reference
solution computed at $h=2^{-10}$.
We calculate $L^{\infty}$ and $L^{1}$ relative errors restricted to the $x$-interval $[ \frac{1}{3}, \frac{2}{3} ]$ (to avoid boundary effects), 
and $t=0$ for  $u$ and  $t=T$ for  $m$:
\begin{align*}
   \textup{ERR}_u:= \frac{\| u_{\rho,h} ( 0,\cdot ) - u_{\text{ref}} ( 0,\cdot ) \|_{L^{\infty} ( \frac{1}{3}, \frac{2}{3} )}}{\|  u_{\text{ref}} ( 0,\cdot ) \|_{L^{\infty} ( \frac{1}{3} , \frac{2}{3} )}}, \quad
\textup{ERR}_m:=\frac{\| m_{\rho,h}^{\epsilon} ( T,\cdot ) - m_{\text{ref}} ( T,\cdot ) \|_{L^{1} ( \frac{1}{3} , \frac{2}{3} )}}{\|  m_{\text{ref}} ( T,\cdot ) \|_{L^{1} ( \frac{1}{3} , \frac{2}{3} )}}.
\end{align*}
The results are given in the table below.
\end{example}

\begin{table}[ht!]
\begin{tabular}{l|llllllll}
    h     & $2^{-2}$ & $2^{-3}$ & $2^{-4}$ & $2^{-5}$ & $2^{-6}$ & $2^{-7}$ & $2^{-8}$ & $2^{-9}$ \\
    \hline
    ERR$_u$ & 0.3155  &  0.1951  &  0.0920  &  0.0446 &   0.0218  &  0.0097  &  0.0035 & 0.0013 \\
    ERR$_m$ & 0.8055 &   0.4583 &   0.2886  &  0.1869  &  0.1023  &  0.0596  &  0.0300 & 0.0186
\end{tabular}
\end{table}

\noindent We see that when we halve $h$, the error is halved, i.e we observe an error of order $O ( h )$.

\appendix

\section{Proof of Proposition \ref{thm:existence_discrete_system}}\label{app:pf_ex}
   The proof is an adaptation of the Schauder fixed point argument used to prove existence for MFGs. We will use a direct consequence of Theorem \ref{thm:thightness_m} and \ref{thm:time_equicont_m}:
\begin{corollary}
\label{cor:thightness_m_rho-h}
    Assume \ref{A0},\ref{A2}, \ref{L1}--\ref{L2}, \ref{H1}, \ref{F2}, \ref{M1}, $\Psi$ is given by Proposition \ref{prop:tail-control-function}, and $m^{\epsilon}_{\rho,h}[\mu]$ is defined by \eqref{exten_disc_measure}. Then there is $C_{\rho,h,\epsilon}>0$, such that for any  $\mu \in  C ( [0,T], P ( \rd ))$ and $t,s\in[0,T]$,
\begin{align*}
\int_{\R^d} \Psi(x) \, dm^{\epsilon}_{\rho,h}[\mu](t) +\frac{d_0(m^\epsilon_{\rho,h}[\mu](t),m^\epsilon_{\rho,h}[\mu](s))}{\sqrt{|t-s|}}\leq C_{\rho,h,\epsilon}.
\end{align*}
\end{corollary}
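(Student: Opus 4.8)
The plan is to read off the two summands directly from Theorems~\ref{thm:thightness_m} and~\ref{thm:time_equicont_m} and add them. The only point requiring care is that those statements carry CFL conditions whose sole role is to render their constants independent of the discretisation parameters; here the parameters $\rho,h,\epsilon$ (and the cutoff $r$) are fixed and we only ask for a constant $C_{\rho,h,\epsilon}$ allowed to depend on them, so no CFL restriction is needed and one simply works with the parameter-dependent intermediate estimates appearing inside the two proofs.

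First I would fix $\rho,h,\epsilon,r>0$ arbitrarily. For the first summand, I revisit the iteration bound \eqref{esti_tight_m_rho-h} in the proof of Theorem~\ref{thm:thightness_m}, namely $A_{k+1}\leq A_0+C(hr^{1-2\sigma}+1+\tfrac{\rho^2}{h})$ with $A_0=\int_{\rd}\Psi\,dm_0$. By~\ref{M1} and Proposition~\ref{prop:tail-control-function} we have $A_0<\infty$, and with the parameters fixed the right-hand side is a finite constant independent of both $\mu$ and $t$; via the interpolation \eqref{exten_disc_measure} this gives $\int_{\rd}\Psi(x)\,dm^{\epsilon}_{\rho,h}[\mu](t)\leq C_1$ uniformly in $\mu\in C([0,T],P(\rd))$ and $t\in[0,T]$. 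For the second summand, I use the bound on $|I_k|$ derived in the proof of Theorem~\ref{thm:time_equicont_m}; inserting it into \eqref{regular_KR_dist}--\eqref{diff-m} and optimising the mollification parameter $\delta=\sqrt{|t-s|}$ yields $d_0(m^\epsilon_{\rho,h}[\mu](t),m^\epsilon_{\rho,h}[\mu](s))\leq C_2\sqrt{|t-s|}$, where for fixed parameters $C_2$ is finite and independent of $\mu,t,s$. Dividing by $\sqrt{|t-s|}$ and setting $C_{\rho,h,\epsilon}:=C_1+C_2$ gives the claim.

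The main---and essentially only---obstacle is the bookkeeping remark that the constants in the two cited theorems, advertised there as parameter-independent under CFL, become parameter-dependent once CFL is relaxed but remain finite and, crucially, uniform in $\mu$. This uniformity in $\mu$ is exactly what the Schauder fixed-point argument of Appendix~\ref{app:pf_ex} needs: together with Prokhorov's theorem and the Arzel\`a--Ascoli theorem it forces precompactness in $C([0,T],P(\rd))$ of the image of the map $\mu\mapsto m^\epsilon_{\rho,h}[\mu]$.
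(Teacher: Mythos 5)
Your proposal is correct and follows essentially the same route as the paper: the paper simply states the corollary as a direct consequence of Theorems~\ref{thm:thightness_m} and~\ref{thm:time_equicont_m}, adding only the remark that ``the point is that $\rho,h,\epsilon$ are fixed in this result.'' Your write-up makes explicit the one piece of bookkeeping the paper leaves implicit---that once the parameters (including $r$) are fixed, the intermediate estimates \eqref{esti_tight_m_rho-h} and the $|I_k|$ bound remain finite and uniform in $\mu$ even without the CFL conditions, since those conditions only served to make the constants parameter-independent.
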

\smallskip
The point is that $\rho,h,\epsilon$ are fixed in this result. Let 
 \begin{align*}
    \mathcal{C} := \Big\{ &\mu \in  C (  0,T  ; P ( \R^d  ) ) : \mu ( 0 ) = m_{0}, \\ 
        & \quad \sup_{t,s \in [  0,T] }\Big[ \int_{\R^d} \psi ( x ) d \mu ( t,x ) +
        \frac{d_{0} ( \mu ( t ), \mu ( s ) )}{\sqrt{ | t-s | }} \Big]\leq C_{\rho,h, \epsilon} 
    \Big\}, \notag
\end{align*}
where $C_{\rho,h, \epsilon}$ is defined in Corollary \ref{cor:thightness_m_rho-h}.  
For $\mu \in \mathcal{C}$, let $u_{\rho,h} [ \mu ]$ be solution of \eqref{schme_HJ} and $u_{\rho,h}^{\epsilon} [ \mu ]$ defined by
\eqref{cont_extnd_dscrt_schm_HJ}. Then $m_{\rho,h}^{\epsilon}  = S  ( \mu )$ is defined to the corresponding 
solution of \eqref{Fokker-Planck_discretized}.
Note that a fixed point of $S$ will give a solution $(u,m)$ of the scheme \eqref{disc_mfg_system}. We now conclude the proof by applying Schauder's fixed point theorem since:
\medskip 

 \noindent 1.~~($\mathcal C$ is a convex, closed, compact set). It is a convex and closed by standard arguments and compact by the
Prokhorov and Arzel\`a-Ascoli theorems.
\medskip 

\noindent 2.~~($S$ is a self-map on $\mathcal C$). The map $S$ maps $\mathcal{C}$ into itself by Corollary \ref{cor:thightness_m_rho-h} (tightness and equicontinuity),  and Lemma \ref{mass_conservation} (positivity and mass preservation). \smallskip 
\medskip 

\noindent 3.~~($S$ is continuous). Let $ \mu_{n} \to \mu$ in $\mathcal{C}$. By
Theorem \ref{hjb_discrete_comparison} (comparison)
and \ref{F2},
\begin{align*}
&\| u_{\rho,h} [ \mu_{n} ] - u_{\rho,h} [ \mu ]  \|_{0} \\
&  \leq T \sup_{  t,x   } | F ( x, \mu_{n} ( t ) ) - F ( x, \mu ( t ) ) |  \notag
+ \sup_{x} | G ( x, \mu_{n} ( T ) ) - G ( x, \mu ( T ) ) | \\ 
&\leq T L_{F} \, \sup_{t} d_{0} ( \mu_{n} ( t ) , \mu ( t ) ) + L_{G} \,  d_{0} ( \mu_{n} ( T ) , \mu ( T ) ) \to 0. \notag
\end{align*}
Then
$ 
\sup_{i} \big| \frac{u_{i,k} [ \mu_{n} ] - u_{i-j,k} [ \mu_{n} ]}{ \rho   } - 
\frac{u_{i,k} [ \mu ] - u_{i-j,k} [ \mu ]}{ \rho   } \big| \to 0$ 
uniformly for $| i-j | =   1 $, $\| Du_{\rho,h}^{\epsilon} [ \mu_{n} ] - Du_{\rho,h}^{\epsilon} [ \mu ]  \|_{0} \to 0 $, and finally by Lemma \ref{L1_stability_fp},
\begin{align*}
        \sup_{t \in  [ 0,T ]} \| m_{\rho,h}^{\epsilon} [ \mu_{n} ] ( t, \cdot ) - m_{\rho,h}^{\epsilon} [ \mu ] ( t,\cdot )   \|_{L^{1} ( \R^d )}
        \leq \frac{cKT}{\rho} e^{- h \lambda_{r}}  \| Du_{\rho,h}^{\epsilon} [ \mu_{n} ] 
        - Du_{\rho,h}^{\epsilon} [ \mu ]  \|_{0} \to 0.
\end{align*}
Hence $\mathcal{S}$ is continuous.


\section{Proof of Lemma \ref{thm:HJB-convergence-smoothsol} (ii) and (iii)}\label{app:gradlim}
Fix $(t,x)\in [0,T]\times \rd$ and consider a sequence $(t_k,x_k) \to (t,x)$. For any $y\in \rd$, a Taylor expansion shows that
\begin{align}\label{proof-esti1}
\begin{split}
    & u_{\rho_{n}, h_{n}}^{\epsilon_{n}} [ \mu_{n} ](t_k, x_k+y) - u_{\rho_{n}, h_{n}}^{\epsilon_{n}} [ \mu_{n} ](t_k,x_k) - Du_{\rho_{n}, h_{n}}^{\epsilon_{n}} [ \mu_{n} ](t_k, x_k)\cdot y \\
    & = \int_0^1 \big(Du_{\rho_{n}, h_{n}}^{\epsilon_{n}} [ \mu_{n} ](t_k, x_k+sy) - Du_{\rho_{n}, h_{n}}^{\epsilon_{n}} [ \mu_{n} ] (t_k, x_k) \big)\cdot y \,  ds := \int_0^1 I(s)\cdot y \, ds.
\end{split}
\end{align} 
Using first Lemma \ref{semiconcavity_eps} (a) and then part two of Lemma \ref{semiconcavity_eps}  (b), we find that
 \begin{align*} 
 \int_0^{\frac{\rho_n}{\epsilon_n|y|}} I(s) \cdot y \, ds & \leq   2 \|Du_{\rho_{n}, h_{n}}^{\epsilon_{n}} [ \mu_{n} ]\|_0 \frac{\rho_n}{\epsilon_n}\leq 2 ( ( L_{L} + L_{F} )T + L_{G} ) \frac{\rho_n}{\epsilon_n}\\
 \int^1_{\frac{\rho_n}{\epsilon_n|y|}} I(s) \cdot y \, ds   
 &\leq  c_1 \int^1_{\frac{\rho_n}{\epsilon_n|y|}}  \frac{1}{s} \Big(|sy|^2 + \frac{\rho_n^2}{\epsilon_n^2}\Big) ds = c_1  |y|^2 \int^1_{\frac{\rho_n}{\epsilon_n|y|}} \Big( s \,  +   \frac1{s}\frac{\rho_n^2}{|y|^2\epsilon_n^2}  \Big) \, ds \\
 &\leq c_1  |y|^2 \int^1_{\frac{\rho_n}{\epsilon_n|y|}} \Big( s \,  +   \frac1{s}s^2  \Big) \, ds \leq c_1 |y|^2. 
 \end{align*}
By Lemma \ref{semiconcavity_eps} (a), the sequence $Du_{\rho_{n}, h_{n}}^{\epsilon_{n}} [ \mu_{n} ](t_k, x_k)$ is precompact. Now take any convergent subsequence as $n,k\to \infty$ and $\frac{\rho_n}{\epsilon_n}=o(1)$. If $p$ is the limit, then by passing to the limit in \eqref{proof-esti1} along this subsequence we have
 \begin{align*}
     u[\mu](x+y) - u[\mu](x) - p \cdot y \leq c_1 |y|^2 \quad \text{for every} \quad y \in \rd,
 \end{align*}
 and $p \in D^+u[\mu](t,x)$, the superdifferential of $u[\mu](t,x)$. At points $(x,t)$ where $u[\mu]$ is differentiable, $D^+u[\mu](t,x)=\{Du[\mu](t,x)\}$ and $p= Du[\mu](t,x)$, and then since the subsequence was arbitrary in the above argument and all limit points $p$ coincide,
 \begin{align}\label{limits}
 \begin{split}
     &\limsup_{(t_k, x_k)\to (t,x), n\to \infty } Du_{\rho_{n}, h_{n}}^{\epsilon_{n}} [ \mu_{n} ](t_k, x_k)\\
     &\qquad= \liminf_{(t_k, x_k)\to (t,x), n\to \infty } Du_{\rho_{n}, h_{n}}^{\epsilon_{n}} [ \mu_{n} ](t_k, x_k)\\
     &\qquad = Du(t,x). 
     \end{split}
 \end{align}
 We conclude that $Du_{\rho_{n}, h_{n}}^{\epsilon_{n}} [ \mu_{n} ] \rightarrow Du [ \mu ]$ at $(t,x)$.
Part (ii) now follows since $u[\mu]$ is Lipschitz in space by Proposition \ref{prop:viscosity_sol_HJB} (c) and then $x$-differentiable for a.e. $x$ and every $t$.
 
To prove part (iii), we note that $u$ is $C^1$ by \ref{U1}, so now \eqref{limits} holds for every $(t,x)$. Then in view of the uniform Lipschitz estimate from Lemma \ref{semiconcavity_eps} (a), local uniform convergence follows from \cite[Chapter V, Lemma 1.9]{BCD:book}. The proof is complete.

\section{Proof of Lemma \ref{lem:bound_char}}\label{app:pf}
We first show strong separation between any two characteristics $\Phi^{\epsilon,\pm}$: By Lemma \ref{lem:dimone-semicon},
\begin{align*}
&\big|\Phi^{\epsilon,\pm}_{j,k} - \Phi^{\epsilon,\pm}_{i,k}\big|^2  = \, \Big|x_j-x_i  \pm \sqrt{h}\sigma_r \mp \sqrt{h}\sigma_r - h\Big(D_p H(x_j,Du^{\epsilon}_{\rho,h}(t_k,x_j)) + B^{\sigma}_r \\
 & \hspace*{5cm} - D_p H(x_i,Du^{\epsilon}_{\rho,h}(t_k,x_i)) - B^{\sigma}_r\Big) \Big|^2\\
 &\geq  \, |x_j-x_i|^2 - 2 h \Big(D_pH\big(x_j,Du^{\epsilon}_{\rho,h}(t_k,x_j)\big)- D_p H \big(x_i, Du^{\epsilon}_{\rho,h}(t_k,x_i)\big)\Big) (x_j-x_i) \\
 & \geq  \, (1-c_0 h) |x_j-x_i|^2.
\end{align*}  
Hence, we have 
\begin{align}\label{dist_char}
\min\Big\{\big|\Phi^{\epsilon,+}_{j,k} - \Phi^{\epsilon,+}_{i,k}\big|, \big|\Phi^{\epsilon,-}_{j,k} - \Phi^{\epsilon,-}_{i,k}\big|\Big\} \geq \sqrt{1-c_0h} |j-i|\rho> \rho \sqrt{1-c_0h} .
\end{align}
The result now holds following the proof of \cite[Lemma 3.8]{carliniSilva2014semi1st}. We give the proof for completeness.

Since the diameter of the support of a (hat) basis functions $\beta_i$ is $2\rho$, by \eqref{dist_char} there can be at most 3 characteristics inside the $\textup{supp}(\beta_i)$ 
for small enough $h$. The result is trivial if there is only one in characteristic $\textup{supp}(\beta_i)$. 
When  $\textup{supp}(\beta_i)$ contains 2 characteristics, say $\Phi^{\epsilon,+}_{j_1,k}$ and $\Phi^{\epsilon,+}_{j_2,k}$,  we see by \eqref{dist_char} (check the different orderings of $x_k$, $\Phi^{\epsilon,+}_{j_1,k}$, $\Phi^{\epsilon,+}_{j_2,k}$) that
\begin{align*}
\beta_i(\Phi^{\epsilon,+}_{j_1,k})+ \beta_i(\Phi^{\epsilon,+}_{j_2,k}) = &\,  1-\frac{\Big|x_i-\Phi^{\epsilon,+}_{j_1,k}\Big|}{\rho} + 1 -\frac{\Big|x_i-\Phi^{\epsilon,+}_{j_2,k}\Big|}{\rho} \\
 \leq & \, 2 - \frac{\Big|\Phi^{\epsilon,+}_{j_1,k}-\Phi^{\epsilon,+}_{j_2,k}\Big|}{\rho} \leq 2 - \sqrt{1-c_0h} \leq \, 1+ K_0 h. 
\end{align*}

 Finally, assume $support(\beta_i)$ contains 3 characteristics $\Phi^{\epsilon,+}_{j_1,k}, \Phi^{\epsilon,+}_{j_2,k}$ and $\Phi^{\epsilon,+}_{j_3,k}$. By \eqref{dist_char} that all three characteristics can not be on one side (left or right) of $x_i$. Without loss of generality we assume $\Phi^{\epsilon,+}_{j_1,k}<x_i< \Phi^{\epsilon,+}_{j_2,k}<\Phi^{\epsilon,+}_{j_3,k}$, and find 
\begin{align*}
& \beta_i(\Phi^{\epsilon,+}_{j_1,k}) + \beta_i(\Phi^{\epsilon,+}_{j_2,k})+ \beta_i(\Phi^{\epsilon,+}_{j_3,k}) =  1 -\frac{x_i-\Phi^{\epsilon,+}_{j_1,k}}{\rho}+  1 -\frac{\Phi^{\epsilon,+}_{j_2,k}-x_i}{\rho} +  1 -\frac{\Phi^{\epsilon,+}_{j_3,k}-x_i}{\rho} \\
& \leq  \, 3 - \frac{\Phi^{\epsilon,+}_{j_2,k}-\Phi^{\epsilon,+}_{j_1,k}}{\rho} -\frac{\Phi^{\epsilon,+}_{j_3,k}-\Phi^{\epsilon,+}_{j_2,k}}{\rho} \\
& \leq \, 3 - 2 \sqrt{1-c_0h} \leq 1 + 2(1- \sqrt{1-c_0h}) \leq 1+ K_0h.
\end{align*} 
 Combining all three cases we get 
 $$\sum_{j\in \Z} \beta_i (\Phi^{\epsilon,+}_{j,k}) \leq 1 + K_0 h \quad \mbox{for any} \, i \in \Z.$$ 
The estimate of $\sum_{j\in \Z} \beta_i (\Phi^{\epsilon,-}_{j,k})$  is similar. This completes the proof.

\section*{Acknowledgements}
The authors are supported by the Toppforsk (research excellence) project Waves and Nonlinear Phenomena (WaNP), grant no. 250070 from the Research Council of Norway. IC is partially supported by the Croatian Science Foundation under the project 4197. The authors would like to thank Elisabetta Carlini for sharing the code of the numerical methods introduced in \cite{carliniSilva2014semi1st}.

\bibliographystyle{plain}

\end{document}